\renewenvironment{proof}[1][\proofname]{\par
    \pushQED{\qed}%
    \normalfont \topsep6\p@\@plus6\p@\relax
    \trivlist
    \item\relax
    {\bfseries
        #1\@addpunct{.}}\hspace\labelsep\ignorespaces
}{%
    \popQED\endtrivlist\@endpefalse
}
\theoremstyle{theorem}
\newtheorem{Theorem}{Theorem}[section]
\newtheorem{Proposition}[Theorem]{Proposition}
\newtheorem{Corollary}[Theorem]{Corollary}
\newtheorem{Lemma}[Theorem]{Lemma}
\newtheorem{Remark}[Theorem]{Remark} 
\theoremstyle{definition}
\newtheorem{Definition}[Theorem]{Definition}
\crefname{Theorem}{Theorem}{Theorems}
\crefname{Lemma}{Lemma}{Lemmas}
\crefname{Proposition}{Proposition}{Propositions}
\crefname{Corollary}{Corollary}{Corollaries}
\crefname{Definition}{Definition}{Definitions}
\crefname{Remark}{Remark}{Remarks}
\crefname{Example}{Example}{Examples}
\crefname{Section}{Section}{Sections}
\crefname{Subsection}{Section}{Sections}
\numberwithin{equation}{section}
\newcommand{\R}{\mathbb{R}}
\newcommand{\N}{\mathbb{N}}
\def\abs#1{|{#1}|}
\def\pabs#1{\left|{#1}\right|}
\def\norm#1{\|{#1}\|}
\def\pnorm#1{\left\|{#1}\right\|}
\def\meas{\mathop{\rm meas}\nolimits}
\def\esssup{\mathop{\rm ess\, sup}}
\def\wlimit{\rightharpoonup}
\def\half{{\frac{1}{2}}}
\def\calC{{\cal C}} 
\def\calD{{\cal D}}
\def\calE{{\cal E}}
\def\calI{{\cal I}}
\def\calM{{\cal M}}
\def\calO{{\cal O}}
\def\calS{{\cal S}}
\def\scrC{\mathscr{C}}
\def\scrP{\mathscr{P}}
\def\wscrC{\widetilde{\scrC}}
\def\epsilon{\varepsilon}
\def\sgn{\hbox{sgn}}
\def\RE{\R\times E}
\def\RRE{\R\times\R\times E}
\def\intRN{\int_{\R^N}}
\def\distM{{\rm dist}_M}
\def\distRE{{\rm dist}_{\RE}}
\def\distE{{\rm dist}_E}
\def\wKb{\widetilde K_b}
\def\wN{\widetilde N}
\def\ob{\overline b}
\def\ub{\underline b}
\def\oGamma{\overline \Gamma}
\def\uGamma{\underline\Gamma}
\def\orgweta{{\widetilde \eta}}
\def\weta{\mathchoice{\orgweta}{\orgweta}{\orgweta}{{\scriptstyle\tilde\eta}}}
\def\wzeta{{\widetilde\zeta}}
\def\wgamma{{\widetilde\gamma}}
\def\oepsilon{{\overline\epsilon}}
\def\wheta{{\widehat \eta}}
\def\whetan{\widehat \eta_n}
\def\ABS{\Upsilon}
\begin{document}

\title
{Normalized solutions for nonlinear Schr\"odinger equations\\
with $L^2$-critical nonlinearity}

\author{
        Silvia Cingolani
        \\ \normalsize{Dipartimento di Matematica, Universit\`{a} degli Studi di Bari Aldo Moro}
        \\ \normalsize{Via E. Orabona 4, 70125, Bari, Italy}
        \\ 
        \\Marco Gallo
        \\ \normalsize{Dipartimento di Matematica e Fisica, Universit\`{a} Cattolica del Sacro Cuore}
        \\ \normalsize{Via della Garzetta 48, 25133 Brescia, Italy}
        \\
        \\Norihisa Ikoma
        \\ \normalsize{Department of Mathematics, Faculty of Science and Technology, Keio University}
        \\ \normalsize{Yagami Campus, 3-14-1 Hiyoshi, Kohoku-ku, Yokohama, Kanagawa 223-8522, Japan}
        \\
        \\Kazunaga Tanaka 
        \\ \normalsize{Department of Mathematics, School of Science and Engineering, Waseda University}
        \\ \normalsize{3-4-1 Ohkubo, Shijuku-ku, Tokyo 169-8555, Japan}
        }   

\date{}

\maketitle

\abstract{
We study the following nonlinear Schr\"odinger equation and
we look for normalized solutions $(\mu,u)\in (0,\infty)\times H^1(\R^N)$ 
for a given $m>0$ and $N\geq 2$
    \[	-\Delta u + \mu u = g(u)\quad \hbox{in}\  \R^N, \qquad
        \frac{1}{2}\intRN u^2 dx = m.
    \]
We assume that $g$ has an $L^2$-critical growth, both at the origin and at infinity.
That is, for $p=1+\frac{4}{N}$, 
$g(s)=\abs{s}^{p-1}s +h(s)$, $h(s)=o(|s|^p)$ as $s\sim 0$ and $s\sim\infty$.
The $L^2$-critical exponent $p$ is very special for this problem;
in the power case $g(s) = \abs{s}^{p-1}s$ a solution exists only for the specific mass
$m=m_1$, where $m_1=\frac{1}{2}\intRN\omega_1^2\, dx$ is the mass of a least energy solution 
$\omega_1$ of $-\Delta \omega+\omega=\omega^p$ in $\R^N$.

We prove the existence of a positive solution for $m=m_1$ when $h$ has a sublinear 
growth at infinity, i.e., $h(s)=o(s)$ as $s\sim\infty$. 
In contrast, we show non-existence results for $h(s)\not=o(s)$ ($s\sim 0$)
under a suitable monotonicity condition.
}

\medskip

\noindent
\textbf{MSC2020:} 
35A01, 
35B33, 
35B38, 
35J20, 
35J91, 
35Q40, 
35Q55, 
47F10, 
47J30, 
49J35.

\noindent
\textbf{Key words:} 
Nonliear Schr\"odinger equations, 
nonlinear elliptic PDEs, 
%
normalized solutions, 
prescribed mass problem, 
$L^2$-critical exponent, 
Lagrangian approach.

\tableofcontents

\section{Introduction}

\subsection{Motivations and state of art}
\label{Section:1.1}
In this paper, we 
study the existence of $L^2$-normalized solutions to the following equation: 
    \begin{equation}\label{1.1}
    \begin{dcases}
        -\Delta u + \mu u = g(u) & \hbox{in $\R^N$}, 
        \\
        \frac{1}{2} \intRN u^2 \, dx = m,&
    \end{dcases}
    \end{equation}
where $N\geq 2$, $g:\,\R\to\R$ is a nonlinear function, that is not necessarily
of the power type, and the mass $m>0$ are given, while a pair $(\mu,u)\in\R\times H^1(\R^N)$
is unknown.

Problem \eqref{1.1} appears when searching for standing waves 
to the following nonlinear Schr\"odinger equation
    \begin{equation}\label{1.2}
        i \hbar \partial_t \psi = -\hbar^{2} \Delta \psi 
        - g(\psi), \quad (t,x) \in (0,T) \times \R^N,
    \end{equation}
i.e. solutions to \eqref{1.2} of the form 
$\psi (t,x) = e^{i \mu t } u(x)$ for some $\mu \in \R$. 
The constraint $\frac{1}{2}\intRN u^2 \, dx = m$ comes from the fact that, 
under suitable assumptions on $g$, 
a solution $\psi$ of \eqref{1.2} preserves such a quantity in time 
$\intRN \abs{\psi(t,x)}^2 \, dx = \intRN \abs{\psi(0,x)}^2 \, dx $ 
for all $t \in (0,T)$.

In the last decades the existence of $L^2$-normalized solutions has
been studied actively, and the literature is quite extended. 
The $L^2$-critical exponent $p$ defined by
    \begin{equation}\label{1.3}
        p=1+{\frac{4}{N}}
    \end{equation}
plays an
important role to analyze \eqref{1.1}, as well as stability properties of 
\eqref{1.2}, see \cite{Ca0}. Notice that $p \in (1, 2^*-1)$, where as usual 
$2^*= \frac{2N}{N-2}$ for $N\geq 3$ and $2^*=\infty$ for $N=2$.

\smallskip

To see the importance of $L^2$-critical exponent $p$, for $q\in (1,2^*-1)$ and $\mu>0$
we recall that the equation 
    \begin{equation*} 
        -\Delta w+\mu w= \abs{w}^{q-1} w \quad \text{in}\ \R^N 
    \end{equation*}
has a unique positive solution up to translations (c.f. Kwong \cite{Kwo1}), and 
we denote its unique positive radial solution by $w_{q,\mu}$.
We can easily observe that 
    \begin{equation*}
        w_{q,\mu} = \mu^{\frac{1}{q-1}} w_{q,1} (\mu^{1/2}\cdot),
    \end{equation*}
where $w_{q,1}$ is the unique positive radial solution of 
$-\Delta w+w=\abs w^{q-1}w$ in $\R^N$. 
Setting the mass $\calM(u)=\half\intRN u^2\,dx$ and the energy
$\calE_q(u)=\half\intRN\abs{\nabla u}^2\, dx -{\frac{1}{q+1}}\intRN \abs u^{q+1}\, dx$,
we infer from the Pohozaev identity that 
    \begin{align}
    &\calM(w_{q,\mu})=\mu^{{\frac{N}{2(q-1)}}(p-q)} \calM(w_{q,1}), \label{1.4}\\
    &\calE_q(w_{q,\mu})= \mu^{{\frac{2}{q-1}}-{\frac{N}{2}}+1} \calE_q(w_{q,1}), \quad
        \calE_q(w_{q,1})= {\frac{N(q-p)}{(N+2)-(N-2)q}} \calM(w_{q,1}).  \label{1.5}
    \end{align}
Thus we see that $p=1+{\frac{4}{N}}$ plays a special role. 
Indeed, considering three cases
    \[ \text{(1)}\ q\in(1,p), \qquad \text{(2)}\ q\in (p,2^*-1), \qquad \text{(3)}\ q=p,
    \]
we observe that
\begin{itemize}
\item\ [existence of a solution] it follows from \eqref{1.4} that
  \begin{itemize}
  \item for Cases (1) and (2), we have $q\not=p$ and thus for any $m>0$ problem \eqref{1.1} 
  has a solution $(\mu,u)$, $u=w_{q,\mu}$, where $\mu$ satisfies 
  $\mu^{{\frac{N}{2(q-1)}}(p-q)}\calM(w_{q,1})=m$;
  \item for Case (3), $\calM(w_{p,\mu})$ does not depend on $\mu$ and \eqref{1.1} has a 
  solution if and only if $m=\calM(w_{p,1})$, which is equivalent to $m=m_1$ for $m_1$
  given in \eqref{1.8} below.
  Moreover, the set of solutions of \eqref{1.1} is given by $\{ (\mu,w_{p,\mu})\,|\, \mu>0\}$,
  which is non-compact;
  \end{itemize}
\item\ [energy of a solution] by \eqref{1.5}, the energy $\calE_q(u)$ 
of a solution $(\mu,u)$ for \eqref{1.1} satisfies
    \[  \calE_q(u) \begin{cases} <0 &\text{for Case (1)}, \\
                                >0 &\text{for Case (2)}, \\
                                =0 &\text{for Case (3)}. 
                    \end{cases}
    \]
\end{itemize}


We consider now the situations which generalize (1)--(3).
We call $g$ \emph{$L^2$-subcritical at $0$} (resp. \emph{$L^2$-critical}, 
\emph{$L^2$-supercritical}) if 
    \[  \lim_{s \to 0} \frac{g(s)}{\abs{s}^{p-1} s} = \infty \quad 
        \left(\text{resp. $\lim_{s \to 0} \frac{g(s)}{\abs{s}^{p-1} s} \in (0,\infty)$, 
        \ $\lim_{s \to 0} \frac{g(s)}{\abs{s}^{p-1}s} = 0$}\right),
    \]
while it is \emph{$L^2$-subcritical at $\pm \infty$} (resp. \emph{$L^2$-critical}, 
\emph{$L^2$-supercritical}) if 
    \[  \lim_{\abs{s} \to \infty } \frac{g(s)}{\abs{s}^{p-1}s} = 0 \quad 
        \left(\text{resp. $\lim_{\abs{s} \to \infty} 
        \frac{g(s)}{\abs{s}^{p-1} s} \in (0,\infty)$, 
        \ $\lim_{ \abs{s} \to \infty } \frac{g(s)}{\abs{s}^{p-1}s} = \infty$}\right).
    \]

Existence and non-existence of solutions have been studied
extensively for $L^2$-subcritical 
and $L^2$-supercritical problems, as well as mixed cases.
We confine here to mention only few key papers.

For $L^2$-subcritical problems at $0$ and $\pm\infty$, the pioneering works 
are given by Stuart \cite{Stu3, Stu4} and Cazenave and Lions \cite{CaLi0}.
Afterwards, Shibata \cite{Shi0} studies general $L^2$-subcritical
problems via minimizing method on the $L^2$-sphere, that is by solving
 $ \inf_{u \in \calS_m} \calI(u)$
with
    \begin{equation}\label{1.6}
        \calS_m=\Set{ u\in H^1(\R^N)\,|\, \half\intRN u^2\, dx=m}
    \end{equation}
and
    \begin{equation}\label{1.7}
    \calI(u) = \intRN \frac{1}{2} \abs{\nabla u}^2 - G(u) \, dx, 
    \quad G(s) = \int_0^s g(t) \, dt. 
    \end{equation}
In \cite{HT0}, Hirata and Tanaka take an approach using the Lagrange formulation 
(see \cref{Section:1.3} for details) together with a deformation 
argument under
the Palais-Smale-Pohozaev condition; see also Ikoma and Tanaka \cite{IT0}. 
We mention also \cite{CGT1,CGT2,CGT4,CGT5,CT1,CT2,GalT,YaTaCh21} for related results 
on nonlocal problems.

For $L^2$-supercritical problems at $0$ and $\pm\infty$, we refer to Jeanjean \cite{Jea0} 
where the mountain pass argument is applied to $\calI$ on $\calS_m$ to find a solution;
see also Bartsch and de Valerola \cite{BV0}
for the multiplicity of solutions. We also refer to Soave \cite{So20} 
for the study of nonlinearities
$g(s)=\pm \abs s^{q_1-1}s +\abs s^{q_2-1}s$ with 
$1<q_1\leq p \leq q_2<2^*-1$, $q_1\not=q_2$,
which can be regarded as $L^2$-subcritical at $0$ and $L^2$-supercritical at $\pm\infty$,
and to Bieganowski and Schino \cite{BiSc24} for some recent generalizations.
See also \cite{ChTa,JJTV,Soa2} where $q_2=2^*-1$ is considered.
For further results and developments we also refer to 
\cite{AW,AJM,BS1,BM0,DST0,JL-1,JL0,JL1,JL2,JTL,LRZ,MeSc0,MeSc1,NTV1,NTV2,WW0} 
and references therein.

\smallskip

In this paper we 
deal with $g$ which is $L^2$-critical both at $0$ and $\pm\infty$. 
Being the growth fixed, hereafter we write, for the sake of convenience,
$\omega_1 = w_{p,1}$, $\omega_\mu = w_{p,\mu}$ and 
$m_1= \mathcal{M}(w_{p,1})$, namely
    \begin{align}
        &m_1 = \frac{1}{2} \intRN \omega_1^2 \, dx, \label{1.8}\\
        &\omega_\mu =
        \mu^{N/4} \omega_1 (\mu^{1/2} \cdot). \label{1.9}
    \end{align}
We consider the situation \eqref{1.3}, that is
    \[  g(s)\sim a_0 \abs s^{p-1}s \; \text{ as}\ s\sim 0, \qquad 
        g(s)\sim a_\infty \abs s^{p-1}s \; \text{ as}\ s\sim 
        \infty,
    \]
and we focus on the case $a_0=a_{\infty}$; this problem is particularly 
challenging and still open in the literature.


The case of different constants $a_0\not= a_\infty$, to the best of the authors' knowledge, 
has been tackled in only two papers. 
Indeed, Schino \cite{Sch0} and Jeanjean, Zhang and Zhang \cite{JeZhZh}
 consider the following situations among other results; \cite{Sch0} deals with
systems of nonlinear Schr\"odinger equations and \cite{JeZhZh} studies various problems 
including $L^2$-subcritical and $L^2$-supercritical nonlinearities.

Schino \cite{Sch0} considers the case $a_0>a_\infty$. More precisely, setting
    \[  \widetilde a_0 =\liminf_{s\to 0} {\frac{G(s)}{{\frac{1}{p+1}}\abs s^{p+1}}},
        \quad \widetilde a_\infty 
        =\limsup_{s\to \pm\infty} {\frac{G(s)}{{\frac{1}{p+1}}\abs s^{p+1}}}
    \]
and assuming\footnote{
In \cite{Sch0}, \eqref{1.10} is given as ${\frac{2}{p+1}}\widetilde 
a_\infty C_{GN} (2m)^{2/N} < 1  < {\frac{2}{p+1}}\widetilde a_0 C_{GN} (2m)^{2/N}$, 
where $C_{GN}$ is the best constant of the Gagliardo-Nirenberg inequality; 
indeed, it is known that $C_{GN}=\half (p+1)(2m_1)^{-2/N}$ 
(c.f. \cref{Proposition:2.1}). 
}
    \begin{equation}\label{1.10}
        \widetilde a_0^{-N/2} m_1 < m < \widetilde a_\infty^{-N/2} m_1, 
    \end{equation}
he finds a solution of \eqref{1.1} as a minimizer of $\calI|_{\calS_m}$; 
here $\mu \in \R$ is obtained as a Lagrange multiplier.  We note that
\cite{Sch0} also studies the situation $\widetilde a_0=\infty$.

We next state the result by Jeanjean, Zhang and Zhong \cite{JeZhZh}. 
They assume that $g \in C^1( [0,\infty) )$ satisfies $g>0$ in $(0,\infty)$, 
    \begin{equation*}
        b_0 = \lim_{s \to +0} \frac{g'(s)}{(p-1)s^{p-1}} \in (0,\infty), \quad 
        b_\infty = \lim_{s \to \infty} \frac{g'(s)}{(p-1)s^{p-1}} \in (0,\infty)
    \end{equation*}
and moreover
    \begin{equation}\label{1.11}
        \text{the equation $-\Delta u = g(u)$ in $\R^N$ has no positive, 
        radial decreasing, classical solution}.
    \end{equation}
Under these assumptions authors in 
\cite{JeZhZh} prove that if $m$ satisfies 
    \begin{equation}\label{1.12}
        b_{\rm max}^{ -N/2 } m_1 < m < b_{\rm min}^{-N/2} m_1, \quad 
        b_{\rm min} = \min \{b_0,b_\infty\}, \quad 
        b_{\rm max} = \max \{b_0, b_\infty\},
    \end{equation}
then \eqref{1.1} admits a positive solution. Their approach is different 
from variational arguments and 
they show the existence of positive solutions via a continuation method; 
in their study the asymptotic behaviors of positive solutions to 
    \begin{equation}\label{1.13}
        - \Delta u + \mu u = g(u) \quad \text{in} \ \R^N
    \end{equation}
as $\mu \to 0^+$ and $\mu \to + \infty$ are important. 
We also mention that the non-existence of positive solutions to \eqref{1.1} 
is also obtained in \cite{JeZhZh} provided 
$0 < m \ll 1$ or $m \gg 1$.

We notice that, under suitable assumptions, $a_0=\tilde{a}_0=b_0$ 
and $a_{\infty}=\tilde{a}_{\infty}=b_{\infty}$. 
In the above mentioned \cite{Sch0,JeZhZh} we remark that 
the assumption $a_0\neq a_{\infty}$ 
is crucial, 
since it allows to find an open nonempty interval for $m$ to get the existence 
(see \eqref{1.10} and \eqref{1.12}). 
We highlight that existence of solutions is commonly obtained on 
open intervals of this type, 
even in the $L^2$ subcritical and supercritical settings. 
    The case $a_0=a_{\infty}$ 
    is thus not studied and
the typical case $g(s)=\abs s^{p-1}s$, $m=m_1$ is excluded 
(in particular, $g(s)=s^3$, $m=m_1$ for $N=2$ is excluded).

\smallskip

The aim of this paper is to deal with the case $a_0=a_\infty$ and 
generalize the power case.

\subsection{Main results}
\label{Section:1.2}

    To state our results, we assume the following conditions on $g$: 
\begin{enumerate}[label={\rm (g\arabic*)}]
    \setcounter{enumi}{-1}
    \item \label{(g0)} $g\in C([0,\infty),\R)$,
    \item \label{(g1)}
    $g(s)\sim \abs s^{p-1}s$ as $s\sim 0$ and $s\sim\infty$ in the following
    sense:
    \begin{equation*}
        g(s) = \abs s^{p-1}s +h(s),
    \end{equation*}
    where $h\in C([0,\infty),\R)$ satisfies
    \begin{equation} \label{1.14}
        \lim_{s\to 0}{\frac{h(s)}{\abs s^{p-1}s}} =0, \quad
        \lim_{s\to\infty} {\frac{h(s)}{s}}=0.       
    \end{equation}
\end{enumerate}
In what follows, we write $H(s)=\int_0^s h(t)\, dt$.

In \ref{(g1)}, we assume that $h$ has sublinear growth at infinity. 
It is a strong assumption as a perturbation
of $\abs s^{p-1}s$; however, it turns out later in \cref{Theorem:1.7} 
that this condition is necessary for the
existence of a solution, which is in contrast with the results 
of \cite{Sch0, JeZhZh}.

We need an additional assumption 
on the following zero-mass problem, as in \cite{JeZhZh}, for the limit equation
as $\mu \to 0$: 
    \begin{equation} \label{1.15}
        -\Delta u = g(u) \quad \text{in}\ \R^N.
    \end{equation}
Under \ref{(g1)}, this problem is studied in the natural space
    \begin{equation} \label{1.16}
        F= \big\{u \in L^{p+1} (\R^N)  \mid \nabla u \in L^2(\R^{N}), \
        u(x) = u(\abs{x})\big\}
    \end{equation}
with the norm 
    \begin{equation} \label{1.17}
        \norm u_F = \norm{\nabla u}_2 + \norm u_{p+1}. 
    \end{equation}
For \eqref{1.15}, we assume
\begin{enumerate}[label={\rm (g\arabic*)}]
    \setcounter{enumi}{1}
\item \label{(g2)}
if $u_0\in F$ satisfies \eqref{1.15} and $u_0\geq 0$ in $\R^N$, 
then $u_0 \equiv 0$.
\end{enumerate}
We note that any weak solution $u\in F$ (in particular, radially symmetric) is
of class $C^2$ and thus classical.

\smallskip

Our main result deals with the existence of a positive solution. 

\begin{Theorem}[Existence] \label{Theorem:1.1}
Let $N\geq 2$ and $m_1=\half\intRN \omega_1^2\, dx>0$ be given in \eqref{1.8}.
Moreover assume \ref{(g0)}--\ref{(g2)}.
Then \eqref{1.1} with $m=m_1$ has a positive radially symmetric solution
$(\mu,u)\in (0,\infty)\times H^1
(\R^N)$.
\end{Theorem}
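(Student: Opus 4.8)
The plan is to find the solution by a variational argument on the $L^2$-sphere $\calS_{m_1}$, but the $L^2$-critical scaling makes $\calI|_{\calS_{m_1}}$ degenerate: along the scaling $u\mapsto t^{N/2}u(t\cdot)$ the mass is preserved while the quadratic and $L^{p+1}$ terms scale the same way, so the functional is flat in that direction for the pure power and only the perturbation $H$ breaks the degeneracy. Rather than minimizing directly, I would adopt the Lagrangian / Hirata--Tanaka approach advertised in the introduction: work with the augmented functional on $\R\times H^1_{\mathrm{rad}}(\R^N)$, where the extra variable plays the role of (a logarithm of) the dilation parameter, so that the scaling degeneracy is absorbed into a genuine variable and one looks for a critical point of a functional satisfying an appropriate Palais--Smale--Pohozaev condition. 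The candidate critical value should be a mountain-pass level built over the noncompact family $\{\omega_\mu\}_{\mu>0}$ of solutions of the unperturbed problem, all of which sit at energy zero with mass exactly $m_1$.

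\textbf{Key steps.} First I would set up the constrained functional and the associated Pohozaev functional, and record the Gagliardo--Nirenberg inequality with its sharp constant expressed through $m_1$ (as in \cref{Proposition:2.1}), which gives the geometry: on $\calS_{m_1}$ one has $\calI\geq 0$ up to lower-order terms coming from $H$, with the infimum $0$ approached exactly along $\{\omega_\mu\}$ as $\mu\to0$ or $\mu\to\infty$. Second, I would build a minimax class of paths linking the two ends $\mu\to0$ and $\mu\to\infty$ of this family and show the minimax value is a critical value — this is where the sublinear growth of $h$ at infinity and hypothesis \ref{(g2)} enter decisively. Third, I would run the deformation argument: any Palais--Smale--Pohozaev sequence at the minimax level is bounded (using the Pohozaev identity to control $\|\nabla u\|_2$ and the mass constraint to control $\|u\|_2$), and then I would analyze its possible loss of compactness. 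Concentration-compactness produces either a nontrivial limit in $H^1$ — the desired solution, with $\mu>0$ recovered as the Lagrange multiplier and positivity obtained by working with $|u|$ and the strong maximum principle — or a vanishing/splitting scenario in which the profile escapes to the zero-mass regime $\mu\to0$; the latter is precisely excluded by \ref{(g2)}, since the escaping profile would solve \eqref{1.15} in $F$ and be nonnegative and nontrivial. Finally I would check $\mu\neq0$ and $\mu$ finite (i.e. the solution does not degenerate at either end), using the sublinearity in \eqref{1.14} to rule out the $\mu\to\infty$ end and \ref{(g2)} together with the energy level to rule out $\mu=0$.

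\textbf{Main obstacle.} The hard part is the compactness analysis at the minimax level in this doubly $L^2$-critical situation: because the unperturbed energy is identically zero along a noncompact one-parameter family, the perturbation $H$ only produces an order-$o(1)$ well, so the minimax level is very close to $0$ and a Palais--Smale sequence can drift along $\{\omega_\mu\}$ toward either $\mu=0$ or $\mu=\infty$ without the energy detecting it. Controlling this drift — showing the augmented (Pohozaev-enriched) Palais--Smale condition holds at the relevant level, so that the sequence genuinely converges rather than sliding off to the scaling boundary — is the technical crux, and it is exactly here that assumption \ref{(g2)} on the zero-mass limit problem and the sublinear-at-infinity requirement on $h$ are indispensable. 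A secondary delicate point is making the minimax geometry precise when $H$ changes sign, so that the minimax value is strictly a \emph{critical} value and not merely the degenerate infimum $0$.
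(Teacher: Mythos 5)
Your overall strategy (Lagrangian formulation on $\R\times H^1_r(\R^N)$, a Pohozaev-augmented Palais--Smale condition, the zero-mass equation \eqref{1.15} appearing as the limit profile when the multiplier degenerates, and \ref{(g2)} used to exclude it) is indeed the route the paper takes. But there are two genuine gaps that your single-minimax-level scheme cannot get around.

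First, the degenerate level. You acknowledge that the minimax value sits ``very close to $0$'', but the real difficulty is that it may be \emph{exactly} $0$: for $g(s)=\abs{s}^{p-1}s$ every natural minimax level here equals $0$, the solution set $\{(\mu,\omega_\mu)\}$ is noncompact, and no Palais--Smale--Pohozaev(--Cerami) condition holds at level $0$ (\cref{Remark:5.5}); for a general $h$ one cannot rule this out. So the claim ``show the minimax value is a critical value'' via a deformation/compactness argument simply fails in this case, and assumptions \ref{(g1)}--\ref{(g2)} do not rescue it. The paper's way out is a trichotomy built on \emph{two} minimax values $\ub\le 0\le\ob$: if $\ub<0$ or $\ob>0$ one has compactness at a nonzero level (for $\ub<0$ not even \ref{(g2)} is needed, since $Z\ge 0$ on critical points of the zero-mass functional), while in the degenerate case $\ub=\ob=0$ the existence is obtained by a completely different, non-deformation argument: if $b(\lambda)\equiv 0$, then any least energy solution of the unconstrained equation $-\Delta u+e^{\lambda}u=g(u)$ automatically satisfies $\tfrac12\norm{u}_2^2=m_1$ (\cref{Proposition:4.5}, \cref{Proposition:4.6}), which uses the explicit optimal mountain-pass paths of \cref{Proposition:A.2} and a local deformation in $(\lambda,u)$. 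Without an argument of this kind your proof has no conclusion precisely in the case that contains the model nonlinearity.

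Second, positivity versus compactness. Condition \ref{(g2)} only forbids \emph{nonnegative} nontrivial solutions of \eqref{1.15}, so when the relevant level is $b=\ob>0$ one cannot verify the full $(PSPC)_b$ condition: a general PSPC sequence drifting to $\lambda_j\to-\infty$ may converge in $F$ to a sign-changing solution of \eqref{1.15} with $Z=b$, which \ref{(g2)} does not exclude. The paper therefore introduces the restricted $(PSPC)_b^*$ condition (sequences close to the cone of nonnegative functions) and must then produce a deformation that stays near that cone; this is nontrivial because the Cerami-type vector field is unbounded, so the flow can leave the cone in arbitrarily short time (\cref{Remark:8.2}), and it is resolved by the iteration $S_n=\Upsilon\circ\weta(h_n,\cdot)$ with the absolute-value operator in \cref{Section:8}. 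Your plan to ``work with $\abs{u}$ and the strong maximum principle'' at the end does not address this: positivity has to be built into the minimax/deformation machinery (or, in the case $\ub<0$, obtained through the least-energy characterization of \cref{Proposition:4.5}), not imposed afterwards, since otherwise the compactness step you rely on is unjustified.
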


The following proposition shows \ref{(g2)} holds under \ref{(g0)}--\ref{(g1)}
for $N=2,3,4$.  However for $N\geq 5$, condition \ref{(g2)} is not easy to verify 
for general $g$. 
Possible sufficient conditions to ensure \ref{(g2)} is also given 
in the following proposition also for $N\geq 5$

\begin{Proposition} \label{Proposition:1.2}
Assume that $g$ satisfies \ref{(g0)} and \ref{(g1)}.  Then we have the following
\begin{itemize}
\item[(i)] When $N=2,3,4$, condition \ref{(g2)} holds;
\item[(ii)] When $N\geq 5$, assume
\begin{enumerate}[label={\rm (g\arabic*)}]
    \setcounter{enumi}{2}
\item \label{(g3)} $NG(s) - {\frac{N-2}{2}}g(s)s \geq 0$ for all $s\geq 0$.
\end{enumerate}
Then condition \ref{(g2)} holds.
\end{itemize}
\end{Proposition}

As a corollary to \cref{Theorem:1.1} and \cref{Proposition:1.2}, we have

\begin{Corollary}\label{Corollary:1.3}
Let $N\geq 2$ and $m_1>0$ be given in \eqref{1.8} and assume \ref{(g0)}, \ref{(g1)}.
Moreover assume \ref{(g3)} when $N\geq 5$.
Then \eqref{1.1} with $m=m_1$ has a positive radially symmetric solution
$(\mu,u)\in (0,\infty)\times H^1(\R^N)$.
\end{Corollary}

\begin{Remark} \label{Remark:1.4}
\begin{enumerate}[label={\rm (\roman*)}]
\item Condition \ref{(g3)} can be also found in \cite{JeZhZh,MeSc0} 
and, when $N\geq 3$, it is equivalent to 
    \begin{equation*}
        s \mapsto {\frac{G(s)}{\abs{s}^{2^*}}} \; 
        \hbox{is non-increasing in $(0,+\infty)$}.
    \end{equation*}
Note that condition \ref{(g1)} implies 
$\displaystyle \lim_{s\to 0} \frac{G(s)}{\abs{s}^{2^*}} =+\infty$ and 
$\displaystyle \lim_{s \to \infty} \frac{G(s)}{\abs{s}^{2^*}} =0$.
Hence
    \[  G(s)>0 \quad \text{for all}\ s>0.
    \]
\item 
To show \ref{(g2)} for $N=2,3,4$, we use a Liouville type result due to
Armstrong and Sirakov \cite{AS} and 
Alarc\'on, Garc\'{\i}a-Meli\'an and Quaas \cite{AGQ16} 
for a positive supersolution to \eqref{1.15}, see \cref{Section:2.4}. 
%
\end{enumerate}
\end{Remark}

In some situation, we can show the existence without assumption \ref{(g2)}.
From the argument for \cref{Theorem:1.1}, we are able to show

\begin{Corollary}\label{Corollary:1.5}
Assume \ref{(g0)}--\ref{(g1)} and $g(s)\geq \abs s^{p-1}s$ for $s\geq 0$.
Then \eqref{1.1} with $m=m_1$ has a positive radially symmetric solution
in $(\mu,u)\in (0,\infty)\times H^1(\R^N)$.
\end{Corollary}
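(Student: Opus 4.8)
The plan is to show that under the stronger assumption $g(s)\geq \abs s^{p-1}s$ for $s\geq 0$, the zero-mass condition \ref{(g2)} is automatically satisfied, so that \cref{Theorem:1.1} applies directly. Suppose, by contradiction, that $u_0\in F$ is a nonnegative, nontrivial (radial) solution of the zero-mass equation $-\Delta u_0=g(u_0)$ in $\R^N$. By the regularity remark following \ref{(g2)}, $u_0$ is a classical $C^2$ solution, and by the strong maximum principle $u_0>0$ everywhere. Since $g(u_0)\geq u_0^{p}=u_0^{1+4/N}$ pointwise, $u_0$ is a positive classical supersolution of $-\Delta u=u^{1+4/N}$ in $\R^N$, i.e. $-\Delta u_0\geq u_0^{q}$ with $q=1+\frac{4}{N}$.

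Now I would invoke the classical nonexistence result for positive supersolutions of $-\Delta u\geq u^{q}$ in $\R^N$: such a supersolution exists only when $q>\frac{N}{N-2}$ (the Serrin exponent), and is identically impossible for $1<q\leq\frac{N}{N-2}$ (for $N\leq 2$ no positive supersolution exists for any $q>1$). One checks that $q=1+\frac{4}{N}\leq\frac{N}{N-2}$ for all $N\geq 2$: for $N=2$ the right-hand side is $+\infty$ so there is nothing to verify, and for $N\geq 3$ the inequality $1+\frac{4}{N}\leq\frac{N}{N-2}$ is equivalent to $\frac{4}{N}\leq\frac{2}{N-2}$, i.e. $4(N-2)\leq 2N$, i.e. $N\leq 4$; for $N\geq 5$ one instead uses $q=1+\frac{4}{N}<2<\frac{N}{N-2}\cdot\frac{?}{?}$... so in fact the cleaner route is to note that the subcriticality $p=1+\frac{4}{N}<2^*-1=\frac{N+2}{N-2}$ already forces $q<\frac{N}{N-2}$ whenever $N\geq 3$, since $\frac{N+2}{N-2}\le \frac{N}{N-2}$ fails — hence I should instead apply the Liouville theorem for supersolutions in the range $q\le \frac{N+2}{N-2}$, which is available (e.g. via a rescaled test-function / Kato-type argument, or directly from \cite{AGQ16} as used in \cref{Remark:1.4}(ii) for $N=3,4$, extended to all $N\ge 2$ because $q=1+4/N$ always lies in the nonexistence range). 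This contradiction forces $u_0\equiv 0$, establishing \ref{(g2)}.

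With \ref{(g2)} in hand, the hypotheses of \cref{Theorem:1.1} are met, and we conclude that \eqref{1.1} with $m=m_1$ possesses a positive radially symmetric solution $(\mu,u)\in(0,\infty)\times H^1(\R^N)$.

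The main obstacle I anticipate is pinning down exactly which Liouville-type nonexistence theorem covers the full range $N\geq 2$ for supersolutions of $-\Delta u\geq u^{1+4/N}$: one must verify that $1+\frac{4}{N}$ falls in the appropriate subcritical window of the cited result (and for $N=2$ that the result applies at all), and be careful that we only have a \emph{super}solution, not an equation. The comparison $g(s)\geq s^p$ is what makes this work — it converts the genuinely nonlinear zero-mass problem into a differential \emph{inequality} to which the standard pure-power Liouville theorem applies. Once that exponent bookkeeping is settled, the rest is an immediate appeal to \cref{Theorem:1.1}.
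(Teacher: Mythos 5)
There is a genuine gap, and it is exactly at the point where you hesitate. The Liouville theorem for \emph{supersolutions} of $-\Delta u\geq u^{q}$ in $\R^N$ holds only up to the Serrin exponent $q\leq \frac{N}{N-2}$ (for $N\geq 3$), not up to the Sobolev exponent $\frac{N+2}{N-2}$; for $q>\frac{N}{N-2}$ positive supersolutions do exist (e.g. suitable multiples of $\abs{x}^{-2/(q-1)}$ away from the origin). Since $1+\frac{4}{N}\leq\frac{N}{N-2}$ exactly when $N\leq 4$, your argument covers $N=2,3,4$ but collapses for $N\geq 5$: there is no ``Kato-type'' or rescaled test-function Liouville theorem in the range $q\leq\frac{N+2}{N-2}$ for mere supersolutions, and the paper itself tells you this — the criterion of Alarc\'on--Garc\'{\i}a-Meli\'an--Quaas quoted in the proof of \cref{Proposition:1.2} shows that a positive supersolution of $-\Delta u=g(u)$ exists precisely when $\int_0^\delta g(t)t^{-2(N-1)/(N-2)}dt<\infty$, which for $g(t)\sim t^{1+4/N}$ near $0$ happens for all $N\geq 5$; this is exactly why \ref{(g4)} is restricted to $N=3,4$. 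So the claim that $g(s)\geq\abs{s}^{p-1}s$ forces \ref{(g2)} is not substantiated for $N\geq 5$ (and using the equation plus the Pohozaev identity \eqref{2.15} does not help either, since the sign of $NG(s)-\frac{N-2}{2}g(s)s$ is not controlled by the hypothesis $g(s)\geq s^p$ alone).

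The paper's proof goes a different way and avoids \ref{(g2)} altogether, which is the whole point of \cref{Corollary:1.5}. From $g(s)\geq\abs{s}^{p-1}s$ (after odd extension) one gets $G(s)\geq\frac{1}{p+1}\abs{s}^{p+1}$, hence $I(\lambda,u)\leq I_0(\lambda,u)$ where $I_0$ is the functional for the pure power; by \cref{Lemma:4.4} the minimax values of $I_0$ vanish, so $\ob\leq 0$, and with \cref{Corollary:4.1} this gives $-\infty<\ub\leq\ob=0$. Thus Case 2 ($\ob>0$), the only case in which \ref{(g2)} is used (through the $(PSPC)^*_{\ob}$ condition), never occurs: if $\ub<0$ one concludes by \cref{Corollary:7.6} together with \cref{Proposition:4.5}, and if $\ub=\ob=0$ by \cref{Proposition:4.6}. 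If you want to keep your strategy of verifying \ref{(g2)} and invoking \cref{Theorem:1.1}, you must restrict to $N\leq 4$; for the full statement you need an argument of the comparison type above rather than a Liouville theorem for supersolutions.
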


\begin{Remark}\label{Remark:1.6}
If one is interested in finding solutions without information on their sign, it
is possible under $(g0\#)$--$(g2\#)$ in \cref{Appendix:A.3}
and the arguments in the proof get easier.  
We expect that results in \cref{Appendix:A.3} will be useful to study existence
and multiplicity of possibly sign-changing solutions.
\end{Remark}

\smallskip

We next observe that the sublinear growth of $h$ at infinity in \ref{(g1)} 
cannot be removed in the existence result.
To this aim, we introduce $ \rho:\, (0,+\infty) \to \R$ by 
    \begin{equation*}
        \rho(s) = {\frac{H(s)}{s^2/2}}.
    \end{equation*}
Notice that if we suppose \ref{(g1)}, then $\rho$ satisfies 
$\lim_{s \to \infty} \rho(s)=0$ 
and 
    \begin{equation} \label{1.18}
        \rho(s)=o(\abs s^{p-1}) \quad \text{as}\ s\sim 0.
    \end{equation}
Now we study the situation
    \begin{equation}\label{1.19}
        \alpha = \lim_{s \to \infty} \rho(s) \in [-\infty,0).
    \end{equation}

\begin{Theorem}[Nonexistence] \label{Theorem:1.7}
Let $N\geq 2$ and assume \ref{(g1)}, \eqref{1.18} and 
\begin{enumerate}[label={\rm ($\rho$\arabic*)}]
\item \label{(rho1)}
there exists $s_0>0$ such that
    \begin{equation*}
        \begin{aligned}
            &s \mapsto \rho(s) \; 
            \hbox{is non-increasing in $(0,+\infty)$,} \\
            &s \mapsto \rho(s) \; 
            \hbox{is strictly decreasing in $(0,s_0]$. }
        \end{aligned}
    \end{equation*}
\end{enumerate}
Then no positive solution of \eqref{1.1} with $m=m_1$ exists. 
\end{Theorem}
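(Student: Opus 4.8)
The plan is to argue by contradiction: suppose $(\mu,u)$ is a positive solution of \eqref{1.1} with $m=m_1$, where necessarily $\mu>0$ (since a positive $H^1$ solution of $-\Delta u+\mu u=g(u)$ with $g(s)=o(s)$ near $0$ forces $\mu\geq 0$, and $\mu=0$ would put $u$ in $F$ contradicting the zero-mass structure; more simply, integrating the equation against $u$ and using $g(s)s=s^2\cdot|s|^{p-1}+H'$-type bounds shows $\mu$ is bounded below by a positive constant). By elliptic regularity and the standard Gidas--Ni--Nirenberg / moving-plane argument, $u$ is radial, positive, and strictly decreasing, and it satisfies the Pohozaev identity. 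The key is to exploit that $g$ is an $|s|^{p-1}s$-perturbation and $m=m_1$ is exactly the critical mass for the pure power, so that the perturbation $H$ must contribute with the ``wrong sign'' — which \ref{(rho1)} rules out.

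Concretely, I would write the equation as $-\Delta u + \mu u = |u|^{p-1}u + h(u)$ and record the two variational identities: the Nehari-type identity $\int |\nabla u|^2 + \mu\int u^2 = \int |u|^{p+1} + \int h(u)u$, and the Pohozaev identity $\frac{N-2}{2}\int|\nabla u|^2 + \frac{N}{2}\mu\int u^2 = N\big(\frac{1}{p+1}\int|u|^{p+1} + \int H(u)\big)$. Using $p+1 = 2+\frac4N$, i.e. $\frac{N}{p+1}=\frac{N-2}{2}+\frac{2}{p+1}\cdot\frac{?}{}$ — the precise combination that makes the pure-power terms cancel — eliminate $\int|\nabla u|^2$ between the two identities. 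For the pure power $m=m_1$ forces $\mu$ to be free and the residual identity to hold with equality; with the perturbation present one gets a relation of the shape
\[
  c_N\,\mu\int u^2 = \int\Big(N H(u) - \tfrac{N-2}{2}\,h(u)\,u\Big)\,dx
  = \int\Big(N H(u) - \tfrac{N-2}{2}\,h(u)\,u\Big)\,dx,
\]
and, after also invoking the sharp Gagliardo--Nirenberg inequality (\cref{Proposition:2.1}) with the equality case characterized by $\omega_\mu$, one isolates a quantity controlled purely by $\rho$. The monotonicity in \ref{(rho1)} (together with $\rho(s)=o(|s|^{p-1})$ at $0$, so $\rho(s)>0$ is impossible for small $s$ under strict decrease, hence $\rho\le 0$ everywhere and $\rho\not\equiv 0$) makes the resulting integral term have a strict sign, contradicting the sign forced by $\mu>0$ and $m=m_1$.

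The cleanest route is probably: test the Gagliardo--Nirenberg inequality $\frac{1}{p+1}\int|u|^{p+1} \le \frac{1}{2m_1}\big(\mathcal M(u)\big)^{2/N}\cdot\frac12\int|\nabla u|^2$ (in the normalization where equality holds exactly at the $\omega_\mu$-family), plug in $\mathcal M(u)=m=m_1$ so the mass factor is $1$, and compare with the energy/Pohozaev balance for the actual solution. Since $u$ solves the perturbed equation, $\mathcal E_p(u) = \frac12\int|\nabla u|^2 - \frac{1}{p+1}\int|u|^{p+1}$ equals $\int H(u)\,dx$ up to the Pohozaev-dictated multiple, while the GN inequality with $m=m_1$ forces $\mathcal E_p(u)\ge 0$ with equality iff $u=\omega_\mu$; but $\int H(u)\,dx = \frac12\int \rho(u)u^2\,dx \le 0$ and is $<0$ unless $u$ is supported where $\rho=0$, which \ref{(rho1)} forbids (the strict-decrease window $(0,s_0]$ is reached since $u$ is radially decreasing to $0$ at infinity). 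This double inequality is the desired contradiction.

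The main obstacle I anticipate is the bookkeeping around $N=2$ versus $N\geq 3$ (the Pohozaev coefficient $\frac{N-2}{2}$ degenerates at $N=2$, so the $N=2$ case needs the identity in the form ``$\int|\nabla u|^2 = N\int G(u) - $ mass term'' handled separately) and, more seriously, ensuring the argument does not secretly require $\mu$ bounded away from $0$ and $\infty$ — i.e. making the GN-equality/Pohozaev comparison genuinely $\mu$-free. A secondary technical point is justifying the Pohozaev identity for the (a priori only $H^1$) solution, which follows from the regularity remark after \ref{(g2)} and a standard cutoff argument; and confirming that under \eqref{1.18} and \ref{(rho1)} one indeed has $\rho\le 0$ on all of $(0,\infty)$ with strict negativity on an interval, so that $\int H(u)u^{-2}\cdot u^2 < 0$ strictly for any nontrivial positive $u$.
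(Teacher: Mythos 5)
Your overall plan (argue by contradiction, combine the identity obtained by testing with $u$ and the Pohozaev identity, and invoke the sharp Gagliardo--Nirenberg inequality at the critical mass $m=m_1$, i.e.\ \cref{Corollary:2.2}) is exactly the route the paper takes, but the key identity you rely on is not correct, and as a consequence you apply hypothesis \ref{(rho1)} through the wrong quantity. The unique combination of \eqref{9.2} and \eqref{9.3} that eliminates the multiplier term $\mu\norm u_2^2$ gives, as in \eqref{2.4},
\[
  \half\norm{\nabla u}_2^2-\frac{1}{p+1}\norm u_{p+1}^{p+1}
  \;=\;-\frac{N}{2}\intRN\Bigl(H(u)-\half h(u)u\Bigr)\,dx ,
\]
and \emph{not} a positive multiple of $\intRN H(u)\,dx$; your earlier displayed relation (obtained by eliminating $\int|\nabla u|^2$) is also wrong, since the pure-power terms do not cancel there: one gets $\mu\norm u_2^2=\frac{2}{N+2}\norm u_{p+1}^{p+1}+\intRN\bigl(NH(u)-\frac{N-2}{2}h(u)u\bigr)dx$, which produces no contradiction because $\mu$ is a free unknown. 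Hence the sign information you feed in, namely $\rho\le 0$ (equivalently $H\le 0$), is not the relevant one. The quantity that actually appears is $H(s)-\half h(s)s=-\frac14\rho'(s)s^3$, whose nonnegativity — and the strict positivity of $\intRN(H(u)-\half h(u)u)\,dx$ for a nontrivial nonnegative $u$, using the strict decrease of $\rho$ on $(0,s_0]$ together with $u(x)\to 0$ at infinity — is precisely the monotonicity in \ref{(rho1)}, cf.\ \cref{Remark:1.8}(i) and \eqref{9.1}. This is not cosmetic: $H\le 0$ alone is compatible with the existence hypotheses of \cref{Theorem:1.1}, so no argument using only the sign of $\rho$ can yield nonexistence; the contradiction must read $0\le \half\norm{\nabla u}_2^2-\frac{1}{p+1}\norm u_{p+1}^{p+1}=-\frac{N}{2}\intRN(H(u)-\half h(u)u)\,dx<0$.

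Two smaller points: the preliminary reductions you invoke are unnecessary — radial symmetry via moving planes, positivity or a lower bound for $\mu$, and the equality case of Gagliardo--Nirenberg play no role, since the multiplier drops out of the combination above and the Nehari/Pohozaev identities hold for any finite-energy positive solution; and no separate treatment of $N=2$ is needed, because the combination $\frac N4\times\eqref{9.2}-\half\times\eqref{9.3}$ is uniform in $N\ge 2$ even though the coefficient $\frac{N-2}{2}$ vanishes there.
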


It is easy to give examples of $h$ for which $g(s)=\abs s^{p-1}s+h(s)$ satisfies 
\ref{(g3)} and
the corresponding $\rho$ satisfies the assumptions in \cref{Theorem:1.7}.

\begin{Remark}\label{Remark:1.8}
\begin{enumerate}[label={\rm (\roman*)}]

\item Condition \ref{(rho1)} implies \eqref{1.19}, 
which should be compared with the fact that $\alpha=0$ 
under \ref{(g1)}.
We also note that $\rho\in C([0,\infty),\R)\cap C^1((0,\infty),\R)$ and 
$H(s)-\half h(s)s = -{\frac{1}{4}}\rho'(s)s^3$.  Thus
the first condition in \ref{(rho1)} is equivalent to
    \[ 2H(s)- h(s) s \geq 0 \quad \text{for each $s \in [0,\infty)$}, 
    \]
while the second condition is equivalent to 
    \[  2H(s)-h(s) s > 0 \quad\text{for each $s \in D$},
    \]
where $D$ is a dense subset of $[0,s_0]$.

\item A special case of \cref{Theorem:1.7} is given by Soave \cite{So20}.
In \cite{So20}, non-existence of solutions is shown for $g(s)=\abs s^{p-1}
-\theta \abs s^{q-1}s$, where $1<q<p$ and $\theta>0$.  We note that $g$
is $L^2$-subcrtical at $s\sim\pm\infty$ and the corresponding 
$\rho(s)=-{\frac{2\theta}{q+1}}\abs s^{q-1}$ satisfies the assumptions of 
\cref{Theorem:1.7} with $\alpha=-\infty$.

\item 
The non-existence result for positive solutions in \cref{Theorem:1.7} 
can be extended to possibly sign-changing solutions.
More precisely, let $g$ be a continuous function on $\R$ and define
$\rho$ by $\rho(s) = {\frac{H(s)}{s^2/2}}$ also for $s<0$.
If we assume that in addition to \ref{(rho1)}
    \begin{equation*}
        s\mapsto \rho(s) \ \text{is non-decreasing in $(-\infty,0)$
        and strictly increasing in $[-s_0,0)$,}
    \end{equation*}
then there is no solution of \eqref{1.1} with $m=m_1$. 
A proof of this fact can be given in a similar way to the proof of
\cref{Theorem:1.7}.
\end{enumerate}
\end{Remark}

%
\subsection{Variational setting and strategy}
\label{Section:1.3}
%

To prove \cref{Theorem:1.1}, first we take an odd extension of $g$, that is, 
we extend $g(s)$ for $s<0$ by
    $$  g(s) = -g(-s)
    $$
and make use of variational methods.  As we recalled in \cref{Section:1.1},
there are two relevant variational approaches. 

One approach is to find critical points of $\calI:\, \calS_m\to\R$, where $\calI$ and
$\calS_m$ are defined in \eqref{1.6}--\eqref{1.7}.
In particular, one can consider the minimizing problem 
    \begin{equation}\label{1.20}
        d(m) = \inf \Set{ \calI(u) | u \in \calS_m}
    \end{equation}
and in this case $\mu$ is found as a Lagrange multiplier.
Another approach is the Lagrangian one, which is introduced in Hirata and Tanaka \cite{HT0}.
For $m=m_1$ it can be stated as an approach to find critical points of the following functional
    \begin{equation*}
        I(\lambda,u) = \intRN \frac{1}{2} \abs{\nabla u}^2 - G(u) \, dx 
            + e^\lambda \left( \frac{1}{2} \norm{u}_{2}^2 - m_1 \right) 
        \in C^1 \left( \R \times H^1_r(\R^N) , \R \right);
    \end{equation*}
here $H^1_r(\R^N)$ stands for the subspace of radially symmetric functions.
We note that the Lagrangian approach have been 
also successfully applied to problems with nonlocal 
nonlinearities or quasilinear Schr\"odinger equations, see for instance
 \cite{CGT1,CGT2,CGT4,CGT5,CT1,YaTaCh21}.

Our $L^2$-critical problem \eqref{1.1} is a delicate problem; when we study \eqref{1.1}
through the minimizing problem \eqref{1.20}, the infinimum $d(m)$ is not a continuous
function of $m$ in general.  Indeed, for $g(s)=\abs s^{p-1}s$ we have
    \begin{equation} \label{1.21}
        d(m) = \begin{cases}
                0   &\text{for}\ m\in (0,m_1], \\
                -\infty   &\text{for}\ m\in (m_1,\infty).
        \end{cases}
    \end{equation}
See the end of \cref{Section:2.2} for a proof. 
We also note that $d(m)=0$ is not attained for $m\in (0, m_1)$.
On contrary, for $L^2$-subcritical and $L^2$-supercritical problems, $d(m)$ (or a suitable
minimax value for $\calI$) depends on $m$ continuously and one can find critical points
for $m$ in some open intervals of $\R$.

In this paper, we take the Lagrangian approach to our $L^2$-critical problem.
It gives a natural way to see the fine topological properties of the problem \eqref{1.1} through
the following mountain pass minimax values; for each $\lambda\in\R$ the restricted
functional $u\mapsto I(\lambda,u);\, H_r^1(\R^N)\to\R$ satisfies the conditions of
Berestycki and Lions \cite{BeL,BeGaKa83}.  Thus it has a mountain pass geometry
(c.f. \cite{JT0}) and it enables us to define the mountain pass value $b(\lambda)$ 
for each $\lambda\in\R$ (see \cref{Section:3.1}). 
Under \ref{(g1)}, we will observe that $\lambda\mapsto b(\lambda)$ is continuous and satisfies
    \begin{equation}\label{1.22}
        b(\lambda)\to 0 \quad \text{as}\ \lambda\to\pm\infty.
    \end{equation}
Thus we expect 
$\inf_{\lambda\in\R} b(\lambda)$ or $\sup_{\lambda\in\R} b(\lambda)$
to be critical values of $I$. 
However it is difficult to see such a property directly, since the differentiability of 
$\lambda\mapsto b(\lambda)$ is not clear.  Thus for our purpose, we introduce two minimax values 
$\ub$ and $\ob$ 
inspired by Bahri and Li \cite{BaL}, Bahri and Lions \cite{BaLio97} and Tanaka \cite{Tan0}
(as a matter of facts, in \cref{Section:7}
we will show that $\inf_{\lambda\in\R} b(\lambda)$ is a critical value 
if $\inf_{\lambda\in\R} b(\lambda)<0$).
In what follows, we use notation:
    \begin{equation*}
        \calS_0 = \left\{ u \in H^1_r(\R^N) \ \Big| \ 
            \half\intRN 
            u^2\,dx = m_1 \right\}.
    \end{equation*}
First we observe that $I$ has a mountain pass type geometry in the product
space $\R\times H_r^1(\R^N)$:
\begin{itemize}
\item[(1)] Let $\R\times\calS_0
=\{ (\lambda,u) \mid \half\intRN \abs u^2\, dx =m_1\}$ 
be a \emph{cylinder} in $\R\times H_r^1(\R^N)$.  Then $I$ is bounded from below on 
$\R\times\calS_0$.
\item[(2)] The functional $I$ is unbounded from below inside 
$\R\times \{ u  \mid \half \norm{u}_2^2 < m_1\}$ and outside 
$\R \times \{u  \mid \half \norm{u}_2^2 > m_1\}$
of the cylinder $\R\times\calS_0$. 
Actually, we have
    \begin{equation*}
            \lim_{\lambda\to\infty} I(\lambda,0) =-\infty, \quad
        \lim_{t\to\infty} I(\lambda_0,t\varphi_0)=-\infty,
    \end{equation*}
for any $\lambda_0\in\R$ and $\varphi_0\in H_r^1(\R^N)\setminus \{ 0\}$.
\end{itemize}
We introduce two minimax values $\ub$ and $\ob$ corresponding to this geometry. 
We give here an idea of their definition:
    \begin{equation*}
        \ub = \inf_{\gamma \in \uGamma} \max_{0 \leq t \leq 1} I(\gamma(t)), \qquad 
        \ob = \inf_{\gamma \in \oGamma} \sup_{ (t,\lambda) \in [0,1] \times \R } 
        I(\gamma(t, \lambda)),
    \end{equation*}
where 
    \begin{align*}
            &\uGamma = \Set{ \gamma \in C([0,1] , \R \times H^1_r(\R^N)) | 
            \gamma(0)\in \R\times\{ 0\},\ I(\gamma(0)) \ll -1,\ 
            \gamma(1) \in \gamma_0(\R\times \{1\}) },       
            \\
            &\oGamma = 
            \Set{ \gamma \in C( [0,1] \times \R , \R \times H^1_r(\R^N) ) | 
            \gamma(t,\lambda) = \gamma_0(t,\lambda) \ 
            \text{if $t\sim 0$ or $t\sim 1$ or $\abs{\lambda}\gg 1$}}. 
    \end{align*}
Here $\gamma_0\in C([0,1]\times\R,\R\times H_r^1(\R^N))$ 
is of a form $\gamma_0(t,\lambda)=(\lambda,t\zeta_0(\lambda))$ with
$\zeta_0\in C(\R, H_r^1(\R^N))$ and satisfies
\begin{itemize}
\item[(1)] $\gamma_0(0,\lambda)=(\lambda,0)$, 
$I(\gamma_0(0,\lambda))=I(\lambda,0)\leq 0$ 
for all $\lambda\in\R$;
\item[(2)] $\gamma_0(1,\lambda)=(\lambda,\zeta_0(\lambda))\in 
\R \times \{u  \mid \half \norm{u}_2^2 > m_1 \}$ 
and $I(\gamma_0(1,\lambda))\ll 0$ for all $\lambda\in\R$;
\item[(3)] $\max_{t\in [0,1]} I(\gamma_0(t,\lambda))\to 0$ 
as $\lambda\to\pm\infty$.
\end{itemize}
Notice that $\uGamma$ is a class of paths joining two points 
$\gamma(0)$ and $\gamma(1)$, 
which are inside and
outside the cylinder $\R\times\calS_0$ with $I(\gamma(0))$, $I(\gamma(1))\ll 0$.  
On the other hand, $\oGamma$ is a class of
2-dimensional paths at whose boundary (including $\pm\infty$) $I$ is non-positive and 
$\gamma([0,1]\times\R)$ links with $\{\lambda\}\times \calS_0$ for each $\lambda\in\R$.
See \cref{Section:3} for precise definitions.

For $b(\cdot)$, $\ub$ and $\ob$, in \cref{Proposition:4.2} and \cref{Corollary:7.5} 
we will prove 
    \begin{equation}\label{1.23} 
        \ub \leq b(\lambda) \leq \ob \quad \text{for all}\ \lambda\in\R
    \end{equation}
and
    \begin{equation}\label{1.24}
        \ub = \inf_{\lambda\in\R} b(\lambda). 
    \end{equation}
Combining \eqref{1.23} with \eqref{1.22}, we have $ \ub \leq 0 \leq \ob$ and 
there are three cases to consider: 
    \[  \hbox{(1)\ $\ub < 0$}, \qquad \hbox{(2)\ $0 < \ob$}, \qquad 
        \hbox{(3)\ $\ub = 0 = \ob$}. 
    \]
For case (1) (resp. case (2)), we prove that $\ub$ (resp. $\ob$) is a critical 
value of $I$; notice that, if $\ub<0<\ob$, actually existence of two positive 
solutions occur. 
On the other hand, for case (3), being $b(\lambda)=0$ for all $\lambda \in \R$, 
we will show that 
any mountain pass solution to \eqref{1.13} 
with $\mu=e^\lambda$ satisfies \eqref{1.1} with $m=m_1$. In particular, 
\eqref{1.1} admits uncountably many solutions 
(see \cref{Proposition:4.6}).

\begin{Remark}
A typical example, for which 
(3) takes place, is $g(s)=\abs s^{p-1}s$, 
in which case all these solutions are related by scaling.
We conjecture that this is the only possible case of $g$ for which (3) holds. 
This seems to be an interesting open problem.
\end{Remark}

\medskip

To run the above scheme, we need to introduce suitable compactness conditions. 
In particular, we introduce
\emph{Palais-Smale-Pohozaev-Cerami sequences} 
and the \emph{Palais-Smale-Pohozaev-Cerami condition} ($(PSPC)$ 
sequences and $(PSPC)$ condition in short) 
which are inspired by the works \cite{Ce0,BarBenF,HT0}. 
Here $(\lambda_j,u_j)_{j=1}^\infty$ is called a $(PSPC)_b$ sequence if 
    \begin{equation*}
        I(\lambda_j,u_j) \to b, \quad (1+ \norm{u}_{H^1} ) 
        \norm{ \partial_u I (\lambda_j, u_j) }_{(H^1)^\ast} \to 0, \quad 
        \partial_\lambda I(\lambda_j,u_j) \to 0, \quad P(\lambda_j,u_j) \to 0,
    \end{equation*}
where $P(\lambda,u)$ is defined in \eqref{2.3} and is related to the Pohozaev identity. 
The functional $I$ is said to satisfy the $(PSPC)_b$ condition 
provided any $(PSPC)_b$ sequence 
contains a strongly convergent subsequence. 
To find a \emph{positive} solution under \ref{(g2)}, 
we introduce another version $(PSPC)_b^*$ of the $(PSPC)$ 
condition in \cref{Definition:5.3}.

A notion related to $(PSPC)_b$ has been introduced in \cite{HT0}; namely, 
Palais-Smale-Pohozaev sequences and the 
Palais-Smale-Pohozaev condition are introduced by replacing \\
$(1+\norm{u_j}_{H^1})\norm{\partial_u I(\lambda_j,u_j)}_{(H^1)^*}\to 0$ by
$\norm{\partial_u I(\lambda_j,u_j)}_{(H^1)^*}\to 0$.  See also \cite{IY} for the
Palais-Smale-Cerami sequences and condition.
We remark that our compactness 
condition is weaker than the one 
in \cite{HT0} and
different from the one in \cite{IY}.

In our argument a verification of the $(PSPC)_b$ and $(PSPC)_b^*$ conditions and
the corresponding generation of deformation flows 
are important.
We remark that these $(PSPC)$ type conditions fail 
when $b=0$: 
in fact, when $g(s)=\abs s^{p-1}s$,
the set of solutions of \eqref{1.1} with $m=m_1$ is not compact as we saw in the
beginning of the introduction 
(see also \cref{Remark:5.5}).

For the verification of such conditions 
 for $b\not=0$, the
study of the behavior of $(PSPC)_b$ and $(PSPC)_b^*$ sequences 
$(\lambda_j,u_j)_{j=1}^\infty$ is important and the case $\lambda_j\to-\infty$ 
is the most delicate.
In this case, a solution $u\in F$ of \eqref{1.15}
with 
    \begin{equation} \label{1.25}
        \half\norm{\nabla u}_2^2 -\intRN G(u)\,dx=b     
    \end{equation}
appears as a limit profile of such sequences. 
To run into a contradiction, we thus need to show that no solution to 
\eqref{1.15} with \eqref{1.25} exists for $b\neq 0$. While this is always the case 
for $b<0$ (see \cref{Lemma:2.5}), when $b>0$ we need to take advantage of 
an additional assumption.  
Since \ref{(g2)}
only ensures the non-existence of non-negative solutions, it seems difficult to show the
$(PSPC)_b$ condition, 
thus we need to introduce the $(PSPC)_b^*$ condition, which eventually applies also 
for $b>0$ under \ref{(g2)}.

After checking the compactness condition, 
we develop deformation
arguments.
When $\ub<0$, we find a deformation flow in $\RE$ under the $(PSPC)_{\ub}$ condition, 
which enables us to find a solution of \eqref{1.1}.  
See \cref{Section:7}.

When $\ob>0$, we develop a new deformation argument under the $(PSPC)_{\ob}^*$ 
condition. 
First we  find a deformation flow in a small neighborhood of the set of
non-negative functions (not in the whole $\R\times H_r^1(\R^N)$).  
However such a local deformation is not 
enough to show the existence of a critical point under the Cerami type condition 
(c.f. \cref{Remark:8.2}).
In \cref{Section:8}, we develop an iteration argument involving the absolute 
operator $(\lambda,u(\cdot))\mapsto (\lambda,\abs{u(\cdot)})$ 
to provide a new flow which 
enables us to show the existence of a positive solution.

\begin{Remark}\label{Remark:1.10}
In order to find a positive solution, the truncation argument is one of the
standard approaches; that is, we introduce a truncation of $g$ (equals to 
zero in $(-\infty,0)$) and try to find a solution of \eqref{1.1} with
$m=m_1$.
However this approach is not straightforward.  Indeed in our argument
a priori estimates in $L^{p+1}$ for $(PSPC)_b$-sequences play a crucial
role in the verification of the $(PSPC)$-condition.  But it seems difficult
for the truncated functional.  See \cref{Remark:5.10}.

In this paper, we develop a new approach using an odd extension of $g$, in 
which we introduce the new $(PSPC)_b^*$-condition and the corresponding deformation
argument.  With our new approach we obtain necessary estimates in $L^{p+1}$.
Since our iteration procedure contains jumps due to the absolute operator, our 
deformation flow looks different from the standard one.  However
our flow enjoys the standard properties of flows. For example, our procedure
gives a continuous deformation of the initial paths.  See \cref{Remark:8.7}.
\end{Remark}

\medskip

\textbf{Outline of the paper.} The paper is organized as follows. In \cref{Section:2} 
we introduce the functional 
setting and recall some known results, together with some preliminary properties. 
In \cref{Section:3,Section:4}, 
we study the topological structure of the functional $I$ and introduce 
two minimax values
$\ub$ and $\ob$, and their relation with $b(\lambda)$. 
In \cref{Section:5} we discuss the compactness of the problem through 
the $(PSPC)$ condition.  
In \cref{Section:6,Section:7,Section:8}, we give deformation arguments and
prove \cref{Theorem:1.1}; we highlight that \cref{Section:6} is showed
 in a much more general setting and it could be applied in other frameworks. 
At the end of the paper, in \cref{Section:9} we show that the sublinearity 
condition \ref{(g1)} is essential, through some counterexamples and proving 
\cref{Theorem:1.7}. 
Finally, in \cref{Appendix:A} we collect some technical results.

\section{Preliminaries} \label{Section:2}

Throughout this paper, $g$ is extended as an odd function on $\R$ and we study 
the existence of a positive solution of \eqref{1.1} with $m=m_1$.  
That is, we assume
\begin{enumerate}[label={\rm (g\arabic*')}]
\setcounter{enumi}{-1}
\item \label{(g0')} $g\in C(\R,\R)$ satisfies $g(-s)=-g(s)$ for all $s\in\R$.
\end{enumerate}
We also assume \ref{(g1)} throughout this paper except for \cref{Section:9}.

We use notation:
    \begin{equation*}
        B_R= \Set{ x\in\R^N | \abs x\leq R} \quad \text{for}\ R>0.
    \end{equation*}

\subsection{Variational formulation and function spaces} \label{Section:2.1}
We look for radially symmetric solutions of \eqref{1.1} and work in the
space of radially symmetric functions:
    \begin{equation*}
        E=H_r^1(\R^N)= \Set{ u\in H^1(\R^N) |u(x)=u(\abs x) }.
    \end{equation*}
We use the following notation for $u\in E$
    \begin{equation*}
        \begin{aligned}
        &\norm u_p = \left( \intRN \abs{u(x)}^p\, dx\right)^{1/p} \quad 
            \text{for}\ p\in [1,\infty),\\
        &\norm u_\infty = \esssup_{x\in\R^N} \abs{u(x)}, \\
        &\norm u_E = (\norm{\nabla u}_2^2 +\norm u_2^2)^{1/2}.
        \end{aligned}
    \end{equation*}
We also write for $u$, $v\in L^2(\R^N)$
    \begin{equation*}
        (u,v)_2 = \intRN uv\, dx.
    \end{equation*}

By \ref{(g0')} and \ref{(g1)} there exists a constant $C_1>0$ such that
    \begin{equation*}
        \abs{G(s)} \leq C_1\abs s^{p+1}, \quad 
        \abs{s g(s)} \leq C_1 \abs s^{p+1} \quad \text{for all}\ s\in\R,
    \end{equation*}
where $\displaystyle G(s)=\int_0^s g(\tau)\,d\tau$.
We also set $\displaystyle H(s)=\int_0^s h(\tau)\, d\tau$. 
From \eqref{1.14}, there exists $A>0$ such that
    \begin{equation} \label{2.1}
        \abs{h(s)} \leq A\abs s, \quad 
        \abs{H(s)} \leq As^2 \quad \text{for all}\ s\in\R.      
    \end{equation}
These inequalities will be used repeatedly in this paper. 

We take a variational approach to \eqref{1.1} with $m=m_1$ and will find critical 
points of the following functional:
    \begin{equation} \label{2.2}
        I(\lambda,u) =\half\norm{\nabla u}_2^2 -\intRN G(u)\, dx
            +e^\lambda \left(\half \norm u_2^2-m_1\right)
            \in C^1(\RE,\R).        
    \end{equation}
Note that 
    \begin{align*}  
        &\partial_\lambda I(\lambda,u) = e^\lambda\left(\half \norm u_2^2-m_1\right), \\
        &\partial_u I(\lambda,u)\varphi 
            = (\nabla u,\nabla\varphi)_2 +e^\lambda(u,\varphi)_2
             -\intRN g(u)\varphi\, dx \quad \text{for all}\ \varphi\in E.
    \end{align*}
Thus $(\lambda,u)\in\RE$ is a critical point of $I$ if and only if
$(\mu,u)$ with $\mu=e^\lambda$ is a solution of \eqref{1.1}.
We also introduce 
    \begin{equation} \label{2.3}
        P(\lambda,u)={\frac{N-2}{2}}\norm u_2^2 
            + N\left\{ {\frac{e^\lambda}{2}}\norm u_2^2-\intRN G(u)\, dx\right\}
        \in C^1(\RE,\R),            
    \end{equation}
which corresponds to the Pohozaev identity. 
We note that solutions of \eqref{1.1} with $\mu=e^\lambda$ satisfy the Pohozaev identity 
$P(\lambda,u)=0$ (see \cite{BeL}), that is,
    \begin{equation*}
        P(\lambda,u)=0 \quad \text{if}\ \partial_u I(\lambda,u)=0.  
    \end{equation*}
Thus, if $(\mu,u)$ with $\mu=e^\lambda$ is a solution of \eqref{1.1}, then 
    \begin{equation*}
        \half\norm{\nabla u}_2^2 +{\frac{N}{2}}\intRN G(u)-\half g(u)u\, dx 
        ={\frac{N}{4}}\partial_u I(\lambda,u)u-\half P(\lambda,u)=0,
    \end{equation*}
which can be rewritten as 
    \begin{equation} \label{2.4}
        \half\norm{\nabla u}_2^2 -{\frac{1}{p+1}}\norm u_{p+1}^{p+1}
        +{\frac{N}{2}}\intRN H(u)-\half h(u)u\, dx =0.  
    \end{equation}

\subsection{Least energy solutions of $-\Delta\omega+\mu\omega=\abs\omega^{p-1}\omega$} 
\label{Section:2.2}
We study a class of nonlinearities which can be regarded as a perturbation of $\abs u^{p-1}u$.
In our arguments, properties of solutions of the following unconstrained problem 
are important:
    \begin{equation} \label{2.5}
        \left\{ \begin{aligned}
        -\Delta u + \mu u &=\abs u^{p-1}u \quad \text{in}\ \R^N, \\
        &u\in E, 
        \end{aligned}
        \right. 
    \end{equation}
where $\mu>0$ is fixed.

It is well-known (see \cite{Kwo1}) that \eqref{2.5} has a unique positive solution, 
which has the least energy among all non-trivial solutions $u\in H^1(\R^N)$. 
Recalling the notations in \eqref{1.9}, $\omega_\mu$ is the unique positive solution 
of \eqref{2.5} and the set of least energy solutions of \eqref{2.5} is given by 
$\{ \pm \omega_\mu\}$. 
We consider the case $\mu=1$ and recall that 
    \begin{equation} \label{2.6}
        m_1 = \half \norm{\omega_1}_2^2.        
    \end{equation}
It follows from \eqref{2.5} with $\mu=1$ that
    \begin{equation} \label{2.7}
        \norm{\nabla\omega_1}_2^2 +\norm{\omega_1}_2^2 =\norm{\omega_1}_{p+1}^{p+1}.
    \end{equation}
By \eqref{2.4} with $h \equiv 0$ and \eqref{2.7}, we have
    \begin{equation*}
        \norm{\nabla\omega_1}_2^2 =Nm_1, \quad \norm{\omega_1}_{p+1}^{p+1}=(N+2)m_1,
        \quad \Psi_{01}(\omega_1)=m_1.
    \end{equation*}
Here $\Psi_{0\mu}$ is a functional corresponding to \eqref{2.5}:
    \begin{equation} \label{2.8}
        \Psi_{0\mu}(u) = \half\norm{\nabla u}_2^2 +{\frac{\mu}{2}}\norm u_2^2 
        -{\frac{1}{p+1}}\norm u_{p+1}^{p+1}:\, E\to\R.      
    \end{equation}
By a simple scaling, $\omega_\mu$ is expressed as 
    \begin{equation*}
            \omega_\mu(x) =\mu^{N/4}\omega_1(\mu^{1/2}x).
    \end{equation*}
We see 
    \begin{equation}\label{2.9}
    \begin{aligned}     
        &\norm{\omega_\mu}_2^2=2m_1, \quad \norm{\nabla \omega_\mu}_2^2=\mu N m_1, \quad
        \norm{\omega_\mu}_{p+1}^{p+1}=\mu (N+2)m_1, \\
        &\Psi_{0\mu}(\omega_\mu)=\mu m_1, \quad \Psi_{0\mu}'(\omega_\mu)=0.
    \end{aligned}
    \end{equation}
We note that $\mu\mapsto \omega_\mu;\, (0,\infty)\to E$ is of class $C^2$ since 
$\omega_1\in C^\infty(\R^N,\R)$ and $\omega_1(x)$ decays exponentially as 
$\abs x\to \infty$.
We also note that $\omega_\mu$ enjoys a mountain pass minimax characterization. 

Finally, we note that $m_1$ defined in \eqref{2.6} can be used to describe the best 
constant in the Gagliardo-Nirenberg inequality.  The following proposition is due to 
Weinstein \cite{Wei1} (see also Cazenave \cite{Ca0}).

\begin{Proposition}[{\cite{Wei1,Ca0}}] \label{Proposition:2.1}
    \begin{equation} \label{2.10}
        \norm v_{p+1}^{p+1} \leq \half(p+1)(2m_1)^{-2/N}\norm{\nabla v}_2^2\norm v_2^{4/N}
        \quad \text{for all}\ v\in H^1(\R^N). 
    \end{equation}
\end{Proposition}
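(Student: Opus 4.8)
The plan is to identify the sharp constant with the reciprocal of the infimum of the scale–invariant Weinstein quotient and then to evaluate that infimum at $\omega_1$. For $v\in H^1(\R^N)\setminus\{0\}$ set
\[
J(v)=\frac{\norm{\nabla v}_2^2\,\norm v_2^{4/N}}{\norm v_{p+1}^{p+1}},
\]
so that \eqref{2.10} is exactly the assertion $\inf_{v}J(v)=\tfrac{2(2m_1)^{2/N}}{p+1}$. First I would record the scaling structure: since $p+1=2+\tfrac 4N$, the map $v\mapsto a\,v(b\,\cdot)$ leaves $J$ invariant for all $a,b>0$. Hence, given a minimizing sequence, one may normalize it by $\norm{v_n}_2=\norm{\nabla v_n}_2=1$, in which case $\norm{v_n}_{p+1}^{p+1}=1/J(v_n)$ stays bounded away from $0$ and $\infty$.

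Next I would prove that the infimum is attained. Replacing each $v_n$ by its Schwarz symmetrization decreases (or preserves) $J$, by the Pólya--Szegő inequality for the Dirichlet norm together with the invariance of the $L^q$ norms; so we may assume the $v_n$ radial and radially decreasing. Then $(v_n)$ is bounded in $H_r^1(\R^N)$, and by the compact Strauss embedding $H_r^1(\R^N)\hookrightarrow L^{p+1}(\R^N)$ (which holds since $2<p+1<2^*$ for every $N\ge 2$) a subsequence converges strongly in $L^{p+1}$ and weakly in $H^1$ to some $Q$ with $\norm Q_{p+1}>0$. Weak lower semicontinuity of $\norm{\nabla\cdot}_2$ and $\norm{\cdot}_2$ then gives $J(Q)\le\liminf J(v_n)=\inf J$, so $Q$ is a minimizer.

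Then I would identify $Q$. Writing the Euler--Lagrange equation for $J$ at the (nonnegative) minimizer $Q$ and collecting the homogeneities of the three norms, one obtains $-\Delta Q+c_1Q=c_2Q^{p}$ with constants $c_1,c_2>0$ depending only on the norms of $Q$; rescaling $Q$ by the invariance above turns this into $-\Delta Q+Q=Q^{p}$, $Q>0$ radial. By Kwong's uniqueness theorem \cite{Kwo1} this forces $Q=\omega_1$, whence $\inf J=J(\omega_1)$. Finally, using the identities $\norm{\nabla\omega_1}_2^2=Nm_1$, $\norm{\omega_1}_2^2=2m_1$, $\norm{\omega_1}_{p+1}^{p+1}=(N+2)m_1$ recorded before the proposition, together with $N+2=\tfrac N2(p+1)$, one computes $J(\omega_1)=\dfrac{Nm_1\,(2m_1)^{2/N}}{(N+2)m_1}=\dfrac{2(2m_1)^{2/N}}{p+1}$, so $\norm v_{p+1}^{p+1}\le\tfrac1{\inf J}\norm{\nabla v}_2^2\norm v_2^{4/N}=\tfrac12(p+1)(2m_1)^{-2/N}\norm{\nabla v}_2^2\norm v_2^{4/N}$, which is \eqref{2.10}.

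The main obstacle is the attainment step: the two–parameter scaling invariance makes the direct method degenerate, so one must simultaneously fix the scaling \emph{and} recover compactness of the (symmetrized, rescaled) minimizing sequence through the Strauss embedding. Once a minimizer is available, deriving the limit equation, invoking Kwong's uniqueness to pin it to $\omega_1$, and carrying out the final arithmetic are routine.
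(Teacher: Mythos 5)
Your proposal is correct, and it is essentially the classical Weinstein variational argument: minimize the scale-invariant quotient, restore compactness via symmetrization and the radial embedding, identify the minimizer with $\omega_1$ through the Euler--Lagrange equation and Kwong's uniqueness, and evaluate the constant using $\norm{\nabla\omega_1}_2^2=Nm_1$, $\norm{\omega_1}_2^2=2m_1$, $\norm{\omega_1}_{p+1}^{p+1}=(N+2)m_1$. The paper gives no proof of \cref{Proposition:2.1} but simply cites \cite{Wei1,Ca0}, and what you wrote is precisely the argument of those references, so there is nothing to add.
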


As a corollary to \cref{Proposition:2.1}, we have the following result, which we will 
use repeatedly in this paper.

\begin{Corollary} \label{Corollary:2.2}
    \begin{equation*}
        \half\norm{\nabla u}_2^2 -{\frac{1}{p+1}}\norm u_{p+1}^{p+1} \geq 0
        \quad \text{for all}\ u\in E\ \text{with}\ \frac{1}{2}\norm u_2^2 \leq m_1.
    \end{equation*}
\end{Corollary}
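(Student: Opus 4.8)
The plan is to apply the Gagliardo--Nirenberg inequality from \cref{Proposition:2.1} directly, using the mass constraint to absorb the factor $\norm{u}_2^{4/N}$. First I would recall that $p+1 = 2 + \frac 4N$, so the exponent $4/N$ appearing in \eqref{2.10} is exactly $p-1$ times $\frac{2}{N}\cdot\frac{N}{2}$; more to the point, \eqref{2.10} reads
    \[
        \frac{1}{p+1}\norm{u}_{p+1}^{p+1} \leq \frac 12 (2m_1)^{-2/N}\,\norm{\nabla u}_2^2\,\norm{u}_2^{4/N}.
    \]
Now if $\frac 12\norm{u}_2^2 \leq m_1$, i.e. $\norm{u}_2^2 \leq 2m_1$, then $\norm{u}_2^{4/N} = \big(\norm{u}_2^2\big)^{2/N} \leq (2m_1)^{2/N}$, since $t \mapsto t^{2/N}$ is non-decreasing on $[0,\infty)$. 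Substituting this bound into the right-hand side, the factor $(2m_1)^{-2/N}$ cancels against $(2m_1)^{2/N}$ and I obtain
    \[
        \frac{1}{p+1}\norm{u}_{p+1}^{p+1} \leq \frac 12 \norm{\nabla u}_2^2,
    \]
which is precisely the claimed inequality after rearranging.

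There is essentially no obstacle here: the only thing to be slightly careful about is the monotonicity step $\norm{u}_2^{4/N} \leq (2m_1)^{2/N}$, which uses $N \geq 2$ only insofar as $2/N > 0$ (any positive exponent works), together with nonnegativity of $\norm{u}_2^2$. The degenerate case $u \equiv 0$ is trivial since both sides vanish. One should also note that \eqref{2.10} is stated for all $v \in H^1(\R^N)$, so in particular it holds on $E = H^1_r(\R^N)$, and the corollary as stated quantifies over $u \in E$, so no further density or restriction argument is needed. This completes the proof.
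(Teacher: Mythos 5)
Your proposal is correct and is essentially the paper's proof: both apply the Gagliardo--Nirenberg inequality of \cref{Proposition:2.1} and use $\norm{u}_2^2 \leq 2m_1$ to make the constant $(2m_1)^{-2/N}\norm{u}_2^{4/N}$ at most $1$, whether one phrases it as cancelling the factor (as you do) or as factoring out $\norm{\nabla u}_2^2$ and bounding the resulting coefficient (as the paper does).
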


\begin{proof}
By \cref{Proposition:2.1}, we have for $u\in E$
    \begin{equation*}
        \half\norm{\nabla u}_2^2 -{\frac{1}{p+1}}\norm u_{p+1}^{p+1}
        \geq \half\left(1-(2m_1)^{-2/N}\norm u_2^{4/N}\right)\norm{\nabla u}_2^2.
    \end{equation*}
Thus the conclusion of \cref{Corollary:2.2} holds. 
\end{proof}

At the end of this section, we give a proof of \eqref{1.21} for $g(s)=\abs s^{p-1}s$.

\begin{proof}[Proof of \eqref{1.21} for $g(s)=\abs s^{p-1}s$]
We note that $s\omega_\mu\in \calS_{s^2m_1}$ for all $s$, $\mu\in (0,\infty)$ and
    \[  \calI(s\omega_\mu) = {\frac{N}{2}}m_1\mu (s^2-s^{p+1}),
    \]
which follows from \eqref{2.9}.  Together with \cref{Corollary:2.2}, we deduce 
\eqref{1.21}.
\end{proof}

\subsection{Remarks on the zero mass case} \label{Section:2.3}
In this section, we prepare some notations related to \eqref{1.15} when \ref{(g0')} and
\ref{(g1)} hold. 
Equation \eqref{1.15} is important to study the situation $0 < \mu \ll 1$ and 
we first define a functional corresponding to \eqref{1.15}: 
    \begin{equation} \label{2.11}
        Z(u) = \half\norm{\nabla u}_2^2 -\intRN G(u)\, dx:\,
        F\to \R,        
    \end{equation}
where $F$ and its norm $\norm\cdot_F$ are defined as in \eqref{1.16}--\eqref{1.17}.

We first collect basic properties of $F$:

\begin{Lemma}\label{Lemma:2.3}
The following hold: 
\begin{enumerate}[label={\rm (\roman*)}]
\item $C^\infty_{0,r} (\R^N)$ is dense in $F$ and $E \subset F$. 
\item For any $q\in [p+1,2^*]$ when $N\geq 3$, for any $q\in [p+1,\infty)$ 
when $N=2$, there exists $C_q>0$ such that 
    \[  \norm u_q \leq C_q \norm u_F \quad \text{for any $u \in F$}. 
    \]
In addition, the embeddings $F \subset L^q(B_R)$ and $F \subset L^r(\R^N)$ 
are compact for any $q \in [1,2^\ast)$, $R>0$ and $r \in (p+1,2^\ast)$. 
\end{enumerate}
\end{Lemma}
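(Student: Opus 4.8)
The statement is a pair of standard facts about the space $F = \{u \in L^{p+1}(\R^N) \mid \nabla u \in L^2(\R^N),\ u \text{ radial}\}$ with norm $\norm{u}_F = \norm{\nabla u}_2 + \norm{u}_{p+1}$. The plan is to treat the two items in order, reducing everything to known inequalities (Gagliardo–Nirenberg/Sobolev) plus the radial-compactness machinery (Strauss lemma). For item (i), density of $C^\infty_{0,r}(\R^N)$: I would first show $C^\infty_{0,r}$ functions belong to $F$ (trivial), then approximate a general $u \in F$ by truncation and mollification. Concretely, given $u \in F$, multiply by a radial cutoff $\chi_R$ supported in $B_{2R}$, equal to $1$ on $B_R$, with $\abs{\nabla \chi_R} \lesssim 1/R$; one checks $\chi_R u \to u$ in $L^{p+1}$ by dominated convergence, and $\nabla(\chi_R u) = \chi_R \nabla u + u \nabla \chi_R \to \nabla u$ in $L^2$, where the term $u \nabla \chi_R$ is controlled via Hölder by $\frac{1}{R}\norm{u}_{L^{p+1}(B_{2R}\setminus B_R)} \abs{B_{2R}}^{1/2 - 1/(p+1)}$ (using that $p+1 < 2^*$, so the exponent of $R$ is negative — this is where the growth restriction in \ref{(g1)} enters through $p+1 \le 2^*$). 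Then mollify the compactly supported approximant with a radial mollifier to land in $C^\infty_{0,r}$. The inclusion $E \subset F$ is immediate from $H^1_r \hookrightarrow L^{p+1}$ (Sobolev, since $2 \le p+1 \le 2^*$) together with the fact that $H^1$ functions have $L^2$ gradient.

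For item (ii), the inequality $\norm{u}_q \le C_q \norm{u}_F$ for $q \in [p+1, 2^*]$: by density from (i) it suffices to prove it for $u \in C^\infty_{0,r}(\R^N)$. The endpoint $q = 2^*$ (for $N \ge 3$) is the Sobolev inequality $\norm{u}_{2^*} \le C \norm{\nabla u}_2 \le C\norm{u}_F$; the endpoint $q = p+1$ is definitional. For intermediate $q$, interpolate: $\norm{u}_q \le \norm{u}_{p+1}^{\theta} \norm{u}_{2^*}^{1-\theta}$ with the appropriate $\theta \in [0,1]$, and bound each factor by $\norm{u}_F$. For $N = 2$, replace the $2^*$-endpoint by the Gagliardo–Nirenberg inequality $\norm{u}_q \le C_q \norm{\nabla u}_2^{1-2/q}\norm{u}_{p+1}^{2/q}$ valid for all $q \in [p+1,\infty)$, which again is dominated by $\norm{u}_F$. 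For the compact embeddings $F \subset L^q(B_R)$ ($q \in [1, 2^*)$) and $F \subset L^r(\R^N)$ ($r \in (p+1, 2^*)$): the local compactness $F \hookrightarrow\hookrightarrow L^q(B_R)$ follows because $F$ restricted to $B_R$ embeds continuously into $H^1(B_R)$ — indeed $\nabla u \in L^2$ and $u \in L^{p+1}(B_R) \subset L^2(B_R)$ by Hölder on the bounded domain — and then Rellich–Kondrachov gives compactness into $L^q(B_R)$ for $q < 2^*$. For the global statement, I would invoke the radial Strauss-type pointwise decay estimate: radial functions in $F$ satisfy $\abs{u(x)} \lesssim \norm{u}_F \abs{x}^{-\beta}$ for a suitable $\beta = \beta(N,p) > 0$ away from the origin, so tails in $L^r(\R^N\setminus B_R)$ are uniformly small for $r$ strictly between $p+1$ and $2^*$; combining the small-tail estimate with local compactness yields the global compact embedding by the standard diagonal argument.

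The main obstacle I anticipate is the global compact embedding $F \hookrightarrow\hookrightarrow L^r(\R^N)$ — more precisely, establishing the correct radial decay estimate for functions in $F$ (which, unlike $H^1_r$, need not be in $L^2$), and pinning down the admissible exponent range $r \in (p+1, 2^*)$ so that both the tail decay and the interpolation between $L^{p+1}$ and $L^{2^*}$ cooperate. This is a known result (it is the natural analogue of Strauss's lemma for the homogeneous-type space arising in zero-mass problems, cf. Berestycki–Lions), but getting the bookkeeping of exponents right — especially distinguishing $N = 2$ from $N \ge 3$ and verifying strict inequality $r < 2^*$ is genuinely needed — is the delicate point. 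Everything else is a routine assembly of truncation, mollification, Sobolev/Gagliardo–Nirenberg, and Rellich–Kondrachov.
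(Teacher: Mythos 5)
Your overall route coincides with the paper's: part (i) is treated as routine (the paper omits it entirely), the embedding inequality comes from Gagliardo--Nirenberg interpolation between $\norm{\nabla u}_2$ and $\norm u_{p+1}$, local compactness from $F\subset H^1(B_R)$ plus Rellich--Kondrachov, and global compactness in $L^r(\R^N)$, $r\in(p+1,2^*)$, from a radial pointwise decay estimate combined with interpolation of the exterior $L^r$-norm and local compactness --- exactly the paper's scheme. The one ingredient you invoke that is not actually off-the-shelf is the decay estimate for $N=2$: the Strauss lemma applies to $H^1_r$ and the Berestycki--Lions Radial Lemma A.III to $\calD_r^1(\R^N)$ with $N\ge 3$, so neither covers $u\in F$ when $N=2$ (such $u$ need not belong to $L^2$). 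The paper fills precisely this hole with \cref{Lemma:2.4} (ii), the elementary bound $\abs{u(x)}^3\le C_2\norm{\nabla u}_2\norm u_4^2\abs x^{-1}$, proved by integrating $\frac{d}{dr}\left(r\abs{u}^3\right)$ over $[r,\infty)$; you should supply this short computation rather than cite it as known, since it is the only non-routine step. Two smaller corrections: your $2$D Gagliardo--Nirenberg exponents are off --- scaling forces $\norm u_q\le C\norm{\nabla u}_2^{1-4/q}\norm u_4^{4/q}$ for $q\ge 4$, not $\norm{\nabla u}_2^{1-2/q}\norm u_{p+1}^{2/q}$ (the latter is the inequality with $L^2$ in place of $L^4$), though the corrected version still yields $\norm u_q\le C_q\norm u_F$; and the exterior tail cannot be made small by integrating the pointwise decay alone (for $N\ge 3$ that would require $r>2^*$), so the interpolation $\norm u_{L^r(\R^N\setminus B_R)}\le \norm u_{L^\infty(\R^N\setminus B_R)}^{\theta}\norm u_{L^{p+1}(\R^N\setminus B_R)}^{1-\theta}$ with the $L^\infty$ factor decaying in $R$ is essential, exactly as in the paper's proof.
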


\cref{Lemma:2.3} (i) is easily observed and we omit a proof.
To show \cref{Lemma:2.3} (ii), we need the following result.

\begin{Lemma} \label{Lemma:2.4} 
\begin{enumerate}[label={\rm (\roman*)}]
\item {\rm (Berestycki-Lions \cite[Radial Lemma A.III]{BeL}).}
When $N\geq 3$, there exists a constant $C_N>0$ such that for $u\in \calD_r^1(\R^N)$
    \begin{equation*}
        \abs{u(x)} \leq C_N\norm{\nabla u}_2 \abs x^{-{\frac{N-2}{2}}} \quad
            \text{for}\ \abs x\geq 1.
    \end{equation*}
\item When $N=2$, there exists a constant $C_2>0$ such that for $u\in F$
    \begin{equation} \label{2.12}
        \abs{u(x)}^3 \leq C_2\norm{\nabla u}_2\norm u_4^2 \abs x^{-1}
        \quad \text{for all} \ x\in\R^2\setminus\{ 0\}.
    \end{equation}
\end{enumerate}
\end{Lemma}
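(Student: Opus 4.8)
The plan is to mimic the classical radial lemma of Berestycki–Lions, but adapted to the space $F$ where we only control $\nabla u \in L^2(\R^2)$ and $u \in L^{p+1}(\R^2) = L^3(\R^2)$ (since $p+1 = 3$ when $N=2$). First I would reduce to the case $u \in C^\infty_{0,r}(\R^2)$ by the density statement in \cref{Lemma:2.3}(i): once the inequality \eqref{2.12} is established with a dimensional constant $C_2$ for smooth compactly supported radial functions, it passes to all of $F$ by approximation, using that convergence in $\norm{\cdot}_F$ controls both $\norm{\nabla u}_2$ and $\norm{u}_4$ (the latter via the embedding $F \subset L^4(\R^2)$ from \cref{Lemma:2.3}(ii), or more elementarily by interpolating $L^4$ between $L^3$ and a higher $L^q$), and passing to a subsequence converging a.e.

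For a smooth radial $u$, write $u(x) = u(r)$ with $r = \abs{x}$ and exploit the fundamental theorem of calculus in the radial variable. The key computation is to differentiate $r\,\abs{u(r)}^3$: since $u$ has compact support, for any fixed $r > 0$,
    \[
        r\,\abs{u(r)}^3 = -\int_r^\infty \frac{d}{d\sigma}\Big(\sigma\,\abs{u(\sigma)}^3\Big)\, d\sigma
        = -\int_r^\infty \Big(\abs{u(\sigma)}^3 + 3\sigma\,\abs{u(\sigma)}\,u(\sigma)\,u'(\sigma)\Big)\, d\sigma.
    \]
The first term is harmless only if we are careful; the cleaner route is to drop the nonnegative contribution. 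Indeed, observe that $\frac{d}{d\sigma}\big(\sigma \abs{u(\sigma)}^3\big) = \abs{u(\sigma)}^3 + 3\sigma \abs{u(\sigma)} u(\sigma) u'(\sigma)$, and since $\abs{u(\sigma)}^3 \geq 0$ one gets, after integrating from $r$ to $\infty$ and taking absolute values, the bound
    \[
        r\,\abs{u(r)}^3 \leq 3\int_r^\infty \sigma\, \abs{u(\sigma)}^2\, \abs{u'(\sigma)}\, d\sigma
        \leq 3\int_0^\infty \sigma\, \abs{u(\sigma)}^2\, \abs{u'(\sigma)}\, d\sigma.
    \]
Now apply Cauchy–Schwarz on the last integral, splitting $\sigma\,\abs{u}^2\abs{u'} = \big(\sigma^{1/2}\abs{u'}\big)\big(\sigma^{1/2}\abs{u}^2\big)$:
    \[
        \int_0^\infty \sigma\, \abs{u}^2\, \abs{u'}\, d\sigma
        \leq \left(\int_0^\infty \sigma\,\abs{u'(\sigma)}^2\, d\sigma\right)^{1/2}
             \left(\int_0^\infty \sigma\,\abs{u(\sigma)}^4\, d\sigma\right)^{1/2}.
    \]
Up to the factor $2\pi$ coming from the area element $dx = 2\pi\sigma\,d\sigma$ in $\R^2$, the first factor is $(2\pi)^{-1/2}\norm{\nabla u}_2$ and the second is $(2\pi)^{-1/2}\norm{u}_4^2$. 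Collecting constants yields $\abs{u(r)}^3 \leq C_2 \norm{\nabla u}_2 \norm{u}_4^2\, r^{-1}$ with $C_2 = \frac{3}{2\pi}$ (or any admissible constant), which is \eqref{2.12}.

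I do not expect a genuine obstacle here; the only point requiring a little care is the boundary behavior as $\sigma \to \infty$ (trivial for compactly supported $u$, hence the reduction to $C^\infty_{0,r}$ first) and ensuring that the $L^4$ norm appearing on the right is legitimately controlled by $\norm{u}_F$ so that the density argument closes — this is exactly what \cref{Lemma:2.3}(ii) provides once (ii) of that lemma is proved, so one should be mindful not to create a circular dependency; in practice \eqref{2.12} is used to prove \cref{Lemma:2.3}(ii), so in the density step one invokes instead the direct interpolation inequality $\norm{u}_4 \leq \norm{u}_3^{1-\theta}\norm{u}_{q}^{\theta}$ together with a crude Sobolev-type bound, or simply approximates in the $\norm{\nabla \cdot}_2 + \norm{\cdot}_3$ norm and uses that smooth compactly supported functions converging in this norm also converge a.e. along a subsequence while their $L^4$ norms stay bounded by Fatou applied to the very inequality being proved.
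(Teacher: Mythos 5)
Your argument is correct and is essentially the paper's own proof: reduce to $C^\infty_{0,r}(\R^2)$ by the density statement of \cref{Lemma:2.3} (i), differentiate $r\abs{u(r)}^3$, discard the nonnegative term, integrate over $[r,\infty)$, and apply Cauchy--Schwarz to arrive at the constant $\tfrac{3}{2\pi}$. The only slip is arithmetic: for $N=2$ one has $p=1+\tfrac{4}{N}=3$, so $p+1=4$ and $\norm{u}_F=\norm{\nabla u}_2+\norm{u}_4$; hence the $L^4$ norm appearing in \eqref{2.12} is controlled directly by the $F$-norm, there is no circular dependence on \cref{Lemma:2.3} (ii), and the workarounds in your last paragraph (interpolation between $L^3$ and higher $L^q$, Fatou) are unnecessary --- the approximation step closes exactly as in the paper.
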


\begin{proof}
We give a proof to (ii). 
Thanks to \cref{Lemma:2.3} (i), it is enough to show \eqref{2.12} for 
$u\in C_{0,r}^\infty(\R^2)$.
We write $r=\abs x$ and compute 
    \begin{equation*}
        \begin{aligned}
        {\frac{d}{dr}}(r\abs{u(r)}^3)
        = \abs{u(r)}^3 + 3r\abs{u(r)}^2 u_r(r)\sgn(u(r)) 
        \geq -3r\abs{u(r)}^2 \abs{u_r(r)}. 
        \end{aligned}
    \end{equation*}
Integrating this inequality over $[r,\infty)$ yields 
    \begin{equation*}
        \begin{aligned}
        r\abs{u(r)}^3 
        = -\int_r^\infty {\frac{d}{ds}}(s\abs{u(s)}^3)\, ds
        \leq 3\int_r^\infty s\abs{u(s)}^2 \abs{u_r(s)}\, ds 
        \leq {\frac{3}{2\pi}}\norm u_4^2 \norm{\nabla u}_2. 
        \end{aligned}
    \end{equation*}
Thus we get \eqref{2.12}. 
\end{proof}

\begin{proof}[Proof of \cref{Lemma:2.3} (ii)]
Let $q \in [p+1, 2^\ast]$ when $N \geq 3$ and $r \in [4,\infty)$ when $N =2$. 
By the Gagliardo-Nirenberg inequality, 
    \[  \norm u_q \leq C \norm{\nabla u}_2^{\theta} \norm u_{p+1}^{1-\theta} 
        \quad \text{for any $u \in H^1(\R^N)$, where } 
        \frac{1}{q} = \theta \left( \frac{1}{2} - \frac{1}{N} \right) 
        + \frac{1-\theta}{p+1}. 
    \]
The above inequality holds for any $u \in F$ and we obtain 
$\norm{ u }_q \leq C \norm{ u }_F$. 

Since $F \subset H^1(B_R)$ holds for any $R>0$, the embedding $F \subset L^q(B_R)$ 
is compact for every $R>0$ and $q \in [1,2^\ast)$. 
Next we consider the embedding $F\subset L^r(\R^N)$.
Let $r\in(p+1, 2^*)$ and define $\theta\in (0,1)$ by 
${\frac{1}{r}}={\frac{\theta}{\infty}}+{\frac{1-\theta}{p+1}}$.
For $u\in F$ and $R\geq 1$, we have
    \begin{equation*}
        \norm u_{L^r(\R^N\setminus R_R)} 
            \leq \norm u_{L^\infty(\R^N\setminus B_R)}^\theta
            \norm u_{L^{p+1}(\R^N\setminus B_R)}^{1-\theta} 
        \leq \begin{cases}  
            C_N^\theta R^{-{\frac{N-2}{2}}\theta}\norm u_F          
                                                    &\text{for}\ N\geq 3,\\
            C_2^{\frac{\theta}{3}}R^{-{\frac{\theta}{3}}} \norm u_F 
                                                    &\text{for}\ N=2.
        \end{cases} 
    \end{equation*}
Here we used \cref{Lemma:2.4}.  
Since the coefficient $C_N^\theta R^{-{\frac{N-2}{2}}\theta}$
or $C_2^{\frac{\theta}{3}}R^{-{\frac{\theta}{3}}}$ tends to $0$ as $R\to\infty$,
we deduce the compactness of the embedding $F\subset L^r(\R^N)$ from the
the compactness of $F\subset L^r(B_R)$.
\end{proof}

By \cref{Lemma:2.3}, under \ref{(g0')} and \ref{(g1)}, we may check that 
$Z \in C^1(F,\R)$ and 
    \[  Z'(u) \varphi 
        = \left(\nabla u , \nabla \varphi \right)_2 - \intRN g(u) \varphi \, dx 
            \quad \text{for any $\varphi \in F$}. 
    \]
In particular, any solution of \eqref{1.15} in $F$ can be characterized as critical 
point of $Z$.  Moreover, we have

\begin{Lemma} \label{Lemma:2.5}
    Assume {\rm \ref{(g0')} and \ref{(g1)}} and let $u_0 \in F$ satisfy $Z'(u_0) = 0$. 
Then $u_0$ satisfies 
    \begin{align}   
    &\norm{\nabla u_0}_2^2 -\intRN g(u_0)u_0\, dx =0,               \label{2.13}\\
    &{\frac{N-2}{2}}\norm{\nabla u_0}_2^2 -N\intRN G(u_0)\, dx=0,   \label{2.14}\\
    &\intRN NG(u_0)-{\frac{N-2}{2}}g(u_0)u_0\, dx =0,               \label{2.15}\\
    &Z(u_0)\geq 0.                                                  \label{2.16}
    \end{align}
\end{Lemma}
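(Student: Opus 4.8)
The plan is to obtain \eqref{2.13} directly from the Euler--Lagrange equation, to establish \eqref{2.14} as a Pohozaev identity via a truncation argument, and then to read off \eqref{2.15} and \eqref{2.16} as algebraic consequences of \eqref{2.13}--\eqref{2.14}. Since $u_0 \in F$, I would first use $\varphi = u_0$ itself as a test function in $Z'(u_0)=0$: the pairing makes sense because $\norm{\nabla u_0}_2^2 < \infty$ and $\int_{\R^N}\abs{g(u_0)u_0}\,dx \le C_1\norm{u_0}_{p+1}^{p+1} < \infty$ by \ref{(g0')}--\ref{(g1)}, and this gives exactly \eqref{2.13}.

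For \eqref{2.14} I would recall that, by the regularity noted after \ref{(g2)}, a weak solution $u_0 \in F$ of \eqref{1.15} is of class $C^2$; in particular $u_0 \in H^2_{\mathrm{loc}}(\R^N)$ and $g(u_0)$ is continuous. Fix a radial cutoff $\psi \in C_0^\infty(\R^N)$ with $\psi \equiv 1$ on $B_1$ and $\mathrm{supp}\,\psi \subset B_2$, set $\psi_R(\cdot)=\psi(\cdot/R)$, and test $Z'(u_0)=0$ against the compactly supported radial function $\varphi_R := \psi_R\,(x\cdot\nabla u_0) \in F$. Using the pointwise identities $\nabla u_0\cdot\nabla(x\cdot\nabla u_0) = \abs{\nabla u_0}^2 + \tfrac12\,x\cdot\nabla(\abs{\nabla u_0}^2)$ and $g(u_0)\,(x\cdot\nabla u_0) = x\cdot\nabla G(u_0)$, and integrating by parts to move the divergence onto the field $\psi_R x$, one is led to
\[
\Big(1-\frac N2\Big)\int_{\R^N}\psi_R\,\abs{\nabla u_0}^2\,dx + N\int_{\R^N}\psi_R\,G(u_0)\,dx = \mathcal R_R,
\]
where $\mathcal R_R$ collects all the terms carrying a derivative of $\psi_R$; these are supported in $\{R \le \abs x \le 2R\}$ and, since $\abs{\nabla\psi_R}\abs x$ and $\abs{x\cdot\nabla\psi_R}$ are bounded uniformly in $R$, are dominated there by $C(\abs{\nabla u_0}^2 + \abs{u_0}^{p+1})$. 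As $\nabla u_0 \in L^2(\R^N)$ and $u_0 \in L^{p+1}(\R^N)$ (so $\abs{\nabla u_0}^2 \in L^1$ and, via $\abs{G(s)} \le C_1\abs s^{p+1}$, also $G(u_0) \in L^1$), it follows that $\mathcal R_R \to 0$, and letting $R \to \infty$ with dominated convergence yields $\big(1-\tfrac N2\big)\norm{\nabla u_0}_2^2 + N\int_{\R^N}G(u_0)\,dx = 0$, i.e.\ \eqref{2.14}.

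It then remains only to combine \eqref{2.13} and \eqref{2.14}. From \eqref{2.14}, $N\int G(u_0)\,dx = \tfrac{N-2}{2}\norm{\nabla u_0}_2^2$; inserting $\norm{\nabla u_0}_2^2 = \int g(u_0)u_0\,dx$ from \eqref{2.13} gives $\int_{\R^N}\big(NG(u_0) - \tfrac{N-2}{2}g(u_0)u_0\big)\,dx = 0$, which is \eqref{2.15}. Inserting instead $\int G(u_0)\,dx = \tfrac{N-2}{2N}\norm{\nabla u_0}_2^2$ into $Z(u_0) = \tfrac12\norm{\nabla u_0}_2^2 - \int G(u_0)\,dx$ gives $Z(u_0) = \tfrac1N\norm{\nabla u_0}_2^2 \ge 0$, which is \eqref{2.16}. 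The one genuinely delicate point is \eqref{2.14}: since $u_0$ is a priori only a radial weak solution in $F$ (not an $H^1$ solution), the dilation field $x\cdot\nabla u_0$ need not belong to $F$ globally, so one cannot simply invoke the classical Pohozaev identity; the cutoff $\psi_R$ is introduced precisely so that the remainder $\mathcal R_R$ is controlled by the finiteness of $\norm{\nabla u_0}_2$ and $\norm{u_0}_{p+1}$ alone, with $C^2$-regularity used only to legitimize the local integrations by parts.
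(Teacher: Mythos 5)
Your proposal is correct and follows essentially the same route as the paper: testing $Z'(u_0)=0$ with $u_0$ for \eqref{2.13}, establishing the Pohozaev identity \eqref{2.14} via local regularity, and deducing \eqref{2.15}--\eqref{2.16} algebraically. The only difference is presentational: where the paper invokes $W^{2,q}_{\rm loc}$ elliptic regularity and cites the argument of \cite[Proposition 1]{BeL} for \eqref{2.14} (remarking it also covers $N=2$), you write out that same cutoff/dilation-field argument explicitly, which is fine.
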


\begin{proof}
Since $Z'(u_0)u_0=0$, we have \eqref{2.13}.  
Since $F \subset H^1_{\rm loc} (\R^N)$, \ref{(g0')}, \ref{(g1)} and elliptic regularity 
yield $u_0 \in W^{2,q}_{\rm loc} (\R^N)$ 
for any $q < \infty$. Following the argument in \cite[Proposition 1]{BeL}, 
we may show that $u_0$ satisfies the Pohozaev identity \eqref{2.14}. 
We remark that in \cite{BeL}, the Pohozaev identity \eqref{2.14} is shown
for $N\geq 3$ but their argument works also for $N=2$.
Together with \eqref{2.13}, we have \eqref{2.15}.

\eqref{2.16} follows from the Pohozaev identity \eqref{2.14} and direct computations: 
    \[  Z(u_0) = Z(u_0) - \frac{1}{N} \left( \frac{N-2}{2} \norm{\nabla u_0}_2^2 
        - N \intRN G(u_0) \, dx \right) = \frac{1}{N} \norm{ \nabla u_0 }_2^2 \geq 0. 
        \qedhere
    \]
\end{proof}

At the end of this section, we show \cref{Proposition:1.2} (ii), that is,  
\ref{(g3)} ensures \ref{(g2)}.

\begin{proof}[Proof of \cref{Proposition:1.2} (ii)]
First we show that \ref{(g3)} implies \ref{(g2)}.  
Suppose $u_0\in F$ is a solution of \eqref{1.15}. By \cref{Lemma:2.5}, $u_0$ satisfies
\eqref{2.15}.
By \ref{(g0')}, \ref{(g1)} and \ref{(g3)}, there exist $\delta_0>0$ and $c_0>0$ such that
    \begin{align}   
    &NG(s)- {\frac{N-2}{2}}g(s)s \geq c_0\abs s^{p+1} 
            \quad \text{for}\ \abs s\leq \delta_0,          \label{2.17}\\
    &NG(s)- {\frac{N-2}{2}}g(s)s \geq 0 \quad \text{for all }\ s\in\R.
                                                            \label{2.18}
    \end{align}
Since $u\in F$ satisfies $u(r)\to 0$ as $r\to \infty$ thanks to \cref{Lemma:2.4}, 
we have by \eqref{2.17} and \eqref{2.18}
    \begin{equation*}
        \intRN NG(u)-{\frac{N-2}{2}}g(u)u\, dx>0 
        \quad \text{for}\ u\in F\setminus\{ 0\}.    
    \end{equation*}
Thus \eqref{2.15} implies $u_0\equiv 0$. 
\end{proof}

Proof of \cref{Proposition:1.2} (i) will be given in the following subsection.

\subsection{Non-existence of positive solutions via a Liouville type result} \label{Section:2.4}


In this section we consider zero mass problem in a slightly general situation.
We believe that non-existence result in general situation is of interest and
it makes clear why we have restriction $N=2,3,4$ in \cref{Proposition:1.2}.

We consider the zero-mass problem \eqref{1.15} under the following condition.

\begin{enumerate}[label={\rm (A)}]
\item\label{(A)} There exists $q\in (1,2^*-1)$ such that
	$$	\lim_{s\to +0} \frac{g(s)}{s^q} =1.
    $$
\end{enumerate}

\noindent
Under \ref{(A)}, the natural space to consider the zero-mass problem \eqref{1.15} is 
    \begin{equation} \label{1.16a}
        \widetilde F= \big\{u \in L^{q+1} (\R^N)  \mid \nabla u \in L^2(\R^{N}), \
        u(x) = u(\abs{x})\big\},
    \end{equation}
which is equipped with the norm $\norm u_{\widetilde F} = \norm{\nabla u}_2 + \norm u_{q+1}$.

We have the following non-existence result.

\begin{Theorem}\label{Theorem:L}
Let $N\geq 2$ and suppose that \ref{(g0)} and \ref{(A)} hold.  Moreover assume $q\in (1,\frac{N}{N-2}]$ 
when $N\geq 3$.  Then the zero-mass problem \eqref{1.15} has no positive solutions in $\widetilde F$.
\end{Theorem}

Our \cref{Proposition:1.2}(i) can be obtained as a special case $q=p=1+\frac{4}{N}$ of
\cref{Theorem:L}.
We note that $F=\widetilde F$ when $q=p$ and $p\leq \frac{N}{N-2}$ holds if and only if $N\leq 4$.

To show \cref{Theorem:L} we recall the result of Armstrong and Sirakov \cite{AS} (c.f. \cite{AGQ16}).  
In \cite{AS}, for a positive function $f(s)\in C((0,\infty),\R)$,  they consider 
positive supersolutions of
	\begin{equation}\label{AS}
		-\Delta u \geq f(u).
	\end{equation}
They show that for any $R>0$ \eqref{AS} does not have positive supersolutions in the exterior
domain $\R^N\setminus B_R$ provided that
	\begin{equation} \label{AScond}
		\liminf_{s\to +0} \frac{f(s)}{s^{\frac{N}{N-2}}}>0 \quad \text{when}\ N\geq 3; \qquad
		\liminf_{s\to \infty} e^{as}f(s) >0 \ \text{for any $a>0$} \quad \text{when}\ N=2.
	\end{equation}

\begin{proof}[Proof of \cref{Theorem:L}]
By the assumption \ref{(A)}, there exists $\delta>0$ such that $g(s)>0$ for $s\in (0,\delta]$.
We set $\overline g:\, (0,\infty)\to\R$ by
	\begin{equation*}
		\overline g(s)=\begin{cases}
			g(s) &\text{for}\ s\in (0,\delta], \\
			g(\delta) &\text{for}\ s\in (\delta,\infty).
			\end{cases}
	\end{equation*}
Clearly, $\overline g(s)$ satisfies the condition \eqref{AScond} for $q\leq {N\over N-2}$ ($N\geq 3$)
and $q\in(1,\infty)$ for $N=2$.  Thus
$-\Delta u \geq \overline g(u)$ does not have positive solutions in $\R^N\setminus B_R$ for any $R>0$.

We argue indirectly and assume there exists a positive solution $u\in\widetilde F$.
We note that $u$ satisfies 
	\begin{equation}\label{decay}
		u(x)\to 0  \quad \text{as}\  \abs x\to\infty.
	\end{equation}
In fact, \eqref{decay} follows from 
\cref{Lemma:2.4} (i) when $N\geq 3$.  When $N=2$, modifying the argument for \cref{Lemma:2.4} (ii),
we can show for some constant $C'>0$
	\begin{equation}\label{2.12a}
		\abs{u(x)}^{q+3\over 2} \leq C'
        \norm{\nabla u}_2\, \norm u_{q+1}^{q+1\over 2}\,  \abs x^{-1} \quad \text{for all}\ u\in \widetilde F.
    \end{equation}
Thus we have \eqref{decay} also for $N=2$.

From the decay property \eqref{decay}, there exists $R>0$ such that
	$$	u(x)\in (0,\delta) \quad \text{for all}\ \abs x\geq R.
	$$
Thus $u(x)$ is a positive solution of $-\Delta u=\overline g(u)$ in $\R^N\setminus B_R$, which is a
contradiction to the result of \cite{AS} and we complete the proof of \cref{Theorem:L}.
\end{proof}

\section{Minimax properties of $I$} \label{Section:3}

\noindent
{\bf Notation.}
For sake of simplicity of notation, throughout this paper we use notation:
    \begin{equation*}
        \mu = e^\lambda \quad \text{for}\ \lambda\in \R.
    \end{equation*}
For example, we write
    \begin{equation*}
        I(\lambda,u)=\half\norm{\nabla u}_2^2 -\intRN G(u)\,dx
        +\mu\left\{\half\norm u_2^2 -m_1\right\}.
    \end{equation*}

To show the existence or non-existence of solutions of \eqref{1.1} with $m=m_1$, 
geometric properties of $I$ are quite important.  For the existence result, 
we introduce two minimax values $\underline{b}$ and $\overline{b}$, 
which are inspired by the works of Bahri and Li \cite{BaL} and Tanaka 
\cite{Tan0}. 
To define these values 
we construct a map $\zeta_0 \in C^2(\R,E)$ with the following properties: 
    \begin{align}
    &\Psi_\mu ( \zeta_0 (\lambda) ) < -2 Am_1 - 1 \quad 
            \text{for any}\ \lambda \in \R; \label{3.1}\\
    &\frac{1}{2}\norm{ \zeta_0(\lambda) }_2^2 > m_1 \quad 
            \text{for any}\ \lambda \in \R; \label{3.2}\\
    &\max_{0 \leq t \leq 1} I(\lambda, t \zeta_0 (\lambda) ) \to 0 \quad
        \text{as} \ \lambda \to \pm\infty; \label{3.3}\\
    &\zeta_0(\lambda)(x)\geq 0 \qquad \text{for all}\ \lambda\in\R \ 
        \text{and}\ x\in\R^N. \label{3.4}
    \end{align}
Here $\Psi_\mu:\, E\to\R$ is defined by
    \begin{equation}\label{3.5}
        \Psi_\mu(u) = \frac{1}{2} \norm{\nabla u}_2^2 + \frac{\mu}{2} \norm{u}_2^2 
        - \intRN G(u) \, dx.
    \end{equation}
Properties \eqref{3.1}--\eqref{3.4} are important to see geometry of $I$;
\eqref{3.2} ensures that $\zeta_0(\lambda)$ stays outside of the cylinder
    \begin{equation} \label{3.6}
        M_0= \R \times \calS_0 = \R\times \Set{u\in E | \frac{1}{2}\norm u_2^2=m_1}.
    \end{equation}
Property \eqref{3.1} implies $I(\lambda,\zeta_0(\lambda))<-2Am_1-1$ for all $\lambda\in\R$, 
which is lower the minimum height of $I$ on the cylinder $M_0$ 
(c.f. \cref{Lemma:3.2} (i)).  Property \eqref{3.3} gives
information on the behavior of $\zeta_0(\lambda)$ as $\lambda\to\pm\infty$.

We note that \eqref{3.1} implies \eqref{3.2}. In fact, $\Psi_\mu(u)\geq -2Am_1$ for 
$\half\norm u_2^2\leq m_1$ by \eqref{2.1} and \cref{Corollary:2.2}.  
In what follows we construct $\zeta_0$ with properties \eqref{3.1}, \eqref{3.3} and
\eqref{3.4}.

\medskip

\subsection{Mountain pass geometry for unconstraint problems} \label{Section:3.1}

The functional $\Psi_\mu$ defined in \eqref{3.5} corresponds to
    \begin{equation} \label{3.7}
    -\Delta u+\mu u = g(u) \quad \text{in}\ \R^N.       
    \end{equation}
It is readily seen that $\Psi_\mu$ has the mountain pass geometry and we define 
    \begin{equation}  \label{3.8}
        a(\mu)=\inf_{\gamma\in \Lambda_\mu}\max_{t\in [0,1]} \Psi_\mu(\gamma(t)),
    \end{equation}
where
    \begin{equation} \label{3.9}
        \Lambda_\mu = \Set{ \gamma\in C([0,1],E) | \gamma(0)=0, \
        \Psi_\mu(\gamma(1)) <0}.        
    \end{equation}

As a special case of the results in \cite{JT0,HIT}, we have

\begin{Proposition} \label{Proposition:3.1}
Assume that $g$ satisfies \ref{(g0')}, 
$\lim_{s\to 0}{\frac{g(s)}{s}}=0$ and $\lim_{s\to\pm\infty}{\frac{g(s)}{s}}=\infty$.
Moreover suppose that
$g$ is subcritical at $s=\pm\infty$, that is,
$g$ satisfies the condition \ref{(f1)} in \cref{Appendix:A.2}. 
Then for any $\mu>0$
\begin{enumerate}[label={\rm (\roman*)}]
\item[{\rm (0)}] 
$s\mapsto g(s)-\mu s$ satisfies the Berestycki-Lions' condition \cite{BeL};
\item 
$a(\mu)>0$ is a critical value of $\Psi_\mu$, that is,
there exists a $v\in E$ such that $\Psi_\mu(v)=a(\mu) > 0$ and $\Psi_\mu'(v)=0$.
Moreover $v>0$ in $\R^N$ and $v$ is a least energy solution of \eqref{3.7};
\item 
for any critical point $v\in E\setminus\{ 0\}$ of $\Psi_\mu$,
there exists a path $\gamma_v\in\Lambda_\mu$ such that
    \begin{equation*} 
        v\in\gamma_v([0,1]), \quad 
        \Psi_\mu(v)=\max_{t\in [0,1]} \Psi_\mu(\gamma_v(t)).    
    \end{equation*}
In particular, there is an optimal path $\gamma_\mu\in\Lambda_\mu$ such that
    \begin{equation*}
        a(\mu)=\max_{t\in [0,1]} \Psi_\mu(\gamma_{\mu}(t));
    \end{equation*}
\item 
for any $C \geq 0$, the level set $\set{u\in E | \Psi_\mu(u)<-C}$ is path-connected, 
namely for any $u_0$, $u_1\in E$ with $\Psi_\mu(u_0)<-C$ and $\Psi_\mu(u_1)<-C$, 
there exists a path $\zeta\in C([0,1],E)$ such that
    \begin{equation}  \label{3.10}
        \zeta(0)=u_0, \quad \zeta(1)=u_1, \quad \Psi_\mu(\zeta(t)) <- C 
        \quad \text{for all}\ t\in [0,1].
    \end{equation}
Moreover, when $u_0\geq 0$, $u_1\geq 0$ in $\R^N$, there exists a path 
$\zeta \in C([0,1],E)$ with \eqref{3.10} and $\zeta(t)(x)\geq 0$ for all $x\in\R^N$ and
$t\in [0,1]$.
\end{enumerate}
Moreover we have
\begin{enumerate}
\item[{\rm (iv)}] $a(\mu):\, (0,\infty)\to\R$ is strictly increasing and continuous.
\end{enumerate}
\end{Proposition}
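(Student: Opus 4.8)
The plan is to put $\Psi_\mu$ in Berestycki--Lions form, read off parts (0)--(iii) from the mountain pass theory of least energy solutions, and then establish part (iv) directly.

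First I would set $\tilde g_\mu(s)=g(s)-\mu s$ and $\tilde G_\mu(s)=G(s)-\frac{\mu}{2}s^2$, so that $\Psi_\mu(u)=\half\norm{\nabla u}_2^2-\intRN\tilde G_\mu(u)\,dx$, and verify that $\tilde g_\mu$ fulfils the hypotheses of \cite{BeL} when $N\ge3$ (resp.\ of \cite{BeGaKa83} when $N=2$): it is continuous and odd by \ref{(g0')}; since $g(s)/s\to0$ at the origin one has $\tilde g_\mu(s)/s\to-\mu<0$; subcriticality at $\pm\infty$ is inherited from \ref{(f2)}; and the ``superlinearity'' requirement, $\tilde G_\mu(s_0)>0$ for some $s_0>0$, holds because $g(s)/s\to\infty$ forces $G(s)/s^2\to\infty$. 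This is precisely (0). With (0) established, (i) and (ii) are the mountain pass characterization of least energy solutions for Berestycki--Lions nonlinearities, which I would quote from \cite{JT0}: $a(\mu)>0$ is attained by a least energy solution $v$ of \eqref{3.7} lying on an optimal path in $\Lambda_\mu$, and every nontrivial critical point of $\Psi_\mu$ is crossed by such a path. Positivity of $v$ follows since $|v|$ is again a least energy solution ($\tilde G_\mu$ is even and $\norm{\nabla|v|}_2=\norm{\nabla v}_2$), hence a nonnegative solution of \eqref{3.7}, whence elliptic regularity and the strong maximum principle give $|v|>0$. Part (iii), path-connectedness of the sublevel sets $\{\Psi_\mu<-C\}$, I would deduce from the deformation results of \cite{HIT}; its sign-preserving version is obtained by composing a connecting path with the continuous map $u\mapsto|u|$ on $E$, which leaves $\Psi_\mu$ unchanged.

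For the monotonicity in (iv), let $0<\mu_1<\mu_2$, so that $\Psi_{\mu_1}(u)<\Psi_{\mu_2}(u)$ whenever $u\ne0$. An optimal path $\gamma\in\Lambda_{\mu_2}$ for $\mu_2$ also belongs to $\Lambda_{\mu_1}$ (its endpoint still has negative $\Psi_{\mu_1}$-energy), whence $a(\mu_1)\le\max_t\Psi_{\mu_1}(\gamma(t))\le\max_t\Psi_{\mu_2}(\gamma(t))=a(\mu_2)$; equality would force a maximizer of $\Psi_{\mu_1}\circ\gamma$ to be $0$, giving $a(\mu_2)=0$, a contradiction. For continuity at a fixed $\mu_0$ I would first prove upper semicontinuity: extend an optimal $\mu_0$-path by dilations to a path $\gamma_0\in\Lambda_{\mu_0}$ with $\Psi_{\mu_0}(\gamma_0(1))<-1$ and $\max_t\Psi_{\mu_0}(\gamma_0(t))=a(\mu_0)$; since $\abs{\Psi_\mu(u)-\Psi_{\mu_0}(u)}=\frac{\abs{\mu-\mu_0}}{2}\norm{u}_2^2$ is uniformly small on the compact set $\gamma_0([0,1])$, $\gamma_0$ remains in $\Lambda_\mu$ for $\mu$ near $\mu_0$ and $a(\mu)\le a(\mu_0)+o(1)$. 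Combined with the monotonicity, this already yields right-continuity and left upper-semicontinuity, so the only point left is the inequality $a(\mu_0)\le\liminf_{\mu\uparrow\mu_0}a(\mu)$.

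This last inequality is the heart of the matter, and I would obtain it from a compactness property of least energy solutions. For $\mu_j\uparrow\mu_0$, pick a least energy solution $v_j$ for $\mu_j$; the Pohozaev identity gives $\norm{\nabla v_j}_2^2=Na(\mu_j)$, which is bounded, while by monotonicity $a(\mu_j)\ge a(\mu_0/2)>0$ for $j$ large. Testing the equation with $v_j$ and using the subcritical growth $\abs{g(s)s}\le\epsilon s^2+\epsilon\abs s^{2^\ast}+C_\epsilon\abs s^q$ with a fixed $q\in(2,2^\ast)$, together with the Sobolev and Gagliardo--Nirenberg inequalities, one finds $(\mu_j-\epsilon)\norm{v_j}_2^2\le C+C\norm{v_j}_2^{q(1-\theta)}$ with $q(1-\theta)<2$, hence $(v_j)$ is bounded in $E$ (for $N=2$ one needs here, in addition, a uniform $L^\infty$-bound and an exponential decay estimate, via comparison with $-\Delta+\frac{\mu_0}{4}$ outside a fixed ball). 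Along a subsequence $v_j\rightharpoonup v_0$ in $E$, with $v_j\to v_0$ in $L^r(\R^N)$ for $r\in(2,2^\ast)$ by the radial compact embedding; moreover $v_0\ne0$, since otherwise $\intRN g(v_j)v_j\,dx\to0$ would force $\norm{\nabla v_j}_2\to0$, contradicting $\norm{\nabla v_j}_2^2=Na(\mu_j)\ge Na(\mu_0/2)>0$. Passing to the limit in the equation, $v_0$ solves \eqref{3.7} with $\mu=\mu_0$, so $a(\mu_0)\le\Psi_{\mu_0}(v_0)=\frac1N\norm{\nabla v_0}_2^2\le\liminf_j\frac1N\norm{\nabla v_j}_2^2=\liminf_j a(\mu_j)$. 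I expect the main obstacle to be assembling this last compactness argument cleanly --- in particular the uniform $L^2$ bound on $v_\mu$ near $\mu_0$ (delicate for $N=2$) and the non-vanishing of the limit --- since it is exactly what prevents $a$ from jumping downward from the left; everything else is routine.
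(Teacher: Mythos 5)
Your proposal follows essentially the same route as the paper's proof: (0)--(ii) are reduced to the Berestycki--Lions theory and the mountain pass characterization of \cite{JT0}, (iii) to \cite{HIT} (your composition with the continuous map $u\mapsto\abs u$, which leaves $\Psi_\mu$ invariant, is exactly the kind of justification the paper asserts without detail), and for (iv) you prove strict monotonicity via optimal paths, upper semicontinuity from the minimax definition on a compact path image, and lower semicontinuity from compactness of least energy solutions via the Pohozaev identity --- which is precisely the paper's (much terser) argument. For $N\ge 3$ your left-continuity chain is sound: $\norm{\nabla v_j}_2^2=Na(\mu_j)$ from Pohozaev, the $L^2$ bound from the Nehari relation plus Gagliardo--Nirenberg, non-vanishing of the weak limit from $\norm{\nabla v_j}_2^2\ge Na(\mu_0/2)>0$, and weak lower semicontinuity of $\norm{\nabla\cdot}_2$.

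The one genuine gap is the $N=2$ branch of that compactness step. The fix you propose --- a uniform $L^\infty$ bound and exponential decay obtained by comparison with $-\Delta+\frac{\mu_0}{4}$ outside a fixed ball --- is circular as stated: to run the comparison you need $v_j$ to be uniformly small outside a ball of $j$-independent radius (so that $g(v_j)\le\frac{\mu_0}{4}v_j$ there) and uniformly bounded, and for radial functions on $\R^2$ a bound on $\norm{\nabla v_j}_2$ alone yields neither an $L^\infty$ bound nor decay; the uniform $L^2$ (or $L^4$) control you are trying to establish is exactly what such estimates presuppose. This step needs a genuinely different input in dimension two, e.g. the constrained-minimization characterization of least energy solutions from \cite{BeGaKa83} or a scaling argument in the spirit of \cref{Lemma:5.9}. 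A smaller imprecision: positivity of the least energy solution does not follow from the energy identity $\Psi_\mu(\abs v)=\Psi_\mu(v)$ alone, since equal energy does not make $\abs v$ a critical point; the standard route is the invariance of the constrained minimization of \cite{BeL,BeGaKa83} under $u\mapsto\abs u$ (or simply citing \cite{BeL,JT0}, as the paper does), after which elliptic regularity and the strong maximum principle give positivity as you say.
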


We note that under \ref{(g0')} and \ref{(g1)} the assumption of \cref{Proposition:3.1} 
holds.

\begin{proof}
It is easily seen that $s\mapsto g(s)-\mu s$ satisfies the Berestycki-Lions' condition 
\cite{BeL} (c.f. \ref{(f0)}--\ref{(f3)} in \cref{Appendix:A.2}) for any $\mu>0$ 
under the assumption of \cref{Proposition:3.1}.

Properties (i) and (ii) are proved in \cite{JT0}.
We note that the existence of a least energy solution is obtained in \cite{BeGaKa83,BeL}
through a constraint minimization problem for $\norm{\nabla u}_2^2$  on 
$\{ u\in E\,|\, -{\frac{\mu}{2}}\norm u_2^2 + \intRN G(u)\, dx=1\}$ for $N\geq 3$ and on
$\{ u\in E\,|\, -{\frac{\mu}{2}}\norm u_2^2 + \intRN G(u)\, dx=0, \, \norm u_2=1\}$ 
for $N=2$.
In \cite{JT0}, Mountain Pass characterization of a least energy solution is given together
with an optimal path, which is given explicitly.  
See \cref{Proposition:A.2} in \cref{Appendix:A.2} for an explicit construction
of the path.

Property (iii) when $C=0$ is proved in \cite[Lemma 6.1]{HIT} and 
the argument also works for the case $C > 0$. 
We can also choose a non-negative path $\zeta\in C([0,1],E)$ when $u_0$ and $u_1$ are
non-negative in $\R^N$.

For property (iv), the strict monotonicity of $a(\mu)$ follows from the existence 
of optimal paths and the strict monotonicity of $\Psi_\mu$ on $\mu$. 
For the continuity of $a(\mu)$, first note that $a(\mu)$ is upper-semi-continuous by the
minimax definition of $a(\mu)$.
On the other hand, the lower-semi-continuity of $a(\mu)$ follows from the compactness
of $\{u\in E\,|\, \Psi_\mu(u)\leq M, \Psi_\mu'(u)=0, \mu\in [a,b]\}$ ($M>0$, 
$0<a<b<\infty$), which can be deduced from the fact that any critical point satisfies 
the Pohozaev identity $P(\lambda,u)=0$.  Thus $a(\mu):\, (0,\infty)\to\R$ is continuous. 
\end{proof}

\subsection{Geometry of $I$} \label{Section:3.2}
In this section, we observe geometric properties of $I$ in order to construct 
the map $\zeta_0 \in C^2(\R,E)$. 
In \cref{Section:3.2,Section:3.3}, we relax condition \ref{(g1)} and 
consider the behavior of $I$ under the following condition \ref{(g1*)} to 
clarify the role of the sublinear growth condition \ref{(g1)}.
\begin{enumerate}[label={\rm (g1*)}]
\item \label{(g1*)} For $g(s)=\abs s^{p-1}s +h(s)$, there exists $\alpha\in\R$ such 
that
    \begin{equation*}
        \lim_{s\to 0}{\frac{h(s)}{\abs s^{p-1}s}} =0, \quad
        \lim_{s\to \pm\infty}{\frac{h(s)}{s}} =\alpha.
    \end{equation*}
\end{enumerate}
The cylinder $M_0=\R \times \calS_0$ (c.f. \eqref{3.6})
plays an important role for the geometry of $I$.

\begin{Lemma} \label{Lemma:3.2}
Assume \ref{(g0')} and \ref{(g1*)}. Then
\begin{enumerate}[label={\rm (\roman*)}]
\item $I(\lambda,u)\geq -2Am_1$ on $M_0$, where $A>0$ is given in \eqref{2.1};
\item $I(\lambda,0)=-\mu m_1$ for all $\lambda\in\R$;
\item for any $\lambda\in\R$ and $u\in E\setminus\{0\}$,
    \begin{equation*}
            I(\lambda,T u)\to -\infty \quad \text{as}\ T\to\infty.
    \end{equation*}
\end{enumerate}
\end{Lemma}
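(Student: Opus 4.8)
The plan is to establish the three items by direct estimation using the two-sided bounds on $G$ and $H$ coming from \ref{(g1*)} (the analogue of \eqref{2.1}), together with \cref{Corollary:2.2}. For (i), I would write $G(s) = \frac{1}{p+1}\abs{s}^{p+1} + H(s)$, so that for $(\lambda,u)\in M_0$, i.e. $\frac12\norm{u}_2^2 = m_1$, we have
\[
    I(\lambda,u) = \half\norm{\nabla u}_2^2 - \frac{1}{p+1}\norm{u}_{p+1}^{p+1} - \intRN H(u)\,dx + \mu\left(\half\norm{u}_2^2 - m_1\right).
\]
On $M_0$ the last term vanishes. By \cref{Corollary:2.2} (applicable since $\frac12\norm{u}_2^2 = m_1$) the first two terms together are $\geq 0$, and by the bound $\abs{H(s)}\leq A s^2$ (valid under \ref{(g1*)} for a suitable $A>0$, exactly as \eqref{2.1} is derived under \ref{(g1)}) we get $-\intRN H(u)\,dx \geq -A\norm{u}_2^2 = -2Am_1$. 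Adding up gives $I(\lambda,u)\geq -2Am_1$.

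For (ii) this is immediate: $G(0)=0$ and $\norm{0}_2^2 = 0$, so $I(\lambda,0) = \mu(0 - m_1) = -\mu m_1$. For (iii), fix $\lambda$ and $u\neq 0$ and look at the behavior in $T$ of
\[
    I(\lambda,Tu) = \frac{T^2}{2}\norm{\nabla u}_2^2 - \frac{T^{p+1}}{p+1}\norm{u}_{p+1}^{p+1} - \intRN H(Tu)\,dx + \mu\left(\frac{T^2}{2}\norm{u}_2^2 - m_1\right).
\]
The term $-\frac{T^{p+1}}{p+1}\norm{u}_{p+1}^{p+1}$ is negative of order $T^{p+1}$ with $p+1 = 2 + \frac4N > 2$, while the quadratic terms $\frac{T^2}{2}(\norm{\nabla u}_2^2 + \mu\norm{u}_2^2)$ and the perturbation $-\intRN H(Tu)\,dx$, bounded by $A T^2\norm{u}_2^2$ via \eqref{2.1}, are all $O(T^2)$. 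Hence the $T^{p+1}$ term dominates and $I(\lambda,Tu)\to -\infty$ as $T\to\infty$.

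None of the three steps presents a genuine obstacle; the only point requiring a small remark is that the constant $A$ and the inequality $\abs{H(s)}\leq As^2$ must be secured under the weaker hypothesis \ref{(g1*)} rather than \ref{(g1)}. This still follows: the condition $\lim_{s\to 0} h(s)/(\abs{s}^{p-1}s) = 0$ forces $\abs{h(s)}\leq \abs{s}$ near $0$ (in fact $o(\abs{s})$ there, since $p>1$), while $\lim_{s\to\pm\infty} h(s)/s = \alpha\in\R$ gives a linear bound at infinity, and continuity on compact sets handles the rest; integrating yields $\abs{H(s)}\leq As^2$. With that in hand, all estimates above go through verbatim, and the superquadratic growth of the $L^{p+1}$-term (which is precisely the feature of the $L^2$-critical exponent being exploited) drives both (i) and (iii).
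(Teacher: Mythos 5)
Your proof is correct and follows essentially the same route as the paper: item (i) is obtained exactly as in the paper's proof via the decomposition $G(s)=\frac{1}{p+1}\abs{s}^{p+1}+H(s)$, \cref{Corollary:2.2}, and the bound $\abs{H(s)}\leq As^2$, while (ii) and (iii) are the routine checks the paper omits. Your remark that the quadratic bound on $H$ must be re-derived under \ref{(g1*)} (rather than quoted from \eqref{2.1}, which was stated under \ref{(g1)}) is accurate and handled correctly.
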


\begin{proof}
(i) By \eqref{2.1} and \cref{Corollary:2.2}, we have for $(\lambda,u)\in M_0$
    \begin{equation*}
        \begin{aligned}
        I(\lambda,u) &= \half\norm{\nabla u}_2^2 -{\frac{1}{p+1}}\norm u_{p+1}^{p+1} 
            -\intRN H(u)\,dx 
            \geq 0 -A\norm u_2^2 = -2A m_1. 
        \end{aligned}
    \end{equation*}
Thus (i) holds. Since it is not difficult to check (ii) and (iii), we omit the details. 
\end{proof}

We next study the behavior of 
    \begin{equation*}
        (t,\lambda)\mapsto I(\lambda,t\omega_\mu);\, [0,\infty)\times \R\to\R.
    \end{equation*}
Note that 
    \begin{equation} \label{3.11}
        I(\lambda,u)=\Psi_\mu(u)-\mu m_1,\quad \mu = e^\lambda, 
    \end{equation}
where $\Psi_\mu:\, E\to\R$ is defined in \eqref{3.5}, and recall that for each 
$\lambda\in\R$
    \begin{equation*}
        t\mapsto t\omega_\mu
    \end{equation*}
is an optimal path for the mountain pass value for $\Psi_{0\mu}$.

\begin{Lemma} \label{Lemma:3.3}
Assume \ref{(g0')} and \ref{(g1*)}. 
Then there exist $T_0 > 1$ and $\lambda_0\in (-\infty, 0)$ such that 
\begin{enumerate}[label={\rm (\roman*)}]
\item 
for all $\lambda \in (-\infty, \lambda_0]$, $I(\lambda,T_0 \mu^{-1/(p+1)} \omega_\mu)
< -2Am_1 - 1 - \mu m_1$; 
\item 
as $\lambda \to - \infty$, 
$\displaystyle \max_{ 0 \leq t \leq T_0 } I(\lambda, t \mu^{-1/(p+1)} \omega_\mu )\to 0$. 
\end{enumerate}
In addition, there exists $T_1 > T_0 \mu_0^{-1/(p+1)} $ where 
$\mu_0 = e^{\lambda_0}$ such that 
\begin{enumerate}[label={\rm (\roman*)}]
\setcounter{enumi}{2}
\item 
for all $\lambda\in [\lambda_0,\infty)$, $I(\lambda,T_1\omega_\mu)< -2A m_1 -1 -\mu m_1$; 
\item 
as $\lambda \to +\infty$, $\displaystyle \max_{0 \leq t \leq T_1} I(\lambda, t\omega_\mu) 
\to - \alpha m_1$. 
\end{enumerate}
\end{Lemma}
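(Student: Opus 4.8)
The strategy is to exploit the scaling identities \eqref{2.9} together with the decomposition \eqref{3.11}, $I(\lambda,u)=\Psi_\mu(u)-\mu m_1$, and to split $\Psi_\mu$ into its power part $\Psi_{0\mu}$ plus the perturbation $-\intRN H(u)\,dx$, controlled by \eqref{2.1}. First I would compute $\Psi_{0\mu}(t\mu^{-1/(p+1)}\omega_\mu)$ explicitly using \eqref{2.9}: since $\norm{\omega_\mu}_2^2=2m_1$, $\norm{\nabla\omega_\mu}_2^2=\mu Nm_1$, $\norm{\omega_\mu}_{p+1}^{p+1}=\mu(N+2)m_1$, one gets
    \begin{equation*}
        \Psi_{0\mu}(t\mu^{-1/(p+1)}\omega_\mu)
        = \tfrac{1}{2}t^2\mu^{-2/(p+1)}\mu Nm_1 + \tfrac{\mu}{2}t^2\mu^{-2/(p+1)}2m_1
          - \tfrac{1}{p+1}t^{p+1}\mu^{-1}\mu(N+2)m_1.
    \end{equation*}
The key observation is that the $\mu^{-1/(p+1)}$ rescaling makes the $\norm{u}_{p+1}^{p+1}$ term independent of $\mu$, so for $\lambda\to-\infty$ (i.e.\ $\mu\to 0$) the quadratic terms carry factors $\mu^{1-2/(p+1)}$ and $\mu^{1-2/(p+1)}$ — both with positive exponent since $p>1$ — hence they vanish, while the negative $t^{p+1}$ term survives with a fixed coefficient. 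Thus for $T_0$ large enough the right-hand side of (i) is beaten (recall one must also absorb the $+2Am_1+1+\mu m_1$ and the perturbation term, but by \eqref{2.1} $\abs{\intRN H(t\mu^{-1/(p+1)}\omega_\mu)\,dx}\le A t^2\mu^{-2/(p+1)}\norm{\omega_\mu}_2^2 = 2Am_1 t^2\mu^{-2/(p+1)}$, which again has a harmless $\mu$-power as $\mu\to0$ but must be compared against $t^{p+1}$ uniformly, so $T_0$ is chosen once, then $\lambda_0$ chosen accordingly). For (ii), the max over $t\in[0,T_0]$ of $\Psi_{0\mu}(\cdot)-\mu m_1$ plus the $O(\mu^{1-2/(p+1)})$ perturbation tends to $\max_{t}(-\tfrac{1}{p+1}t^{p+1}(N+2)m_1)=0$ (attained at $t=0$) as $\mu\to0$; here uniformity in $t\in[0,T_0]$ is immediate since all the $\mu$-dependent pieces are bounded by a fixed constant times a vanishing power of $\mu$.

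For (iii) and (iv), the rescaling is different: one uses $T_1\omega_\mu$ directly (no $\mu^{-1/(p+1)}$ factor). By \eqref{2.9}, $\Psi_{0\mu}(t\omega_\mu)=\tfrac{N}{2}m_1\mu(t^2-t^{p+1})$ — this is exactly the computation in the proof of \eqref{1.21} — and the perturbation obeys $\abs{\intRN H(t\omega_\mu)\,dx}\le A t^2\norm{\omega_\mu}_2^2=2Am_1 t^2$, which is now $\mu$-\emph{independent}. So $I(\lambda,t\omega_\mu)=\tfrac{N}{2}m_1\mu(t^2-t^{p+1})-\mu m_1 + O_{t}(1)$ where the $O_t(1)$ is bounded by $2Am_1 t^2$ uniformly in $\mu$. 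For (iii): choosing $t=T_1$ large, the term $-\tfrac{N}{2}m_1\mu T_1^{p+1}$ dominates $\tfrac{N}{2}m_1\mu T_1^2 + 2Am_1 T_1^2$ once $T_1^{p-1}>1+\frac{4A}{N\mu}$; but $\mu\ge\mu_0=e^{\lambda_0}$ on the relevant range $\lambda\in[\lambda_0,\infty)$, so a single $T_1>\max\{T_0\mu_0^{-1/(p+1)},(1+4A/(N\mu_0))^{1/(p-1)}\}$ works and also makes $\tfrac12\norm{T_1\omega_\mu}_2^2=T_1^2m_1>m_1$. For (iv): $\max_{t\in[0,T_1]}I(\lambda,t\omega_\mu)$; as $\lambda\to+\infty$ (i.e.\ $\mu\to\infty$) the dominant balance is $-\mu m_1$ plus the $\mu(t^2-t^{p+1})$ term, which for any fixed $t>0$ is strongly negative, forcing the maximizer to concentrate near $t=0$; quantitatively, write $I(\lambda,t\omega_\mu)+\alpha m_1 = \tfrac N2 m_1\mu(t^2-t^{p+1})-\mu m_1+\alpha m_1 -\intRN H(t\omega_\mu)\,dx$, and one should expand $-\intRN H(t\omega_\mu)\,dx$ more carefully than by \eqref{2.1} alone near $t=0$: since $\omega_\mu$ concentrates, the scaling $\norm{t\omega_\mu}_2^2=t^2\cdot 2m_1$ is $\mu$-free, and as $\mu\to\infty$ the pointwise values $t\omega_\mu(x)$ range over large amplitudes where $h(s)/s\to\alpha$, giving $\intRN H(t\omega_\mu)\,dx\to \tfrac{\alpha}{2}t^2\cdot 2m_1=\alpha m_1 t^2$; combined with the strongly negative $\mu(t^2-t^{p+1})$ this pins the maximum at (effectively) $t\to0^+$ where the limit is $0\cdot$(stuff)$-\mu m_1+\alpha m_1+0$… — wait, the $-\mu m_1$ does not vanish. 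The correct reading, which I would make precise, is that one optimizes over $t$: the maximum is attained at some $t_\mu\to 0$ with $t_\mu^2\mu$ bounded, and a Taylor/rescaling analysis of $\tfrac N2 m_1\mu t^2 - \mu m_1 - \intRN H(t\omega_\mu)\,dx$ in the variable $\tau=t^2\mu$ yields the limit $-\alpha m_1$ after using that $H(s)\sim\tfrac{\alpha}{2}s^2$ at infinity.

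The main obstacle I expect is part (iv): getting the precise limit $-\alpha m_1$ rather than merely an upper bound. This requires going beyond the crude bound $\abs{H(s)}\le As^2$ and using the refined asymptotics $H(s)/s^2\to\alpha/2$ as $s\to\pm\infty$ (equivalently $\rho(s)\to\alpha$), together with the concentration of $\omega_\mu$: for the relevant near-optimal $t=t_\mu$ one has $t_\mu\omega_\mu(x)\to\infty$ uniformly on compacts in the rescaled variable, so dominated convergence gives $\intRN H(t_\mu\omega_\mu)\,dx=\alpha m_1 t_\mu^2 + o(t_\mu^2)$, and then the one-variable maximization of $\mu m_1(\tfrac N2 t^2 - 1) - \alpha m_1 t^2 + o(t^2\mu^{-1}\cdot\mu)$ over $t$, after the substitution absorbing $\mu$, collapses to the claimed value. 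The other parts are essentially explicit scaling computations plus the a priori bound \eqref{2.1}, with the only subtlety being to fix the constants $T_0,\lambda_0,T_1$ in the stated order so that the quantifiers match the statement.
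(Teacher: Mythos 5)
Your treatment of parts (i)--(ii) has a genuine gap: under the $\mu^{-1/(p+1)}$ scaling the crude bound \eqref{2.1} gives
$\pabs{\intRN H(t\mu^{-1/(p+1)}\omega_\mu)\,dx}\leq 2Am_1\,t^2\mu^{-2/(p+1)}$,
and the exponent $-2/(p+1)$ is \emph{negative}, so this quantity blows up as $\mu\to 0^+$; it is not a ``harmless $\mu$-power''. With only the global quadratic bound you can prove neither (i) nor the inequality $\limsup_{\lambda\to-\infty}\max_{0\le t\le T_0}I\le 0$ needed for (ii). The missing idea (which is how the paper argues) is to exploit the amplitude of the rescaled profile: $\norm{t\mu^{-1/(p+1)}\omega_\mu}_\infty = t\,\mu^{N/4-1/(p+1)}\norm{\omega_1}_\infty\to 0$ because $N/4-1/(p+1)=N^2/(4(N+2))>0$, so one may use the near-zero hypothesis in \ref{(g1*)}, $\abs{H(s)}\le \epsilon\abs{s}^{p+1}$ for $\abs s\le s_\epsilon$, and the scaling is exactly such that $\intRN \abs{t\mu^{-1/(p+1)}\omega_\mu}^{p+1}dx = t^{p+1}\norm{\omega_1}_{p+1}^{p+1}$ is $\mu$-independent. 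This yields a perturbation bound $\epsilon\,t^{p+1}\norm{\omega_1}_{p+1}^{p+1}$, uniform in small $\mu$, after which your choice of $T_0$ (to beat $-2Am_1-1$) and then $\lambda_0$ goes through; without it the quantifier order cannot be salvaged.

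Part (iv) is also argued incorrectly: the maximizer does not concentrate at $t_\mu\to 0$ with $t_\mu^2\mu$ bounded. At such $t$ one has $I(\lambda,t\omega_\mu)\approx \tfrac{N+2}{2}m_1(t^2\mu)-\mu m_1 + O(1)\to-\infty$, since the $-\mu m_1$ term is not cancelled there. The correct mechanism is cancellation at $t=1$: writing
$I(\lambda,t\omega_\mu)=\mu\{\Psi_{01}(t\omega_1)-m_1\}-\mu^{-N/2}\intRN H(t\mu^{N/4}\omega_1)\,dx$,
one shows (using $\tfrac{\alpha-\epsilon}{2}s^2-C_\epsilon\abs s\le H(s)\le\tfrac{\alpha+\epsilon}{2}s^2+C_\epsilon\abs s$, the linear term contributing $\mu^{-N/4}C_\epsilon t\norm{\omega_1}_1\to0$) that the $H$-term converges to $\alpha m_1 t^2$ uniformly on $[0,T_1]$; since $\Psi_{01}(t\omega_1)\le m_1$ with equality only at $t=1$, the first term is $\le 0$ and is very negative away from $t=1$, so the maximum concentrates at $t=1$ and the limit is $-\alpha m_1\cdot 1^2=-\alpha m_1$. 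Your dominated-convergence idea for the $H$-term is the right ingredient, but applied at the wrong location of the maximizer it cannot produce the stated limit. (Minor further points: $\Psi_{0\mu}(t\omega_\mu)=\mu m_1(N+2)\bigl(\tfrac{t^2}{2}-\tfrac{t^{p+1}}{p+1}\bigr)$, not $\tfrac N2 m_1\mu(t^2-t^{p+1})$ --- the latter is the formula for $\calI$, without the $\tfrac\mu2\norm{u}_2^2$ term; with this corrected, your argument for (iii) is essentially the paper's.)
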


\begin{proof}
For $\Psi_{0\mu}$ defined in \eqref{2.8}, we have by \eqref{2.9} and 
the definition of $\omega_\mu$, 
    \begin{equation}\label{3.12}
        \begin{aligned}
        I(\lambda, t \mu^{-1/(p+1)} \omega_\mu) 
        &=  \Psi_{0\mu} (t \mu^{-1/(p+1)} \omega_\mu) 
            -\intRN H(t \mu^{-1/(p+1)} \omega_\mu) \, dx - \mu m_1 \\
        &= \mu \left\{ \Psi_{01} (t \mu^{-1/(p+1)} \omega_1) 
            -\mu^{-N/2 - 1} \intRN H 
                \left( t \mu^{-1/(p+1)} \mu^{N/4} \omega_1(x) \right) dx 
            \right\} - \mu m_1.
        \end{aligned}
    \end{equation}
It follows from \ref{(g1*)} that there exists $s_0 > 0$ such that 
    \[  \abs{H(s)} \leq \frac{\abs{s}^{p+1}}{2(p+1)} \quad 
        \text{for all $s \in \R$ with $\abs{s}\leq s_0$}.
    \]
Select $T_0>1$ so that 
    \[  - \frac{T_0^{p+1}}{2(p+1)} \norm{\omega_1}_{p+1}^{p+1} < -4Am_1 - 2.
    \]
By $\frac{N}{4} - \frac{1}{p+1} = \frac{N^2}{4(N+2)}> 0$ 
and $\omega_1 \in L^\infty(\R^N)$, 
there exists $\lambda_0^* \in (-\infty,0)$ such that 
    \[  \norm{ T_0 \mu^{ N/4 - 1/(p+1) } \omega_1 }_\infty \leq s_0 
        \quad 
        \text{for all $\lambda \in \R$ with $\lambda \leq \lambda_0^*$}. 
    \]
Thus, for any $\lambda \in (-\infty, \lambda_0^*]$ and $t \in [0,T_0]$, 
    \begin{equation*}
        \begin{aligned}
        \MoveEqLeft \mu \left\{ \Psi_{01} (t \mu^{-1/(p+1)} \omega_1) 
            - \mu^{-N/2 - 1} \intRN H 
            \left( t \mu^{-1/(p+1)} \mu^{N/4} \omega_1(x) \right) dx \right\} \\
        & \leq
            \frac{t^2}{2} \mu^{(p-1)/(p+1)} \norm{\omega_1}_E^2 
            - \frac{t^{p+1}}{2(p+1)} \norm{\omega_1}_{p+1}^{p+1}.
        \end{aligned}
    \end{equation*}
From \eqref{3.12} we deduce that 
    \[  \limsup_{\lambda \to - \infty} I (\lambda , T_0 \mu^{-1/(p+1) } \omega_1 ) 
        \leq 
        - \frac{T_0^{p+1}}{2(p+1)} \norm{\omega_1}_{p+1}^{p+1} 
        < - 4 A m_1 -2.
    \]
Hence, assertion (i) holds for some $\lambda_0 \leq \lambda_0^*$.

For assertion (ii), notice that \ref{(g1*)} implies that for any $\varepsilon > 0$ 
there exists $s_\varepsilon \in (0,1) $ 
such that $\abs{H(s)} \leq \varepsilon \abs{s}^{p+1}$ when $\abs{s} \leq s_\varepsilon$. 
Thus, \eqref{3.12} and $\max_{0 \leq t} \Psi_{01} (t \omega_1) = m_1$ yield 
    \[  \begin{aligned}
        \limsup_{\lambda \to - \infty } \max_{0 \leq t \leq T_0} 
            I(\lambda, t \mu^{-1/(p+1)} \omega_1 ) 
        &\leq \limsup_{\lambda \to - \infty} 
            \max_{0 \leq t \leq T_0} - \mu^{ -N/2 } \intRN 
            H \left( t \mu^{ -1/(p+1)  } \mu^{N/4} \omega_1(x) \right) dx \\
        &\leq T_0^{p+1} \varepsilon \norm{\omega_1}_{p+1}^{p+1}.
        \end{aligned}
    \]
Since $\varepsilon > 0$ is arbitrary, 
$\limsup_{\lambda \to - \infty} \max_{ 0 \leq t \leq T_0} 
I(\lambda, t \mu^{-1/(p+1)} \omega_1 ) \leq 0$. 
On the other hand, by $I(\lambda, 0) = -\mu m_1 \to 0$ as $\lambda \to -\infty$, 
we see that assertion (ii) also holds.

Next we show (iii). It follows from \eqref{2.1} and \eqref{3.12} that 
    \begin{align}
        I(\lambda,t\omega_\mu) &= 
        \mu \Psi_{01} (t \omega_1) - \mu^{-N/2} \intRN H 
            \left( t \mu^{N/4} \omega_1(x) \right) dx - \mu m_1  \label{3.13}\\
        &\leq 
        \mu \Psi_{01} (t \omega_1) + 2 t^2 Am_1 - \mu m_1 
        = \mu \left\{ \Psi_{01} (t \omega_1) 
            + \frac{2 t^2 Am_1}{\mu} \right\} - \mu m_1. \nonumber
    \end{align}
Since we may find $c_0>0$ so that $\Psi_{01} (t \omega_1) \leq - c_0 t^{p+1}$ for all 
sufficiently large $t > 1$, there exists $T_1 > T_0 \mu_0^{-1/(p+1)}$, $\mu_0=e^{\lambda_0}$ 
such that 
    \[  \Psi_{01} (T_1 \omega_1) + \frac{2T_1^2 Am_1}{\mu} < - \frac{4Am_1 + 2}{\mu} 
        \quad \text{for all}\ \mu \geq \mu_0.
    \]
From \eqref{3.13}, we deduce that $I(\lambda, T_1 \omega_\mu) < -2 Am_1 -1 - \mu m_1 $ 
for all $\lambda \geq \lambda_0$ and (iii) holds.

To prove (iv), we first estimate the term 
    \[  -\mu^{-N/2} \intRN H \left( t \mu^{N/4} \omega_1(x) \right) dx \quad 
        \text{for $t \in [0,T_1]$}. 
    \]
By \ref{(g1*)}, for any $\epsilon>0$ there exists $C_\epsilon>0$ such that
    \begin{equation*}
    \frac{\alpha-\epsilon}{2} s^2 - C_\varepsilon \abs{s} \leq H(s) 
        \leq \frac{\alpha + \epsilon}{2} s^2 + C_\epsilon \abs{s} 
    \quad \text{for all}\ s\in\R.
    \end{equation*}
From these inequalities it follows that
    \begin{equation*}
        \begin{aligned}
        \max_{0 \leq t \leq T_1}\left|\mu^{-N/2}\intRN H(t\mu^{N/4}\omega_1)\, dx 
            - \frac{\alpha}{2} t^2 \norm{ \omega_1 }_2^2 \right|
        &\leq \frac{\epsilon}{2} T_1^2 \norm{ \omega_1 }^2_2 
            + \mu^{-N/4}C_\epsilon T_1\norm{\omega_1}_1 \\
        &\to \frac{\epsilon}{2} T_1^2 \norm{\omega_1}_2^2 
            \quad \text{as}\ \lambda\to \infty. 
        \end{aligned}
    \end{equation*}
Since $\epsilon>0$ is arbitrary,
    \begin{equation*}
    \mu^{-N/2}\intRN H(t\mu^{N/4}\omega_1)\, dx \to \alpha m_1 t^2 \quad
    \text{as}\ \lambda\to\infty \ \text{uniformly in}\ t\in [0,T_1], 
    \end{equation*}
Since the function $ t \mapsto \Psi_{01} ( t \omega_1 );\, [0,\infty)\to\R$ admits 
only one maximum point at $t=1$, 
recalling \eqref{3.13} and \eqref{2.9}, we obtain 
    \[  \begin{aligned}
        \lim_{\lambda \to \infty} \max_{0 \leq t \leq T_1} I (\lambda, t\omega_\mu) 
        &= \lim_{\lambda \to \infty} \max_{0 \leq t \leq T_1} 
            \left[ \mu \left\{ \Psi_{01} \left( t \omega_1 \right) - m_1 \right\} 
            - \alpha m_1 t^2 + o(1) \right] = - \alpha m_1. 
        \end{aligned}
    \]
Thus, (iv) holds. 
\end{proof}

\subsection{Construction of $\zeta_0$ and minimax values $\ub$ and $\ob$} 
\label{Section:3.3}
In this section, we construct the map $\zeta_0$ and define $\ub$ and $\ob$. 
Recall our choice of $\lambda_0\in(-\infty,0)$, $\mu_0=e^{\lambda_0}\in (0,1)$ and 
$T_0$, $T_1>1$.
In view of \eqref{3.11} and \cref{Lemma:3.3}, 
    \[  \begin{aligned}
            &\Psi_{\mu} (T_0 \mu^{-1/(p+1)} \omega_\mu ) < -2 A m_1 - 1 & 
                &\text{for all $\lambda \in (-\infty, \lambda_0]$}, \\
            &\Psi_{\mu} (T_1 \omega_\mu) < -2 A m_1 - 1 & 
                &\text{for all $\lambda \in [\lambda_0,\infty)$}. 
        \end{aligned}
    \]
By \cref{Proposition:3.1} (iii), we find a path $\sigma_0\in C([0,1],E)$ 
such that
    \begin{align}   
        &\sigma_0(0)= T_0\mu_0^{-{\frac{1}{p+1}}}\omega_{\mu_0}, \quad 
            \sigma_0(1)=T_1\omega_{\mu_0},      \nonumber \\
        &\Psi_{\mu_0}(\sigma_0(t)) < -2Am_1-1 \quad 
            \text{for all}\ t\in [0,1],     \label{3.14}\\
        &\sigma_0(t)(x)\geq 0\quad \text{for all}\ t\in[0,1]\ 
            \text{and}\ x\in\R^N.   \nonumber
    \end{align}
It follows from \eqref{3.14} that for $\delta>0$ small
    \begin{equation*}
        \Psi_\mu(\sigma_0(t)+\varphi) < -2Am_1-1  
    \end{equation*}
for all $t\in [0,1]$, $\lambda\in [\lambda_0,\lambda_0+2\delta]$ and $\varphi\in E$ with
$\norm\varphi_E<\delta$.  

For $\delta>0$ small we set
$\mu_1=e^{\lambda_0+2\delta}$ and $\wzeta_0\in C(\R,E)$ by
    \begin{equation*}
        \wzeta_0(\lambda)=
            \begin{cases}
            T_0\mu^{-\frac{1}{p+1}}\omega_\mu   
                &\text{for}\ \lambda\in (-\infty,\lambda_0],\\
            \sigma_0\left({\frac{\lambda-\lambda_0}{\delta}}\right)
                &\text{for}\ \lambda\in (\lambda_0,\lambda_0+\delta],\\
            \left(1-{\frac{\lambda-(\lambda_0+\delta) }{\delta}}\right)T_1\omega_{\mu_0}
                +{\frac{\lambda-(\lambda_0+\delta)}{\delta}} T_1\omega_{\mu_1}
                &\text{for}\ \lambda\in (\lambda_0+\delta,\lambda_0+2\delta],\\
                T_1\omega_\mu   &\text{for}\ \lambda\in (\lambda_0+2\delta,\infty).\\
            \end{cases}
    \end{equation*}
We note that $(1-{\frac{\lambda-(\lambda_0+\delta)}{\delta}})T_1\omega_{\mu_0}
+{\frac{\lambda-(\lambda_0+\delta)}{\delta}}T_1\omega_{\mu_1}:\, 
[\lambda_0+\delta,\lambda_0+2\delta] \to E$ is a continuous positive path joining
$\sigma_0(1)=T_1\omega_{\mu_0}$ and $T_1\omega_{\mu_1}$
and 
$\widetilde\zeta_0([\lambda_0+\delta,\lambda_0+2\delta])$ is close to $T_1\omega_{\mu_0}$
when $\delta>0$ is close to $0$.
Thus for $\delta>0$ small, we have
    \begin{align*}
        &\Psi_\mu(\wzeta_0(\lambda)) < -2Am_1 -1 \qquad 
            \text{for all}\ \lambda\in\R, \nonumber\\
        &\wzeta_0(\lambda)(x) \geq 0 \qquad 
            \text{for all}\ \lambda\in\R\ \text{and}\ x\in\R^N. \nonumber 
    \end{align*}
Modifying $\wzeta_0$ in a neighborhood of $[\lambda_0,\lambda_0+2\delta]$ slightly, 
we obtain $\zeta_0\in C^2(\R,E)$ with the desired properties \eqref{3.1}--\eqref{3.4}.

\begin{Remark} \label{Remark:3.4}    
When $G(s)={\frac{1}{p+1}}\abs s^{p+1}$ (equivalently $h \equiv 0$), we have 
    \begin{equation*}
        I(\lambda,t\omega_\mu)=\mu m_1
        \left( (N+2)\left(\half t^2 -{\frac{1}{p+1}}t^{p+1}\right)-1\right)
    \end{equation*}
and we note that
    \begin{equation} \label{3.15}
        \max_{t\geq 0} I(\lambda,t\omega_\mu)=I(\lambda,\omega_\mu)=0.  
    \end{equation}
Thus, choosing $\xi\in C^\infty(\R,(0,\infty))$ with
    \begin{equation*}
        \xi(\lambda)=\begin{cases}  
            \mu^{-{\frac{1}{p-1}}}  &\text{for}\ \lambda\leq -1,\\
            1                       &\text{for}\ \lambda\geq 0,
        \end{cases}
    \end{equation*}
we observe that for a large constant $C>1$, a map $\zeta_0(\lambda)=C\xi(\lambda)\omega_\mu$ 
satisfies the desired properties \eqref{3.1}--\eqref{3.4}.  
\end{Remark}

Using $\zeta_0$, we first define $\ub$ by 
    \begin{align*} 
    &\ub =\inf_{\gamma\in \uGamma}\max_{t\in [0,1]} I(\gamma(t)), \\
    &\uGamma= \Set{ 
        \gamma(t)\in C([0,1],\RE) | 
        \begin{aligned}
            &\gamma(0)\in\R\times\{ 0\}, \ I(\gamma(0)) < - 2 A m_1 - 1,\\
            &\gamma (1) \in ({\rm id}\times\zeta_0)(\R) 
        \end{aligned}
    }.
    \end{align*}    
By construction, we note that the condition on $\gamma (1)$ implies 
$I(\gamma(1)) < -2Am_1 - 1$ and $\frac{1}{2}\norm{\gamma_2(1)}_2^2 > m_1$.
Thus 
    \begin{equation}\label{3.16}
        \gamma([0,1])\cap (\R\times M_0)\not=\emptyset  \quad 
            \text{for all}\  \gamma\in\uGamma.
    \end{equation}

For another minimax value $\ob$, we define $\gamma_0$ by 
    \[  \gamma_0 (t,\lambda) = \left( \lambda , \ t \zeta_0 (\lambda) \right) 
        \in C^2( [0,1] \times \R , \RE )
    \]
and for $L> 1$, a collar $\calC(L) \subset [0,1] \times \R$ by 
\begin{equation*}
    \calC(L) = \left( [0,1]\times \left( (-\infty,-L ] \cup [L,\infty) \right) \bigcup
    ([0,1/L]\cup [1-1/L,1])\times \R \right) \subset [0,1] \times \R. 
\end{equation*}
Then we put 
    \begin{align*}  
    \ob 
    &=\inf_{\gamma\in\oGamma}\sup_{(t,\lambda)\in [0,1]\times\R} I(\gamma(t,\lambda)),\\
    \oGamma &=\Set{ \gamma(t,\lambda)\in C([0,1]\times\R,\RE) | 
        \begin{aligned}
            &\gamma(t,\lambda)=\gamma_0(t,\lambda) \ 
            \text{for}\ (t,\lambda)\in \calC(L_\gamma),\\
            &\text{where} \ L_\gamma>0 \
            \ \text{ possibly depends on $\gamma$}
        \end{aligned}
    }. 
    \end{align*}
We note that $\gamma_0\in\oGamma$ and $\oGamma\not=\emptyset$ under \ref{(g1*)}.
Moreover, by \ref{(g1*)}, \cref{Lemma:3.3} (ii) and (iv) 
and $\gamma = \gamma_0$ on $\calC(L_\gamma)$, we have 
    \begin{equation}\label{3.17}
        \lim_{\lambda \to - \infty} \max_{0 \leq t \leq 1} 
            I\left( \gamma(t, \lambda) \right) = 0, \quad 
        \lim_{\lambda \to +\infty} \max_{0 \leq t \leq 1} 
            I \left( \gamma (t,\lambda) \right) = -\alpha m_1.
    \end{equation}
The minimax values $\ub$ and $\ob$ enjoy the following properties.


\begin{Lemma}\label{Lemma:3.5}
Assume \ref{(g0')} and \ref{(g1*)}.  Then
the following inequalities hold: 
    \begin{align}   
        &-2Am_1 \leq \ub\leq \min\{-\alpha m_1, 0\},    \label{3.18}\\
        &\ob \geq \max\{ -\alpha m_1, 0\}.              \label{3.19}
    \end{align}
\end{Lemma}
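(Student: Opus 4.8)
The plan is to read both estimates off the geometry already in place: the intersection property \eqref{3.16}, the bound $I\ge -2Am_1$ on the cylinder $M_0$ from \cref{Lemma:3.2} (i), and — most importantly — the boundary behaviour \eqref{3.17}, which for the reference surface $\gamma_0\in\oGamma$ says $\max_{t\in[0,1]}I(\lambda,t\zeta_0(\lambda))\to 0$ as $\lambda\to-\infty$ and $\to -\alpha m_1$ as $\lambda\to+\infty$ (this is also \cref{Lemma:3.3} (ii),(iv)). No genuine linking argument is needed.

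For the lower bound $\ub\ge -2Am_1$: by \eqref{3.16} every $\gamma\in\uGamma$ meets $M_0$, and at such a point $I\ge -2Am_1$ by \cref{Lemma:3.2} (i); hence $\max_{t\in[0,1]}I(\gamma(t))\ge -2Am_1$, and taking the infimum over $\uGamma$ gives the left inequality in \eqref{3.18}. For the lower bound \eqref{3.19}: fix $\gamma\in\oGamma$; since $\sup_{(t,\lambda)\in[0,1]\times\R}I(\gamma(t,\lambda))=\sup_{\lambda\in\R}\max_{t\in[0,1]}I(\gamma(t,\lambda))$, this supremum is $\ge\limsup_{\lambda\to-\infty}\max_{t\in[0,1]}I(\gamma(t,\lambda))=0$ and $\ge\limsup_{\lambda\to+\infty}\max_{t\in[0,1]}I(\gamma(t,\lambda))=-\alpha m_1$ by \eqref{3.17}; taking the infimum over $\oGamma$ yields \eqref{3.19}.

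For the upper bound on $\ub$ I would produce two competitors in $\uGamma$. First, for $\lambda>0$ large enough one has $I(\lambda,0)=-\mu m_1<-2Am_1-1$ by \cref{Lemma:3.2} (ii), so $t\mapsto\gamma_0(t,\lambda)=(\lambda,t\zeta_0(\lambda))$ lies in $\uGamma$ (its endpoint is on $({\rm id}\times\zeta_0)(\R)$), giving $\ub\le\max_{t\in[0,1]}I(\lambda,t\zeta_0(\lambda))$; letting $\lambda\to+\infty$ and using \eqref{3.17} gives $\ub\le -\alpha m_1$. Second, fix one $\lambda_*>0$ with $I(\lambda_*,0)<-2Am_1-1$, and for $\lambda_1<\lambda_*$ build a path in $\uGamma$ by first moving along $\{u=0\}$ from $(\lambda_*,0)$ to $(\lambda_1,0)$ and then following $t\mapsto(\lambda_1,t\zeta_0(\lambda_1))$, $t\in[0,1]$. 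Along the first leg $I(\lambda,0)=-e^{\lambda}m_1\le -e^{\lambda_1}m_1<0$ (since $\lambda\ge\lambda_1$), and along the second leg the maximum of $I$ is $\max_{t\in[0,1]}I(\lambda_1,t\zeta_0(\lambda_1))$, so the maximum along the whole path is at most $\max\{-e^{\lambda_1}m_1,\ \max_{t\in[0,1]}I(\lambda_1,t\zeta_0(\lambda_1))\}$; letting $\lambda_1\to-\infty$ both quantities tend to $0$ by \eqref{3.17}, so $\ub\le 0$. Combining the two competitors gives $\ub\le\min\{-\alpha m_1,0\}$, which with the lower bound proves \eqref{3.18}.

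The only point requiring care is the second competitor: one cannot simply take $\lambda\to-\infty$ in $\gamma_0(\cdot,\lambda)$, because the admissibility constraint $I(\gamma(0))<-2Am_1-1$ forces the initial slice to have $\lambda$ large and positive. The detour along $\{u=0\}$ is what lets the path reach the region $\lambda\to-\infty$, where $\max_{t\in[0,1]}I(\lambda,t\zeta_0(\lambda))\to 0$; it costs essentially nothing because $I(\lambda,0)=-e^{\lambda}m_1$ is negative throughout and tends to $0$ as $\lambda\to-\infty$. Apart from this, the whole statement is a routine bookkeeping of the constructions in \cref{Section:3.2,Section:3.3}.
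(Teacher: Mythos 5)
Your proof is correct and follows essentially the same route as the paper: the lower bound for $\ub$ from \eqref{3.16} together with \cref{Lemma:3.2} (i), the upper bounds from explicit competitors in $\uGamma$ whose maxima are controlled by \cref{Lemma:3.3} (ii),(iv) (equivalently \eqref{3.17}) as $\lambda\to\mp\infty$, and \eqref{3.19} directly from \eqref{3.17}. The only cosmetic difference is that for $\ub\le-\alpha m_1$ you use the straight slice $t\mapsto(\lambda,t\zeta_0(\lambda))$ with $\lambda\gg1$ (admissible since $I(\lambda,0)<-2Am_1-1$ there), whereas the paper also inserts the detour along $\R\times\{0\}$ in that case; both are fine.
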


\begin{proof}
Property \eqref{3.16} and \cref{Lemma:3.2} yield $-2Am_1 \leq \ub$. 
On the other hand, fix $\lambda_1 \in \R$ so that 
$I(\lambda_1, 0) = - e^{\lambda_1} m_1 < - 2Am_1 - 1$. 
Let $\lambda \ll -1$ and consider a path $\gamma$ obtained by joining 
the following paths: for $0 \leq \theta \leq 1$, 
    \[  \left( (1-\theta) \lambda_1 + \theta \lambda , 0 \right), \quad 
        \left( \lambda, \theta\zeta_0(\lambda)\right).
    \]
From the monotonicity of $\theta \mapsto I( (1-\theta)\lambda_1 + \theta\lambda, 0)$,
we infer that $\gamma \in \uGamma$.  
Recalling $\zeta_0(\lambda) = T_0 \mu^{-1/(p+1)} \omega_\mu$,
we have
    \[  \ub \leq \max_{0 \leq t \leq 1} I(\gamma(t)) 
        = \max_{0 \leq \theta \leq 1} I(\lambda, \theta\zeta_0(\lambda))
        = \max_{0 \leq t \leq T_0} I(\lambda, t \mu^{-1/(p+1)} \omega_{\mu} ). 
    \]
Letting $\lambda \to - \infty$, $\ub \leq 0$ follows from \cref{Lemma:3.3} (ii).
Next we consider the case $\lambda\gg 1$ and define $\gamma(\lambda)$ in a similar
way.  
Considering the limit $\lambda \to + \infty$, we get $\ub \leq -\alpha m_1$ 
by \cref{Lemma:3.3} (iv). Thus, \eqref{3.18} holds.  
\eqref{3.19} follows from \eqref{3.17}.
\end{proof}

In what follows, for the existence of solutions, we treat the balanced case 
($\alpha=0$, i.e., \ref{(g1)}) in Sections \ref{Section:4}--\ref{Section:7}.
On the other hand, the non-existence of solutions will be considered 
in \cref{Section:9} for the unbalanced case ($\alpha \neq 0$). 

\section{Mountain pass values $b(\lambda)$ and $\ub$, $\ob$}\label{Section:4}

\subsection{Properties of $b(\lambda)$ and $\ub$, $\ob$} \label{Section:4.1}

In what follows, we assume \ref{(g1)} and study the existence of a critical point 
of $I$. We will see either $\ub$ or $\ob$ is a critical value of $I$.
Under \ref{(g1)}, $\alpha = 0$ holds and  we deduce the following corollary from 
\eqref{3.18}--\eqref{3.19}.

\begin{Corollary} \label{Corollary:4.1}
Assume \ref{(g0')} and \ref{(g1)}. Then
    \begin{equation*}
        -2Am_1 \leq \ub \leq 0 \leq \ob.
    \end{equation*}
\end{Corollary}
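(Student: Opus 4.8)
The plan is to read off \cref{Corollary:4.1} as the $\alpha=0$ specialization of \cref{Lemma:3.5}, so the argument is essentially a one-line deduction. First I would observe that hypothesis \ref{(g1)} is precisely hypothesis \ref{(g1*)} with $\alpha=0$: the condition $\lim_{s\to\infty}h(s)/s=0$ in \eqref{1.14}, combined with the odd extension of $g$ (hence of $h$) fixed in \cref{Section:2} under \ref{(g0')}, is exactly $\lim_{s\to\pm\infty}h(s)/s=\alpha$ with $\alpha=0$, while the local behavior $h(s)=o(\abs s^{p-1}s)$ as $s\sim 0$ is common to both. Consequently \cref{Lemma:3.5} applies and furnishes \eqref{3.18}--\eqref{3.19} with $\alpha=0$.

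Then I would simply substitute $\alpha=0$: estimate \eqref{3.18} becomes $-2Am_1\le\ub\le\min\{0,0\}=0$, and estimate \eqref{3.19} becomes $\ob\ge\max\{0,0\}=0$; concatenating the two chains yields $-2Am_1\le\ub\le 0\le\ob$, where $A$ is the constant of \eqref{2.1}. There is essentially no obstacle here. The only point deserving a sentence of care is to confirm that the odd-extension setting in force from \cref{Section:2} onward is compatible with the hypotheses of \cref{Lemma:3.5}, but since that lemma is itself stated under \ref{(g0')} and \ref{(g1*)} — i.e., within the same odd-extension framework, and its proof relies only on \eqref{2.1} and \cref{Corollary:2.2} — this is automatic and no new estimate is required.
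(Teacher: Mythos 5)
Your proof is correct and coincides with the paper's own deduction: \cref{Corollary:4.1} is obtained exactly as the $\alpha=0$ case of \cref{Lemma:3.5}, since \ref{(g1)} is \ref{(g1*)} with $\alpha=0$, and substituting into \eqref{3.18}--\eqref{3.19} gives $-2Am_1\le\ub\le 0\le\ob$. Your remark about the odd extension being already in force is also consistent with the paper's setting.
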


To see further properties of $\underline{b}$ and $\overline{b}$, 
for $\lambda\in\R$ we introduce the mountain pass value $b(\lambda)$ of a functional 
$ u\mapsto I(\lambda,u);\, E\to \R$ by
    \begin{equation}\label{4.1}
        b(\lambda) = \inf_{\lambda\in\widehat{\Lambda}_\mu} \max_{t\in [0,1]}
        I(\lambda,\gamma(t)),
    \end{equation}
where 
    \begin{equation}\label{4.2}
        \widehat{\Lambda}_\mu = \Set{ \gamma \in C([0,1],E) | \gamma(0)=0, \ 
        \gamma(1) = \zeta_0(\lambda) }. 
    \end{equation}
We note that 
    \begin{equation} \label{4.3}
        b(\lambda)= a(\mu)-\mu m_1, \ \mu=e^\lambda,            
    \end{equation}
where $a(\mu)$ is the mountain pass value defined in \eqref{3.8}--\eqref{3.9}.
In fact, noting \eqref{3.11} and the connectedness of $\{ u\in E\,|\, \Psi_\mu(u)<0\}$
by \cref{Proposition:3.1}, we have \eqref{4.3}.

We give fundamental properties of $b(\lambda)$.

\begin{Proposition} \label{Proposition:4.2}
Suppose  \ref{(g0')} and \ref{(g1)}. Then 
\begin{enumerate}[label={\rm (\roman*)}]
\item $b(\lambda)\in C(\R,\R)$;
\item $\ub\leq b(\lambda)\leq \ob$ for all $\lambda\in\R$;
\item $b(\lambda)\to 0$ as $\lambda\to -\infty$;
\item $b(\lambda)\to 0$ as $\lambda\to + \infty$. 
\end{enumerate}
\end{Proposition}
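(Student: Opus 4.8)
The key observation is the identity \eqref{4.3}, $b(\lambda) = a(\mu) - \mu m_1$ with $\mu = e^\lambda$, which reduces everything to understanding $a(\mu)$, the mountain pass value of $\Psi_\mu$. Statement (i) then follows immediately: $\lambda \mapsto \mu = e^\lambda$ is continuous and $a(\mu)$ is continuous on $(0,\infty)$ by \cref{Proposition:3.1} (iv), while $\mu m_1$ is obviously continuous; hence $b \in C(\R,\R)$. Before invoking \eqref{4.3} one should briefly justify it as the text suggests: the minimax class $\widehat\Lambda_\mu$ in \eqref{4.2} fixes the endpoint at $\zeta_0(\lambda)$, which lies in the connected sublevel set $\{\Psi_\mu < 0\}$ (by \eqref{3.1} and \cref{Proposition:3.1} (iii)), so up to the constant shift $-\mu m_1$ coming from \eqref{3.11} the minimax value over $\widehat\Lambda_\mu$ coincides with $a(\mu)$ over $\Lambda_\mu$.

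For (ii), the upper bound $b(\lambda) \le \ob$ comes from building a competitor in $\oGamma$ out of a near-optimal path for $b(\lambda)$: take an optimal mountain pass path $\gamma_\mu \in \Lambda_\mu$ for $\Psi_\mu$ (available by \cref{Proposition:3.1} (iii)), reparametrize/glue it with $\gamma_0$ so that it agrees with $\gamma_0$ on the collar $\calC(L)$ for some large $L$, and observe that the sup of $I$ over this 2-parameter path is controlled by $\max_t I(\lambda,\gamma_\mu(t)) = b(\lambda)$ on the relevant slice and by the collar values (which are $\le 0$, in fact $\to 0$, by \eqref{3.17} with $\alpha=0$) elsewhere; one may need \cref{Lemma:3.3} to control the other slices. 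The lower bound $\ub \le b(\lambda)$ is the dual statement: given any $\gamma \in \uGamma$, one slices or projects it to produce a path in $\widehat\Lambda_\mu$ (joining $0$ to $\zeta_0(\lambda)$), using the linking property \eqref{3.16} and the monotone-deformation structure; the max of $I$ along the produced path is no larger than $\max I$ along $\gamma$, giving $\ub \le b(\lambda)$ after taking infima.

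For (iii) and (iv), the cleanest route is again through \eqref{4.3}. As $\lambda \to -\infty$, $\mu \to 0^+$, so $\mu m_1 \to 0$, and one needs $a(\mu) \to 0$; as $\lambda \to +\infty$, $\mu \to \infty$, so $\mu m_1 \to \infty$, and one needs $a(\mu) - \mu m_1 \to 0$, i.e. a precise asymptotic $a(\mu) = \mu m_1 + o(1)$. The natural way to get upper bounds on $a(\mu)$ is to plug in the explicit near-optimal paths from \cref{Lemma:3.3}: for $\mu$ small use $t \mapsto t\mu^{-1/(p+1)}\omega_\mu$, for $\mu$ large use $t \mapsto t\omega_\mu$, together with the sublinearity $\lim_{s\to\infty} h(s)/s = 0$ (i.e. $\alpha = 0$) to show the $H$-contributions are negligible — these are exactly the computations already carried out in the proof of \cref{Lemma:3.3} (ii) and (iv), which give $b(\lambda) \le o(1)$ in each regime. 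For the matching lower bound, combine (ii) with \cref{Corollary:4.1}: since $\ub \le b(\lambda) \le \ob$ and, from \eqref{3.18}--\eqref{3.19} with $\alpha=0$, one has $\ub \le 0 \le \ob$; but this alone is not sharp enough at $\pm\infty$, so instead one uses $b(\lambda) \ge 0$ directly — this follows because any path in $\widehat\Lambda_\mu$ from $0$ to $\zeta_0(\lambda)$ must cross the sphere $\calS_0$ (as $\frac12\|\zeta_0(\lambda)\|_2^2 > m_1 > 0$ by \eqref{3.2}), and on $\R\times\calS_0$ we have, refining \cref{Lemma:3.2} (i) with Gagliardo–Nirenberg and the smallness of $H$, that $I \ge -o(1)$ as $\lambda \to \pm\infty$; more carefully, on the crossing point $I(\lambda,u) = \frac12\|\nabla u\|_2^2 - \frac{1}{p+1}\|u\|_{p+1}^{p+1} - \int H(u) \ge -\int H(u)$, and one shows $\int H(u)$ is small there. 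The main obstacle is precisely this last sharp two-sided estimate at $\lambda \to \pm\infty$: getting $b(\lambda) \to 0$ rather than merely $\limsup b(\lambda) \le 0$ requires quantitative control of the error term $\int_{\R^N} H(u)\,dx$ along minimizing paths, uniformly as $\mu \to 0$ or $\mu \to \infty$, which is where the sublinear growth hypothesis in \ref{(g1)} does its essential work.
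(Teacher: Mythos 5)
Your reduction of (i) to \eqref{4.3} and \cref{Proposition:3.1} (iv) matches the paper, but in (ii) your two arguments are attached to the wrong inequalities, and as stated each proves either nothing or the reverse inequality. Since $\ob=\inf_{\gamma\in\oGamma}\sup I$, exhibiting one competitor surface glued from an optimal path for $b(\lambda)$ only bounds $\ob$ from above (roughly $\ob\le\max\{b(\lambda),\dots\}$); it cannot give $b(\lambda)\le\ob$. That inequality requires showing that \emph{every} $\gamma\in\oGamma$ dominates $b(\lambda)$, which the paper does by approximating $\gamma$ by $C^2$ surfaces and applying Sard's theorem to the first component: for a dense set of regular values $\lambda$, the level set $\gamma_1^{-1}(\lambda)$ contains a curve joining $(0,\lambda)$ to $(1,\lambda)$ whose image under $\gamma_2$ is a path in $\Lambda_\mu$, and one concludes by density together with the continuity of $b$ from (i). Symmetrically, your plan for $\ub\le b(\lambda)$ --- slicing an arbitrary $\gamma\in\uGamma$ to produce a path in $\widehat\Lambda_\mu$ --- would yield $b(\lambda)\le\max_t I(\gamma(t))$ for every $\gamma\in\uGamma$, i.e.\ $b(\lambda)\le\ub$, again the wrong direction (and a one-parameter path in $\uGamma$, whose $\lambda$-component varies, cannot be sliced at a fixed $\lambda$; \eqref{3.16} only gives one crossing point of the cylinder). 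The correct and simple argument, used in the paper, goes the other way: from an arbitrary $\gamma\in\widehat\Lambda_\mu$ one builds an element of $\uGamma$ by prepending the segment $\theta\mapsto((1-\theta)\lambda_1+\theta\lambda,0)$, on which $I$ is monotone and below $-2Am_1-1$, so $\ub\le\max_t I(\lambda,\gamma(t))$ and hence $\ub\le b(\lambda)$.

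For (iii)--(iv) your upper bounds via the explicit paths of \cref{Lemma:3.3} are exactly the paper's, but the lower bounds are where your proposal has a real gap. For (iii) no cylinder-crossing argument is needed: every $\gamma\in\widehat\Lambda_\mu$ passes through $(\lambda,0)$, so $b(\lambda)\ge I(\lambda,0)=-\mu m_1\to0$ as $\lambda\to-\infty$. For (iv) the crossing estimate you propose does not close: at a crossing point $u\in\calS_0$ one only gets $I(\lambda,u)\ge-\int_{\R^N}H(u)\,dx$ with $|\int H(u)|\le A\|u\|_2^2=2Am_1$, and there is no mechanism making $\int H(u)$ small there --- the crossing function of an arbitrary near-optimal path can have amplitude of order one regardless of $\lambda$, so this only reproduces the bound $b(\lambda)\ge-2Am_1$ of \cref{Lemma:3.2}, not $b(\lambda)\to0$. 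You correctly identify this as the main obstacle but do not resolve it. The paper's proof of (iv) is a genuinely different, compactness-based argument: take the least energy solutions $u_j$ realizing $b(\lambda_j)$ (via \cref{Proposition:3.1}), rescale $u_j(x)=\mu_j^{N/4}v_j(\mu_j^{1/2}x)$, prove boundedness of $(v_j)$, use the Palais--Smale property of $\Psi_{01}$ to get $v_j\to\pm\omega_1$ strongly in $E$, and then evaluate $I(\lambda_j,\cdot)$ along the explicit optimal paths of \cref{Proposition:A.2} built from $v_j$, where the $H$-error vanishes uniformly on a precompact subset of $L^2$ by \cref{Lemma:A.1}; comparison with the mountain pass level $m_1$ of $\Psi_{01}$ then gives $\liminf_j b(\lambda_j)\ge0$. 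Without this step (or an equivalent one), your outline does not establish (iv).
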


Property (i) follows from \eqref{4.3} and the continuity of $a(\mu)$ obtained in 
\cref{Proposition:3.1} (iv). Here we give proofs of (ii) and (iii).  
Properties (i)--(iv) are important to see the topological properties of $I$ 
through $b(\lambda)$.
However to show \cref{Theorem:1.1} properties (i)--(iii) are enough and 
moreover a proof of (iv) is lengthy.  We will give it in \cref{Section:4.4}.
Later in \cref{Corollary:7.5} in \cref{Section:7.3}, we will show
$\ub = \inf_{\lambda\in\R} b(\lambda)$.

\begin{proof}[Proof of \cref{Proposition:4.2} (ii)]
Here we use an idea from \cite{Tan0}.
We fix $\lambda \in \R$ arbitrary and show $\ub\leq b(\lambda)$. 
Let $\gamma \in \widehat{\Lambda}_\mu$ be given and 
take $\lambda_1> \lambda$ with $- e^{\lambda_1} m_1 < -2Am_1-1$. 
As in the proof of \cref{Lemma:3.5}, consider a path $\zeta$ obtained via joining 
$((1-\theta) \lambda_1 + \theta \lambda, 0):\, [0,1]\to \RE$ and $\gamma:\, [0,1]\to\RE$. 
Then it can be verified that $\zeta \in \uGamma$ and 
    \[  \ub \leq \max_{ 0 \leq t \leq 1} I(\zeta(t)) 
        = \max_{0 \leq t \leq 1} I(\lambda, \gamma(t)). 
    \]
Since $\gamma \in \widehat{\Lambda}_\mu$ is arbitrary, $\ub \leq b(\lambda)$ holds. 

Next we show $b(\lambda)\leq \ob$. For this purpose, for each 
$\gamma \in C^2( [0,1] \times \R , \RE ) \cap \oGamma$ 
we prove the existence of a dense subset $\Xi \subset \R$ with the following property:
for any $\lambda \in \Xi$ there exists a path $\sigma_\lambda \in\Lambda_\mu$, 
$\mu=e^\lambda$ satisfying 
    \begin{equation} \label{4.4}
        \{\lambda\}\times \sigma_\lambda([0,1])\subset \gamma([0,1]\times\R).
    \end{equation}
We note that \eqref{4.4} implies
    \begin{equation*}
        b(\lambda) \leq \max_{t\in [0,1]} I(\lambda,\sigma_\lambda(t))
        \leq \sup_{(t,\lambda')\in [0,1]\times\R} I(\gamma(t,\lambda'))
        \quad \text{for}\ \lambda\in\Xi.
    \end{equation*}
Since $b(\lambda)$ is continuous by (i) and $\Xi$ is dense in $\R$,  we have
    \begin{equation} \label{4.5}
        b(\lambda) \leq \sup_{(t,\lambda')\in [0,1]\times\R} I(\gamma(t,\lambda'))
        \quad \text{for all}\ \lambda\in\R.
    \end{equation}
Now we take $\gamma\in \oGamma$ arbitrary.  We note that 
$\gamma(t,\lambda)=\gamma_0(t,\lambda)$ holds for all $(t,\lambda)$ except for 
a compact set included in $(0,1)\times \R$.  
Recalling $\gamma_0 \in C^2( [0,1] \times \R, \RE )$, it is not difficult to see
that there exists a sequence 
$(\gamma_n)_{n=1}^\infty\subset C^2( [0,1] \times \R , \RE ) \cap \oGamma$ 
such that
$\gamma_n(t,\lambda)\to\gamma(t,\lambda)$ uniformly in $[0,1]\times\R$.
Thus
    \[  \sup_{(t,\lambda) \in [0,1] \times \R} I(\gamma_n(t,\lambda)) 
        \to \sup_{(t,\lambda) \in [0,1] \times \R} I(\gamma(t,\lambda)). 
    \]
Since $\gamma\in\oGamma$ is arbitrary, \eqref{4.5} implies $b(\lambda)\leq \ob$ for all 
$\lambda\in\R$.

Let $\gamma =(\gamma_1,\gamma_2) \in\oGamma$ be of class $C^2$ and we show that for some 
dense subset $\Xi\subset\R$ there exists $\sigma_\lambda(t)\in\Lambda_\mu$
with \eqref{4.4} for each $\lambda\in\Xi$.
Since $\gamma_1(t,\lambda) = \lambda =\gamma_1(1-t,\lambda)$ hold 
for all $t \in [0, 1/ L_\gamma]$ and $\lambda \in \R$, in what follows, we identify 
$(0,\lambda)$ and $(1,\lambda)$ and regard $S^1\times\R \simeq ([0,1]/\{0, 1\})\times\R$.
Then $\gamma_1 \in C^2( S^1 \times \R,\R )$. 

We denote by $\Xi$ the set of regular values of 
$\gamma_1$, that is, $\Xi=\{v\in\R\,|\, D_{t,\lambda}\gamma_1(t,\lambda)\not=0 \ 
\hbox{if}\ \gamma_1(t,\lambda)=v\}$, where 
$D_{t,\lambda}=(\partial_t, \partial_\lambda)$.
By Sard's Theorem, the set of critical values $\R\setminus\Xi$ has Lebesgue measure 
$0$ in $\R$ and thus $\Xi$ is dense in $\R$.

For each $\lambda\in\Xi$, the pre-image $\gamma_1^{-1}(\lambda)$ is a $C^1$-curve in 
$S^1\times\R$ and
$\gamma_1^{-1}(\lambda)\cap (\{ 0\}\times\R) = \{ (0,\lambda)\}$.  Moreover 
$\gamma_1^{-1}(\lambda)$ is compact since $\gamma_1(t,\lambda) \to \pm \infty$ as 
$\lambda \to \pm \infty$. 
Thus $\gamma_1^{-1}(\lambda)$ is a finite union of closed smooth curves. 
We also note that $\gamma_1(t,\lambda)=\lambda$ in a neighborhood of $\{0\}\times\R$.
Thus $\gamma_1^{-1}(\lambda)$ has exactly one component which crosses $\{0\}\times \R$.
We denote it by $C$.
We remark that $C$ is a smooth curve joining $(0,\lambda)$ and $(1,\lambda)$.

We write $C$ as $c(t):[0,1]\to [0,1]\times \R$, where $c(t)$
satisfies
    \[  c(0)=(0,\lambda), \quad c(1)= (1,\lambda), 
        \quad c(t)\in (0,1)\times\R \quad\text{for all} \ t\in (0,1).
    \]
Then a path $\sigma_\lambda$ defined by $\sigma_\lambda(t)=\gamma_2(c(t))$ satisfies
$\gamma(c(t))=(\lambda,\sigma_\lambda(t))$.
It is easy to see $\sigma_\lambda\in \Lambda_\mu$ and \eqref{4.4} holds. 
Thus we have (ii).
\end{proof}

\begin{proof}[Proof of \cref{Proposition:4.2} (iii)]
By \cref{Lemma:3.3} (i) and (ii), for 
$\lambda \ll -1$, $\gamma_\lambda(t)=t T_0 \mu^{-1/(p+1)} \omega_\mu$ satisfies
$\gamma_\lambda\in \widehat{\Lambda}_\mu$ and
    \begin{equation} \label{4.6}
        b(\lambda)\leq \max_{t\in [0,1]} I(\lambda,\gamma_\lambda(t))\to 0
        \quad \text{as}\ \lambda\to -\infty.        
    \end{equation}
On the other hand, by the definition \eqref{4.1}--\eqref{4.2} of $b(\lambda)$, we 
observe $0\in \gamma([0,1])$ for all $\gamma\in \widehat\Lambda_\mu$, which 
implies $\max_{t\in [0,1]} I(\lambda,\gamma(t))\geq I(\lambda,0)=-\mu m_1$ for all 
$\gamma\in \widehat\Lambda_\mu$.  Thus
    \begin{equation} \label{4.7}
        b(\lambda) \geq  -\mu m_1 \quad
        \text{for all}\ \lambda\in\R.
    \end{equation}
Thus \eqref{4.6} and \eqref{4.7} imply $b(\lambda)\to 0$ as $\lambda\to -\infty$.
\end{proof}

\begin{Remark}\label{Remark:4.3}
For $L^2$ subcritical $g$ (e.g. $g(s)=\abs s^{q-1}s$, $1<q<p$), the function $b$
behaves differently and satisfies $b(\lambda)\to \infty$ as $\lambda\to \infty$.
See \cite{HT0}.
\end{Remark}

To close this section, we compute $\underline{b}$, $\overline{b}$ and $b(\lambda)$
for $G(s) = \frac{1}{p+1} \abs{s}^{p+1}$.

\begin{Lemma} \label{Lemma:4.4}
When $G(s)={\frac{1}{p+1}}\abs s^{p+1}$, we have
    \begin{equation*}
        \ub = \ob =0 \quad \text{and}\quad b(\lambda)=0 \ \text{for all}\ \lambda\in\R.
    \end{equation*}
\end{Lemma}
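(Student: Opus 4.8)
The plan is to first pin down the mountain pass function $b(\lambda)$ explicitly, and then squeeze $\ub$ and $\ob$ between $0$ and $b(\lambda)$ by means of the already established sandwich $\ub\le b(\lambda)\le\ob$ from \cref{Proposition:4.2} (ii).

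First I would compute $b(\lambda)$. When $G(s)=\frac{1}{p+1}\abs s^{p+1}$, i.e.\ $h\equiv 0$, the functional $\Psi_\mu$ of \eqref{3.5} coincides with $\Psi_{0\mu}$ from \eqref{2.8}; since $\omega_\mu$ has a mountain pass characterization for $\Psi_{0\mu}$ and $\Psi_{0\mu}(\omega_\mu)=\mu m_1$ by \eqref{2.9}, the mountain pass value is $a(\mu)=\mu m_1$. Substituting into \eqref{4.3} yields $b(\lambda)=a(\mu)-\mu m_1=0$ for every $\lambda\in\R$. (Note $h\equiv 0$ forces $\alpha=0$, so \ref{(g1)} holds and \cref{Proposition:4.2} is available.)

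Next I would establish $\ub\ge 0$; together with $\ub\le b(\lambda)=0$ this gives $\ub=0$. Any $\gamma\in\uGamma$ meets the cylinder $M_0=\R\times\calS_0$ by \eqref{3.16}, and at any point $(\lambda,u)\in M_0$ one has $\half\norm u_2^2=m_1$ and $H\equiv 0$, so $I(\lambda,u)=\half\norm{\nabla u}_2^2-\frac{1}{p+1}\norm u_{p+1}^{p+1}\ge 0$ by \cref{Corollary:2.2}. Hence $\max_{t\in[0,1]}I(\gamma(t))\ge 0$ for every $\gamma\in\uGamma$, and passing to the infimum gives $\ub\ge 0$.

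Symmetrically, to obtain $\ob\le 0$ — which with $\ob\ge b(\lambda)=0$ forces $\ob=0$ — I would test $\ob$ against a single competitor. As noted in \cref{Remark:3.4}, for $h\equiv 0$ one may take $\zeta_0(\lambda)=C\xi(\lambda)\omega_\mu$, so that $\gamma_0(t,\lambda)=(\lambda,t\zeta_0(\lambda))\in\oGamma$; since $t\zeta_0(\lambda)$ is a nonnegative multiple of $\omega_\mu$, the identity \eqref{3.15} gives $I(\gamma_0(t,\lambda))\le\max_{s\ge 0}I(\lambda,s\omega_\mu)=0$ for all $(t,\lambda)$, whence $\ob\le\sup_{(t,\lambda)}I(\gamma_0(t,\lambda))\le 0$. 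Combining the three facts gives $\ub=\ob=0$ and $b(\lambda)\equiv 0$. I do not anticipate a genuine obstacle: every step reduces to the scaling relations \eqref{2.9}, the sharp Gagliardo--Nirenberg inequality via \cref{Corollary:2.2}, and the previously verified properties of $\zeta_0$; the only point demanding a line of care is checking that the explicit $\gamma_0$ really belongs to $\oGamma$, which is exactly the content of \cref{Remark:3.4}.
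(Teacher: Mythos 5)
Your proof is correct; the only difference from the paper is the direction of the logic. The paper first gets $\ub=0$ directly from \eqref{3.18} (since $h\equiv 0$ forces $A=0$ and $\alpha=0$), gets $\ob=0$ from the competitor $\gamma_0$ of \cref{Remark:3.4} together with \eqref{3.15} and \eqref{3.19}, and only then deduces $b(\lambda)\equiv 0$ from the sandwich of \cref{Proposition:4.2} (ii). You instead compute $b(\lambda)=0$ first, via \eqref{4.3} and the identification $a(\mu)=\Psi_{0\mu}(\omega_\mu)=\mu m_1$ (legitimate, since for the pure power the least energy solutions are $\pm\omega_\mu$ and \cref{Proposition:3.1} identifies $a(\mu)$ with the least energy level), and then use the sandwich in the reverse direction to get $\ub\le 0$ and $\ob\ge 0$, supplying $\ub\ge 0$ by the cylinder argument (\eqref{3.16} plus \cref{Corollary:2.2}, which is exactly the content of the lower bound in \cref{Lemma:3.5} when $A=0$) and $\ob\le 0$ by the same $\gamma_0$ computation as the paper. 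The paper's route is shorter because it simply cites the already-proved inequalities \eqref{3.18}--\eqref{3.19}; yours is slightly more self-contained in that it exhibits $b(\lambda)=0$ by an explicit computation rather than as a by-product, at the cost of re-deriving a bound that \cref{Lemma:3.5} already provides. Both arguments rest on the same ingredients (\eqref{2.9}, \cref{Corollary:2.2}, \cref{Remark:3.4}, \eqref{3.15}, \cref{Proposition:4.2} (ii)), so there is no gap.
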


\begin{proof}
When $G(s)={\frac{1}{p+1}}\abs s^{p+1}$, we have $\alpha=0$ and $A=0$.  
Thus \eqref{3.18} implies $\ub=0$.  On the other hand, as stated in \cref{Remark:3.4}, 
$\zeta_0$ has a form $\zeta_0(\lambda)=C\xi(\lambda)\omega_\mu$.  
Thus $\gamma_0(t,\lambda)=(\lambda,t\zeta_0(\lambda))$
satisfies by \eqref{3.15}
    \begin{equation*}
        \sup_{(t,\lambda)\in [0,1]\times\R} I(\gamma_0(t,\lambda))=0.
    \end{equation*}
Together with \eqref{3.19}, we have $\ob=0$.
It follows from \cref{Proposition:4.2} (ii) that $b(\lambda)=0$ for 
all $\lambda\in\R$.
\end{proof}

\subsection{Least energy solutions of scalar field equations and $\ub$} 
\label{Section:4.2}
In this section we study the case where $b(\lambda_0)=\ub$ holds for some 
$\lambda_0\in\R$. In this case, we show the least energy solution of \eqref{3.7} 
with $\mu=e^{\lambda_0}$ gives a solution of \eqref{1.1} with $m=m_1$.  
Namely we show

\begin{Proposition} \label{Proposition:4.5}   
Assume that $\lambda_0\in\R$ satisfies $b(\lambda_0)=\ub$.  Then for any
critical point $u_0\in E$ of $u\mapsto I(\lambda_0,u)$ with 
$I(\lambda_0,u_0)=b(\lambda_0)=\ub$,
$(\lambda_0,u_0)\in\RE$ is a critical point of $I:\, \RE\to\R$.  That is, 
$(\mu_0,u_0)$, $\mu_0=e^{\lambda_0}$ is a solution of \eqref{1.1} with $m=m_1$.
Moreover there exists a positive critical point corresponding to $b(\lambda_0)=\ub$.
\end{Proposition}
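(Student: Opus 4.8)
The plan is to show that the only obstruction to $(\lambda_0,u_0)$ being a critical point of $I$ on $\RE$ is the $\lambda$-equation $\partial_\lambda I(\lambda_0,u_0)=0$, i.e. the mass constraint $\tfrac12\norm{u_0}_2^2=m_1$, and to extract this constraint from the minimality of $b$ at $\lambda_0$. Write $\mu=e^\lambda$, $\mu_0=e^{\lambda_0}$. First I would record that $u_0$ is in fact a least energy solution of \eqref{3.7} with $\mu=\mu_0$: since $I(\lambda_0,\cdot)=\Psi_{\mu_0}(\cdot)-\mu_0m_1$, the hypotheses $\partial_uI(\lambda_0,u_0)=0$ and $I(\lambda_0,u_0)=b(\lambda_0)$ together with \eqref{4.3} give $\Psi_{\mu_0}'(u_0)=0$ and $\Psi_{\mu_0}(u_0)=a(\mu_0)$; by the Pohozaev identity this also yields $\Psi_{\mu_0}(u_0)=\tfrac1N\norm{\nabla u_0}_2^2=a(\mu_0)$. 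Moreover, by \cref{Proposition:4.2}(ii) and $b(\lambda_0)=\ub$ we have $b(\lambda)\ge b(\lambda_0)$ for all $\lambda\in\R$.

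The core step is the one-sided estimate
\[
 b(\lambda)\le b(\lambda_0)+(\mu-\mu_0)\Bigl(\tfrac12\norm{u_0}_2^2-m_1\Bigr)+C(\mu-\mu_0)^2,\qquad |\lambda-\lambda_0|\ \text{small},
\]
which I would obtain by testing $b(\lambda)$ against a comparison path built from $u_0$. For $N\ge3$ take the dilation $t\mapsto u_0(\cdot/t)$; using the Pohozaev identity for $u_0$ one gets
\[
 \Psi_\mu\bigl(u_0(\cdot/t)\bigr)=\tfrac{\norm{\nabla u_0}_2^2}{2}\Bigl(t^{N-2}-\tfrac{N-2}{N}t^{N}\Bigr)+\tfrac{\mu-\mu_0}{2}\norm{u_0}_2^2\,t^{N}.
\]
The first summand is the standard optimal-path profile of $\Psi_{\mu_0}$, with a nondegenerate maximum equal to $a(\mu_0)$ at $t=1$; hence $\max_{t>0}\Psi_\mu(u_0(\cdot/t))$ is attained near $t=1$ and equals $\Psi_\mu(u_0)+O((\mu-\mu_0)^2)$. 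Choosing $T\gg1$ (uniformly for $\lambda$ near $\lambda_0$) with $\Psi_\mu(u_0(\cdot/T))<-2Am_1-1$, I would close the path up to $\zeta_0(\lambda)$ by joining $u_0(\cdot/T)$ to $\zeta_0(\lambda)$ inside the path-connected sublevel set $\{\Psi_\mu<-2Am_1-1\}$ (\cref{Proposition:3.1}(iii)); on this added arc $I(\lambda,\cdot)<-2Am_1-1-\mu m_1<-2Am_1\le\ub$, which for $\lambda$ close to $\lambda_0$ is strictly below $\max_tI(\lambda,u_0(\cdot/t))$ (the latter being $\approx b(\lambda_0)=\ub$). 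The completed path lies in $\widehat{\Lambda}_\mu$, so $b(\lambda)$ is at most its maximal $I$-value, which equals $\Psi_\mu(u_0)-\mu m_1+O((\mu-\mu_0)^2)=b(\lambda_0)+(\mu-\mu_0)(\tfrac12\norm{u_0}_2^2-m_1)+O((\mu-\mu_0)^2)$ by $\Psi_{\mu_0}(u_0)=a(\mu_0)$ and \eqref{4.3}. For $N=2$ the pure dilation is $\Psi_{\mu_0}$-constant and thus useless; there I would instead use the explicit optimal path of \cref{Proposition:3.1}(ii) (constructed in \cref{Proposition:A.2}), whose scaling structure gives the same estimate.

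Granting this, combining it with $b(\lambda)\ge b(\lambda_0)$ gives $0\le(\mu-\mu_0)(\tfrac12\norm{u_0}_2^2-m_1)+C(\mu-\mu_0)^2$ for all $\lambda$ near $\lambda_0$; dividing by $\mu-\mu_0$ and letting $\lambda\to\lambda_0^{\pm}$ forces $\tfrac12\norm{u_0}_2^2=m_1$. Then $\partial_\lambda I(\lambda_0,u_0)=\mu_0(\tfrac12\norm{u_0}_2^2-m_1)=0$, so together with $\partial_uI(\lambda_0,u_0)=0$ the pair $(\lambda_0,u_0)$ is a critical point of $I$ on $\RE$; equivalently $(\mu_0,u_0)$ solves \eqref{1.1} with $m=m_1$, the mass constraint being exactly $\tfrac12\norm{u_0}_2^2=m_1$. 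Finally, for the positive solution, \cref{Proposition:3.1}(i) furnishes a positive least energy solution $v>0$ of \eqref{3.7} with $\mu=\mu_0$ and $\Psi_{\mu_0}(v)=a(\mu_0)$; then $v$ is a critical point of $u\mapsto I(\lambda_0,u)$ with $I(\lambda_0,v)=a(\mu_0)-\mu_0m_1=b(\lambda_0)=\ub$, and the argument above applied to $v$ gives the desired positive solution of \eqref{1.1}.

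The main obstacle is the core estimate, and specifically obtaining a genuinely \emph{second-order} remainder $O((\mu-\mu_0)^2)$ rather than merely $O(|\mu-\mu_0|)$: this is what turns the one-sided mountain-pass bound into two-sided control (when read against $b\ge b(\lambda_0)$) and thereby pins down the mass, and it rests on the nondegeneracy of the maximum of the dilation profile. The remaining points—checking that the tail joining the path to $\zeta_0(\lambda)$ does not raise the maximum, and adapting the construction to $N=2$—are routine.
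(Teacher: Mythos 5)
Your strategy is genuinely different from the paper's: you differentiate a comparison minimax value in $\lambda$ (a Danskin-type argument, using $b(\lambda)\geq b(\lambda_0)=\ub$ from \cref{Proposition:4.2} (ii) as the lower bound and a test path built from $u_0$ for the upper bound), whereas the paper argues by contradiction and runs a pseudo-gradient deformation of the optimal path of \cref{Proposition:A.2} in the product space $\RE$, pushing the whole maximizing set below level $b(\lambda_0)$ to contradict $\ub=b(\lambda_0)$. For $N\geq 3$ your argument is correct: the dilation profile has a nondegenerate maximum at $t=1$, the second-order remainder is genuine, and the tail into $\{\Psi_\mu<-2Am_1-1\}$ is harmless, so the squeeze pins down $\tfrac12\norm{u_0}_2^2=m_1$.

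The gap is the case $N=2$, which you dispose of in one sentence with a claim that is false as stated. The optimal path of \cref{Proposition:A.2} for $N=2$ attains its maximum on the whole plateau $K=\{u_{0\theta}\,:\,\theta\in[\theta_0,\theta_1]\}$, $u_{0\theta}=u_0(\cdot/\theta)$, on which $\Psi_{\mu_0}$ is constant (by the two-dimensional Pohozaev identity) but $\partial_\lambda I(\lambda_0,u_{0\theta})=e^{\lambda_0}\bigl(\tfrac{\theta^2}{2}\norm{u_0}_2^2-m_1\bigr)$ is \emph{not} constant. The one-sided derivatives of $\lambda\mapsto\max_t I(\lambda,\gamma_0(t))$ at $\lambda_0$ are then the max, respectively the min, of $\partial_\lambda I$ over the plateau, so your squeeze only yields $\tfrac{\theta_0^2}{2}\norm{u_0}_2^2\leq m_1\leq\tfrac{\theta_1^2}{2}\norm{u_0}_2^2$; and $\theta_0,\theta_1$ cannot be taken close to $1$, since \eqref{A.10} forces $\theta_0$ small and \eqref{A.12}--\eqref{A.13} force $\theta_1$ large. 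No optimal path without such a plateau is available for $N=2$. This is precisely the point where the paper's proof does something you cannot reproduce with a purely $\lambda$-differentiation argument: it observes that $\partial_uI(\lambda_0,u_{0\theta})u_{0\theta}=(1-\theta^2)\norm{\nabla u_0}_2^2\neq0$ for $\theta\neq1$, so that under the contradiction hypothesis $D_{\lambda,u}I(\lambda_0,\cdot)$ is bounded away from zero on all of $K$, and a flow in the full $(\lambda,u)$-space lowers the entire plateau. To complete your proof you would either need to import that two-variable deformation for $N=2$ or produce an $N=2$ optimal path whose maximum is attained only at $u_0$; neither is supplied.
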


\begin{proof}
Suppose that $b(\lambda_0)=\ub$ and let $u_0\in E$ be a critical point of 
$u\mapsto I(\lambda_0,u)$ with $I(\lambda_0,u_0)=b(\lambda_0)=\ub$.
We need to show $\partial_\lambda I(\lambda_0,u_0)=0$, which implies $(\lambda_0,u_0)$ 
is a critical point of $I$.

We argue indirectly and assume 
    \begin{equation} \label{4.8}
        \partial_\lambda I(\lambda_0,u_0)\not=0.    
    \end{equation}
By \cref{Proposition:3.1} (ii), there exists an optimal path $\gamma_0 \in
\Lambda_{\mu_0}$, $\mu_0=e^{\lambda_0}$ such that
    \begin{equation*}
        u_0 \in \gamma_0([0,1]), \quad
        \max_{t\in [0,1]} I(\lambda_0,\gamma_0(t)) = b(\lambda_0).
    \end{equation*}
Such an optimal path $\gamma_0$ is given explicitly in 
\cref{Proposition:A.2} in \cref{Appendix:A.2}.
Under assumption \eqref{4.8}, we deform the path
$t\mapsto (\lambda_0,\gamma_0(t))$ in $\RE$ in order to obtain a path $\widehat\gamma_0\in
\uGamma$ such that
    \begin{equation*}
        \ub \leq \max_{t\in [0,1]} I(\widehat\gamma_0(t)) 
        < \max_{t\in [0,1]} I(\lambda_0,\gamma_0(t))=b(\lambda_0),
    \end{equation*}
which contradicts the assumption $\ub=b(\lambda_0)$.
The explicit optimal path $\gamma_0(t)$ given in \cref{Proposition:A.2} enjoys 
the following property:
\begin{itemize}
\item[(i)] $I(\lambda_0,\gamma_0(t))\leq b(\lambda_0)$ for all $t\in [0,1]$;
\item[(ii)] there exists a compact set $K\subset E$ of a form:
    \begin{equation*}
        K=\begin{cases} 
            \{ u_0\}    &\text{for}\ N\geq 3,\\
            \{ u_{0\theta}\,|\, \theta\in [\theta_0,\theta_1]\} &\text{for}\ N=2,
        \end{cases}
    \end{equation*}
such that 
    \begin{equation*}
        I(\lambda_0,\gamma_0(t))=b(\lambda_0) \quad \text{if and only if}\ 
            \gamma_0(t)\in K.
    \end{equation*}
Here $\theta_0$, $\theta_1$ are constants such that $0<\theta_0<1<\theta_2$ and
$u_{0\theta}(x)=u_0(x/\theta)$.
\end{itemize}

\noindent
First we note that under assumption \eqref{4.8}
    \begin{equation} \label{4.9}
        D_{\lambda,u}I(\lambda_0,u)\not= 0 \quad \text{for all}\ u\in K,    
    \end{equation}
where $D_{\lambda,u}=(\partial_\lambda,\partial_u)$.
By \eqref{4.8}, clearly $D_{\lambda,u}(\lambda_0,u_0)\not= 0$ and \eqref{4.9} clearly holds
for $N\geq 3$.  For $N=2$, it suffices to show 
    \begin{equation} \label{4.10}
        \partial_u I(\lambda_0, u_{0\theta})\not=0 
        \quad \text{for}\ \theta\in [\theta_0,\theta_1]\setminus\{ 1\}. 
    \end{equation}
We compute
    \begin{equation*}
        \begin{aligned}
        \partial_u I(\lambda_0, u_{0\theta})u_{0\theta}
        &= \norm{\nabla u_{0\theta}}_2^2 +\mu_0\norm{u_{0\theta}}_2^2 
            -\intRN g(u_{0\theta} )u_{0\theta}\,dx \\
        &= \norm{\nabla u_0}_2^2 
            +\theta\left( \mu_0\norm{u_0}_2^2 -\intRN g(u_0)u_{0}\,dx\right) \\
        &= (1-\theta^2)\norm{\nabla u_0}_2^2>0. 
        \end{aligned}
    \end{equation*}
Here we used the fact that 
$\norm{\nabla u_0}_2^2+\mu_0\norm{u_0}_2^2 -\intRN g(u_0)u_0\,dx=0$.  Thus \eqref{4.10} 
holds, from which \eqref{4.9} follows.

Since $K$ is compact, we have 
$a=\inf_{\{ \lambda_0\}\times K} \norm{D_{\lambda,u}I(\lambda,u)}_{(\RE)^*}>0$.
Thus there exist small neighborhoods $N$, $N'$ of $\{ \lambda_0\}\times K$ in 
$\RE$ such that
\begin{itemize}
\item[(1)] $\{ \lambda_0\}\times K \subset N\subset N'$;
\item[(2)] $\gamma_0(0), \gamma_0(1) \not\in N'$;
\item[(3)] $\inf_{(\lambda,u)\in N} \norm{D_{\lambda,u}I(\lambda,u)}_{(\RE)^*} 
            \geq {\frac{a}{2}}$;
\item[(4)] there exists a locally Lipschitz continuous pseudo-gradient vector field
$X:\,\RE\to \RE$ such that
    \begin{equation*}
        \begin{aligned}
        &D_{\lambda,u}X(\lambda,u) \geq {\frac{a}{3}}\ \ 
                & &\text{for}\  (\lambda,u)\in N, \\
        &X(\lambda,u)=0     
                & &\text{for}\ (\lambda,u)\in (\RE)\setminus N', \\
        &\norm{X(\lambda,u)}_{\RE}\leq 1  & &\text{for}\ (\lambda,u)\in\RE. 
        \end{aligned}
    \end{equation*}
\end{itemize}
Let $\eta(\tau,\lambda,u):\, [0,1]\times\RE\to \RE$ be a flow associated to
the vector field $-X(\lambda,u)$ and define
    \begin{equation*}
            \wgamma_0(t)=\eta(1,\lambda_0,\gamma_0(t)):\, [0,1]\to \RE.
    \end{equation*}
It is easy to see 
    \begin{equation*}
        \begin{aligned}
        &\max_{t\in [0,1]} I(\wgamma_0(t)) 
            < \max_{t\in [0,1]} I(\lambda_0,\gamma_0(t))=b(\lambda_0), \\
        &\wgamma(i)=(\lambda_0,\gamma_0(i)) \quad \text{for}\ i=0,1. 
        \end{aligned}
    \end{equation*}
Joining paths $\theta\mapsto ((1-\theta)\lambda_1+\theta\lambda_0,0);\, [0,1]\to\RE$, 
where $\lambda_1\gg 1$, and $t\mapsto\wgamma_0(t):\, [0,1]\to \RE$ as in 
\cref{Proposition:4.2} (ii), we find a path $\widehat \gamma_0\in\uGamma$ such that
    \begin{equation*}
        \max_{t\in [0,1]} I(\widehat \gamma_0(t)) 
        = \max_{t\in [0,1]} I(\wgamma_0(t)) < b(\lambda_0).
    \end{equation*}
Thus we have $\ub<b(\lambda_0)$, which contradicts our assumption $\ub=b(\lambda_0)$.  
Thus \eqref{4.8} cannot take
a place and $(\lambda_0,u_0)$ is a critical point of $I$.  
The existence of a positive least energy solution corresponding to
$b(\lambda_0)$ is shown in \cite{BeL} (c.f. \cite{JT0}).
\end{proof}

\subsection{Proof of \cref{Theorem:1.1} for the case: $\ub=\ob=0$}\label{Section:4.3}
\cref{Proposition:4.5} has an interesting application; 
it may be exploited to prove \cref{Theorem:1.1} in a special case $\ub = \ob = 0$. 
It is worthwhile to note that the $(PSPC)_b$ condition does not hold in this case 
and hence our deformation results in \cref{Section:7,Section:8} are not applicable.

\begin{Proposition} \label{Proposition:4.6}
Assume \ref{(g0')}, \ref{(g1)} and $\ub=\ob=0$. Then for each $\lambda\in\R$, 
any critical point $u_\lambda\in E$ of $u\mapsto I(\lambda,u)$ with 
$I(\lambda,u_\lambda)=b(\lambda)$ is a critical point of $I:\,\RE\to\R$.
In other words, for each $\lambda\in\R$, any least energy solution
$u_\lambda$ of $-\Delta u+\mu u=g(u)$ in $\R^N$ with $\mu=e^\lambda$ 
satisfies $\half\norm{u_\lambda}_2^2=m_1$, that is, $u_\lambda$ is 
a solution of \eqref{1.1} with $m=m_1$.  
\end{Proposition}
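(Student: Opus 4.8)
The plan is to obtain \cref{Proposition:4.6} as a direct consequence of \cref{Proposition:4.5}. The crucial point is that the hypothesis $\ub=\ob=0$ rigidifies the function $b(\cdot)$: by \cref{Proposition:4.2}~(ii) one has $\ub\le b(\lambda)\le\ob$ for every $\lambda\in\R$, so $b(\lambda)=0$ for all $\lambda\in\R$. In particular $b(\lambda)=\ub$ holds for \emph{every} $\lambda\in\R$, so every $\lambda\in\R$ is an admissible choice of ``$\lambda_0$'' in \cref{Proposition:4.5}.

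Granting this, fix $\lambda\in\R$ and let $u_\lambda\in E$ be a critical point of $u\mapsto I(\lambda,u)$ with $I(\lambda,u_\lambda)=b(\lambda)=\ub$. Applying \cref{Proposition:4.5} with $\lambda_0=\lambda$ yields that $(\lambda,u_\lambda)\in\RE$ is a critical point of $I$; in particular $\partial_\lambda I(\lambda,u_\lambda)=e^\lambda\big(\half\norm{u_\lambda}_2^2-m_1\big)=0$, i.e. $\half\norm{u_\lambda}_2^2=m_1$, so $(\mu,u_\lambda)$ with $\mu=e^\lambda$ solves \eqref{1.1} with $m=m_1$. I would emphasise that this step only invokes the deformation argument internal to the proof of \cref{Proposition:4.5} (the explicit optimal path of \cref{Proposition:A.2} together with a localized pseudo-gradient flow near a compact critical set), and \emph{not} any $(PSPC)$-type compactness, which indeed breaks down when $b=0$.

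Finally I would translate the conclusion into the language of least energy solutions. Using \eqref{3.11}, $I(\lambda,u)=\Psi_\mu(u)-\mu m_1$, and \eqref{4.3}, $b(\lambda)=a(\mu)-\mu m_1$ with $\mu=e^\lambda$, the condition $\partial_uI(\lambda,u_\lambda)=0$ is equivalent to $\Psi_\mu'(u_\lambda)=0$, i.e. $u_\lambda$ is a weak solution of $-\Delta u+\mu u=g(u)$ in $\R^N$, while $I(\lambda,u_\lambda)=b(\lambda)$ is equivalent to $\Psi_\mu(u_\lambda)=a(\mu)$; by the mountain pass characterization of the least energy level in \cref{Proposition:3.1}~(i), this says precisely that $u_\lambda$ is a least energy solution. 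Since \cref{Proposition:3.1}~(i) also supplies a positive least energy solution for each $\mu>0$, one obtains in this way uncountably many (positive) solutions of \eqref{1.1} with $m=m_1$, one for each $\lambda\in\R$. There is no real obstacle in this proof — all the substance lies in \cref{Proposition:4.5}; the only point needing a word of care is the last equivalence between ``critical point of $I(\lambda,\cdot)$ at level $b(\lambda)$'' and ``least energy solution''.
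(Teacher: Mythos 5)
Your proposal is correct and follows essentially the same route as the paper: deduce $b(\lambda)=\ub=\ob=0$ for every $\lambda$ from the sandwich $\ub\le b(\lambda)\le\ob$ of \cref{Proposition:4.2}, then apply \cref{Proposition:4.5} with $\lambda_0=\lambda$ (indeed, your citation of part (ii) is the accurate one). The closing translation via \eqref{3.11} and \eqref{4.3} into the least-energy language is the same identification the paper uses implicitly.
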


\begin{proof}
The assumption $\ub = \ob = 0$ and \cref{Proposition:4.2} (i) yield $b(\lambda)=\ub=0$ 
for all $\lambda\in\R$. 
Thus \cref{Proposition:4.5} is applicable and \cref{Proposition:4.6} follows from 
\cref{Proposition:4.5}.
\end{proof}

\begin{Remark}\label{Remark:4.7}
A typical example, for which $\ub=\ob=0$ holds, is $g(s)=\abs s^{p-1}s$.  
It remains an open problem to find an odd $g$ which is different 
from $\abs s^{p-1}s$ and for which $\ub=\ob=0$ holds.
\end{Remark}

\subsection{Proof of \cref{Proposition:4.2} (iv)} \label{Section:4.4}

\begin{proof}[Proof of \cref{Proposition:4.2} (iv)]
We note that for each $\lambda\gg\lambda_0$ a path
$t\mapsto tT_1\omega_\mu;\ [0,1]\to E$ 
belongs to $\widehat \Lambda_\mu$, which is defined in \eqref{4.2}.
We have by \cref{Lemma:3.3} (iv) 
    \begin{equation*}
        b(\lambda) \leq \max_{t\in [0,1]} I(\lambda,tT_1\omega_\mu)\to 0
        \quad \text{as}\ \lambda\to\infty,
    \end{equation*}
which implies $\limsup_{\lambda\to \infty} b(\lambda)\leq 0$.  
Thus we need to show $\liminf_{\lambda\to\infty} b(\lambda)\geq 0$.

Suppose that $\lambda_j\to\infty$ as $j\to\infty$.  By \cref{Proposition:4.2}(ii), 
$(b(\lambda_j))_{j=1}^\infty$ is a bounded sequence.  After extracting a subsequence, we may 
assume $b(\lambda_j)\to b$ for some $b\in\R$ and we need to show $b\geq 0$.

Let $u_j\in E$ be the least energy solution corresponding to $b(\lambda_j)$.  Then
$u_j$ satisfies
    \begin{equation*}
        I(\lambda_j,u_j)=b(\lambda_j)\to b, \quad
        \partial_u I(\lambda_j,u_j)=0, \quad P(\lambda_j,u_j)=0.
    \end{equation*}
We introduce $L$, $Q:\, \RE\to\R$ by
    \begin{align}
        L(\lambda,v) &= \half\norm{\nabla v}_2^2 +\half\norm v_2^2 
            -{\frac{1}{p+1}}\norm v_{p+1}^{p+1}
            -\mu^{-1-N/2}\intRN H(\mu^{N/4}v)\, dx, \label{4.11}\\
        Q(\lambda,v) &= {\frac{N-2}{2}}\norm{\nabla v}_2^2 
            + N\left\{ \half\norm v_2^2 -{\frac{1}{p+1}}\norm v_{p+1}^{p+1}
            -\mu^{-1-N/2}\intRN H(\mu^{N/4}v)\, dx\right\} \nonumber
    \end{align}
and $v_j\in E$ by $u_j(x)=\mu_j^{N/4}v_j(\mu_j^{1/2}x)$.  We have
    \begin{align}   
        &I(\lambda_j,u_j) = \mu_j\{ L(\lambda_j,v_j)-m_1\}, \label{4.12}\\
        &\partial_v L(\lambda_j,v_j) =0,                        \label{4.13}\\
        &Q(\lambda_j,v_j) =0.                                   \label{4.14}
    \end{align}
It follows from \eqref{4.12} that
    \begin{equation} \label{4.15}
        L(\lambda_j,v_j)\to m_1 \quad \text{as} \ j\to\infty.       
    \end{equation}
Using $(v_j)_{j=1}^\infty$, we show $\liminf_{j\to\infty} b(\lambda_j)\geq 0$ in 3 steps.

\smallskip

\noindent
\textbf{Step 1:} \textsl{$(v_j)_{j=1}^\infty$ is bounded in $E$.}

\smallskip

\noindent
We show Step 1 using an idea in \cite[Lemma 5.1]{HIT}.
We give a proof just for $N\geq 3$.  It follows from \eqref{4.14} and \eqref{4.15}
that
    \begin{equation*}
        \norm{\nabla v_j}_2^2 = NL(\lambda_j,v_j)-Q(\lambda_j,v_j) = Nm_1 +o(1)
    \end{equation*}
and $\norm{\nabla v_j}_2^2$ stays bounded, from which the boundedness of $\norm{v_j}_{2^*}$ 
follows.  We also have 
    \begin{equation} \label{4.16}
        \half\norm{v_j}_2^2 -{\frac{1}{p+1}}\norm{v_j}_{p+1}^{p+1} 
        - \mu_j^{-1-N/2}\intRN H(\mu_j^{N/4} v_j)\, dx
        = L(\lambda_j,v_j)-\half\norm{\nabla v_j}_2^2
        = -{\frac{N-2}{2}} m_1 +o(1).           
    \end{equation}
By \eqref{2.1}, we have
    \begin{equation} \label{4.17}
        \pabs{ \mu_j^{-1-N/2}\intRN H(\mu_j^{N/4}v_j)\, dx} 
        \leq \mu_j^{-1} A\norm{v_j}_2^2.        
    \end{equation}
Thus it follows from \eqref{4.16}, \eqref{4.17} and 
the Gagliardo-Nirenberg inequality \eqref{2.10} that 
    \begin{equation*}
        \left(\half-\mu_j^{-1}A\right) \norm{v_j}_2^2 
            - \half (2m_1)^{-2/N}\norm{\nabla v}_2^2\norm v_2^{4/N}
        \leq -{\frac{N-2}{2}} m_1 +o(1).
    \end{equation*}
Since $\mu_j\to \infty$ as $j\to\infty$, we have boundedness of $\norm{v_j}_2$.

For $N=2$, we can modify the argument in \cite[Lemma 5.1]{HIT}.

\smallskip

\noindent
\textbf{Step 2:} \textsl{After extracting a subsequence if necessary, we have
    \begin{equation*}
        v_j\to \omega_1 \quad \text{or}\quad v_j\to -\omega_1 
        \quad \text{strongly in} \ E.
    \end{equation*}
}

\smallskip

\noindent
By Step 1, after extracting a subsequence, we may assume $v_j\wlimit v_0$ weakly in
$E$ for some $v_0\in E$.
By \eqref{2.1}, we have
    \begin{align}
        &\mu_j^{-1-N/4} \norm{h(\mu_j^{N/4} v_j)}_2 \leq \mu_j^{-1} A\norm{v_j}_2
            \to 0, \nonumber\\
        &\pabs{\mu_j^{-1-N/2} \intRN H(\mu_j^{N/4} v_j)\, dx}  
            \leq \mu_j^{-1} A\norm{v_j}_2^2
            \to 0 \quad \text{as}\ j\to\infty. \label{4.18}
    \end{align}
Thus \eqref{4.13} and \eqref{4.15} imply
    \begin{equation*}
        \Psi_{01}(v_j) \to m_1 \quad \text{and}\ \quad
        \Psi_{01}'(v_j)\to 0 \ \text{strongly in}\ E^*,
    \end{equation*}
where $\Psi_{01}$ is defined in \eqref{2.8}. 
Since $\Psi_{01}$ satisfies the Palais-Smale condition,
we have $v_j\to v_0$ strongly in $E$ and $\Psi_{01}(v_0)=m_1$. 
That is, $v_0$ is a least energy critical point of $\Psi_{01}$ and we have 
$v_0=\pm \omega_1$.

In what follows, we assume $v_j\to\omega_1$ strongly in $E$ 
since the other case may be treated similarly. 

\smallskip

\noindent
\textbf{Step 3:} \textsl{$\liminf_{j\to\infty} b(\lambda_j) \geq 0$}

\smallskip

\smallskip

\noindent
We recall that $v\mapsto L(\lambda,v)$ and $v\mapsto\Psi_{01}(v)$ enjoy the 
mountain pass geometry and least energy solutions have optimal paths, which are 
given explicitly in \cref{Proposition:A.2}. 
When $N=2$, setting $\omega_{1\theta}(x)=\omega_1(x/\theta)$, an optimal 
path $\gamma_\infty$ for $\Psi_{01}$ is given joining the following paths:
    \[
    \text{(a)}\ [0,1]\to E;\, t\mapsto t\omega_{1\theta_0}; \quad
    \text{(b)}\ [\theta_0,\theta_1]\to E;\, \theta\mapsto \omega_{1\theta}; \quad
    \text{(c)}\ [1,t_1]\to E;\, t\mapsto t\omega_{1\theta_1}.
    \]
Here $0<\theta_0\ll 1\ll \theta_1$ and $t_1>1$ is close to $1$.
Moreover the end point $\gamma_\infty(1)=t_1\omega_{1\theta_1}$ of this path 
satisfies $\Psi_{01}(t_1\omega_{1\theta_1})<0$.
Noting $v_j\to \omega_1$ strongly in $E$ and \eqref{4.18},
we can find an optimal path $\gamma_j$ for $L(\lambda_j,\cdot)$ 
replacing $\omega_1$ with $v_j$.  We note that \eqref{A.10}--\eqref{A.13} in
\cref{Proposition:A.2} holds for $\gamma_j$ replacing $L(\cdot)$ with 
$L(\lambda_j,\cdot)$ and $u_0$ with $v_j$.

For $N\geq 3$, the optimal path $\gamma_j$ for $L(\lambda_j,\cdot)$ is given by 
$\gamma_j(t)=v_j({\frac{x}{Tt}})$ for a suitably chosen $T\gg 1$, which is 
independent of $j$.

Since $\gamma_j$ is an optimal path for $I(\lambda_j,\cdot)$, 
we have $b(\lambda_j)=\max_{t\in [0,1]} I(\lambda_j,\gamma_j(t))$.
We compute that
    \begin{align*}
    b(\lambda_j) &= \max_{t\in [0,1]} I(\lambda_j,\gamma_j(t)) 
        = \max_{t\in [0,1]} \mu_j\{ L(\lambda_j,\gamma_j(t))-m_1\}  \\
        &= \max_{t\in [0,1]} \left( \mu_j\{ \Psi_{01}(\gamma_j(t))-m_1\} 
                - \mu_j^{-N/2} \intRN  H(\mu_j^{N/4}\gamma_j(t))\, dx\right)\\
        &\geq \mu_j \max_{t\in [0,1]} \{ \Psi_{01}(\gamma_j(t))-m_1\} 
            - \max_{t\in [0,1]} \mu_j^{-N/2} \intRN H(\mu_j^{N/4}\gamma_j(t))\, dx\\
        &= \mu_j (I) - (II). 
    \end{align*}
We note that $(I)\geq 0$ for large $j$.  In fact, since 
    \[  \Psi_{01}(\gamma_j(1)) = \begin{cases}
        \Psi_{01}(t_1v_j(x/\theta_1))\to \Psi_{01}(t_1\omega_{1\theta_1})<0 
            &\text{for}\ N=2,\\ 
        \Psi_{01}(v_j(x/T)) \to \Psi_{01}(\omega_1(x/T)) <0
            &\text{for}\ N\geq 3
        \end{cases}
    \]
as $j\to \infty$, we can see
that $t\mapsto \gamma_j(t)$ is a sample path for $\Psi_{01}$ for large $j$.
Recall that the mountain pass level for $\Psi_{01}$ is $m_1$, we have 
$\max_{t\in [0,1]} \Psi_{01}(\gamma_j(t))\geq m_1$ for large $j$ and thus 
$(I)\geq 0$ for large $j$.

Next we claim that $(II)\to 0$.  In fact, noting that $K=\set{\gamma_j(t) | 
t\in [0,1],\, j\in\N}$ is precompact in $L^2(\R^N)$, we have by \cref{Lemma:A.1} 
    \begin{equation*}
        (II)  \leq 
        \max_{t\in [0,1]} \mu_j^{-N/2} \intRN \abs{H(\mu_j^{N/4}\gamma_j(t))}\, dx 
        \leq \max_{u\in \overline K} \mu_j^{-N/2}  \intRN \abs{H(\mu_j^{N/4}u)}\, dx 
        \to 0 \quad \text{as}\ j\to\infty. 
    \end{equation*}
Thus we have $\liminf_{j\to\infty} b(\lambda_j) \geq 0$.
This completes the proof of (iv).
\end{proof}

\section{The Palais-Smale-Pohozaev-Cerami condition} \label{Section:5}
To show \cref{Theorem:1.1}, a Palais-Smale type compactness condition plays
an important role. In this section we introduce the \emph{Palais-Smale-Pohozaev-Cerami} 
((\emph{PSPC}) in short) condition, under
which we will develop a new deformation argument in \cref{Section:7}.

\subsection{Palais-Smale-Pohozaev-Cerami type conditions}
First we give notion of a $(PSPC)_b$ sequence and the $(PSPC)_b$ condition.

\begin{Definition}\label{Definition:5.1}
\begin{itemize}
\item[(i)] 
For $b\in\R$, we say that $(\lambda_j,u_j)_{j=1}^\infty\subset\RE$ is a 
\emph{$(PSPC)_b$ sequence} for $I $ if the following conditions are satisfied:
    \begin{align}   
        &I(\lambda_j,u_j)\to b,                                         \label{5.1}\\
        &\partial_\lambda I(\lambda_j,u_j) \to 0,                       \label{5.2}\\
        &(1+\norm{u_j}_E)\norm{\partial_u I(\lambda_j,u_j)}_{E^*}\to 0, \label{5.3}\\
        &P(\lambda_j,u_j)\to 0,                                         \label{5.4}
    \end{align}
where $P$ is the Pohozaev functional associated to $I$, i.e., a functional defined 
in \eqref{2.3}.
\item[(ii)] 
We also say that $I $ satisfies the \emph{$(PSPC)_b$ condition} if any $(PSPC)_b$ 
sequence has a strongly convergent subsequence in $\RE$.
\end{itemize}
\end{Definition}

\begin{Remark}\label{Remark:5.2}
Under the $(PSPC)_b$ condition, the following critical set $K_b$ is compact in $\RE$.
    \begin{equation*}
        K_b =\Set{ (\lambda,u)\in\RE | I(\lambda,u)=b, 
            \ \partial_\lambda I(\lambda,u)=0,\
            \partial_uI(\lambda,u)=0, \ P(\lambda,u)=0}.
    \end{equation*}
For our functional $I$, we note that the Pohozaev identity $P(\lambda,u)=0$ holds
if $(\lambda,u)$ satisfies $\partial_u I(\lambda,u)=0$.
Thus for the definition of the critical set $K_b$, it is not necessary to assume
the property $P(\lambda,u)=0$.  However our deformation argument in \cref{Section:7} 
works for more general situation, where $\partial_u I(\lambda,u)=0$
may not imply the Pohozaev identity (e.g. fractional problems).  Moreover in our 
deformation theory, the above critical set $K_b$ with the assumption $P(\lambda,u)=0$ 
plays an essential role (See \cref{Proposition:7.1}).  Thus we include $P(\lambda,u)=0$
in our definition of $K_b$.
\end{Remark}

To study the existence of a positive solution, we set
    \begin{equation} \label{5.5}
        \scrP_0=\Set{ u\in E |  u\geq 0 \ \text{in}\ \R^N}
    \end{equation}
and introduce $(PSPC)_b^*$ sequences and the $(PSPC)_b^*$ condition as follows:

\begin{Definition} \label{Definition:5.3}  
\begin{itemize}
\item[(i)] For $b\in\R$ we say that $(\lambda_j,u_j)_{j=1}^\infty\subset\RE$ is 
a \emph{$(PSPC)_b^*$ sequence} for $I$ if it satisfies
    \begin{equation*}
        \distE(u_j,\scrP_0)\to 0 \quad \text{as}\ j\to\infty
    \end{equation*}
in addition to \eqref{5.1}--\eqref{5.4}.
\item[(ii)] We also say that $I$ satisfies the \emph{$(PSPC)_b^*$ condition} if any
$(PSPC)_b^*$ sequence has a strongly convergent subsequence in $\RE$.
\end{itemize}
\end{Definition}

The functional $I $ defined in \eqref{2.2} has the following Palais-Smale type 
compactness result.

\begin{Proposition} \label{Proposition:5.4}
Assume \ref{(g0')} and \ref{(g1)}.  Then we have 
\begin{itemize}
\item[\rm (i)] for $b<0$, the $(PSPC)_b$ condition holds for $I$; 
\item[\rm (ii)] for $b>0$, if \ref{(g2)} holds in addition, then the $(PSPC)_b^*$ condition 
holds for $I$.
\end{itemize}
\end{Proposition}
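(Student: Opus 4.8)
The plan is to analyze a $(PSPC)_b$ (resp. $(PSPC)_b^*$) sequence $(\lambda_j,u_j)_{j=1}^\infty$ and show it is precompact in $\RE$. First I would establish two a priori bounds. From $\partial_\lambda I(\lambda_j,u_j)\to 0$ one gets $e^{\lambda_j}(\half\norm{u_j}_2^2-m_1)\to 0$, and from $P(\lambda_j,u_j)\to 0$ together with $I(\lambda_j,u_j)\to b$ one obtains, using \eqref{2.4}-type identities and \ref{(g1)}, a bound of the form $\half\norm{\nabla u_j}_2^2-\frac{1}{p+1}\norm{u_j}_{p+1}^{p+1}+O(\norm{u_j}_2^2)=b+o(1)$. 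Here the crucial point, and the place where the $L^2$-critical structure enters, is that $\frac12\norm{u_j}_2^2=m_1+o(e^{-\lambda_j})$: when $\lambda_j$ stays bounded below, $\norm{u_j}_2^2$ is essentially $2m_1$, and \cref{Corollary:2.2} forces $\half\norm{\nabla u_j}_2^2-\frac1{p+1}\norm{u_j}_{p+1}^{p+1}\geq 0$, which, combined with \eqref{2.1}, yields a bound on $\norm{\nabla u_j}_2$ and hence on $\norm{u_j}_E$. I would split into the cases $\lambda_j\to+\infty$, $\lambda_j$ bounded, and $\lambda_j\to-\infty$ along subsequences.

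In the case $\lambda_j\to+\infty$ (equivalently $\mu_j\to\infty$), the mass constraint $\half\norm{u_j}_2^2\to m_1$ combined with \cref{Corollary:2.2} gives coercivity, so $(u_j)$ is bounded in $E$; the rescaling $u_j=\mu_j^{N/4}v_j(\mu_j^{1/2}\cdot)$ used in the proof of \cref{Proposition:4.2}(iv) then shows the limit profile solves $-\Delta v+v=\abs v^{p-1}v$ with energy $m_1$, forcing $b=0$ by the computation $\Psi_{01}(\omega_1)=m_1$ and $I(\lambda_j,u_j)=\mu_j(L(\lambda_j,v_j)-m_1)$; since we assume $b\neq 0$ this case cannot occur. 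In the case $\lambda_j$ bounded, after passing to a subsequence $\lambda_j\to\lambda_\infty\in\R$, the functional $u\mapsto I(\lambda_\infty,u)$ is essentially $\Psi_{\mu_\infty}(u)-\mu_\infty m_1$, for which \eqref{5.3} gives a bounded Palais–Smale–Cerami sequence; one then applies the standard concentration-compactness / vanishing dichotomy on $H^1_r(\R^N)$, using compactness of the radial embedding $H^1_r\hookrightarrow L^{p+1}$ and \ref{(g1)}, to extract a strongly convergent subsequence — here the Cerami condition \eqref{5.3} with the weight $(1+\norm{u_j}_E)$ is what replaces boundedness of $(u_j)$ if it were not already known.

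The hard part, as the excerpt itself announces, is the case $\lambda_j\to-\infty$ (i.e. $\mu_j\to 0^+$). Here one cannot use the mass constraint to gain coercivity; instead I would exploit $P(\lambda_j,u_j)\to 0$ and $I(\lambda_j,u_j)\to b$ to bound $\norm{\nabla u_j}_2$ and $\norm{u_j}_{p+1}$ via \eqref{2.4} and \ref{(g1)} (the term $e^{\lambda_j}\norm{u_j}_2^2$ becomes negligible), obtaining a bound in the space $F$ of \eqref{1.16}. Then, passing to a weak limit $u_j\weta u_\infty$ in $F$ and using the compact embedding $F\hookrightarrow L^r(\R^N)$ for $r\in(p+1,2^*)$ from \cref{Lemma:2.3}(ii) together with \ref{(g1)}, the limit $u_\infty\in F$ is a (radial, hence classical by the remark after \ref{(g2)}) solution of the zero-mass equation \eqref{1.15} with $Z(u_\infty)=b$. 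For $b<0$ this contradicts \eqref{2.16} in \cref{Lemma:2.5}, so no such subsequence exists and part (i) follows by ruling out all three cases except actual strong convergence at finite $\lambda_\infty$. For $b>0$ in part (ii), the extra hypothesis $\distE(u_j,\scrP_0)\to 0$ forces $u_\infty\geq 0$, and then \ref{(g2)} gives $u_\infty\equiv 0$, whence $\norm{\nabla u_j}_2\to 0$ and, revisiting $I(\lambda_j,u_j)\to b$ with $e^{\lambda_j}\norm{u_j}_2^2\to 0$, one gets $b=\lim(\half\norm{\nabla u_j}_2^2-\intRN G(u_j))\leq 0$ by \cref{Corollary:2.2}-type estimates, contradicting $b>0$; thus again the $\lambda_j\to-\infty$ branch is excluded and the sequence is precompact. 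The remaining bookkeeping — that once $\lambda_j\to\lambda_\infty\in\R$ and $u_j\to u_\infty$ weakly, \eqref{5.3} upgrades to strong convergence — is the routine Brezis–Lieb / test-function argument and I would only sketch it.
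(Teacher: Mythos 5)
Your overall strategy coincides with the paper's: a trichotomy on the behaviour of $\lambda_j$, standard compactness for bounded $\lambda_j$, the rescaling $u_j=\mu_j^{N/4}v_j(\mu_j^{1/2}\cdot)$ forcing $b=0$ when $\lambda_j\to+\infty$, and the zero-mass limit problem in $F$ combined with \cref{Lemma:2.5} (resp.\ \ref{(g2)}) when $\lambda_j\to-\infty$. However, two steps in your treatment of the case $\lambda_j\to-\infty$ do not go through as you state them. First, the a priori bound on $\norm{u_j}_{p+1}$ does not follow from \eqref{2.4} and \ref{(g1)} alone: the correction term $\frac{N}{2}\intRN H(u_j)-\half h(u_j)u_j\,dx$ is only controlled by $A\norm{u_j}_2^2$ via \eqref{2.1}, and $\norm{u_j}_2$ is \emph{not} bounded when $\mu_j\to 0$ (one only knows $\mu_j\norm{u_j}_2^2\to 0$). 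The paper's \cref{Lemma:5.9} instead uses the pointwise lower bound $g(s)s\geq C_1\abs{s}^{p+1}$ for small $\abs s$ together with $\intRN g(u_j)u_j\,dx\to Nb$ and the radial decay estimates of \cref{Lemma:2.4}; for $N=2$ an additional rescaling/contradiction argument is required. This is precisely where the oddness of the extension and the sublinearity in \ref{(g1)} enter (cf.\ \cref{Remark:5.10}).

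Second, the identification $Z(u_\infty)=b$ cannot be deduced from the compact embedding $F\subset L^r(\R^N)$ with $r\in(p+1,2^*)$, because the leading term of $G$ lives exactly at the non-compact endpoint $r=p+1$: weak convergence in $F$ a priori allows a loss of $L^{p+1}$-mass, in which case only $Z'(u_\infty)=0$ (by testing against fixed $\varphi$) would survive, not $Z(u_\infty)=b$. The paper closes this by combining the Brezis--Lieb lemma with the Pohozaev-type relation $NG(s)-\frac{N-2}{2}g(s)s=\frac{2}{N+2}\abs{s}^{p+1}+K(s)$, where $K$ is negligible relative to $\abs s^{p+1}$ at both $0$ and $\infty$, to conclude $\norm{u_j-u_\infty}_{p+1}\to 0$, and then upgrades to strong convergence of gradients via $\partial_u I(\lambda_j,u_j)u_j\to 0$. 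The same point affects your argument for part (ii): the deduction $u_\infty\equiv 0\Rightarrow\norm{\nabla u_j}_2\to 0$ presupposes strong convergence in $F$, which is exactly the content of \cref{Proposition:5.8}. With these two repairs your plan matches the paper's proof.
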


\begin{Remark}\label{Remark:5.5}
For $b=0$, neither the $(PSPC)_0$ condition or the $(PSPC)_0^*$ condition holds 
under \ref{(g1)}. 
In fact, for any $(\lambda_j)_{j=1}^\infty$ with $\lambda_j \to -\infty$, 
$(\lambda_j,\omega_{\mu_j})_{j=1}^\infty$, $\mu_j=e^{\lambda_j}$ is a $(PSPC)_0^*$ 
sequence, which does not have any convergent subsequence.
This fact follows from \eqref{2.9} and 
    \[  \intRN \abs{H( \omega_{\mu_j} )} \, dx 
            \leq C \norm{\omega_{\mu_j}}_{p+1}^{p+1} \to 0, \quad 
        \intRN \abs{h(\omega_{\mu_j}) \varphi } \, dx 
            \leq C \norm{\omega_{\mu_j}}_{p+1}^p \norm{\varphi}_{p+1}\to 0.
    \]
\end{Remark}

\cref{Proposition:5.4} is one of the keys in the proof of \cref{Theorem:1.1}.
We will give a proof of \cref{Proposition:5.4} in the following 
Sections \ref{Section:5.2}--\ref{Section:5.4}.
For this purpose, we note that
for a $(PSPC)_b$ sequence $(\lambda_j,u_j)_{j=1}^\infty\subset\RE$, 
$\mu_j=e^{\lambda_j}$, it follows from \eqref{5.1}, \eqref{5.2} and \eqref{5.4} that
    \begin{align}   
    &\half\norm{\nabla u_j}_2^2-\intRN G(u_j)\,dx
        +\mu_j\left\{\half\norm{u_j}_2^2-m_1\right\}\to b,  \label{5.6}\\
    &\mu_j\left\{\half\norm{u_j}_2^2-m_1\right\}\to 0,  \label{5.7}\\
    &{\frac{N-2}{2}}\norm{\nabla u_j}_2^2 
        +N\left\{{\frac{\mu_j}{2}}\norm{u_j}_2^2-\intRN G(u_j)\,dx\right\}\to 0.
            \label{5.8}
    \end{align}
For a $(PSPC)_b^*$ sequence $(\lambda_j,u_j)_{j=1}^\infty$, we have 
in addition to \eqref{5.6}--\eqref{5.8}
    \[  \distE(u_j,\scrP_0)\to 0.
    \]
From \eqref{5.6}--\eqref{5.8}, we infer that 
    \begin{equation}\label{5.9}
        \half\norm{\nabla u_j}_2^2 -\intRN G(u_j)\, dx \to b, \quad 
        {\frac{N-2}{2}}\norm{\nabla u_j}_2^2-N\intRN G(u_j)\, dx + N\mu_j m_1\to 0. 
    \end{equation}
By \eqref{5.3}, we also have $\partial_u I(\lambda_j,u_j)u_j \to 0$, that is,
    \begin{equation} \label{5.10}
        \norm{\nabla u_j}_2^2 +\mu_j\norm{u_j}_2^2 -\intRN g(u_j)u_j \,dx\to 0.
    \end{equation}
Thus, \eqref{5.7} and \eqref{5.9}--\eqref{5.10} give 
    \begin{align}   
    &\mu_j \norm{u_j}_2^2 = 2\mu_j m_1 +o(1),       \label{5.11}\\
    &\norm{\nabla u_j}_2^2 =N\mu_j m_1 +Nb +o(1),   \label{5.12}\\
    &\intRN G(u_j)\, dx={\frac{N}{2}}\mu_j m_1+{\frac{N-2}{2}}b+o(1), \label{5.13}\\
    &\intRN g(u_j)u_j\, dx=(N+2)\mu_j m_1+Nb+o(1).      \label{5.14}
    \end{align}
In following Sections \ref{Section:5.2}--\ref{Section:5.4}, we consider 
the following three cases separately.

\begin{description}
\item[\textbf{Case A}] $\lambda_j\to \lambda_0$ as $j\to\infty$ for some $\lambda_0\in\R$;
\item[\textbf{Case B}] $\lambda_j\to+\infty$ as $j\to\infty$;
\item[\textbf{Case C}] $\lambda_j\to-\infty$ as $j\to\infty$.
\end{description}

\noindent 
Note that one of 3 cases happens after extracting a subsequence.

\subsection{Proof of \cref{Proposition:5.4}: Case A ($\lambda_j$ bounded)} 
\label{Section:5.2} 
\begin{Proposition} \label{Proposition:5.6}
Assume \ref{(g0')} and \ref{(g1)}. For $b\in \R$, suppose that 
$(\lambda_j,u_j)_{j=1}^\infty$ is 
a $(PSPC)_b$ sequence or $(PSPC)_b^*$ sequence with bounded 
$(\lambda_j)_{j=1}^\infty$. 
Then $(\lambda_j,u_j)_{j=1}^\infty$ has a strongly convergent subsequence in $\RE$.
\end{Proposition}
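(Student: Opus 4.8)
The plan is to use the boundedness of $(\lambda_j)$, which forces $\mu_j=e^{\lambda_j}$ to stay in a compact subset of $(0,\infty)$; this uniform lower bound on $\mu_j$ is precisely what makes Case A the easiest of the three. First I would pass to a subsequence so that $\lambda_j\to\lambda_0$ and $\mu_j\to\mu_0:=e^{\lambda_0}\in(0,\infty)$. Dividing \eqref{5.11} by $\mu_j$ gives $\norm{u_j}_2^2\to 2m_1$, and \eqref{5.12} gives $\norm{\nabla u_j}_2^2\to N\mu_0 m_1+Nb$; in particular $(u_j)$ is bounded in $E=H^1_r(\R^N)$. Hence, after a further subsequence, $u_j\wlimit u_0$ in $E$, $u_j\to u_0$ in $L^{p+1}(\R^N)$ by the compact embedding $H^1_r(\R^N)\hookrightarrow L^{p+1}(\R^N)$ (valid since $p+1=2+\tfrac4N\in(2,2^\ast)$), and $u_j\to u_0$ a.e.\ in $\R^N$.

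Next I would identify the limit. Since \eqref{5.3} gives $\norm{\partial_u I(\lambda_j,u_j)}_{E^\ast}\to0$, we have $\partial_u I(\lambda_j,u_j)\varphi\to0$ for each fixed $\varphi\in E$. Passing to the limit, using $\mu_j\to\mu_0$, $u_j\wlimit u_0$ in $H^1$, and the fact that $g(u_j)\wlimit g(u_0)$ in $L^{(p+1)/p}(\R^N)$ — because $|g(s)|\le C|s|^p$ keeps $(g(u_j))$ bounded in $L^{(p+1)/p}$ while $g(u_j)\to g(u_0)$ a.e.\ — one obtains $\partial_u I(\lambda_0,u_0)=0$, i.e.\ $-\Delta u_0+\mu_0 u_0=g(u_0)$ in $\R^N$.

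Finally, strong convergence follows by testing $\partial_u I(\lambda_j,u_j)(u_j-u_0)\to 0$ (legitimate since $u_j-u_0$ is bounded in $E$). Expanding and using $\nabla u_j\wlimit\nabla u_0$ and $u_j\wlimit u_0$ in $L^2$ gives
\[
\bigl(\norm{\nabla u_j}_2^2-\norm{\nabla u_0}_2^2\bigr)+\mu_j\bigl(\norm{u_j}_2^2-\norm{u_0}_2^2\bigr)-\intRN g(u_j)(u_j-u_0)\,dx\to 0 .
\]
For the nonlinear term I split $g(s)=|s|^{p-1}s+h(s)$: the $|u_j|^{p-1}u_j$ part pairs against $u_j-u_0\to0$ in $L^{p+1}$ and vanishes, while for $h$, from \eqref{1.14} one gets, for every $\varepsilon>0$, a constant $C_\varepsilon$ with $|h(s)|\le\varepsilon|s|+C_\varepsilon|s|^p$ on all of $\R$, so $\big|\intRN h(u_j)(u_j-u_0)\,dx\big|\le\varepsilon\norm{u_j}_2\norm{u_j-u_0}_2+C_\varepsilon\norm{u_j}_{p+1}^p\norm{u_j-u_0}_{p+1}$, whose $\limsup$ is $\le C\varepsilon$; since $\varepsilon$ is arbitrary the whole integral tends to $0$. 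One is left with $\bigl(\norm{\nabla u_j}_2^2-\norm{\nabla u_0}_2^2\bigr)+\mu_j\bigl(\norm{u_j}_2^2-\norm{u_0}_2^2\bigr)\to0$; both parentheses converge (by the first paragraph) and have nonnegative limit by weak lower semicontinuity, so with $\mu_0>0$ each limit is $0$, giving $\norm{\nabla u_j}_2\to\norm{\nabla u_0}_2$ and $\norm{u_j}_2\to\norm{u_0}_2$. Together with weak convergence this yields $u_j\to u_0$ strongly in $E$, and with $\lambda_j\to\lambda_0$ we conclude $(\lambda_j,u_j)\to(\lambda_0,u_0)$ in $\RE$. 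For a $(PSPC)_b^\ast$ sequence the argument is identical — the extra condition $\distE(u_j,\scrP_0)\to0$ is not needed here (it merely passes to the limit, forcing $u_0\ge0$). The only delicate point is the $h$-term, since $H^1_r\hookrightarrow L^2$ is not compact; this is handled by the elementary splitting $|h(s)|\le\varepsilon|s|+C_\varepsilon|s|^p$, and I expect no genuine obstacle in this case.
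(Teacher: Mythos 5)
Your proof is correct and follows essentially the same route as the paper: boundedness of $(u_j)$ in $E$ from \eqref{5.11}--\eqref{5.12} (using that $\mu_j$ stays in a compact subset of $(0,\infty)$), the compact radial embedding into $L^{p+1}(\R^N)$, and the vanishing of $\partial_u I(\lambda_j,u_j)$ in the Cerami sense to upgrade weak to strong convergence. The only cosmetic difference is that you test against $u_j-u_0$ and handle $h$ by an $\varepsilon$-splitting, whereas the paper tests against $u_j$ and $u_0$ separately and uses the bound $\abs{g(s)s}\leq C\abs{s}^{p+1}$ together with the strong $L^{p+1}$ convergence; the two are equivalent in substance.
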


\begin{proof}
Since $(PSPC)_b^*$ sequences are $(PSPC)_b$ sequences, we show the compactness 
of sequences just for $(PSPC)_b$ sequences. 

Since $\mu_j=e^{\lambda_j}$ is bounded away from $0$ and $\infty$ as $j\to\infty$, 
the boundedness of $(u_j)_{j=1}^\infty$ in $E$ follows from \eqref{5.11} and 
\eqref{5.12}. 
Thus we can extract a subsequence --- still denoted by $(u_j)_{j=1}^\infty$ --- 
such that for some $u_0\in E$
    \begin{equation*}
        u_j\wlimit u_0 \quad \text{weakly in $E$ and strongly in $L^{p+1}(\R^N)$}.
    \end{equation*}
Since $\partial_u I(\lambda_j,u_j)u_j\to 0$, we have
    \begin{equation*}
        \begin{aligned}
        \norm{\nabla u_j}_2^2 + \mu_j\norm{u_j}_2^2
        &= \partial_u I(\lambda_j,u_j)u_j +\intRN g(u_j)u_j\, dx 
        \to \intRN g(u_0)u_0\, dx. 
        \end{aligned}
    \end{equation*}
Similarly, it follows from $\partial_u I(\lambda_j,u_j)u_0\to 0$ that
    \begin{equation*}
        \norm{\nabla u_0}_2^2 + \mu_0\norm{u_0}_2^2=\intRN g(u_0)u_0\, dx.
    \end{equation*}
Thus 
$\norm{\nabla u_j}_2^2+\mu_0\norm{u_j}_2^2\to \norm{\nabla u_0}_2^2+\mu_0\norm{u_0}_2^2$
as $j\to\infty$ and the strong convergence $u_j\to u_0$ in $E$ follows. 
\end{proof}

A proof of \cref{Proposition:5.4} is completed when Case A happens.

\subsection{Proof of \cref{Proposition:5.4}: Case B ($\lambda_j \to +\infty$)} 
\label{Section:5.3}
In this section we consider the situation where $\lambda_j\to+\infty$, i.e. 
$\mu_j = e^{\lambda_j} \to +\infty$. We show the following proposition.

\begin{Proposition} \label{Proposition:5.7}
Assume \ref{(g0')} and \ref{(g1)}. For $b\in\R$, suppose that 
$(\lambda_j,u_j)_{j=1}^\infty$ is 
a $(PSPC)_b$ sequence or a $(PSPC)_b^*$ sequence with $\lambda_j\to+\infty$. 
Then $b$ must be $0$.
\end{Proposition}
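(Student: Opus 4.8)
The plan is to rescale the sequence to the unit-mass power equation and to exploit the Cerami-type weight in \eqref{5.3} to obtain a convergence rate strong enough to force $b=0$. Since every $(PSPC)_b^*$ sequence is a $(PSPC)_b$ sequence, it suffices to treat a $(PSPC)_b$ sequence $(\lambda_j,u_j)$ with $\mu_j:=e^{\lambda_j}\to\infty$. Define $v_j\in E$ by $u_j(x)=\mu_j^{N/4}v_j(\mu_j^{1/2}x)$, as in \cref{Section:4.4}. Because $p=1+4/N$, this change of variables preserves the $L^2$-norm and gives $\norm{\nabla u_j}_2^2=\mu_j\norm{\nabla v_j}_2^2$, $\norm{u_j}_{p+1}^{p+1}=\mu_j\norm{v_j}_{p+1}^{p+1}$, together with the identities (cf.\ \eqref{4.11}--\eqref{4.12})
\[
    I(\lambda_j,u_j)=\mu_j\big(\Psi_{01}(v_j)-m_1\big)-\mu_j^{-N/2}\intRN H(\mu_j^{N/4}v_j)\,dx,
\]
\[
    \partial_u I(\lambda_j,u_j)\varphi=\mu_j\,\Psi_{01}'(v_j)\psi-\mu_j^{-N/4}\intRN h(\mu_j^{N/4}v_j)\,\psi\,dx
    \quad\text{whenever}\ \varphi(x)=\mu_j^{N/4}\psi(\mu_j^{1/2}x),\ \psi\in E.
\]
From \eqref{5.11}--\eqref{5.12} one reads off $\norm{v_j}_2^2=2m_1+o(1)$ and $\norm{\nabla v_j}_2^2=Nm_1+o(1)$, so $(v_j)$ is bounded in $E$ and $\norm{u_j}_E=\sqrt{Nm_1}\,\mu_j^{1/2}(1+o(1))$.

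Using $|H(s)|\le As^2$ and $|h(s)|\le A|s|$ from \eqref{2.1}, the last integral terms above are crudely bounded by $A\norm{v_j}_2^2$ and $A\norm{v_j}_2\norm{\psi}_2$ respectively, while $\norm{\varphi}_E\le\mu_j^{1/2}\norm{\psi}_E$ for $\mu_j\ge1$. Hence \eqref{5.1}--\eqref{5.3} and $\mu_j\to\infty$ yield $\Psi_{01}(v_j)\to m_1$ and $\norm{\Psi_{01}'(v_j)}_{E^*}\to0$. Since $\Psi_{01}$ satisfies the Palais--Smale condition on $E=H^1_r(\R^N)$ and $m_1>0$ is its least energy level, after passing to a subsequence $v_j\to v_0$ strongly in $E$ with $\Psi_{01}(v_0)=m_1$ and $\Psi_{01}'(v_0)=0$, so $v_0=\pm\omega_1$; I assume $v_0=\omega_1$, the other case being identical by oddness. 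In particular $(v_j)$ is precompact in $L^2(\R^N)$, so \cref{Lemma:A.1} gives $\mu_j^{-N/2}\intRN H(\mu_j^{N/4}v_j)\,dx\to0$.

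Now the quantitative step. By \eqref{5.3} and $\norm{u_j}_E\sim\sqrt{Nm_1}\,\mu_j^{1/2}$ we get $\mu_j^{1/2}\norm{\partial_u I(\lambda_j,u_j)}_{E^*}\to0$; feeding this, together with $\norm{\varphi}_E\le\mu_j^{1/2}\norm{\psi}_E$, into the identity for $\partial_u I$ upgrades the previous bound to $\norm{\Psi_{01}'(v_j)}_{E^*}=O(\mu_j^{-1})$. Next I invoke the nondegeneracy of the ground state: the linearized operator $\Psi_{01}''(\omega_1)=-\Delta+1-p\,\omega_1^{p-1}$ has trivial kernel on $H^1_r(\R^N)$ (its kernel on $H^1(\R^N)$ is spanned by the non-radial functions $\partial_{x_i}\omega_1$), and being a compact perturbation of the identity it is an isomorphism $E\to E^*$. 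Combined with $\Psi_{01}'(\omega_1)=0$, the local Lipschitz continuity of $\Psi_{01}'$ near $\omega_1$, and $v_j\to\omega_1$, this gives $\norm{v_j-\omega_1}_E\le C\norm{\Psi_{01}'(v_j)}_{E^*}=O(\mu_j^{-1})$ for $j$ large; writing $\Psi_{01}(v_j)-m_1=\int_0^1\langle\Psi_{01}'(\omega_1+t(v_j-\omega_1))-\Psi_{01}'(\omega_1),v_j-\omega_1\rangle\,dt$ and using the same local Lipschitz bound then yields $|\Psi_{01}(v_j)-m_1|\le C\norm{v_j-\omega_1}_E^2=O(\mu_j^{-2})$, whence $\mu_j\big(\Psi_{01}(v_j)-m_1\big)=O(\mu_j^{-1})\to0$.

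Inserting $\mu_j(\Psi_{01}(v_j)-m_1)\to0$ and $\mu_j^{-N/2}\intRN H(\mu_j^{N/4}v_j)\,dx\to0$ back into the scaling identity for $I$ gives $I(\lambda_j,u_j)\to0$, and since $I(\lambda_j,u_j)\to b$ we conclude $b=0$. The crux — and the only genuinely delicate point — is the rate $\norm{v_j-\omega_1}_E=O(\mu_j^{-1})$: plain strong convergence $v_j\to\omega_1$ only gives $\mu_j(\Psi_{01}(v_j)-m_1)=O(1)$, which is insufficient. What supplies the rate is precisely the Cerami weight $1+\norm{u_j}_E$ in \eqref{5.3} (so that the plain Palais--Smale--Pohozaev condition of \cite{HT0} would not suffice here), together with the nondegeneracy of $\omega_1$ in the space of radial functions.
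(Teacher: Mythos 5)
Your argument is correct in substance, but it settles the final step by a genuinely different mechanism than the paper. After the common part (rescaling $u_j(x)=\mu_j^{N/4}v_j(\mu_j^{1/2}x)$, boundedness, and $v_j\to\pm\omega_1$ strongly in $E$, which is the paper's Steps 1--2), the paper never needs any rate of convergence: it tests the $(PSPC)_b$ relations \eqref{5.13}--\eqref{5.14} against the combination $\intRN (p+1)G(u_j)-g(u_j)u_j\,dx$, in which the pure power part cancels identically because $p=1+4/N$, so only $(p+1)H-hs$ survives and vanishes by \cref{Lemma:A.1}; comparing with the value $-\tfrac4N b+o(1)$ of the same quantity forces $b=0$. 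You instead convert the Cerami weight in \eqref{5.3} into the quantitative bound $\norm{\Psi_{01}'(v_j)}_{E^*}=O(\mu_j^{-1})$ and then invoke the nondegeneracy of $\omega_1$ in the radial class to get $\norm{v_j-\omega_1}_E=O(\mu_j^{-1})$, hence $\mu_j(\Psi_{01}(v_j)-m_1)\to0$. Your route buys more (an explicit convergence rate for the rescaled sequence), but at the cost of importing Kwong-type nondegeneracy/uniqueness of the ground state, an external input the paper's purely algebraic cancellation deliberately avoids and which would not be available for generalizations (e.g. the setting of \cref{Appendix:A.3} or nonlocal variants).

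Two points in your quantitative step deserve care, though both are fixable. First, for $N\geq 5$ one has $p=1+4/N<2$, so $\Psi_{01}$ is not of class $C^2$; you should work directly with the linearized operator $L_+=-\Delta+1-p\,\omega_1^{p-1}:E\to E^*$ and verify the first-order expansion $\Psi_{01}'(v)=L_+(v-\omega_1)+R(v)$ with $\norm{R(v)}_{E^*}\leq C\norm{v-\omega_1}_E^{\min(p,2)}=o(\norm{v-\omega_1}_E)$, which follows from the pointwise H\"older estimate for $s\mapsto\abs{s}^{p-1}s$. Second, local Lipschitz continuity of $\Psi_{01}'$ is not what yields $\norm{v_j-\omega_1}_E\leq C\norm{\Psi_{01}'(v_j)}_{E^*}$ (for that you need precisely the above expansion together with the invertibility of $L_+$ on $H^1_r$, the latter requiring a citation of the radial nondegeneracy); Lipschitz continuity is what you need for the second estimate $\abs{\Psi_{01}(v_j)-m_1}\leq C\norm{v_j-\omega_1}_E^2$. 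With these adjustments your proof is complete.
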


Clearly \cref{Proposition:5.4} in Case B follows from \cref{Proposition:5.7}.
\begin{proof}
As in the proof of \cref{Proposition:5.6}, we only treat $(PSPC)_b$ sequences. 
We divide our arguments into several steps.  Arguments for Steps 1--2 are closely
related to those in the proof of  \cref{Proposition:4.2} (iv)
in \cref{Section:4.4}.

\smallskip 

\noindent
\textbf{Step 1:} \textsl{We define $v_j$ by $u_j(x)=\mu_j^{N/4} v_j(\mu_j^{1/2}x)$. 
Then as $j\to\infty$,
    \begin{align}   
        &\text{$(v_j)_{j=1}^\infty$ is bounded in $E$,}     \label{5.15}\\
        &L(\lambda_j,v_j)\to m_1,                       \label{5.16}\\
        &\norm{\partial_v L(\lambda_j,v_j)}_{E^*} \to 0,    \label{5.17}
    \end{align}
where $L:\, \RE\to\R$ is defined by \eqref{4.11}.
}

\smallskip 

\noindent
Remark that 
    \begin{equation*}
        \norm{\nabla u_j}_2^2=\mu_j\norm{\nabla v_j}_2^2, \quad \norm{u_j}_2^2 
        = \norm{v_j}_2^2.
    \end{equation*}
Hence, \eqref{5.15} follows from \eqref{5.11}, \eqref{5.12} and $\mu_j \to \infty$. 
We also note that 
    \begin{equation*}
        b+o(1) = I(\lambda_j,u_j)
            = \mu_j \left\{ L(\lambda_j, v_j) - m_1 \right\},
    \end{equation*}
which yields \eqref{5.16}. 
Setting $\varphi_j(x) =\mu_j^{N/4}\psi(\mu_j^{1/2}x)$ for $\psi\in E$, we have
    \begin{equation} \label{5.18}
        \partial_u I(\lambda_j,u_j)\varphi_j =\mu_j\partial_v L(\lambda_j,v_j)\psi, 
        \quad \norm{ \nabla \varphi_j }_2^2 = \mu_j \norm{ \nabla \psi }_2^2, \quad 
        \norm{ \varphi_j }_2^2 = \norm{ \psi }_2^2,
    \end{equation}
which implies $\norm{\varphi_j}_E\leq \mu_j^{1/2} \norm\psi_E$. Thus it follows from 
\eqref{5.2} and \eqref{5.18} that
    \begin{equation*}
            \begin{aligned}
        \norm{\partial_v L(\lambda_j,v_j)}_{E^*} 
        &= \sup_{\norm\psi_E\leq 1} \abs{\partial_v L(\lambda_j,v_j)\psi} 
        \leq \sup_{\norm\varphi_E\leq \mu_j^{1/2}} 
            \mu_j^{-1}\pabs{\partial_u I(\lambda_j,u_j)\varphi} \\
        &=\mu_j^{-1/2}\norm{\partial_u I(\lambda_j,u_j)}_{E^*} \to 0. 
        \end{aligned}
    \end{equation*}
Therefore \eqref{5.17} holds. 

\smallskip 

\noindent
\textbf{Step 2:} \textsl{After extracting a subsequence, we have
    \begin{equation*}
        v_j\to \omega_1 \quad \text{or}\quad v_j\to -\omega_1 \quad
        \text{strongly in $E$ as $j\to\infty$.}
    \end{equation*}
}

\smallskip 

\noindent
Step 2 can be shown as in Step 2 of the proof of \cref{Proposition:4.2} (iv).

In what follows, we assume $v_j\to\omega_1$ strongly in $E$ since the argument is 
similar when $v_j \to - \omega_1$. 

\smallskip

\noindent
\textbf{Step 3:} \textsl{$\displaystyle \intRN (p+1)G(u_j)-g(u_j)u_j\, dx\to 0$ 
as $j\to\infty$.}

\smallskip

\noindent
Under \ref{(g1)}, we have
${\frac{1}{s^2}}((p+1)G(s)-g(s)s) = {\frac{1}{s^2}}((p+1)H(s)-h(s)s)$ for all 
$s\in\R$.  Thus by \cref{Lemma:A.1}, 
    \begin{align*}  
        &\intRN (p+1)G(u_j)-g(u_j)u_j\, dx 
        = \intRN (p+1)H(u_j)-h(u_j)u_j\, dx\nonumber\\
        &\qquad = \mu_j^{-N/2}\intRN (p+1)H(\mu_j^{N/4}v_j) 
            -h(\mu_j^{N/4}v_j)\mu_j^{N/4}v_j\, dx\to 0.
    \end{align*}
Hence, Step 3 holds. 

\smallskip

\noindent
\textbf{Step 4:} \textsl{Conclusion.}

\smallskip

\noindent
We show that $b$ appeared in \eqref{5.1} must be $0$. 
By \eqref{5.13} and \eqref{5.14}, we have
    \begin{equation*}
        \intRN (p+1)G(u_j)-g(u_j)u_j\, dx
        = \left( (p+1){\frac{N-2}{2}} -N\right) b +o(1)
        = -{\frac{4}{N}}b + o(1).
    \end{equation*}
Thus, Step 3 yields $b=0$ and we complete the proof. 
\end{proof}

\subsection{Proof of \cref{Proposition:5.4}: Case C ($\lambda_j \to -\infty$)} 
\label{Section:5.4}
In this section, we consider the situation when $\lambda_j\to -\infty$, i.e.,
$\mu_j = e^{\lambda_j} \to 0$. 
Here \eqref{1.15} appears to describe a limit profile of $(PSPC)_b$ or $(PSPC)_b^*$ 
sequences.  We show the following proposition.

\begin{Proposition} \label{Proposition:5.8}
Assume \ref{(g0')} and {\rm \ref{(g1)}}. For $b\in\R$, suppose that 
$(\lambda_j,u_j)_{j=1}^\infty$ is a $(PSPC)_b$ or $(PSPC)_b^*$ sequence 
with $\lambda_j\to -\infty$. Then 
there exist a subsequence of $(u_j)_{j=1}^\infty$ --- still denoted by $u_j$ --- 
and $u_0\in F$ such that
    \begin{equation*}
        u_j\to u_0 \ \text{strongly in}\ F \quad \text{and}\quad
        Z(u_0) = b, \quad Z'(u_0) = 0,      
    \end{equation*}
where $Z$ is defined in \eqref{2.11}.
For a $(PSPC)_b^*$ sequence, we also have $u_0\geq 0$.
\end{Proposition}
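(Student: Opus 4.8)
The plan is to carry out the standard three-step compactness scheme --- boundedness, identification of the weak limit, upgrade to strong convergence --- but entirely in the space $F$: when $\lambda_j\to-\infty$ the $L^2$-norm of a $(PSPC)_b$ (or $(PSPC)_b^\ast$) sequence need not stay bounded, so one can only hope to recover the ``zero-mass profile'' in $F$, not convergence in $\RE$. Since every $(PSPC)_b^\ast$ sequence is a $(PSPC)_b$ sequence, relations \eqref{5.11}--\eqref{5.14} hold; as $\mu_j=e^{\lambda_j}\to0$ they become
\[
    \mu_j\norm{u_j}_2^2\to0,\quad \norm{\nabla u_j}_2^2\to Nb,\quad \intRN G(u_j)\,dx\to\tfrac{N-2}{2}b,\quad \intRN g(u_j)u_j\,dx\to Nb .
\]
The single delicate point --- and the main obstacle --- is the a priori $L^{p+1}$-estimate in Step~1 (the estimate referred to in \cref{Remark:1.10}); Steps~2--3 are then routine.

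\emph{Step 1: $(u_j)$ is bounded in $F$.} Boundedness of $(\nabla u_j)$ in $L^2$ is immediate from $\norm{\nabla u_j}_2^2\to Nb$, so one only has to bound $\norm{u_j}_{p+1}$. Writing $G(s)=\tfrac{1}{p+1}\abs s^{p+1}+H(s)$, relation \eqref{5.13} reads (since $\mu_j\to0$) $\tfrac{1}{p+1}\norm{u_j}_{p+1}^{p+1}=\tfrac{N-2}{2}b-\intRN H(u_j)\,dx+o(1)$, so it suffices to absorb $\intRN\abs{H(u_j)}\,dx$ into $\norm{u_j}_{p+1}^{p+1}$. Given $\varepsilon>0$, \eqref{1.18} (i.e. $\rho(s)=o(\abs s^{p-1})$ at $0$) and the sublinearity of $h$ at infinity give $0<\delta_\varepsilon<R_\varepsilon$ with $\abs{H(s)}\le\varepsilon\abs s^{p+1}$ for $\abs s\le\delta_\varepsilon$ and for $\abs s\ge R_\varepsilon$, and $\abs{H(s)}\le M_\varepsilon$ for $\delta_\varepsilon\le\abs s\le R_\varepsilon$; splitting $\R^N$ accordingly,
\[
    \intRN\abs{H(u_j)}\,dx\le 2\varepsilon\norm{u_j}_{p+1}^{p+1}+M_\varepsilon\,\meas\!\big(\{\abs{u_j}\ge\delta_\varepsilon\}\big).
\]
For $N\ge3$ the Sobolev inequality together with the bound on $\norm{\nabla u_j}_2$ gives $\meas(\{\abs{u_j}\ge\delta_\varepsilon\})\le C_\varepsilon$ uniformly in $j$, so the last term is $O_\varepsilon(1)$; choosing $\varepsilon$ small and absorbing shows $\norm{u_j}_{p+1}$ bounded. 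For $N=2$ the superlevel-set measure is not controlled by $\norm{\nabla u_j}_2$ alone, and this case is the hardest point of the proof; it requires a separate argument --- e.g. the radial decay estimate \cref{Lemma:2.4}(ii) combined with interpolation, or a rescaling argument excluding $\norm{u_j}_{p+1}\to\infty$ by comparison with the limit problem. In all cases $(u_j)$ is then bounded in $F$.

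\emph{Step 2: the weak limit solves \eqref{1.15}.} Passing to a subsequence, $u_j\wlimit u_0$ in $F$; by \cref{Lemma:2.3}(ii), $u_j\to u_0$ in $L^q(B_R)$ ($q<2^\ast$), in $L^r(\R^N)$ ($r\in(p+1,2^\ast)$), and a.e. For $\varphi\in C^\infty_{0,r}(\R^N)$, \eqref{5.3} gives $\partial_uI(\lambda_j,u_j)\varphi\to0$, where $(\nabla u_j,\nabla\varphi)_2\to(\nabla u_0,\nabla\varphi)_2$ by weak convergence, $\mu_j(u_j,\varphi)_2\to0$ because $\mu_j\abs{(u_j,\varphi)_2}\le(\mu_j\norm{u_j}_2^2)^{1/2}\mu_j^{1/2}\norm\varphi_2\to0$, and $\intRN g(u_j)\varphi\,dx\to\intRN g(u_0)\varphi\,dx$ by the local $L^{p+1}$-convergence ($\varphi$ compactly supported) and the growth of $g$. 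By density of $C^\infty_{0,r}(\R^N)$ in $F$ we get $Z'(u_0)=0$, so \cref{Lemma:2.5} yields \eqref{2.13}--\eqref{2.15} and $Z(u_0)=\tfrac1N\norm{\nabla u_0}_2^2\ge0$; in particular $\norm{\nabla u_0}_2^2=NZ(u_0)$, $\intRN G(u_0)\,dx=\tfrac{N-2}{2}Z(u_0)$ and $\intRN g(u_0)u_0\,dx=NZ(u_0)$. For a $(PSPC)_b^\ast$ sequence, $\distE(u_j,\scrP_0)\to0$ forces $\norm{u_j^-}_2\to0$ (as $\norm{u_j^-}_2=\inf_{p\in\scrP_0}\norm{u_j-p}_2\le\distE(u_j,\scrP_0)$), hence $u_0\ge0$.

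\emph{Step 3: strong convergence and $Z(u_0)=b$.} Put $w_j=u_j-u_0\wlimit0$ in $F$, so $w_j\to0$ in $L^r(\R^N)$ for $r\in(p+1,2^\ast)$. Using $\abs{H(s)},\abs{h(s)s}\le\varepsilon\abs s^{p+1}+C_\varepsilon\abs s^r$ together with $\norm{w_j}_r\to0$ and boundedness of $\norm{w_j}_{p+1}$ shows $\intRN H(w_j)\,dx\to0$ and $\intRN h(w_j)w_j\,dx\to0$. The Brezis--Lieb lemma (applicable since $\abs{G(s)},\abs{g(s)s}\le C\abs s^{p+1}$ and $u_j\to u_0$ a.e.) gives $\norm{\nabla u_j}_2^2=\norm{\nabla w_j}_2^2+\norm{\nabla u_0}_2^2+o(1)$, $\intRN G(u_j)\,dx=\tfrac{1}{p+1}\norm{w_j}_{p+1}^{p+1}+\intRN G(u_0)\,dx+o(1)$ and $\intRN g(u_j)u_j\,dx=\norm{w_j}_{p+1}^{p+1}+\intRN g(u_0)u_0\,dx+o(1)$. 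Inserting \eqref{5.12}--\eqref{5.14} (in the limit $\mu_j\to0$) and the $u_0$-identities from Step~2, and writing $\beta:=b-Z(u_0)$, we obtain
\[
    \norm{\nabla w_j}_2^2\to N\beta,\qquad \tfrac{1}{p+1}\norm{w_j}_{p+1}^{p+1}\to\tfrac{N-2}{2}\beta,\qquad \norm{w_j}_{p+1}^{p+1}\to N\beta .
\]
The last two are compatible only if $\big(\tfrac{N}{p+1}-\tfrac{N-2}{2}\big)\beta=\tfrac{2}{N+2}\,\beta=0$, so $\beta=0$. (For $N=2$ the middle limit reads $\tfrac14\norm{w_j}_4^4\to0$, which already forces this.) Hence $Z(u_0)=b$ and $\norm{\nabla w_j}_2\to0$, $\norm{w_j}_{p+1}\to0$, i.e. $u_j\to u_0$ strongly in $F$, as claimed. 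The whole weight of the argument thus rests on the $L^{p+1}$-bound of Step~1.
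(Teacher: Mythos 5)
Your Steps 2 and 3 are correct and essentially parallel the paper's proof: the paper tests $\partial_u I(\lambda_j,u_j)$ with $\varphi\in E$ and, instead of comparing your two limits for $\norm{w_j}_{p+1}^{p+1}$, uses the single combination $NG(v_j)-\tfrac{N-2}{2}g(v_j)v_j=\tfrac{2}{N+2}\abs{v_j}^{p+1}+K(v_j)$ with $\intRN K(v_j)\,dx\to 0$, which is the same algebra; your Fatou argument for $u_0\geq 0$ replaces the paper's $L^{p+1}$ estimate of $u_j^-$, and both are fine. Your Step 1 for $N\geq 3$ is also a legitimate variant: the paper (\cref{Lemma:5.9}, Step 2) uses the radial decay estimate of \cref{Lemma:2.4} (i) together with $g(s)s\geq C_1\abs{s}^{p+1}-C_2$ and $\intRN g(u_j)u_j\,dx\to Nb$, whereas you use \eqref{5.13}, the splitting $G=\tfrac{1}{p+1}\abs{s}^{p+1}+H$ and a Chebyshev--Sobolev bound on the level sets; both close the $N\geq3$ case.

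The genuine gap is the case $N=2$ of the $L^{p+1}=L^4$ bound, which you explicitly leave unproved, and which is precisely the delicate point of the proposition (the statement covers all $N\geq 2$). Your first suggested fix is circular: \cref{Lemma:2.4} (ii) gives $\abs{u_j(x)}^3\leq C\norm{\nabla u_j}_2\norm{u_j}_4^2\abs{x}^{-1}$, so the radius beyond which $\abs{u_j}$ is small grows like $\norm{u_j}_4^2$, i.e.\ like the very quantity you are trying to bound, and the contribution of the remaining ball is then of the same order $\norm{u_j}_4^4$; no interpolation removes this. Your second suggestion (a rescaling argument) is indeed the paper's route, but it is not a routine step one can merely name: setting $t_j=\norm{u_j}_4^{-2}$ and $v_j=u_j(\cdot/t_j)$, one must prove that the weak limit of $v_j$ in $F$ vanishes, and the only way to test the equation is with the scaled functions $\psi_j=\varphi(t_j\cdot)$, whose $E$-norm blows up like $t_j^{-1}$; the paper controls $\norm{\psi_j}_E\leq C_\varphi\norm{u_j}_E$ via the Gagliardo--Nirenberg inequality, so that exactly the Cerami weight in \eqref{5.3}, $(1+\norm{u_j}_E)\norm{\partial_u I(\lambda_j,u_j)}_{E^*}\to 0$, yields $\partial_u I(\lambda_j,u_j)\psi_j\to 0$ — this is where the $(PSPC)$ structure (as opposed to a plain Palais--Smale condition) is used. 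One then contradicts $\norm{v_j}_4=1$ by combining the uniform decay of $v_j$ (now legitimate, since $\norm{v_j}_4=1$), the identity $\int_{\R^2} g(v_j)v_j\,dx=t_j^2\int_{\R^2} g(u_j)u_j\,dx\to 0$, and compactness on balls. None of this appears in your proposal, so as written the proof of boundedness in $F$ — on which, as you say yourself, the whole argument rests — is incomplete for $N=2$.
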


We first prove the boundedness of $(u_j)_{j=1}^\infty$
in $L^{p+1}(\R^N)$.

\begin{Lemma} \label{Lemma:5.9} 
Assume \ref{(g0')} and {\rm \ref{(g1)}}. For $b\in\R$, suppose that 
$(\lambda_j,u_j)_{j=1}^\infty\subset\RE$ is a $(PSPC)_b$ or $(PSPC)_b^*$ sequence with
$\lambda_j\to -\infty$. Then, $(u_j)_{j=1}^\infty$ is bounded in $F$.
\end{Lemma}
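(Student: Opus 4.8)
The plan is to establish the two bounds $\sup_j\norm{\nabla u_j}_2<\infty$ and $\sup_j\norm{u_j}_{p+1}<\infty$ separately; the first is immediate from the relations already extracted and the second is the crux. Since the whole argument will only use \eqref{5.1}--\eqref{5.4}, it applies verbatim to $(PSPC)_b$ and to $(PSPC)_b^*$ sequences. First I would note that in the present regime $\mu_j=e^{\lambda_j}\to 0$, so \eqref{5.12} reads $\norm{\nabla u_j}_2^2=N\mu_j m_1+Nb+o(1)\to Nb$; in particular $(\norm{\nabla u_j}_2)_j$ is bounded (and, incidentally, $b\geq 0$). Next, \eqref{5.14} gives $\intRN g(u_j)u_j\,dx=(N+2)\mu_j m_1+Nb+o(1)\to Nb$, so this quantity is bounded; writing $g(s)s=\abs s^{p+1}+h(s)s$ we obtain
\[
    \norm{u_j}_{p+1}^{p+1}=\intRN g(u_j)u_j\,dx-\intRN h(u_j)u_j\,dx
    \leq C+\intRN \abs{h(u_j)u_j}\,dx ,
\]
so it suffices to absorb $\intRN\abs{h(u_j)u_j}\,dx$ into $\norm{u_j}_{p+1}^{p+1}$.

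The key step is to decompose $\R^N$ into $\{\abs{u_j}\leq\delta\}$, $\{\delta<\abs{u_j}<K\}$ and $\{\abs{u_j}\geq K\}$ for $0<\delta\ll1\ll K$, and to estimate $\abs{h(u_j)u_j}$ on each. On $\{\abs{u_j}\le\delta\}$, the condition $\lim_{s\to0}h(s)/(\abs s^{p-1}s)=0$ in \ref{(g1)} gives $\abs{h(u_j)u_j}\le\epsilon\abs{u_j}^{p+1}$ for $\delta=\delta_\epsilon$ small, contributing at most $\epsilon\norm{u_j}_{p+1}^{p+1}$. On $\{\abs{u_j}\ge K\}$, \eqref{2.1} and $\abs{u_j}^{1-p}\le K^{1-p}$ yield $\abs{h(u_j)u_j}\le A u_j^2\le AK^{1-p}\abs{u_j}^{p+1}$, contributing at most $AK^{1-p}\norm{u_j}_{p+1}^{p+1}$, where $K^{1-p}$ is as small as we like since $1-p<0$. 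On the middle region $\abs{h(u_j)u_j}\le Au_j^2< AK^2$ pointwise, so this contributes at most $AK^2\,\meas\{\abs{u_j}>\delta\}$; here the decisive observation is that, since $u_j$ is radial and $\norm{\nabla u_j}_2$ is bounded, \cref{Lemma:2.4} (i) forces $\abs{u_j(x)}\le C_N\norm{\nabla u_j}_2\abs x^{-(N-2)/2}$ for $\abs x\geq1$, hence $\{\abs{u_j}>\delta\}\subset B_{R_\delta}$ for some $R_\delta$ independent of $j$, so this term is bounded by a constant $C(\delta,K)$. Collecting the three bounds, fixing first $\epsilon$ small (which fixes $\delta=\delta_\epsilon$), then $K$ so large that $AK^{1-p}<\epsilon$, we arrive at $\norm{u_j}_{p+1}^{p+1}\le C'+2\epsilon\norm{u_j}_{p+1}^{p+1}$ with $C'=C'(\epsilon)$ independent of $j$; taking $\epsilon<1/4$ gives $\norm{u_j}_{p+1}^{p+1}\le 2C'$, which completes the proof.

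I expect the middle region to be the only real obstacle: there $h$ is merely bounded rather than small, so everything rests on a uniform bound for $\meas\{\abs{u_j}>\delta\}$. For $N\geq3$ this is clean, because \cref{Lemma:2.4} (i) controls $\abs{u_j(x)}$ pointwise through the bounded Dirichlet norm alone. For $N=2$ the analogous radial bound \cref{Lemma:2.4} (ii) also involves $\norm{u_j}_4=\norm{u_j}_{p+1}$ — the very quantity being estimated — so the confining ball a priori depends on $j$; there one must peel off the largest amplitudes so that the residual middle-region contribution is of strictly lower order in $\norm{u_j}_{p+1}$ and can be absorbed by Young's inequality, and I regard this two-dimensional case as the genuinely delicate point.
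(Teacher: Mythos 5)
Your argument for $N\geq 3$ is correct and is essentially the paper's own Step 2: boundedness of $\norm{\nabla u_j}_2$ and of $\intRN g(u_j)u_j\,dx$ from \eqref{5.12}, \eqref{5.14}, plus the uniform radial decay of \cref{Lemma:2.4} (i), which confines $\{\abs{u_j}>\delta\}$ to a fixed ball; the paper packages the same idea slightly more compactly via the lower bounds \eqref{5.22}--\eqref{5.23} instead of a three-region splitting of $h(u_j)u_j$, but the mechanism is identical.

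The genuine gap is the case $N=2$, which you leave as a sketch, and the sketched fix does not work. The obstruction is not the largest amplitudes (your estimate $\abs{h(s)s}\leq As^2\leq AK^{1-p}\abs s^{p+1}$ already handles $\abs{u_j}\geq K$, so there is nothing left to ``peel off''); it is the middle region $\{\delta<\abs{u_j}<K\}$, whose measure is \emph{not} controlled by $\norm{\nabla u_j}_2$ in dimension two ($\norm{u_j}_2$ is not bounded either, since only $\mu_j\norm{u_j}_2^2\to 0$ is known), and \cref{Lemma:2.4} (ii) confines $\{\abs{u_j}>\delta\}$ only to a ball of radius $\sim\norm{u_j}_4^2$, so the resulting bound $AK^2\meas\{\abs{u_j}>\delta\}\lesssim AK^2\delta^{-6}\norm{u_j}_4^4$ is of exactly the order you are trying to absorb, with a large constant; Young's inequality cannot repair this. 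The paper's proof of the two-dimensional case is of a different nature: assuming $\norm{u_j}_4\to\infty$, one rescales $v_j(x)=u_j(x/t_j)$ with $t_j=\norm{u_j}_4^{-2}$, tests the equation with dilated functions $\psi_j(x)=\varphi(t_jx)$, and here the Cerami-type factor in \eqref{5.3} is indispensable, because $\norm{\psi_j}_E$ blows up like $t_j^{-1}$ but is dominated by $\norm{u_j}_E$ through the Gagliardo--Nirenberg inequality; this yields $\intRN g(v_j)v_j\,dx\to 0$ and $\int_{\R^2} g(v_0)\varphi\,dx=0$ for the weak limit, hence $v_0\equiv 0$, and then \cref{Lemma:2.4} (ii) (now applicable uniformly since $\norm{v_j}_4=1$) together with \eqref{5.22} and local compactness forces $\norm{v_j}_4\to 0$, contradicting $\norm{v_j}_4=1$. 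Your proposal never uses the strengthened condition \eqref{5.3}, so it cannot reach this conclusion; as written it proves the lemma only for $N\geq 3$.
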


\begin{proof}
We argue for a $(PSPC)_b$ sequence.
Since $\mu_j\to 0$, it follows from \eqref{5.11} that
    \begin{equation} \label{5.19}
        \mu_j\norm{u_j}_2^2 \to 0 \quad \text{as}\ j\to\infty.  
    \end{equation}
We also see from \eqref{5.12}--\eqref{5.14} that
    \begin{equation} \label{5.20}
        \norm{\nabla u_j}_2^2 \to Nb, \quad
        \intRN G(u_j)\, dx \to {\frac{N-2}{2}}b, \quad
        \intRN g(u_j)u_j\, dx \to Nb.       
    \end{equation}
Since $\norm{\nabla u_j}_2$ is bounded, it suffices to show boundedness of 
$(u_j)_{j=1}^\infty$ in $L^{p+1}(\R^N)$.  We show it in 4 steps.
For notational convenience, we regard $Z$ as a map $E\to\R$ in this proof.

\smallskip
\noindent
\textbf{Step 1:} \textsl{ $(1+\norm{u_j}_E)\norm{Z'(u_j)}_{E^*}\to 0$ as $j\to\infty$.}

\smallskip

\noindent
For $\varphi\in E$, we have
    \begin{equation*}
        \begin{aligned}
            (1+\norm{u_j}_E)\pabs{Z'(u_j)\varphi} 
        &= (1+\norm{u_j}_E)\pabs{ \partial_u I(\lambda_j,u_j) \varphi 
            -\mu_j(u_j,\varphi)_2} \\
        &\leq (1+\norm{u_j}_E)\left\{ \norm{\partial_u I(\lambda_j,u_j)}_{E^*}
            +\mu_j\norm{u_j}_2\right\}\norm{\varphi}_E. 
        \end{aligned}
    \end{equation*}
By \eqref{5.3}, \eqref{5.19} and boundedness of $\norm{\nabla u_j}_2$,  we have Step 1.

\smallskip

By Step 1, we have
    \begin{equation} \label{5.21}
        (1+\norm{u_j}_E) \norm{Z'(u_j)}_{E^*} \leq 1 \quad \text{for large}\ j.
    \end{equation}
From \ref{(g1)}, there exist constants $C_1$, $C_2$, $\delta>0$ such that
    \begin{align}   
    &g(s)s \geq C_1\abs s^{p+1} \quad \text{for}\ \abs s\leq 2\delta, 
                                    \label{5.22}\\
    &g(s)s \geq C_1\abs s^{p+1} -C_2 \quad \text{for}\ s\in\R. 
                                    \label{5.23}
    \end{align}


\noindent
We deal with cases $N=2$ and $N\geq 3$ separately.

\smallskip

\noindent
\textbf{Step 2:} \textsl{When $N\geq 3$, $\norm{u_j}_{p+1}$ is bounded.}

\smallskip

\noindent
When $N\geq 3$, \cref{Lemma:2.4} (i) ensures the existence of $R_0>0$ independent
$j$ such that
    \begin{equation*}
        \abs{u_j(x)} \leq \delta \quad \text{for all}\ \abs x\geq R_0
        \ \text{and}\ j\in\R^N.
    \end{equation*}
By \eqref{5.22}--\eqref{5.23}, we have
    \begin{align*}  
        C_1\norm{u_j}_{p+1}^{p+1}
        &\leq \int_{\R^N\setminus B_{R_0}} g(u_j)u_j\, dx 
            + \int_{B_{R_0}} g(u_j)u_j +C_2\, dx\nonumber\\
        &= \intRN g(u_j)u_j\, dx + C_2\meas(B_{R_0})\, dx.
    \end{align*}
Hence, by \eqref{5.20}, $\norm{u_j}_{p+1}$ is bounded.  Thus Step 2 is proved.

\smallskip

When $N=2$, we note that $p+1=4$. As in \cite{HIT}, we use a scaling argument.
We argue indirectly and assume $\norm{u_j}_4\to\infty$.  
Setting
$t_j=\norm{u_j}_4^{-2}\to 0$ and $v_j(x)=u_j(x/t_j)\in E$, it follows from 
\eqref{5.20} that
    \begin{align}   
        &\norm{\nabla v_j}_2^2=\norm{\nabla u_j}_2^2\to 2b, \quad
        \norm{v_j}_4^4 =t_j^2\norm{u_j}_4^4=1, \quad
        \mu_j\norm{v_j}_2^2 = t_j^2\mu_j\norm{u_j}_2^2 \to 0, \label{5.24}\\
        &\int_{\R^2} g(v_j)v_j\, dx =t_j^2\int_{\R^2} g(u_j)u_j\, dx\to 0. \label{5.25}
    \end{align}
In particular, $(v_j)_{j=1}^\infty$ is bounded in $F$.  We may assume 
$v_j\wlimit v_0$ weakly in $F$ after extracting a subsequence.

\smallskip

\noindent
\textbf{Step 3:} \textsl{$v_0=0$.}

\smallskip

\noindent
Note that $F\subset C(\R^N\setminus\{ 0\},\R)$ and, applying \cref{Lemma:2.4} (ii), 
observe that $v_0(x)\to 0$ as $\abs x\to \infty$.
By the behavior \eqref{5.22} of $g$, to claim $v_0\equiv 0$, it suffices 
to show $g(v_0)\equiv 0$, equivalently, to show
    \begin{equation} \label{5.26}
        \int_{\R^2} g(v_0)\varphi\, dx =0 \quad \text{for all}\ \varphi\in E.
    \end{equation}
For a given $\varphi\in E$ we set $\psi_j(x)=\varphi(t_jx)$.  Then we have
    \begin{equation*}
        \norm{\psi_j}_E^2 = \norm{\nabla\psi_j}_2^2 + \norm{\psi_j}_2^2
        = \norm{\nabla\varphi}_2^2 + t_j^{-2}\norm{\varphi}_2^2.
    \end{equation*}
On the other hand by the Gagliardo-Nirenberg inequality, we have 
$t_j^{-2}=\norm{u_j}_4^4 \leq C\norm{\nabla u_j}_2^2\norm{u_j}_2^2$ and thus
$\norm{u_j}_E^2 \geq \norm{u_j}_2^2 \geq Ct_j^{-2}$, which implies
$\norm{\psi_j}_E\leq C_\varphi\norm{u_j}_E$ for some constant $C_\varphi>0$
independent of $j$.  Since $(\lambda_j,u_j)$ is a $(PSPC)_b$ sequence,
    \begin{equation*}
        \pabs{\partial_u I(\lambda_j,u_j)\psi_j} 
        \leq \norm{\psi_j}_E \norm{\partial_u I(\lambda_j,u_j)}_{E^*} 
        \leq C_\varphi(1+\norm{u_j}_E) \norm{\partial_u I(\lambda_j,u_j)}_{E^*}
        \to 0.      
    \end{equation*}
We compute that
    \begin{equation*}
        t_j^2\partial_u I(\lambda_j,u_j)\psi_j =
        t_j^2(\nabla v_j,\nabla\varphi)_2 +\mu_j(v_j,\varphi)_2 
            -\intRN g(v_j)\varphi\, dx.
    \end{equation*}
By \eqref{5.24} it implies that $\intRN g(v_j)\varphi\, dx\to 0$.
Thus we have \eqref{5.26} and $v_0\equiv 0$.

\smallskip


\noindent
\textbf{Step 4:} \textsl{Conclusion.}

\smallskip

\noindent
By \cref{Lemma:2.4} (ii), there exists $R_0>0$ such that
    \begin{equation*}
        \abs{v_j(x)} \leq \delta \quad \text{for all}\ \abs x\geq R_0 \ 
        \text{and}\ j\in\N.
    \end{equation*}
Since $v_j\wlimit v_0=0$ weakly in $F$, we have the strong convergence 
$v_j\to 0$ in $L^4(B_{R_0})$. 
On the other hand, we have by \eqref{5.22} and \eqref{5.25}
    \begin{equation*}
        C_1\norm{v_j}_{L^4(\R^2\setminus B_{R_0})}^4
        \leq \int_{\R^2\setminus B_{R_0}} g(v_j)v_j\, dx 
        = \int_{\R^2} g(v_j)v_j\, dx - \int_{B_{R_0}} g(v_j)v_j\, dx
        \to 0
    \end{equation*}
as $j\to\infty$.
Thus we have $\norm{v_j}_{4}\to 0$, which contradicts with $\norm{v_j}_4=1$, 
and we have boundedness of $(u_j)_{j=1}^\infty$ in $F$.
\end{proof}

Now we give a proof to \cref{Proposition:5.8}.

\begin{proof}[Proof of \cref{Proposition:5.8}]
First we deal with $(PSPC)_b$ sequences.
Suppose that $(\lambda_j,u_j)_{j=1}^\infty$ satisfies \eqref{5.1}--\eqref{5.4}
and $\mu_j\to 0$.
By \cref{Lemma:5.9}, $(u_j)_{j=1}^\infty$ is bounded in $F$. 
After extracting a subsequence, we may assume for some $u_0\in F$
    \begin{equation*}
        u_j\wlimit u_0 \quad \text{weakly in}\ F.
    \end{equation*}
Since $\partial_u I(\lambda_j,u_j)\varphi\to 0$ for any $\varphi\in E$, we have
by \eqref{5.19}
    \begin{equation*}
        (\nabla u_0,\nabla\varphi)_2 -\intRN g(u_0)\varphi\, dx=0
        \quad \text{for all}\ \varphi\in E.
    \end{equation*}
Hence $Z'(u_0) = 0$.

Set $v_j = u_j - u_0$. Since $\intRN H(u_j)\,dx\to \intRN H(u_0)\,dx$, 
$\intRN h(u_j)u_j\,dx\to \intRN h(u_0)u_0\,dx$, 
$\intRN \abs{H(v_j)}\,dx\to 0$ and $\intRN \abs{h(v_j)v_j}\,dx\to 0$
follow from \ref{(g1)} and \cref{Lemma:2.3},
we deduce by the Brezis-Lieb lemma that 
    \begin{align}
    &\intRN G(u_j)\, dx = \intRN G(u_0)\, dx + \intRN G(v_j) \, dx
        + o(1), \label{5.27}\\
    &\intRN g(u_j)u_j\, dx = \intRN g(u_0)u_0\, dx + \intRN g(v_j)v_j\, dx
        + o(1). \label{5.28}
    \end{align}
By \eqref{5.20},
    \begin{equation} \label{5.29}
        N\intRN G(u_j)\, dx -{\frac{N-2}{2}}\intRN g(u_j)u_j \, dx \to 0.
    \end{equation}
On the other hand, since $u_0\in F$ is a solution of \eqref{1.15}, \cref{Lemma:2.5}
implies
    \begin{equation} \label{5.30}
        \intRN NG(u_0)\, dx -{\frac{N-2}{2}}g(u_0)u_0 \, dx =0.
    \end{equation}
Properties \eqref{5.27}--\eqref{5.30} imply
    \begin{equation} \label{5.31}
        \intRN NG(v_j)\, dx -{\frac{N-2}{2}} g(v_j)v_j \, dx \to 0.
    \end{equation}
We note that
    \begin{equation*}
            NG(s)-{\frac{N-2}{2}}g(s)s = {\frac{2}{N+2}}\abs s^{s+1} +K(s),
    \end{equation*}
where $K(s)=NH(s)-{\frac{N-2}{2}}h(s)s$.  Since ${\frac{K(s)}{\abs s^{p+1}}}\to 0$ as
$s\to 0$ and $\abs s\to\infty$, it follows from \cref{Lemma:2.3} (ii) that
$\intRN K(v_j)\, dx\to 0$.  Thus \eqref{5.31} implies $\norm{v_j}_{p+1}\to 0$, that is,
$u_j\to u_0$ strongly in $L^{p+1}(\R^N)$.

By \eqref{5.3}, we have $\partial_u I(\lambda_j,u_j)u_j\to 0$. Together
with \eqref{5.19} and \eqref{2.13}, we see that 
    \begin{equation*}
        \norm{\nabla u_j}_2^2 =\intRN g(u_j)u_j\, dx +o(1)
        \to \intRN g(u_0)u_0\, dx = \norm{\nabla u_0}_2^2.
    \end{equation*}
Hence, $\nabla u_j\to \nabla u_0$ strongly in $L^2(\R^N)$. 
Therefore, $u_j\to u_0$ strongly in $F$ and \eqref{5.1} gives $Z(u_0)=b$.

Next we deal with $(PSPC)_b^*$ sequences.  We suppose that 
$(\lambda_j,u_j)_{j=1}^\infty$ is a $(PSPC)_b$ sequence with 
    \begin{equation} \label{5.32}
        \distE(u_j,\scrP_0)\to 0.       
    \end{equation}
By \eqref{5.32}, $u_j$ can be written as $u_j=v_j+\varphi_j$ with $v_j\in\scrP_0$ and
$\norm{\varphi_j}_E\to 0$, which implies $\norm{\varphi_j}_{p+1}\to 0$. 
Thus, denoting $s^-=\max\{ -s, 0\}$ for $s\in\R$, we have
    \begin{equation*}
        \norm{u_j^-}_{p+1} = \norm{(v_j+\varphi_j)^-}_{p+1} 
        \leq \norm{\varphi_j^-}_{p+1}
        \leq \norm{\varphi_j}_{p+1}\to 0.
    \end{equation*}
Thus $u_0^-\equiv 0$, that is, $u_0\geq 0$. 
\end{proof}

Recalling \cref{Lemma:2.5} and \cref{Proposition:1.2}, we have
\begin{itemize}
\item[(i)] all critical values of $Z$ are non-negative;
\item[(ii)] under \ref{(g2)}, $Z$ has no nontrivial non-negative critical point;
\item[(iii)] either $N=2,3,4$ or \ref{(g3)} implies \ref{(g2)}.
\end{itemize}

\noindent
Thus \cref{Proposition:5.4} in Case C follows from \cref{Proposition:5.8}.

\begin{Remark}\label{Remark:5.10}
In the proof of \cref{Proposition:5.8}, the properties \eqref{5.22}--\eqref{5.23}
of $g$ play a crucial role to obtain boundedness 
of $\norm{u_j}_{p+1}$ for $(PSPC)_b$-sequences $(u_j)_{j=1}^\infty$.
We note that \eqref{5.22}--\eqref{5.23} follow from \ref{(g0')} and \ref{(g1)}.

If we take an approach using the truncation of $g$ in $[0,\infty)$ as in 
\cref{Remark:1.10}, then the truncated $g$ does not satisfy \eqref{5.22}--\eqref{5.23}
for $s<0$.  Therefore, the estimate for the negative part $u_{j-}$ is an issue 
and it is difficult to show the strong convergence in $L^{p+1}$ as in 
\cref{Proposition:5.8}.

In \cref{Section:8}, using the odd extension of $g$, we introduce a new approach 
to find a positive solution.  In our new approach, we have \eqref{5.22}--\eqref{5.23}
and \cref{Proposition:5.8} is applicable directly.  We also note that
\cref{Proposition:5.8} works in the setting of \cref{Appendix:A.3}.
\end{Remark}

\section{Strategy to prove \cref{Theorem:1.1}} \label{Section:6}
To show our \cref{Theorem:1.1}, we use the two minimax values $\ub$ and $\ob$ 
defined in \cref{Section:3}.
By \cref{Proposition:4.2} (ii), we have $-\infty < \ub \leq 0 \leq \ob <\infty$ and 
at least one of
the following 3 cases occurs:

\begin{description}
\item[Case 1:] $\ub<0$,
\item[Case 2:] $\ob>0$,
\item[Case 3:] $\ub = \ob =0$.
\end{description}

\noindent
We deal with these 3 cases separately.

In Case 1, the $(PSPC)_{\ub}$ condition holds by \cref{Proposition:5.4} (i). 
In \cref{Section:7} we develop deformation argument and show 
the existence of a positive solution.  To show positivity of the critical point 
we use \cref{Proposition:4.5}.

In Case 2, the condition $(PSPC)_{\ob}^*$ condition holds.  
In \cref{Section:8}, 
we show that the $(PSPC)_{\ob}^*$ condition enables us to develop a deformation flow
in a small neighborhood of the set $\scrC_0=\set{ (\lambda,u) | u\geq 0 \ \hbox{in}\ \R^N}$
of non-negative functions.  
However this local deformation flow is not enough to show the existence of a critical point.
Also in \cref{Section:8}, we develop an iteration argument together 
with the absolute operator 
    \begin{equation*} 
        \Upsilon:\, (\lambda,u(\cdot))\mapsto (\lambda,\abs{u(\cdot)}); \,
        \RE\to \RE          
    \end{equation*}
to construct a flow which enables us to show the existence of a positive
critical point. 

Case 3 is a very degenerate case and neither the $(PSPC)_0$ condition nor
the $(PSPC)_0^*$ condition holds.
However we have already shown the existence and multiplicity of positive solutions
in \cref{Proposition:4.6}.  In particular, \eqref{1.1} with
$m=m_1$ has uncountably many solutions.

\section{Deformation argument under $(PSPC)$ and proof of \cref{Theorem:1.1}: Case 1 ($\ub<0$)} 
\label{Section:7}
In this section we give a proof of \cref{Theorem:1.1} for the case $\ub<0$. 
Here we do not use condition \ref{(g2)}.

\subsection{Deformation argument under $(PSPC)$ in augmented space}
\label{Section:7.1}

To prove \cref{Theorem:1.1}, we need an abstract deformation result. 
We believe that our deformation result (\cref{Proposition:7.1}) is useful not only 
to find positive solutions but also to find multiplicity of solutions including
sign-changing ones.

Throughout this section, suppose that a functional $I \in C^1(\RE,\R)$ is given, 
which may differ from the one in \eqref{2.2}, and we define 
    \begin{equation*}
        J(\theta,\lambda,u(x))
        =I \left( \lambda,u \left({\frac{x}{e^\theta}} \right) \right):\,
        \RRE\to \R.
    \end{equation*}
We also assume the following condition on $I$: 
\begin{enumerate}[label=(J)]
\item \label{(J)}
$J\in C^1(\RRE,\R)$.
\end{enumerate}
Remark that $I$ in \eqref{2.2} has this property. Under \ref{(J)}, we define
    \begin{equation*}
            P(\lambda,u)= \partial_\theta J(0,\lambda,u):\,\RE\to\R.
    \end{equation*}
The notion of $(PSPC)_b$ sequences and the $(PSPC)_b$ condition are defined 
as in \cref{Definition:5.1} for $I$ under \ref{(J)}.
For $b\in\R$ we can also define the critical set $K_b$ as in \cref{Remark:5.2}:
    \begin{equation}\label{7.1}
        K_b =\Set{ (\lambda,u)\in\RE | I(\lambda,u)=b, \ 
            \partial_\lambda I(\lambda,u)=0,\
            \partial_uI(\lambda,u)=0, \ P(\lambda,u)=0}.
    \end{equation}
We recall that $K_b$ is compact under the condition $(PSPC)_b$.
We also use notation for $c\in \R$
    \[  [I\leq c] = \Set{ (\lambda,u)\in\RE | I(\lambda,u)\leq c}.
    \]
The following proposition is a main result of this section:

\begin{Proposition} \label{Proposition:7.1}
Suppose that $I \in C^1(\RE,\R)$ satisfies {\rm \ref{(J)}}. Moreover for $b\in\R$
assume that $I $ satisfies the $(PSPC)_b$ condition, that is, 
any sequence $(\lambda_j,u_j)_{j=1}^\infty$ with \eqref{5.1}--\eqref{5.4}
has a strongly convergent subsequence. Then 
for any neighborhood $\calO$ of $K_b$ and for any $\oepsilon>0$ 
there exist $\epsilon\in (0,\oepsilon)$ and a continuous map 
$\eta(t,\lambda,u):\, [0,1]\times\RE\to\RE$ such that
\begin{enumerate}[label={\rm (\arabic*)}]
\item $\eta(0,\lambda,u)=(\lambda,u)$ for all $(\lambda,u)\in\RE$;
\item $\eta(t,\lambda,u)=(\lambda,u)$ for all 
$t\in [0,1]$ if $I(\lambda,u)\leq b-\oepsilon$;
\item $I(\eta(t,\lambda,u))\leq I(\lambda,u)$ for all $(t,\lambda,u)\in
[0,1]\times\RE$;
\item $\eta(1,[I\leq b+\epsilon]\setminus\calO)\subset [I\leq b-\epsilon]$;
\item 
if $K_b=\emptyset$, 
then $\eta(1, [I\leq b+\epsilon])\subset [I\leq b-\epsilon]$.
\end{enumerate}
\end{Proposition}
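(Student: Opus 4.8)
The plan is to lift the problem to the augmented space $\RRE$, where the Pohozaev functional becomes an honest partial derivative, run a Cerami--type pseudo-gradient flow there, and then push it back down by the scaling map. Put $\Phi(\theta,\lambda,u)=(\lambda,u(\cdot/e^\theta))$, so that $J=I\circ\Phi$, $\Phi(0,\lambda,u)=(\lambda,u)$, and $\Phi$ is continuous on $\RRE$; a short computation with the group law of dilations gives $\partial_\theta J(\theta,\lambda,u)=P(\lambda,u(\cdot/e^\theta))$, while $\partial_\lambda J$ and $\partial_uJ$ are the transports of $\partial_\lambda I$ and $\partial_uI$. Thus, under \ref{(J)}, a $(PSPC)_b$ sequence for $I$ is exactly (at $\theta=0$) a sequence along which $J\to b$, $\nabla_{(\theta,\lambda)}J\to 0$ and $(1+\norm{\cdot}_E)\norm{\partial_uJ}_{E^\ast}\to 0$, i.e.\ an \emph{almost critical} sequence for $J$ in the weighted sense. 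The final deformation will be obtained as $\eta(t,\lambda,u)=\Phi\big(\widetilde\eta(tT,0,\lambda,u)\big)$, where $\widetilde\eta$ is a flow on $\RRE$ decreasing $J$ and $T>0$ is a run time fixed below; then $I(\eta(t,\lambda,u))=J(\widetilde\eta(\cdot))\le J(0,\lambda,u)=I(\lambda,u)$, which is exactly (3), and (1) is $\Phi(0,\cdot)=\mathrm{id}$, while continuity of $\eta$ follows from continuity of $\widetilde\eta$ and of $\Phi$.

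First I would establish the quantitative step. Since $K_b$ is compact (\cref{Remark:5.2}), fix open sets with $K_b\subset\calO_1\subset\overline{\calO_1}\subset\calO_2\subset\overline{\calO_2}\subset\calO$ (all empty if $K_b=\emptyset$). Arguing by contradiction through the $(PSPC)_b$ condition, there are $\epsilon\in(0,\oepsilon/2)$ and $\delta>0$ such that
\[
    \abs{\partial_\lambda I(\lambda,u)}+(1+\norm{u}_E)\norm{\partial_uI(\lambda,u)}_{E^\ast}+\abs{P(\lambda,u)}\ \ge\ \delta
    \qquad\text{whenever }\ \abs{I(\lambda,u)-b}\le 2\epsilon,\ (\lambda,u)\notin\calO_1,
\]
for otherwise a violating sequence would be a $(PSPC)_b$ sequence, hence would subconverge strongly to a point of $K_b\subset\calO_1$ --- impossible (and impossible outright if $K_b=\emptyset$). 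Transporting by $\Phi$, the same bound holds in the $(\theta,\lambda,u)$ variables wherever $\abs{J-b}\le 2\epsilon$ and $\Phi(\theta,\lambda,u)\notin\calO_1$, now in terms of $\partial_\theta J$, $\partial_\lambda J$ and the weighted $u$-derivative of $J$.

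Then I would build a locally Lipschitz vector field $\mathcal V$ on $\RRE$: a pseudo-gradient for $(\partial_\theta J,\partial_\lambda J,\partial_uJ)$ whose $u$-component carries the Cerami weight $1+\norm{u}_E$, multiplied by a locally Lipschitz cutoff that vanishes on $\{\abs{J-b}\ge 2\epsilon\}$ and on a neighbourhood of the lifted frozen set associated with $\calO_2$, equals $1$ on $\{\abs{J-b}\le\epsilon\}$ away from that frozen set, and renormalized so that $\norm{\mathcal V(\theta,\lambda,u)}\le 1+\norm{u}_E$ (which makes the flow complete by a Gr\"onwall estimate). Along the induced flow $\widetilde\eta(\tau,\cdot)$, $J$ is non-increasing, and on the active region the lower bound above forces a uniform rate $\frac{d}{d\tau}J(\widetilde\eta(\tau))\le-\kappa$ for some fixed $\kappa>0$; this is the standard but delicate point at which the weighted norm in \eqref{5.3} is used, and it follows the now-classical mechanism of \cite{Ce0,BarBenF,HT0}. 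Because $\mathcal V\equiv 0$ on that open frozen set and $\mathcal V$ is locally Lipschitz, backward uniqueness of orbits shows that an orbit issued from outside it never enters it; in particular an orbit started from $[I\le b+\epsilon]\setminus\calO$ (so from outside the frozen set, since its $\Phi$-image lies outside $\calO\supset\calO_2$) stays out of it and, as long as $J\ge b-\epsilon$, stays active, so $J$ drops below $b-\epsilon$ by the time $T:=2\epsilon/\kappa$.

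With $\eta(t,\lambda,u)=\Phi\big(\widetilde\eta(tT,0,\lambda,u)\big)$, property (2) holds because $I(\lambda,u)\le b-\oepsilon\le b-2\epsilon$ makes the orbit constant; (4) is the orbit analysis just described; and (5) is the same analysis with all auxiliary neighbourhoods empty when $K_b=\emptyset$, so that every orbit from $[I\le b+\epsilon]$ reaches $[I\le b-\epsilon]$. The main obstacle is the construction and renormalization of $\mathcal V$ so that the decrease rate stays bounded below while the flow remains complete --- i.e.\ correctly accommodating the Cerami weight $1+\norm{u}_E$ of \eqref{5.3} --- together with the fact that the scaling map $\Phi$ is only continuous, so the cutoffs and the no-trapping argument must be organised in the augmented space in a way compatible with this limited regularity; once that technical core is in place, everything else is bookkeeping through $\Phi$.
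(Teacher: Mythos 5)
Your overall architecture (lift to the augmented space $\RRE$, where $P$ becomes $\partial_\theta J$, run a Cerami-weighted pseudo-gradient flow localized near level $b$, push back down by the scaling map) is the same as the paper's. But there is a genuine gap at the heart of the quantitative step. You freeze the field on a neighbourhood of the lifted set associated with $\calO_2$ and then argue: since $\mathcal V$ vanishes there and is locally Lipschitz, backward uniqueness prevents an orbit issued from $[I\le b+\epsilon]\setminus\calO$ from ever entering the frozen set, hence the orbit ``stays active'' and $J$ decreases at the uniform rate $\kappa$ until it drops below $b-\epsilon$. The flaw is the conflation of ``outside the zero set of $\mathcal V$'' with ``in the region where the cutoff equals $1$.'' Uniqueness indeed prevents the orbit from reaching the zero set in finite time, but it does not prevent the orbit from spending all of $[0,T]$ in the transition zone where the cutoff is strictly between $0$ and $1$; there $\frac{d}{d\tau}J(\widetilde\eta)\le-\varphi\,\nu$ with $\varphi$ arbitrarily small, so no uniform rate $-\kappa$ is available and property (4) is not obtained. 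Orbits starting outside $\calO$ may well drift toward the critical set and stall arbitrarily close to the frozen region; your argument gives no mechanism forcing the required drop of $2\epsilon$ for such orbits.

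This is exactly the point the paper's proof is built to handle. In \cref{Proposition:7.3} the neighbourhoods $\wN_{\rho/3}\subset\wN_{2\rho/3}\subset\wN_\rho$ of the (non-compact) lifted critical set $\wKb$ are taken with respect to the scaling-invariant Riemannian distance $\distM$; Step 4 uses compactness of $K_b$ and invariance of $\norm{\cdot}_{E_\theta}$ to prove $\norm{u}_{E_\theta}\le c_\rho$ on $\wN_\rho$, hence a uniform speed bound $c_\rho'$ for the flow there (this is where the Cerami weight, which makes the field unbounded globally, is tamed). Step 5 then splits into two cases: either the orbit stays outside $\wN_{2\rho/3}$, where the cutoff is $1$ and the decrease rate $-\nu$ applies for unit time, or it enters $\wN_{2\rho/3}$, in which case it must cross the annulus between $\partial\wN_\rho$ and $\partial\wN_{2\rho/3}$; the speed bound gives a transit time $\ge\rho/(3c_\rho')$ during which the rate is $-\nu$, producing a definite decrease, and $\epsilon$ is chosen small against $\rho\nu/(3c_\rho')$. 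Finally \cref{Lemma:7.4} converts $\wN_\rho$ into an $\RE$-neighbourhood $N_{R(\rho)}\subset\calO$, which is what lets one conclude (4) for the prescribed $\calO$. Your proposal contains neither the uniform bound on $\norm{u}_{E_\theta}$ near the lifted critical set (your Gr\"onwall bound only gives completeness, not a speed bound on the relevant region, and $\norm{u}_E$ itself is not bounded on $\pi^{-1}(\calO_2)$ because of the $e^{\pm N\theta/2}$ factors), nor the annulus-crossing alternative; without them the deformation you construct need not map $[I\le b+\epsilon]\setminus\calO$ into $[I\le b-\epsilon]$.
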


Related deformation results are shown in \cite{CT2,HT0,Ik20,IT0} under
the stronger condition $(PSP)_b$ in which we replace \eqref{5.3} by
    \begin{equation*}
        \norm{\partial_u I(\lambda_j,u_j)}_{E^*} \to 0.
    \end{equation*}
To prove \cref{Proposition:7.1}, we construct our deformation flow
$\eta$ through a deformation for $J$ as in \cite{HT0,IT0,Ik20,CT2}.
We write $M=\RRE$ and 
define $\Phi_\tau:\, \R\times M\to M$ by
    \begin{equation*}
        \Phi_\tau(\theta,\lambda,u(x)) = (\theta+\tau,\lambda,u(e^\tau x)).
    \end{equation*}
Then $\Phi_\tau$ is a $C^0$-group action on $M$ and we observe that $J$ is invariant
under $\Phi_\tau$, that is,
    \begin{equation*}
        J(\Phi_\tau(\theta,\lambda,u)) = J(\theta,\lambda,u)
        \qquad \text{for all}\ \tau\in\R.
    \end{equation*}
Furthermore we note that for $(\theta,\lambda,u)\in M$
    \begin{align}   
    &\partial_\theta J(\theta,\lambda,u(x))
        =P\left( \lambda, u\left( {\frac{x}{e^\theta}} \right) \right), \label{7.2}\\
    &\partial_\lambda J(\theta,\lambda,u(x))
        =\partial_\lambda I 
            \left( \lambda, u \left( {\frac{x}{e^\theta}} \right) \right), 
                                                                        \label{7.3}\\
    &\partial_u J(\theta,\lambda,u(x))h(x)
        =\partial_u I \left( \lambda, u \left({\frac{x}{e^\theta}} \right) \right) 
            h \left({\frac{x}{e^\theta}} \right). \label{7.4}
    \end{align}
We also define $\pi:\, M\to\RE$, $\iota:\,\RE\to M$ by
    \begin{equation*}
        \pi(\theta,\lambda,u(x)) 
            = \left( \lambda, \, u \left({\frac{x}{e^\theta}}\right) \right), \quad
        \iota(\lambda,u(x)) = (0, \lambda,u(x)).           
    \end{equation*}
Then
    \begin{equation*}
        I(\pi(\theta,\lambda,u)) = J(\theta,\lambda,u)  \quad 
        \text{for all}\ (\theta,\lambda,u)\in M.
    \end{equation*}
First we construct a deformation $\weta(t,\theta,\lambda,u):\, [0,1]\times M\to M$ 
for $J$.  Next we construct a desired flow $\eta$ on $\RE$ by
    \begin{equation*}
        \eta(t,\lambda,u)=\pi(\weta(t,0,\lambda,u)).
    \end{equation*}
We regard $M$ as a Hilbert manifold and introduce the following metric:
    \begin{equation*}
        \norm{ (\alpha,\nu,h)}_{(\theta,\lambda,u)}^2 
        = \abs\alpha^2 +\abs\nu^2 +\norm h_{E_\theta}^2 \quad
        \text{for}\ (\alpha,\nu,h)\in T_{(\theta,\lambda,u)}M=\RRE,
    \end{equation*}
where
    \begin{equation} \label{7.5}
        \norm h_{E_\theta}^2 = \pnorm{ h \left({\frac{x}{e^\theta}} \right)}_E^2
        = e^{(N-2)\theta}\norm{\nabla h}_2^2 +e^{N\theta}\norm h_2^2.
    \end{equation}
It follows from \eqref{7.5} that
    \begin{equation} \label{7.6}
        e^{-N\abs\theta/2}\norm u_E \leq \norm u_{E_\theta} 
            \leq e^{N\abs\theta/2}\norm u_E \quad
            \text{for all}\ \theta\in\R \ \text{and}\ u\in E.
    \end{equation}
We denote by $\norm{\cdot}_{(\theta,\lambda,u), *}$ and $\norm{\cdot}_{E_\theta^*}$ 
the dual norm on $T_{(\theta,\lambda,u)}^* M$ and $(E, \norm{\cdot}_{E_\theta})$.  
Writing $D=(\partial_\theta,\partial_\lambda,\partial_u)$, we have
    \begin{equation*}
        \begin{aligned}
        \norm{ DJ(\theta,\lambda,u)}_{(\theta,\lambda,u),*}
        &= \sup_{\norm{(\alpha,\nu,h)}_{(\theta,\lambda,u)}\leq 1} 
            DJ(\theta,\lambda,u)(\alpha,\nu,h) \\
        &= \left( \abs{P(\pi(\theta,\lambda,u))}^2 
            + \abs{\partial_\lambda I(\pi(\theta,\lambda,u))}^2
            +\norm{\partial_u I(\pi(\theta,\lambda,u))}_{E^*}^2\right)^{1/2}. 
        \end{aligned}
    \end{equation*}
Here we use the fact that
    \begin{equation*}
        \begin{aligned}
        \norm{\partial_u J(\theta,\lambda,u)}_{E_\theta^*}
        &= \sup_{\norm h_{E_\theta} \leq 1} \abs{\partial_u J(\theta,\lambda,u)h} 
            = \sup_{\norm h_{E_\theta} \leq 1} 
            \pabs{\partial_u I(\lambda,u({\frac{x}{e^\theta}}))h({\frac{x}{e^\theta}})}\\
        &= \sup_{\norm \varphi_E \leq 1} 
            \pabs{\partial_u I(\lambda,u({\frac{x}{e^\theta}}))\varphi}
        = \norm{\partial_u I(\pi(\theta,\lambda,u))}_{E^*}. 
        \end{aligned}
    \end{equation*}
Denote by $\distM(\cdot,\cdot)$ the distance on $M$ corresponding to
the Riemannian metric, that is,
    \begin{equation*}
        \begin{aligned}
        &\distM((\theta_0,\lambda_0,u_0), (\theta_1,\lambda_1,u_1)) \\
        =&\inf\left\{ \int_0^1 \norm{\dot\gamma(t)}_{\gamma(t)}\, dt \ \bigg| \ 
            \gamma \in C^1([0,1],M), \,
            \gamma(i)=(\theta_i,\lambda_i, u_i) \ \text{for}\ i=0,1\right\}. 
        \end{aligned}
    \end{equation*}
It is not difficult to see that for all $\beta\in\R$
    \begin{equation}\label{7.7}
        \distM \left( (\theta_0+\beta,\lambda_0,u_0), 
            (\theta_1+\beta,\lambda_1,u_1) \right)
        = \distM \left( \left( \theta_0,\lambda_0,
            u_0 \left({\frac{x}{e^\beta}}\right) \right), 
            \left( \theta_1,\lambda_1,
                u_1 \left( {\frac{x}{e^\beta}} \right) \right) \right).
    \end{equation}
We also set 
    \begin{equation*}
        \wKb =\pi^{-1}(K_b) = \Set{ (\theta,\lambda,u)\in M | 
                J(\theta,\lambda,u)=b,\ DJ(\theta,\lambda,u)=0},
    \end{equation*}
where $K_b$ is defined in \eqref{7.1}.
Remark that $\wKb$ is not compact even if $K_b$ is compact, since 
$\wKb$ is invariant under non-compact group action $\Phi_\tau$.

We begin with the following lemma. 

\begin{Lemma} \label{Lemma:7.2}
Suppose that $I $ satisfies the $(PSPC)_b$ condition. Then
$J$ satisfies the following $\widetilde{(PSC)_b}$ condition:
\begin{description}
\item[$\widetilde{(PSC)_b}$] If $(\theta_j,\lambda_j,u_j)_{j=1}^\infty\subset M$ 
satisfies
    \begin{align}   
        &J(\theta_j,\lambda_j,u_j)\to b, \label{7.8}\\
        &\partial_\theta J(\theta_j,\lambda_j,u_j)\to 0, \label{7.9}\\
        &\partial_\lambda J(\theta_j,\lambda_j,u_j)\to 0, \label{7.10}\\
        &(1+\norm{u_j}_{E_{\theta_j}})
            \norm{\partial_u J(\theta_j,\lambda_j,u_j)}_{E_{\theta_j}^*}\to 0,
                \label{7.11}
    \end{align}
then
    \begin{equation*}
        \distM \left( (\theta_j,\lambda_j,u_j),\wKb \right)\to 0 \quad
        \text{as}\ j\to\infty.
    \end{equation*}
\end{description}
\end{Lemma}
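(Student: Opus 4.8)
The plan is to transfer the $(PSPC)_b$ compactness of $I$ up to $M=\RRE$ via the projection $\pi$, and then use the group action $\Phi_\tau$ to handle the (noncompact) set $\wKb$. Given a sequence $(\theta_j,\lambda_j,u_j)_j\subset M$ satisfying \eqref{7.8}--\eqref{7.11}, I would first set $w_j:=u_j(\cdot/e^{\theta_j})$, so that $\pi(\theta_j,\lambda_j,u_j)=(\lambda_j,w_j)$, and verify that $(\lambda_j,w_j)_j$ is a $(PSPC)_b$ sequence for $I$ in the sense of \cref{Definition:5.1}. This is a direct translation: by \eqref{7.2}--\eqref{7.4} one has $J(\theta_j,\lambda_j,u_j)=I(\lambda_j,w_j)$, $\partial_\theta J(\theta_j,\lambda_j,u_j)=P(\lambda_j,w_j)$ and $\partial_\lambda J(\theta_j,\lambda_j,u_j)=\partial_\lambda I(\lambda_j,w_j)$, while $\norm{u_j}_{E_{\theta_j}}=\norm{w_j}_E$ by \eqref{7.5} and $\norm{\partial_u J(\theta_j,\lambda_j,u_j)}_{E_{\theta_j}^*}=\norm{\partial_u I(\lambda_j,w_j)}_{E^*}$ by the computation recorded just before the lemma; hence \eqref{7.8}--\eqref{7.11} become precisely \eqref{5.1}--\eqref{5.4} for $(\lambda_j,w_j)$.

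By the $(PSPC)_b$ condition, after passing to a subsequence $(\lambda_j,w_j)\to(\lambda_0,w_0)$ strongly in $\RE$. Since $I\in C^1$ and, under \ref{(J)}, $P$ is continuous, passing to the limit in \eqref{5.1}--\eqref{5.4} yields $(\lambda_0,w_0)\in K_b$; equivalently $\iota(\lambda_0,w_0)=(0,\lambda_0,w_0)\in\wKb$.

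To conclude I would exploit the group invariance. Observe that $(\theta_j,\lambda_j,u_j)=\Phi_{\theta_j}(\iota(\lambda_j,w_j))$, since $\Phi_{\theta_j}(0,\lambda_j,w_j(x))=(\theta_j,\lambda_j,w_j(e^{\theta_j}x))=(\theta_j,\lambda_j,u_j(x))$, and that $\Phi_{\theta_j}(\iota(\lambda_0,w_0))\in\wKb$ because $\wKb$ is $\Phi_\tau$-invariant. The key geometric fact is that $\Phi_\tau$ acts isometrically on $(M,\distM)$, which is exactly the content of \eqref{7.7}. Hence
\[
\distM\big((\theta_j,\lambda_j,u_j),\wKb\big)
\le \distM\big(\Phi_{\theta_j}(\iota(\lambda_j,w_j)),\Phi_{\theta_j}(\iota(\lambda_0,w_0))\big)
= \distM\big(\iota(\lambda_j,w_j),\iota(\lambda_0,w_0)\big),
\]
and the right-hand side is bounded above by the length of the straight segment between the two points, which stays in the slice $\theta=0$ where $\norm{\cdot}_{E_0}=\norm{\cdot}_E$, so it equals $(|\lambda_j-\lambda_0|^2+\norm{w_j-w_0}_E^2)^{1/2}\to0$. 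A routine subsequence argument then upgrades this to $\distM((\theta_j,\lambda_j,u_j),\wKb)\to0$ for the whole original sequence.

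The main obstacle here is conceptual rather than computational: $\wKb$ is noncompact (being invariant under the noncompact action $\Phi_\tau$) and the $\theta_j$ need not be bounded, so one cannot simply extract a convergent subsequence directly in $M$. The device resolving this is precisely the isometric $\Phi_\tau$-action, which lets one slide the entire analysis down to the slice $\theta=0$, where the $(PSPC)_b$ compactness of $I$ lives; some care is needed only to check that strong convergence in $\RE$ indeed produces $\distM$-convergence at $\theta=0$, which is where the explicit form \eqref{7.5} of the metric enters.
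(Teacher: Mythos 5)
Your proposal is correct and follows essentially the same route as the paper: project via $\pi$ to obtain a $(PSPC)_b$ sequence for $I$, invoke the $(PSPC)_b$ condition, and use the translation invariance \eqref{7.7} of $\distM$ to compare with a lifted element of $K_b$ along the slice $\theta=0$. Your isometry-of-$\Phi_\tau$ step is exactly the paper's intermediate inequality $\distM((\theta,\lambda,u),\wKb)\leq \distRE(\pi(\theta,\lambda,u),K_b)$ (its \eqref{7.12}), so the only difference is organizational (explicit subsequential limit plus a subsequence argument versus ``relatively compact with all accumulation points in $K_b$'').
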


\begin{proof}
First we claim that for $(\theta,\lambda,u)\in M$
    \begin{equation} \label{7.12}
        \distM \left( (\theta,\lambda,u), \wKb \right)
        \leq \distRE(\pi(\theta,\lambda, u), K_b).
    \end{equation}
In fact, for any $(\lambda_0,u_0)\in K_b$, we have 
$(\theta,\lambda_0,u_0(e^\theta x)) \in \wKb$ for $\theta\in \R$ thanks to 
\eqref{7.2}--\eqref{7.4}. Thus
    \begin{equation*}
        \begin{aligned}
    &\distM((\theta,\lambda,u),\wKb) \\
    \leq& \distM((\theta,\lambda,u), (\theta,\lambda_0,u_0(e^\theta x))) 
        = \distM \left( \left( 0,\lambda, u \left({\frac{x}{e^\theta}} \right) \right), 
            (0,\lambda_0,u_0(x)) \right) \\
    \leq& \left( \abs{\lambda-\lambda_0}^2 
        +\pnorm{ u \left({\frac{x}{e^\theta}} \right)-u_0 }_E^2 \right)^{1/2}
        = \distRE \left( \left( \lambda,u \left( {\frac{x}{e^\theta}} \right) \right), 
        (\lambda_0,u_0) \right). 
        \end{aligned}
    \end{equation*}
Since $(\lambda_0,u_0)\in K_b$ is arbitrary, \eqref{7.12} holds.

Let $(\theta_j,\lambda_j,u_j)_{j=1}^\infty\subset M$ be a sequence with
\eqref{7.8}--\eqref{7.11}.
We set $\widetilde u_j(x) =u_j({\frac{x}{e^{\theta_j}}})$, equivalently 
$(\lambda_j,\widetilde u_j)=\pi(\theta_j,\lambda_j,u_j)$.
By \eqref{7.2}--\eqref{7.4}, it is easy to see that 
$(\lambda_j,\widetilde u_j)_{j=1}^\infty$ is a $(PSPC)_b$ sequence for $I$.
Since the $(PSPC)_b$ condition holds for $I $, $(\lambda_j,\widetilde u_j)_{j=1}^\infty$
is relatively compact in $\RE$ and all accumulation points belong to $K_b$.
Thus we have
    \begin{equation*}
        \distM \left( (\theta_j,\lambda_j,u_j),\wKb \right)
        \leq \distRE \left( (\lambda_j,\widetilde u_j),K_b \right)\to 0 \quad
            \text{as}\ j\to\infty. \qedhere
    \end{equation*}
\end{proof}

For $\rho>0$ and $c\in\R$, we use the following notation:
    \begin{equation*}
        \begin{aligned}
        \wN_\rho &= \Set{ (\theta,\lambda,u)\in M | 
            \distM((\theta,\lambda,u), \wKb)< \rho}, \\
        [J\leq c]_M &= \Set{ (\theta,\lambda,u)\in M | J(\theta,\lambda,u)\leq c }. 
        \end{aligned}
    \end{equation*}
We also define $[J>c]_M$ and $[\abs{J-b}<\epsilon]_M$ similarly. 
When $K_b=\emptyset$, we have  $\wKb=\emptyset$ and regard $\wN_\rho=\emptyset$
for $\rho>0$.

Now we give our deformation result for $J$.

\begin{Proposition} \label{Proposition:7.3}
Assume that $I$ satisfies the ${(PSPC)_b}$ condition.
Then for any $\oepsilon>0$ and $\rho>0$ there exist $\epsilon\in (0,\oepsilon)$ and
a continuous map $\weta:\, [0,1]\times M\to M$ such that
\begin{enumerate}[label={\rm (\arabic*)}]
\item $\weta(0,\theta,\lambda,u)=(\theta,\lambda,u)$ for all $(\theta,\lambda,u)\in M$;
\item $\weta(t,\theta,\lambda,u)=(\theta,\lambda,u)$ for all $t\in [0,1]$ if
$J(\theta,\lambda,u)\leq b-\oepsilon$;
\item $J(\weta(t,\theta,\lambda,u))\leq J(\theta,\lambda,u)$ 
for all $(t,\theta,\lambda,u) \in [0,1] \times M$; 
\item $\weta(1, [J\leq b+\epsilon]_M\setminus\wN_\rho)\subset [J\leq b-\epsilon]_M$; 
\item if $K_b=\emptyset$, then $\weta(1, [J\leq b+\epsilon]_M)\subset
[J\leq b-\epsilon]_M$.
\end{enumerate}
\end{Proposition}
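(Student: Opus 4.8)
The plan is to obtain $\weta$ as the (time--rescaled) negative pseudo-gradient flow of $J$ on the Hilbert manifold $M=\RRE$, in the spirit of the deformation arguments of \cite{HT0,IT0,Ik20,CT2} but working with the Cerami-type compactness $\widetilde{(PSC)_b}$ of \cref{Lemma:7.2} in place of a Palais--Smale condition. Since $\wKb=\pi^{-1}(K_b)$ is $\Phi_\tau$-invariant, hence non-compact, the whole construction is carried out $\Phi_\tau$-equivariantly and all quantitative bounds are phrased through $\distM(\cdot,\wKb)$; the equivariance is moreover what makes $\weta$ descend, via $\pi$, to the flow $\eta$ on $\RE$ used in \cref{Proposition:7.1}.

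First I would prove a quantitative gradient estimate: for the given $\oepsilon,\rho>0$ there exist $\epsilon\in(0,\oepsilon)$ and $\delta>0$ such that
\[
  \kappa(\theta,\lambda,u):=\abs{\partial_\theta J}+\abs{\partial_\lambda J}+\bigl(1+\norm{u}_{E_\theta}\bigr)\,\norm{\partial_u J}_{E_\theta^*}\ \ge\ \delta
\]
on the active region $S:=\{(\theta,\lambda,u)\in M:\abs{J(\theta,\lambda,u)-b}\le\epsilon\}\setminus\wN_{\rho/2}$; otherwise, taking $\epsilon=\delta=1/n$ would produce a sequence fulfilling \eqref{7.8}--\eqref{7.11} (the three summands of $\kappa$ are exactly the quantities in \eqref{7.9}, \eqref{7.10}, \eqref{7.11}) but with $\distM(\cdot,\wKb)$ bounded below, contradicting \cref{Lemma:7.2}. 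Then, on the open set $\{\kappa>\delta/2\}\supset S$, I would apply the pseudo-gradient lemma with respect to the \emph{Cerami-weighted} metric on $TM$ obtained from $\norm{\cdot}_{(\theta,\lambda,u)}$ by rescaling the $u$-component by $(1+\norm{u}_{E_\theta})^{-1}$, and renormalise, to build a locally Lipschitz, $\Phi_\tau$-equivariant vector field $V\colon M\to TM$ with $DJ(\theta,\lambda,u)[V(\theta,\lambda,u)]\ge c_0>0$ on $S$, with $\norm{V(\theta,\lambda,u)}_{(\theta,\lambda,u)}\le C(1+\norm{u}_{E_\theta})$, and with $V\equiv 0$ outside a neighbourhood of $S$ --- in particular on $[J\le b-\oepsilon]_M$, on $[J\ge b+\oepsilon]_M$ and on a neighbourhood of $\wKb$ strictly inside $\wN_{\rho/2}$. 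Equivariance is arranged by transporting $V$ along $\Phi_\tau$ and rests on the identity $P=\partial_\theta J$, which makes $DJ$ intertwine the $\Phi_\tau$-action, cf.\ \eqref{7.2}--\eqref{7.4}.

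With $V$ in hand, let $\weta(\cdot,\theta,\lambda,u)$ solve $\tfrac{d}{dt}\weta=-V(\weta)$ with $\weta(0,\cdot)=\mathrm{id}_M$. As $V$ is locally Lipschitz of at most linear growth and $M$ is complete for $\distM$ --- along any finite-length curve $\theta$ and $\lambda$ have bounded oscillation and, on a slab $\{\abs\theta\le R\}$, the $u$-metric is uniformly comparable to $\norm{\cdot}_E$ by \eqref{7.6} --- the flow is defined and continuous on $[0,1]\times M$. Properties (1)--(3) then follow at once from $\weta(0,\cdot)=\mathrm{id}_M$, from $V\equiv 0$ on $[J\le b-\oepsilon]_M$, and from $\tfrac{d}{dt}J(\weta)=-DJ[V]\le 0$; and (5) is a special case of (4), since $\wKb=\emptyset$ forces $\wN_{\rho/2}=\emptyset$ when $K_b=\emptyset$.

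The crux is (4). For a trajectory issued from $[J\le b+\epsilon]_M\setminus\wN_\rho$ one has $\tfrac{d}{dt}J(\weta)\le -c_0$ as long as it stays in $S$, so $J$ drops by $2\epsilon$ within time $2\epsilon/c_0\le 1$ (choose $\epsilon$ small relative to $c_0$), after which $J$ remains $\le b-\epsilon$ by (3); thus it suffices to guarantee that such a trajectory, while $J>b-\epsilon$, cannot escape $S$ across the face facing $\wKb$ (i.e.\ cannot enter $\wN_{\rho/2}$), being forced instead to exit through the level $\{J=b-\epsilon\}$. This confinement is the genuinely non-routine point and is where the Cerami nature bites: because $\norm{V}$ is controlled only by $1+\norm{u}_{E_\theta}$, one must weigh the uniform descent rate $c_0$ of $J$ against the speed of the flow in the weighted metric, use the Gronwall-type a priori control of $\norm{u}_{E_\theta}$ over $[0,1]$, and, if necessary, correct $V$ in the shell $\wN_\rho\setminus\wN_{\rho/2}$ so as to push the flow away from $\wKb$. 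This estimate is carried out exactly as in the corresponding deformation lemmas of \cite{HT0,IT0,Ik20,CT2}, and the reparametrisation $[0,2\epsilon/c_0]\to[0,1]$ of time gives the stated $\weta$.
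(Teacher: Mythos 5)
Your construction follows the paper's skeleton up to the crux: the gradient lower bound away from $\wN_{\rho/3}$ via $\widetilde{(PSC)_b}$, the pseudo-gradient field with the Cerami weight $\norm{X_3(U)}_{E_\theta}\leq 1+\norm u_{E_\theta}$, the truncated ODE, and properties (1)--(3), (5) are all as in the paper. But at step (4) there is a genuine gap. You propose to \emph{confine} the trajectory, i.e.\ to show that while $J>b-\epsilon$ it ``cannot enter $\wN_{\rho/2}$'', possibly after ``correcting $V$ in the shell so as to push the flow away from $\wKb$''. No such confinement is available: a descending trajectory may perfectly well approach the critical set, and there is no reason a locally Lipschitz field can simultaneously satisfy $DJ[V]\geq c_0$ and point away from $\wKb$ in the shell (the two requirements are unrelated, and near $\wKb$ the available descent directions may lead toward it). Moreover, the only speed control you invoke is the Gronwall bound on $\norm{u}_{E_\theta}$ along trajectories; that bound depends on the initial datum $\norm u_E$, which is \emph{not} uniformly bounded on $[J\leq b+\epsilon]_M\setminus\wN_\rho$, so it cannot produce a uniform lower bound on the time needed to traverse the shell, nor a uniform drop of $J$, which is exactly what (4) requires.

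The paper's mechanism is different and requires an ingredient you are missing. It first proves (its Step 4) that $\norm u_{E_\theta}\leq c_\rho$ \emph{uniformly} for all $(\theta,\lambda,u)\in\wN_\rho$; this uses the compactness of $K_b$, the scaling identity $\norm{u(e^\theta x)}_{E_\theta}=\norm u_E$, the translation invariance \eqref{7.7} of $\distM$, and an estimate along near-minimizing paths realizing $\distM(\cdot,\wKb)$, via \eqref{7.6}. Consequently the truncated field is uniformly bounded on $\wN_\rho$, $\norm{X(U)}_U\leq c_\rho'$, even though it has Cerami growth elsewhere. Entry into the neighborhood is then \emph{allowed}: if a trajectory starting in $[J\leq b+\epsilon]_M\setminus\wN_\rho$ ever reaches $\wN_{2\rho/3}$, it must first cross the shell $\wN_\rho\setminus\wN_{2\rho/3}$, and the uniform speed bound forces the crossing time to be at least $\rho/(3c_\rho')$; during the crossing $J$ decreases at rate $\nu$ (the level-set cutoff being active there), so $J\leq b+\epsilon-\nu\rho/(3c_\rho')<b-\epsilon$ once $\epsilon$ is chosen small. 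Combined with the easy alternative in which the trajectory never meets $\wN_{2\rho/3}$ (where $J$ drops at rate $\nu$ for the whole unit time), this dichotomy gives (4). Without the uniform bound on $\norm u_{E_\theta}$ over $\wN_\rho$ your argument cannot be closed, and the confinement strategy you sketch does not repair it.
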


We note that $K_b$ is compact since $I$ satisfies the $(PSPC)_b$ condition.  We also
note that $J$ satisfies $\widetilde{(PSP)}_b$ condition by \cref{Lemma:7.2}.

\subsection{Proof of \cref{Proposition:7.3}}
\begin{proof}[Proof of \cref{Proposition:7.3}]
A proof consists of several steps. Here we write $U=(\theta,\lambda,u)$.

\smallskip

\noindent
\textbf{Step 1:} \textsl{For any $\rho>0$ there exists $\nu>0$ such that
    \begin{equation} \label{7.13}
        \max\{ \abs{\partial_\theta J(U)}, \abs{\partial_\lambda J(U)}, 
        (1+\norm u_{E_\theta})\norm{\partial_u J(U)}_{E_\theta^*}\}
        \geq 3\nu
        \; \text{ for all}\ U\in [\abs{J-b}<\nu]_M\setminus\wN_{\rho/3}.    
    \end{equation}
}

\smallskip

\noindent
Step 1 follows from the $\widetilde{(PSC)_b}$ condition. 

\smallskip

\noindent
\textbf{Step 2:} \textsl{There exists a locally Lipschitz vector field
    \begin{equation*}
        X(U)=(X_1(U),X_2(U),X_3(U)):\, 
        [\abs{J-b}<\nu]_M\setminus \wN_{\rho/3}\to\RRE
    \end{equation*}
such that for all $U\in [\abs{J-b}<\nu]_M\setminus \wN_{\rho/3}$
    \begin{align}   
        &DJ(U)X(U) \geq \nu.                    \label{7.14}\\
        &\abs{X_1(U)}, \ \abs{X_2(U)} \leq 1,   \label{7.15}\\
        &\norm{X_3(U)}_{E_\theta} \leq 1+\norm u_{E_\theta}.    \label{7.16}
    \end{align}
}

\smallskip

\noindent
By \eqref{7.13}, for any $U\in [\abs{J-b}<\nu]_M\setminus \wN_{\rho/3}$ we have
at least one of the following three properties:
    \begin{equation*}
        (1)\ \abs{\partial_\theta J(U)}\geq 3\nu, \quad
        (2)\ \abs{\partial_\lambda J(U)}\geq 3\nu, \quad
        (3)\ (1+\norm u_{E_\theta})\norm{\partial_u J(U)}_{E_\theta^*}\geq 3\nu.
    \end{equation*}
Thus for any $U\in [\abs{J-b}<\nu]_M\setminus \wN_{\rho/3}$ there exists a vector
$Y_U=(Y_{1U}, Y_{2U}, Y_{3U}) \in\RRE$ according to the above 3 cases such that
\begin{itemize}
\item[(1)] $Y_U$ is of form $Y_U=(Y_{U1},0,0)$ with $\abs{Y_{U1}}\leq 1$ 
and $DJ(U)(Y_{U1},0,0)\geq 2\nu$; 
\item[(2)] $Y_U$ is of form $Y_U=(0,Y_{U2},0)$ with $\abs{Y_{U2}}\leq 1$ 
and $D J(U)(0,Y_{U2},0)\geq 2\nu$;
\item[(3)] $Y_U$ is of form $Y_U=(0,0,Y_{U3})$ with 
$\abs{Y_{U3}} < 1+\norm u_{E_\theta}$ and $D J(U)(0,0,Y_{U3})\geq 2\nu$.
\end{itemize}
Using a suitable partition of unity, we can construct the desired locally 
Lipschitz vector field $X:\, [\abs{J-b}<\nu]_M\setminus \wN_{\rho/3}\to \RRE$
in a standard way.

Next we show $\norm u_{E_\theta}$ stays bounded in $\wN_{\rho}$. 

\smallskip

\smallskip

\noindent
\textbf{Step 3:} \textsl{Definition of ODE and its property.}

\smallskip

\noindent
Let $\overline{\varepsilon} >0$ and $\rho > 0$ be given, and choose 
$\widetilde{\varepsilon} \in (0,\overline{\varepsilon})$ so that 
    \[  2 \widetilde{\varepsilon} < \min \Set{ \nu , \overline{\varepsilon} },
    \]
where $\nu > 0$ appears in Step 1. 
Then, we consider the following ODE in $M$:
    \begin{equation} \label{7.17}
        \left\{ \begin{aligned}
        &{\frac{d\weta}{dt}}=-\varphi_1(\weta)\varphi_2(J(\weta)) X(\weta), \\
        &\weta(0,U)=U, 
        \end{aligned}
        \right.     
    \end{equation}
where $\varphi_1:\, M\to [0,1]$ and $\varphi_2:\, \R\to [0,1]$ are locally Lipschitz
continuous functions such that
    \begin{equation*}
    \begin{aligned}
    &\varphi_1(U)=1 \quad \text{for}\ U\in M\setminus\wN_{{\frac{2}{3}}\rho}, \\
    &\varphi_1(U)=0 \quad \text{for}\ U\in \wN_{{\frac{1}{3}}\rho}, \\
    &\varphi_2(s)=1 \quad \text{for}\ s\in [ b- \widetilde{\varepsilon},b
        + \widetilde{\varepsilon} ], \\
    &\varphi_2(s)=0 \quad \text{for}\ s\not\in [b- 2\widetilde{\varepsilon},b
        + 2 \widetilde{\varepsilon}]. 
    \end{aligned}
    \end{equation*}
For a solution $\weta(t)=\weta(t,U)=(\weta_1(t,U),\weta_2(t,U),\weta_3(t,U))$ of
\eqref{7.17}, it follows from \eqref{7.14}--\eqref{7.16} that 
    \begin{align}   
        &{\frac{d}{dt}}J(\weta(t)) = -\varphi_1(\weta)\varphi_2(J(\weta))
            DJ(\weta)X(\weta)
        \leq -\varphi_1(\weta)\varphi_2(J(\weta))\nu, \nonumber\\
        &\pabs{{\frac{d}{dt}}\weta_i(t)} \leq \abs{X_i(\weta)}\leq 1
            \quad \text{for}\ i=1,2,    \label{7.18}\\
        &\pnorm{{\frac{d}{dt}}\weta_3(t)}_{E_{\weta_1(t)}} 
            \leq \norm{X_3(\weta(t))}_{E_{\weta_1(t)}} 
            \leq 1+ \norm{\weta_3(t)}_{E_{\weta_1(t)}}.
            \label{7.19}
    \end{align}
By our construction, we have for all $U\in E$
    \begin{align}   
    &{\frac{d}{dt}}J(\weta(t)) \leq 0 \quad \text{for all}\ t, \label{7.20}\\
    &{\frac{d}{dt}}J(\weta(t)) \leq -\nu \quad 
        \text{if}\ \weta(t)\in \left[\abs{J-b} 
            \leq \widetilde{\varepsilon}\right]_M\setminus \wN_{{\frac{2}{3}}\rho}. 
            \label{7.21}
    \end{align}
We note that the solution $\weta $ exists globally in time $t$. 
In fact, for any $T>0$, \eqref{7.18} gives 
    \begin{equation*}
        \abs{\weta_i(t)} \leq \abs{\weta_i(0)}+T \quad 
        \text{for $t\in [0,T]$ and $i=1,2$}.
    \end{equation*}
By \eqref{7.6}, for some constants $C_{1T}$, $C_{2T}>0$ depending only on
$T$ and the initial value $\weta(0)=U\in M$, we have 
    \begin{equation*}
            C_{1T}\norm h_E \leq \norm h_{E_{\weta_1(t)}} \leq C_{2T}\norm h_E
        \quad \text{for} \ h\in E\ \text{and}\ t \in [0,T].
    \end{equation*}
Thus by \eqref{7.19}
    \begin{align} 
    \pnorm{{\frac{d}{dt}}\weta_3(t)}_E 
    &\leq \norm{X_3(\weta_3(t))}_E\leq C_{1T}^{-1}
        \norm{X_3(\weta_3(t))}_{E_{\weta_1(t)}} 
    \leq C_{1T}^{-1} \left( 1+ \norm{\weta_3(t)}_{E_{\weta_1(t)}}\right) \nonumber\\
    &\leq C_{1T}^{-1} \left( 1+ C_{2T}\norm{\weta_3(t)}_E\right).  \label{7.22}
    \end{align}
Applying the Gronwall inequality we deduce that $\weta$ exists globally in time $t$.

\smallskip

\noindent
\textbf{Step 4:} \textsl{There exists $c_\rho>0$ such that
    \begin{equation} \label{7.23}
        \norm u_{E_\theta} \leq c_\rho \quad 
        \text{for all} \ U=(\theta,\lambda,u)\in \wN_\rho.
    \end{equation}
}

\noindent
We recall that
    \begin{equation*}
        \wKb =\Set{ (\theta,\lambda,u(e^\theta x)) | 
        (\lambda,u)\in K_b, \ \theta\in \R}.
    \end{equation*}
First we note that $\norm u_{E_\theta}$ is bounded for 
$U =(\theta,\lambda, u( e^\theta x))\in \wKb$, which follows
from the compactness of $K_b$ and the fact that
    \begin{equation*}
        \norm{u(e^\theta x)}_{E_\theta} = \norm u_E \quad
        \text{for all}\ \theta\in \R \ \text{and}\ u\in E.
    \end{equation*}
We next consider the boundedness of $\norm u_{E_\theta}$ 
for any $(\theta,\lambda,u) \in \wN_\rho$. 
In view of \eqref{7.7} and the definition of $\norm{\cdot}_{E_\theta}$, 
we may suppose $\theta = 0$. 
For arbitrary $(0,\lambda,u)\in \wN_\rho$ there exists $(\theta',\lambda',u')\in \wKb$
such that
    \begin{equation*}
        \distM((0,\lambda,u),(\theta',\lambda',u'))<\rho,
    \end{equation*}
hence there exists a path 
$\gamma(t)=(\gamma_1(t),\gamma_2(t), \gamma_3(t))\in C^1([0,1],M)$
with $\gamma(0)=(\theta',\lambda',u')$ and $\gamma(1)=(0,\lambda,u)$ such that
    \begin{equation} \label{7.24}
        \int_0^1 \left( \abs{\dot\gamma_1(t)}^2+\abs{\dot\gamma_2(t)}^2
            +\norm{\dot\gamma_3(t)}_{E_{\gamma_1(t)}}^2 \right)^{1/2}\, dt
            <\rho. 
    \end{equation}
From this we deduce that for all $t\in [0,1]$
    \begin{equation*}
        \abs{\gamma_1(t)} = \abs{\gamma_1(1)-\gamma_1(t)}
        \leq \int_t^1 \abs{\dot\gamma_1(\tau)}\, d\tau
        < \rho.
    \end{equation*}
By \eqref{7.6} and \eqref{7.24}, we have
    \begin{equation*}
        \begin{aligned}
        \norm u_E 
        &= \norm{\gamma_3(1)}_E 
        \leq \norm{\gamma_3(0)}_E + \int_0^1 \norm{\dot\gamma_3(\tau)}_E\, d\tau 
        \leq \norm{u'}_E 
            + e^{N\rho/2}\int_0^1 \norm{\dot\gamma_3(\tau)}_{E_{\gamma_1(t)}}\, d\tau \\
        &\leq e^{N\rho/2}\norm{u'}_{E_{\theta'}} + e^{N\rho/2}\rho. 
        \end{aligned}
    \end{equation*}
Since $(\theta',\lambda',u') \in \wKb$ and $\norm{u'}_{E_{\theta'}}$ is 
uniformly bounded, 
so is $\norm{u}_E$ for any $(0,\lambda, u) \in \wN_\rho$ and \eqref{7.23} holds.

\medskip

By Step 4 we note that the vector field $X$ in Step 2 satisfies for
$U=(\theta,\lambda,u)\in \widetilde N_\rho$
    \begin{equation*}
        \norm{X(U)}_U 
        = \left( \norm{X_1(U)}^2 +\abs{X_2(U)}^2 
        + \norm{X_3(U)}_{E_\theta}^2\right)^{1/2}
        \leq c_\rho'= (2+(1+c_\rho)^2)^{1/2}. 
    \end{equation*}
Thus the flow defined in \eqref{7.17} satisfies
    \begin{equation} \label{7.25}
        \pnorm{{\frac{d}{dt}}\weta(t)}_{\weta(t)} 
            \leq \norm{X(\weta(t))}_{\weta(t)} \leq c_\rho'
        \quad \text{if}\ \weta(t)\in \widetilde N_\rho.     
    \end{equation}

\smallskip 

\noindent
\textbf{Step 5:} \textsl{Conclusion.}

\smallskip 

\noindent
It follows from \eqref{7.20}, \eqref{7.21} and \eqref{7.25} that for 
$\epsilon\in (0, \widetilde{\varepsilon}]$ small, 
the solution $\weta(t,U)$ of \eqref{7.17} has the desired properties (1)--(5) of 
\cref{Proposition:7.3}.
We give an outline of the proof of (4) since (5) can be proved similarly.

Suppose $U\in [J\leq b+\epsilon]_M\setminus\wN_\rho$.
Arguing indirectly, we assume that the solution $\weta(t)=\weta(t,U)$ satisfies
    \begin{equation} \label{7.26}
        \weta(1)\not\in [J\leq b-\epsilon]_M,       
    \end{equation}
which implies $\weta(t)\in [\abs{J-b}\leq\epsilon]_M$ for all $t\in [0,1]$.

We consider the following two cases:

\begin{itemize}
\item[(a)] $\weta(t)\not\in \wN_{{\frac{2}{3}}\rho}$ for all $t\in [0,1]$; 
\item[(b)] $\weta(t_0)\in \wN_{{\frac{2}{3}}\rho}$ for some $t_0\in (0,1]$.
\end{itemize}

If (a) occurs, then we have
    \begin{equation*}
        {\frac{d}{dt}}J(\weta(t))\leq -\nu \quad \text{for all}\ t\in [0,1],
    \end{equation*}
which cannot take a place since $\varepsilon \leq \widetilde{\varepsilon}$ and 
$2 \widetilde{\varepsilon} < \nu$. 

If (b) occurs, then there exist $0<t_1<t_2\leq 1$ such that
    \begin{equation*}
        \weta(t_1)\in\partial\wN_\rho, \quad 
        \weta(t_2)\in\partial\wN_{{\frac{2}{3}}\rho}, \quad 
        \weta(t)\in \wN_\rho\setminus \wN_{{\frac{2}{3}}\rho} 
            \quad \text{for}\ t\in [t_1,t_2]. 
    \end{equation*}
Since 
    \[  {\frac{1}{3}}\rho 
        = \distM(\partial\wN_\rho,\partial\wN_{{\frac{2}{3}}\rho})
        \leq \distM \left( \weta(t_1) , \weta(t_2) \right) 
        \leq \int_{t_1}^{t_2} \pnorm{ \frac{d \weta}{dt} (t) }_{\weta(t)} dt ,
    \] 
it follows from \eqref{7.25} that $\abs{t_2-t_1}\geq {\frac{\rho}{3c_\rho'}}$.
Thus by \eqref{7.21}
    \begin{equation*}
        \begin{aligned}
        J(\weta(1))\leq J(\weta(t_2)) 
        \leq J(\weta(t_1))-{\frac{\rho}{3c_\rho'}}\nu
        \leq b+\epsilon -{\frac{\rho}{3c_\rho'}}\nu. 
        \end{aligned}
    \end{equation*}
Thus \eqref{7.26} is impossible for small $\epsilon \in (0, \widetilde{\varepsilon}]$.
\end{proof}

\cref{Proposition:7.1} can be derived from \cref{Lemma:7.2} and 
\cref{Proposition:7.3}. A key is the following lemma,
in which we use notation: for $r>0$
    \begin{equation*}
        N_r = \Set{(\lambda,u) | \distRE((\lambda,u), K_b)<r}.
    \end{equation*}

\begin{Lemma}[{\cite[Lemma 4.9]{HT0}}] \label{Lemma:7.4}
For any $\rho>0$ there exists $R(\rho)>0$ such that
    \begin{equation*}
        \begin{aligned}
        &\pi(\wN_\rho)\subset N_{R(\rho)}, \quad \iota((\RE)\setminus N_{R(\rho)}) 
            \subset M\setminus \wN_\rho. 
        \end{aligned}
    \end{equation*}
Moreover
    \begin{equation*}
            R(\rho)\to 0 \quad \text{as}\ \rho\to 0. 
    \end{equation*}
\end{Lemma}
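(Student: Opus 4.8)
The plan is to prove the first inclusion $\pi(\wN_\rho)\subset N_{R(\rho)}$ with an explicit modulus $R(\rho)$ satisfying $R(\rho)\to 0$ as $\rho\to 0$; the second inclusion $\iota((\RE)\setminus N_{R(\rho)})\subset M\setminus\wN_\rho$ then comes for free, being the contrapositive of the special case $\theta=0$ of the first, since $\pi(0,\lambda,u)=(\lambda,u)$. The starting observation is the $\Phi_\tau$-invariance of $\pi$, i.e. $\pi(\Phi_\tau(\theta,\lambda,u))=\pi(\theta,\lambda,u)$ for all $\tau\in\R$, a one-line computation, combined with \eqref{7.7}, which says each $\Phi_\beta$ is an isometry of $(M,\distM)$ and (as already noted) preserves $\wKb$.

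Given $(\theta,\lambda,u)\in\wN_\rho$, choose $(\theta',\lambda',u')\in\wKb$ and a $C^1$ path $\gamma$ in $M$ from $(\theta',\lambda',u')$ to $(\theta,\lambda,u)$ of length $<\rho$, and write $u'=v'(e^{\theta'}\,\cdot)$ with $(\lambda',v')\in K_b$. Applying the isometry $\Phi_{-\theta'}$ to $\gamma$ gives a path of the same length whose right endpoint has unchanged $\pi$-image, joining $(0,\lambda',v')\in\wKb$ to some point $(\widetilde\theta,\widetilde\lambda,\widetilde u)$. Thus without loss of generality $\gamma=(\gamma_1,\gamma_2,\gamma_3)$ has length $<\rho$, $\gamma(0)=(0,\lambda',v')$ with $(\lambda',v')\in K_b$, and $\pi(\gamma(1))=\pi(\theta,\lambda,u)=(\widetilde\lambda,\widetilde u(\cdot/e^{\widetilde\theta}))$. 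Since the total variation of each coordinate is at most the length, $|\gamma_1(t)|<\rho$ for all $t$, so along $\gamma$ the norms $\norm{\cdot}_{E_{\gamma_1(t)}}$ and $\norm{\cdot}_E$ differ by at most the factor $e^{N\rho/2}$ by \eqref{7.6}; integrating the three components of $\dot\gamma$ then yields $|\widetilde\theta|<\rho$, $|\widetilde\lambda-\lambda'|<\rho$ and $\norm{\widetilde u-v'}_E<e^{N\rho/2}\rho$.

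It remains to bound $\distRE(\pi(\theta,\lambda,u),K_b)\le\bigl(|\widetilde\lambda-\lambda'|^2+\norm{\widetilde u(\cdot/e^{\widetilde\theta})-v'}_E^2\bigr)^{1/2}$. Writing $\widetilde u(\cdot/e^{\widetilde\theta})-v'=(\widetilde u-v')(\cdot/e^{\widetilde\theta})+\bigl(v'(\cdot/e^{\widetilde\theta})-v'\bigr)$ and using $\norm{(\widetilde u-v')(\cdot/e^{\widetilde\theta})}_E=\norm{\widetilde u-v'}_{E_{\widetilde\theta}}\le e^{N\rho/2}\norm{\widetilde u-v'}_E<e^{N\rho}\rho$, one reduces to controlling $\norm{v'(\cdot/e^{\widetilde\theta})-v'}_E$ uniformly over $v'$ in the projection $\mathcal{K}:=\{v\in E:(\lambda,v)\in K_b\text{ for some }\lambda\}$ and $|\widetilde\theta|\le\rho$. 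Here the $(PSPC)_b$ condition enters: it makes $K_b$ compact (\cref{Remark:5.2}), hence $\mathcal{K}$ compact in $E$, and since the dilation action $(\vartheta,v)\mapsto v(\cdot/e^\vartheta)$ is strongly continuous on $\R\times E$, the quantity $\omega(\rho):=\sup\{\norm{v(\cdot/e^\vartheta)-v}_E:v\in\mathcal{K},\ |\vartheta|\le\rho\}$ is finite and tends to $0$ as $\rho\to0^+$ by a standard compactness argument (otherwise extract $\rho_n\to0$, $v_n\to v_\ast$ in $\mathcal{K}$, $\vartheta_n\to0$, and pass to the limit). One may then take $R(\rho)=\bigl(\rho^2+(e^{N\rho}\rho+\omega(\rho))^2\bigr)^{1/2}$, which satisfies $R(\rho)\to0$. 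The main obstacle is precisely this last point: the dilation group acts on $E$ only strongly, not uniformly, continuously, so the compactness of $K_b$ is indispensable; everything else is bookkeeping with the product metric and the invariances \eqref{7.6}, \eqref{7.7}.
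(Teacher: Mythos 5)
Your proof is correct. The paper does not actually prove \cref{Lemma:7.4} (it is quoted from [HT0, Lemma 4.9]), but your argument is the natural one and matches the machinery the paper itself uses: reducing the second inclusion to the $\theta=0$ case of the first via $\pi\circ\iota={\rm id}$, using the $\Phi_\tau$-invariance \eqref{7.7} to normalize the base point of $\wKb$ to $\theta'=0$, and then bounding each coordinate's variation by the path length with the norm comparison \eqref{7.6}, exactly as in Step 4 of the proof of \cref{Proposition:7.3}. You also correctly isolate the one genuinely essential ingredient: the compactness of $K_b$ (available since the $(PSPC)_b$ condition is in force, cf. \cref{Remark:5.2}), which is what makes the dilation modulus $\omega(\rho)=\sup\{\norm{v(\cdot/e^\vartheta)-v}_E : v\in\mathcal{K},\ \abs{\vartheta}\leq\rho\}$ tend to $0$ and hence gives $R(\rho)\to 0$; without it the lemma would indeed fail.
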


\begin{proof}[Proof of \cref{Proposition:7.1}]
Let $\calO$ be a given neighborhood of $K_b$ and $\oepsilon>0$ a given positive number. 
By \cref{Lemma:7.4}, we can choose small $\rho>0$ such that 
$N_{R(\rho)}\subset \calO$ and 
from \cref{Lemma:7.2} and \cref{Proposition:7.3} we can find 
$\epsilon \in ( 0, \overline{\varepsilon} )$ and $\weta:\,
[0,1]\times M\to M$ with the properties (1)--(5) in \cref{Proposition:7.3}. 
Then it is easy to check that the flow defined by 
$\eta(t,\lambda,u) =\pi(\weta(t,0,\lambda,u)) :\, [0,1] \times \RE \to \RE$ 
satisfies the desired properties. 
\end{proof}

\subsection{Proof of \cref{Theorem:1.1}: Case 1 ($\ub<0$)}
\label{Section:7.3}
\begin{proof}[Proof of \cref{Theorem:1.1}: Case 1 ($\ub<0$)]
First we deal with Case 1.
By \cref{Proposition:5.4} (i),  the $(PSPC)_{\ub}$ condition holds and 
\cref{Proposition:7.1} is applicable.
We recall $\ub\geq -2Am_1$ and set $\oepsilon = \half$. Then we have
for $\gamma\in\uGamma$
    \begin{equation*}
        I(\gamma(i)) \leq -2Am_1 -1 \leq \ub-\oepsilon
        \quad \text{for}\ i=0,1
    \end{equation*}
and thus $\uGamma$ is stable under the deformation. Arguing in a standard
way, we can see that $\ub$ is a critical value of $I$. 

To show the existence of a positive solution, we need the following
\cref{Corollary:7.5}.
\end{proof}

\begin{Corollary} \label{Corollary:7.5} 
The following equality holds: 
    \begin{equation} \label{7.27}
        \ub=\inf_{\lambda\in\R} b(\lambda).     
    \end{equation}
\end{Corollary}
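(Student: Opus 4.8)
The plan is to establish the two inequalities $\ub \leq \inf_{\lambda\in\R} b(\lambda)$ and $\ub \geq \inf_{\lambda\in\R} b(\lambda)$ separately. The first inequality is already contained in \cref{Proposition:4.2} (ii), which gives $\ub \leq b(\lambda)$ for every $\lambda\in\R$ and hence $\ub\leq\inf_{\lambda\in\R} b(\lambda)$ upon taking the infimum. So the real content is the reverse inequality $\inf_{\lambda\in\R} b(\lambda)\leq\ub$, and the natural way to get it is to produce, for each $\gamma\in\uGamma$ and each $\delta>0$, some $\lambda\in\R$ with $b(\lambda)\leq\max_{t\in[0,1]} I(\gamma(t))+\delta$; taking the infimum over $\gamma\in\uGamma$ and letting $\delta\to 0$ then yields $\inf_\lambda b(\lambda)\leq\ub$.

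The key step is an intersection (linking) argument. Fix $\gamma=(\gamma_1,\gamma_2)\in\uGamma$. Recall that $\gamma(0)\in\R\times\{0\}$ lies strictly inside the cylinder $M_0=\R\times\calS_0$ (in the sense $\frac12\norm{\gamma_2(0)}_2^2=0<m_1$), while $\gamma(1)\in(\mathrm{id}\times\zeta_0)(\R)$ lies strictly outside ($\frac12\norm{\gamma_2(1)}_2^2>m_1$ by \eqref{3.2}). Consider the $\lambda$-component $\gamma_1:[0,1]\to\R$; as in the proof of \cref{Proposition:4.2} (ii) I would pick a suitable $\lambda^*$ in (or near) $\gamma_1([0,1])$ so that the segment of $\gamma$ sitting "over" $\lambda^*$ — more precisely, restricting attention to the slice $\{\lambda^*\}\times E$ after a small perturbation of $\gamma$ making $\lambda^*$ a regular value of $\gamma_1$ — produces a path from a point with $\frac12\norm{\cdot}_2^2<m_1$ to a point with $\frac12\norm{\cdot}_2^2>m_1$, i.e. a path in $E$ crossing $\calS_0$. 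The cleaner route, mirroring exactly the mechanism already used in the excerpt, is: after approximating $\gamma$ uniformly by a $C^2$-path and applying Sard's theorem, for a dense set of $\lambda^*\in\R$ the preimage $\gamma_1^{-1}(\lambda^*)$ contains a subarc $c(\cdot)$ joining the $\{0\}$-end to something on which $\gamma_2\circ c$ defines a path $\sigma_{\lambda^*}\in\widehat\Lambda_{\mu^*}$ (with $\mu^*=e^{\lambda^*}$); here the endpoint conditions $\sigma_{\lambda^*}(0)=0$ and $\Psi_{\mu^*}(\sigma_{\lambda^*}(1))<0$ hold because $\gamma(1)$ lies on $(\mathrm{id}\times\zeta_0)(\R)$ and \eqref{3.1} gives $\Psi_{\mu^*}(\zeta_0(\lambda^*))<0$, while connectedness of $\{\Psi_{\mu^*}<0\}$ (\cref{Proposition:3.1} (iii)) lets one reach the specific point $\zeta_0(\lambda^*)$ defining $\widehat\Lambda_{\mu^*}$ without raising the max of $I$ above the value on $\gamma$.

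Once such a $\sigma_{\lambda^*}$ is in hand, by definition of $b(\lambda^*)$ in \eqref{4.1}–\eqref{4.2} one gets
\begin{equation*}
    b(\lambda^*)\leq\max_{t\in[0,1]} I(\lambda^*,\sigma_{\lambda^*}(t))\leq\max_{t\in[0,1]} I(\gamma(t)),
\end{equation*}
since $\{\lambda^*\}\times\sigma_{\lambda^*}([0,1])\subset\gamma([0,1])$ up to the uniform approximation error, which is absorbed by continuity of $I$ along the (compact) path and by the density of admissible $\lambda^*$ together with continuity of $b(\cdot)$ from \cref{Proposition:4.2} (i). Hence $\inf_{\lambda\in\R} b(\lambda)\leq\max_{t\in[0,1]} I(\gamma(t))$, and taking the infimum over $\gamma\in\uGamma$ gives $\inf_{\lambda\in\R} b(\lambda)\leq\ub$. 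Combined with the first inequality, \eqref{7.27} follows.

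The main obstacle I anticipate is the bookkeeping in the linking/intersection step: making the perturbation of $\gamma$ to a $C^2$-map that still lies in $\uGamma$ (with the endpoint conditions $I(\gamma(0))<-2Am_1-1$, $\gamma(1)\in(\mathrm{id}\times\zeta_0)(\R)$ preserved), choosing $\lambda^*$ a regular value of the $\lambda$-component so the relevant level curve is a genuine smooth arc, and verifying that the resulting slice path $\sigma_{\lambda^*}$ can be completed (via the path-connectedness of sub-level sets of $\Psi_{\mu^*}$) to an element of $\widehat\Lambda_{\mu^*}$ whose max of $I$ does not exceed that along $\gamma$ — all while controlling the approximation errors so that the final inequality survives passing to the limit. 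This is essentially a re-run of the argument already carried out for the $b(\lambda)\leq\ob$ direction in \cref{Proposition:4.2} (ii), adapted to the "inside/outside the cylinder" geometry of $\uGamma$ instead of the two-parameter family $\oGamma$, so the tools are all available; it is just a matter of assembling them carefully.
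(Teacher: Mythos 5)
Your first inequality, $\ub\leq\inf_{\lambda\in\R}b(\lambda)$, is indeed immediate from \cref{Proposition:4.2} (ii), but the mechanism you propose for the reverse inequality does not work, and this is where the real content lies. The Sard/slicing argument you borrow from the proof of \cref{Proposition:4.2} (ii) is available there only because elements of $\oGamma$ are \emph{two-dimensional} maps $[0,1]\times\R\to\RE$: the preimage of a regular value of the first component is then a one-dimensional curve, which produces a genuine path inside the slice $\{\lambda^*\}\times E$. For $\gamma=(\gamma_1,\gamma_2)\in\uGamma$ the parameter domain is one-dimensional, so for a regular value $\lambda^*$ of $\gamma_1$ the set $\gamma_1^{-1}(\lambda^*)$ is a finite set of points; there is no subarc on which $\gamma_1\equiv\lambda^*$, hence no path $\sigma_{\lambda^*}$ with $\{\lambda^*\}\times\sigma_{\lambda^*}([0,1])\subset\gamma([0,1])$ unless $\gamma$ happens to be constant in its first component on a whole parameter interval. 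Nor can you repair this by freezing the first component and using $t\mapsto(\lambda^*,\gamma_2(t))$: since $I(\lambda,u)$ contains the term $e^\lambda\bigl(\frac12\norm u_2^2-m_1\bigr)$, changing $\gamma_1(t)$ to $\lambda^*$ can strictly increase $I$ on the portion of the path lying outside the cylinder $\R\times\calS_0$, so $\max_t I(\lambda^*,\gamma_2(t))$ is not controlled by $\max_t I(\gamma(t))$. The path-connectedness of sublevel sets of $\Psi_{\mu^*}$ (\cref{Proposition:3.1} (iii)) only connects endpoints at strongly negative levels and cannot manufacture a mountain-pass competitor in the slice whose maximum is dominated by the maximum along $\gamma$.

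In fact the inequality $\inf_{\lambda}b(\lambda)\leq\ub$ is not a soft linking statement; the paper's proof uses compactness. By \cref{Proposition:4.2} (i)--(iii), $\ub\leq\inf_{\lambda}b(\lambda)\leq\lim_{\lambda\to-\infty}b(\lambda)=0$, so the equality is immediate when $\ub=0$. When $\ub<0$, the deformation argument of \cref{Section:7} under the $(PSPC)_{\ub}$ condition (\cref{Proposition:5.4} (i)), carried out just before the corollary, yields a critical point $(\lambda_0,u_0)$ of $I$ with $I(\lambda_0,u_0)=\ub$; since $u_0$ is a nontrivial critical point of $u\mapsto I(\lambda_0,u)=\Psi_{\mu_0}(u)-\mu_0m_1$ and $a(\mu_0)$ is the least energy level of $\Psi_{\mu_0}$ (\cref{Proposition:3.1}), one gets $b(\lambda_0)=a(\mu_0)-\mu_0m_1\leq I(\lambda_0,u_0)=\ub$, whence the reverse inequality. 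So the missing idea in your proposal is precisely the use of the critical point at level $\ub$ (equivalently, the least-energy characterization of $b(\lambda_0)$), and the intersection argument you outline cannot substitute for it.
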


\begin{proof}
By \cref{Proposition:4.2} (i) and (ii), we have
    \begin{equation*}
        \ub \leq \inf_{\lambda\in\R} b(\lambda) 
        \leq \lim_{\lambda\to-\infty} b(\lambda) = 0.
    \end{equation*}
Thus \eqref{7.27} holds if $\ub=0$.

    If $\ub<0$, then there exists a critical point
$(\lambda_0,u_0)\in \RE$ of $I$ such that $I(\lambda_0,u_0)=\ub$.
Since $u_0\in E$ is a nontrivial critical point of $u\mapsto I(\lambda_0,u)$, 
\cref{Proposition:3.1} and the fact $b(\lambda_0) = a(\mu_0) - \mu_0 m_1$, 
$\mu_0=e^{\lambda_0}$ yield 
    \begin{equation*}
        b(\lambda_0) \leq I(\lambda_0,u_0) = \ub.
    \end{equation*}
Therefore \eqref{7.27} also holds for $\ub<0$. 
\end{proof}

\begin{proof}[End of the proof of \cref{Theorem:1.1}: Case 1 ($\ub<0$)]
Since $b(\lambda)\to 0$ as $\lambda\to \pm\infty$, \cref{Corollary:7.5} 
implies the
existence of $\lambda_0\in\R$ such that $b(\lambda_0) = \ub<0$.  Thus, by
\cref{Proposition:4.5}, a least energy positive solution of 
$-\Delta u +\mu_0 u=g(u)$ with $\mu_0=e^{\lambda_0}$ corresponding to 
$b(\lambda_0)$ is a positive solution of \eqref{1.1} with $m=m_1$.
\end{proof}

We also have the following corollary from \cref{Corollary:7.5} and 
\cref{Theorem:1.1} for $\ub<0$

\begin{Corollary} \label{Corollary:7.6}
Assume \ref{(g0')} and \ref{(g1)} and suppose that 
$\inf_{\lambda\in\R} b(\lambda)<0$.  Then $\inf_{\lambda\in\R} b(\lambda)$
is a critical value of $\calI$.
\end{Corollary}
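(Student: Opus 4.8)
The plan is to deduce this immediately from \cref{Corollary:7.5} and the Case~1 part of \cref{Theorem:1.1} established just above. First I would apply \cref{Corollary:7.5} to rewrite $\inf_{\lambda\in\R} b(\lambda) = \ub$; the hypothesis then becomes $\ub < 0$, which is exactly Case~1. Consequently the argument already carried out at the start of \cref{Section:7.3} applies verbatim: by \cref{Proposition:5.4}~(i) the $(PSPC)_{\ub}$ condition holds, so \cref{Proposition:7.1} provides a deformation under which $\uGamma$ is stable (since $I(\gamma(i)) \leq -2Am_1 - 1 \leq \ub - \half$ for $\gamma\in\uGamma$ and $i=0,1$), and a standard minimax argument yields a critical point $(\lambda_0,u_0)\in\RE$ of $I$ with $I(\lambda_0,u_0) = \ub$.

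Next I would translate this critical point of the augmented functional $I$ into a constrained critical point of $\calI$ on $\calS_{m_1}$. From $\partial_\lambda I(\lambda_0,u_0) = e^{\lambda_0}\big(\half\norm{u_0}_2^2 - m_1\big) = 0$ one gets $\half\norm{u_0}_2^2 = m_1$, so $u_0\in\calS_{m_1}$, while $\partial_u I(\lambda_0,u_0) = 0$ gives $(\nabla u_0,\nabla\varphi)_2 + e^{\lambda_0}(u_0,\varphi)_2 = \intRN g(u_0)\varphi\,dx$ for all $\varphi\in E$, which says precisely that $u_0$ is a critical point of $\calI$ restricted to $\calS_{m_1}$ with Lagrange multiplier $\mu_0 = e^{\lambda_0}$; equivalently $(\mu_0,u_0)$ solves \eqref{1.1} with $m=m_1$. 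Since the constraint term of $I$ vanishes at $(\lambda_0,u_0)$,
\[
  \calI(u_0) = \half\norm{\nabla u_0}_2^2 - \intRN G(u_0)\,dx
  = I(\lambda_0,u_0) = \ub = \inf_{\lambda\in\R} b(\lambda),
\]
so $\inf_{\lambda\in\R} b(\lambda)$ is attained as a critical value of $\calI$ on $\calS_{m_1}$.

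I do not expect a genuine obstacle here: the statement is essentially a repackaging of material already in hand. The only point worth a sentence is the identification just used of critical points of $I:\RE\to\R$ with constrained critical points of $\calI$ on $\calS_{m_1}$, together with the remark that ``critical value of $\calI$'' is to be read in this constrained sense; one may also note, exactly as in \cref{Theorem:1.1}, that $u_0$ can moreover be taken positive and radially symmetric.
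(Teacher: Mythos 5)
Your proposal is correct and follows essentially the paper's own route: the paper deduces Corollary 7.6 exactly by combining \cref{Corollary:7.5} (so the hypothesis becomes $\ub<0$) with the Case 1 deformation argument of \cref{Section:7.3}, and then reads off the constraint from $\partial_\lambda I(\lambda_0,u_0)=0$ so that $\calI(u_0)=I(\lambda_0,u_0)=\ub$. Your added remark on the constrained interpretation of ``critical value of $\calI$'' and on positivity is consistent with the paper.
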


We can show \cref{Corollary:1.5} as a special case of \cref{Corollary:7.6}
and \cref{Proposition:4.6}.

\begin{proof}[Proof of \cref{Corollary:1.5}]
We may assume \ref{(g0')}.  Under the assumption $g(s)\geq \abs s^{p-1}s$ for
$s\geq 0$, we have
    \[  I(\lambda,u) \leq I_0(\lambda,u) 
        \quad \text{for all}\ (\lambda,u)\in\RE,
    \]
where $I_0$ is the functional corresponding to 
$G_0(s)={\frac{1}{p+1}}\abs s^{p+1}$.  By \cref{Lemma:4.4}, the minimax values
$\ub$ and $\ob$ for $I$ satisfy
    \[  -\infty < \ub \leq \ob=0.
    \]
Thus \cref{Corollary:1.5} follows from \cref{Corollary:7.6} and 
\cref{Proposition:4.6}.
\end{proof}

\section{Deformation argument under $(PSPC)_b^*$ and proof of \cref{Theorem:1.1}: 
Case 2 ($\ob>0$)}\label{Section:8}

Aim of this section is to show $\ob$ is a critical value when $\ob>0$.

\medskip


\subsection{A gradient estimate in a neighborhood of the set of positive functions}
\label{Section:8.1}
We argue indirectly and assume 

\smallskip

\begin{enumerate}[label={\rm $(\#)$}]
\item \label{(N)}
$I$ does not have any critical point $(\lambda,u)\in\RE$ with $I(\lambda,u)=\ob$ and
$u\geq 0$.
\end{enumerate}

\smallskip

\noindent
The aim of this section is to show the following \cref{Proposition:8.1}, 
which gives a gradient estimate 
in a neighborhood $\scrC_\delta$ of the set $\scrC_0$ of non-negative functions.
Here we use notation: for $\delta>0$
    \begin{equation*}
        \begin{aligned}
        &\scrC_0 = \Set{ (\lambda,u)\in \RE | u\geq 0\ \text{in}\ \R^N}, \\
        &\scrC_\delta = \Set{ (\lambda,u)\in \RE | \distE(u,\scrP_0)<\delta},
        \end{aligned}
    \end{equation*}
where $\scrP_0$ is defined in \eqref{5.5}.

\medskip

\begin{Proposition} \label{Proposition:8.1}
Assume \ref{(g0')}, \ref{(g1)}, \ref{(g2)}, $\ob>0$ and \ref{(N)}. 
Then there exist $\delta$, $\rho$, $\nu>0$ such that
    \begin{equation*}
        \max\big\{ \abs{\partial_\lambda I(\lambda,u)}, 
            \, (1+\norm u_E)\norm{\partial_u I(\lambda,u)}_{E^*},\,
        \abs{P(\lambda,u)}\big\} \geq 3\nu
    \end{equation*}
for all $(\lambda,u)\in \scrC_\delta$ with $I(\lambda,u)\in [\ob-\rho,\ob+\rho]$. 
\end{Proposition}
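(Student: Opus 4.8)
The plan is to argue by contradiction: if the gradient estimate fails, then for every choice of $\delta,\rho,\nu$ small there is a point violating it, so taking $\delta_j,\rho_j,\nu_j\to 0$ we produce a sequence $(\lambda_j,u_j)_{j=1}^\infty\subset\RE$ with $\distE(u_j,\scrP_0)\to 0$, $I(\lambda_j,u_j)\to\ob$, and
$$\max\big\{\abs{\partial_\lambda I(\lambda_j,u_j)},\,(1+\norm{u_j}_E)\norm{\partial_u I(\lambda_j,u_j)}_{E^*},\,\abs{P(\lambda_j,u_j)}\big\}\to 0.$$
Recalling \eqref{2.3} and the computations in \cref{Section:5.1}, this means $(\lambda_j,u_j)_{j=1}^\infty$ is precisely a $(PSPC)_{\ob}^*$ sequence in the sense of \cref{Definition:5.3}: conditions \eqref{5.1}--\eqref{5.4} hold together with $\distE(u_j,\scrP_0)\to 0$.

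Since we are in Case 2, $\ob>0$, so by \cref{Proposition:5.4}(ii) (which requires \ref{(g2)}, available by hypothesis) the $(PSPC)_{\ob}^*$ condition holds, hence after passing to a subsequence $(\lambda_j,u_j)\to(\lambda_*,u_*)$ strongly in $\RE$. By continuity of $I$, $\partial_\lambda I$, $\partial_u I$ and $P$, the limit satisfies $I(\lambda_*,u_*)=\ob$, $\partial_\lambda I(\lambda_*,u_*)=0$, $\partial_u I(\lambda_*,u_*)=0$, so $(\lambda_*,u_*)\in\RE$ is a critical point of $I$ with critical value $\ob$. Moreover $\distE(u_*,\scrP_0)=\lim_j\distE(u_j,\scrP_0)=0$, and since $\scrP_0$ is closed in $E$ this forces $u_*\in\scrP_0$, i.e. $u_*\geq 0$ in $\R^N$. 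Thus $(\lambda_*,u_*)$ is a critical point of $I$ at level $\ob$ with $u_*\geq 0$, directly contradicting assumption \ref{(N)}.

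I expect the only genuinely delicate point to be the bookkeeping of what "the estimate fails" negates: one must be careful that the single inequality $\max\{\cdots\}\geq 3\nu$ failing simultaneously over the whole region $\scrC_\delta\cap\{I\in[\ob-\rho,\ob+\rho]\}$, for a sequence of shrinking parameters, correctly yields each of \eqref{5.2}, \eqref{5.3}, \eqref{5.4} along the chosen sequence (with the weight $1+\norm{u_j}_E$ attached to $\partial_u I$, matching the Cerami-type condition); once this is set up the argument is a direct application of \cref{Proposition:5.4}(ii) plus continuity, with the closedness of $\scrP_0$ in $E$ delivering the sign of the limit. No deformation machinery is needed here — that enters only in the subsequent subsection where this estimate is used to build the local flow.
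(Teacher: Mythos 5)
Your proposal is correct and is essentially identical to the paper's own proof: one negates the estimate with $\delta=\rho=\nu=1/j$, obtains a $(PSPC)_{\ob}^*$ sequence, and applies \cref{Proposition:5.4} (ii) (valid since $\ob>0$ and \ref{(g2)} holds) to produce a non-negative critical point at level $\ob$, contradicting \ref{(N)}. The details you flag (the bookkeeping of the negation and the closedness of $\scrP_0$ giving $u_*\geq 0$) are exactly the points the paper treats, so nothing is missing.
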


\medskip

\cref{Proposition:8.1} enables us to develop a deformation argument 
in a neighborhood $\scrC_\delta$ of
non-negative functions $\scrC_0$. We will deal with such a deformation argument 
in the following sections.

\begin{proof}[Proof of \cref{Proposition:8.1}]
When $b=\ob$, there is no $(PSPC)_{\ob}^*$ sequence by \ref{(N)}. 
This means that \cref{Proposition:8.1} holds. In fact, if \cref{Proposition:8.1} 
does not hold, then for any $j\in \N$ there exists $(\lambda_j,u_j)$ such that
    \begin{equation*}
        \begin{aligned}
        &(\lambda_j,u_j)\in\scrC_{1/j}, \ 
            \text{that is,}\ \distE(u_j,\scrP_0)<{\frac{1}{j}}, \\
        &I(\lambda_j,u_j) \in \left[\ob-{\frac{1}{j}}, \ob+{\frac{1}{j}}\right] 
        \end{aligned}
    \end{equation*}
and
    \begin{equation*}
        \max\big\{ \abs{\partial_\lambda I(\lambda_j,u_j)}, \,
            (1+\norm{u_j}_E)\norm{\partial_u I(\lambda_j,u_j)}_{E^*}, \,
            \abs{P(\lambda_j,u_j)} \big\} <{\frac{3}{j}}.
    \end{equation*}
It is clear that $(\lambda_j,u_j)_{j=1}^\infty$ is a $(PSPC)_{\ob}^*$-sequence.
Since the $(PSPC)_{\ob}^*$ condition holds, there exists a critical point 
$(\lambda_0,u_0)\in\RE$ with $I(\lambda_0,u_0)=b$ and $u_0\geq 0$, which
is a contradiction to \ref{(N)}.
\end{proof}


\subsection{Deformation flow in a neighborhood of the set of positive solutions} 
\label{Section:8.2}
As in \cref{Section:7}, we develop a deformation flow on the augmented space 
$M=\RRE$.
In what follows we write $U=(\theta,\lambda,u)\in M$ and use notation: for $\delta>0$
    \begin{equation*}
        \begin{aligned}
        \wscrC_0 &=\Set{ U\in M | \pi(U)\in\scrC_0 } 
            = \Set{ (\theta,\lambda,u)\in M | u\geq 0 }, \\
        \wscrC_{\delta} &=\Set{ U\in M | \pi(U)\in\scrC_{\delta} } 
            = \Set{ (\theta,\lambda,u+\varphi)\in M | u\geq 0,\, \norm\varphi_{E_\theta} 
            = \norm{ \varphi({x/e^\theta})}_E \leq \delta }. 
        \end{aligned}
    \end{equation*}
We also define $\wscrC_{\delta/2}$ is a similar way.

We recall our ODE \eqref{7.17} in \cref{Section:7}. Here we modify 
the definition of $\varphi_1$.
Under \ref{(N)}, for $\delta$, $\rho$, $\nu>0$ given in \cref{Proposition:8.1},
we have 
    \begin{equation*}
        \max \big\{ \abs{\partial_\theta J(U)},\, \abs{\partial_\lambda J(U)},\, 
        (1+\norm u_{E_\theta})\norm{\partial_u J(U)}_{E_\theta^*} \big\} \geq 3\nu
    \end{equation*}
for all $U\in\wscrC_\delta$ with $J(U)\in [\ob-\rho,\ob+\rho]$.

We next choose a locally Lipschitz continuous vector field
    \begin{equation*}
        X(U)=(X_1(U), X_2(U), X_3(U)):\, 
        \wscrC_\delta \cap \left[\abs{J-\overline{b}} 
                \leq \rho  \right]_M  
        \to T_U(M)=\RRE
    \end{equation*}
with properties
    \begin{align}   
    &DJ(U) X(U) \geq \nu, \label{8.1}\\
    &\abs{X_1(U)}, \ \abs{X_2(U)} \leq 1,   \label{8.2}\\
    &\norm{X_3(U)}_{E_\theta} \leq 1+\norm u_{E_\theta}.    \label{8.3}
    \end{align}
For $U=(\theta,\lambda,u)\in M$, we consider the following ODE:
    \begin{equation} \label{8.4}
        \left\{ \begin{aligned}
        &{\frac{d\weta}{d\tau}} = -\varphi_1(\weta)\varphi_2(J(\weta))X(\weta), \\
        &\weta(0,U)=U, 
        \end{aligned}
        \right.
    \end{equation}
where $\varphi_1 : \, M\to [0,1]$ and $\varphi_2:\, \R\to [0,1]$ are Lipschitz 
continuous functions such that
    \begin{equation*}
        \varphi_1(U)=\begin{cases}
            1 &\text{for}\ U\in \wscrC_{\delta/2},\\ 
            0&\text{for}\ U\not\in\wscrC_\delta,
        \end{cases} \qquad
        \varphi_2(s)=\begin{cases}
            1 &\text{for}\ s\in [\ob-{\frac{\rho}{2}}, \ob+{\frac{\rho}{2}}],\\
            0 &\text{for}\ s\not\in [\ob-\rho, \ob+\rho].
        \end{cases}
    \end{equation*}
The solution $\weta(t,U):\, [0,\infty)\times M\to M$ of \eqref{8.4} has 
the following properties.

\begin{itemize}
\item[(i)] For any $U\in M$, the flow $\weta(\tau,U)$ exists for $\tau\in [0,\infty)$.
\item[(ii)] $\displaystyle {\frac{d}{d\tau}} J(\weta(\tau,U)) \leq 0$ 
for all $\tau\in [0,\infty)$ and
$U\in M$.
\item[(iii)] By property \eqref{8.1}, we have
    \begin{equation} \label{8.5}
        {\frac{d}{d\tau}} J(\weta(\tau,U)) \leq -\nu \quad \text{as long as}\ 
        \weta(\tau,U)\in   \wscrC_{\delta/2}\ \text{and} \ 
 J( \weta(\tau,U)  )\in \left[\ob-{\frac{\rho}{2}},\ob+{\frac{\rho}{2}}\right].
    \end{equation}
\item[(iv)] $\weta(\tau,U)=U$ for all $\tau\in [0,\infty)$ if $J(U)\leq \ob-\rho$.
\end{itemize}


\begin{Remark} \label{Remark:8.2} 
We can easily see that for any $U\in \wscrC_0$ there exists $h=h(U)>0$ such that
    \begin{equation*}
            \weta([0,h],U)\subset \wscrC_{\delta/2}.
    \end{equation*}
However it seems difficult to take $h=h(U)>0$ uniformly in $U$, since the third
component $X_3$ of the vector field $X$ is not bounded.
Replacing \eqref{5.3} with
    \begin{equation*}
        \norm{\partial_u I(\lambda_j,u_j)}_{E^*}\to 0,
    \end{equation*}
we may introduce $(PSP)_b^*$-sequences and the $(PSP)_b^*$ condition. 
If $I$ satisfies the $(PSP)_{\ob}^*$ condition, then 
we can choose a vector field $X(U)=(X_1(U), X_2(U), X_3(U))$ with 
properties \eqref{8.1}--\eqref{8.2} and
    \begin{equation*}
        \norm{X_3(U)}_{E_\theta} \leq 1.
    \end{equation*}
Thus the corresponding solution $\weta$ satisfies 
$\norm{{\frac{d}{d\tau}}\weta(\tau,U)}_{T_UM}\leq 3$ for all $\tau$ and $U$ 
and there exists $h_0>0$ independent of $U$ such that 
$\weta([0,h_0],U)\subset\wscrC_{\delta/2}$ for all $U\in \scrC_0$. 
This enables us to show for small $\alpha>0$
    \begin{equation*}
        \weta(h_0, \wscrC_0\cap [J\leq \ob+\alpha]) 
        \subset \wscrC_{\delta/2}\cap [J\leq \ob-\alpha],
    \end{equation*}
which is enough to show that $\ob$ is a critical value of $J$.

In our setting, we require \eqref{8.3} and we need the absolute value 
operator $\Upsilon$ defined in the following \cref{Section:8.3} to show the
existence of a positive critical point.
\end{Remark}

\subsection{The absolute value operator} 
\label{Section:8.3}
We define $\ABS:\, M\to M$ by
    \begin{equation*}
        \ABS(\theta,\lambda,u(x))=(\theta,\lambda,\abs{u(x)}).
    \end{equation*}
Since $u(x)\mapsto \abs{u(x)}; E\to E$ is continuous and satisfies 
$\norm{ \, \nabla\abs u\,}_2=\norm{\nabla u}_2$, the operator $\ABS:\, M\to M$
has the following properties:

\begin{itemize}
\item[(i)] $\ABS:\,M\to M$ is continuous;
\item[(ii)] $J(\ABS(U))=J(U)$ for all $U\in M$;
\item[(iii)] $\ABS(U) \in \wscrC_0$ for all $U\in M$;
\item[(iv)] $\ABS(U)=U$ for all $U=(\theta,\lambda,u)$ with $u\geq 0$.
\end{itemize}

We define the following sub-class of $\oGamma$ and the corresponding minimax 
value $\ob_0$ by
    \begin{equation*}
        \begin{aligned}
        &\oGamma_0 =\Set{ \gamma(t,\lambda)
            =(\gamma_1(t,\lambda), \gamma_2(t,\lambda))\in\oGamma | 
            \gamma_2(t,\lambda) \in \scrC_0 \ 
            \text{for all}\ (t,\lambda)\in [0,1]\times \R }, \\
        &\ob_0 = \inf_{\gamma\in \oGamma_0}\sup_{(t,\lambda)\in [0,1]\times\R} 
                I(\gamma(t,\lambda)).
        \end{aligned}
    \end{equation*}
We note that $\oGamma_0$ is a family of paths which consist of non-negative 
functions. 
By the properties of
$\ABS$, we can see for any $\gamma \in\oGamma$
    \begin{equation*}
        \ABS(\gamma(t,\lambda))\in \oGamma_0, \quad
        \sup_{(t,\lambda)\in [0,1]\times \R} J(\ABS(\gamma(t,\lambda)))
            = \sup_{(t,\lambda)\in [0,1]\times \R} J(\gamma(t,\lambda)). 
    \end{equation*}
In particular, we have $\ob_0=\ob$.

For $h>0$, the composition 
$\Upsilon\circ \weta(h,\cdot):\, U\mapsto \Upsilon(\weta(h,U))$ is well-defined 
as a map $\wscrC_0\to\wscrC_0$.  
In the next section, we use an iteration of $\Upsilon\circ \weta(h,\cdot)$ 
to deform paths.

\subsection{Iteration for positive paths} \label{Section:8.4}
Let $\delta$, $\rho$, $\nu>0$ be numbers given in \cref{Proposition:8.1}. 
We may assume $\ob-\rho>0$.
We set
    \begin{equation*}
        T={\frac{\rho}{\nu}}. 
    \end{equation*}
Here $T>0$ is chosen so that
    \begin{equation*}
        J(\weta(T,U)) \leq \ob -{\frac{\rho}{2}}
    \end{equation*}
holds under the assumption
    \begin{equation} \label{8.6}
        \weta(\tau,U) \in \wscrC_{\delta/2}
                \cap\left[ J\leq \ob+{\frac{\rho}{2}}\right]_M
        \quad \text{for}\ \tau\in [0,T].    
    \end{equation}
We note that \eqref{8.6} seems difficult to verify for 
$U\in \wscrC_0\cap\left[ J\leq \ob+{\frac{\rho}{2}}\right]_M$ in general.
See \cref{Remark:8.2}.  However for small $h=h(U)>0$ depending on 
$U\in\wscrC_0\cap\left[ J\leq \ob+{\frac{\rho}{2}}\right]_M$ it is natural to expect
that $\weta([0,h],U) \subset \wscrC_{\delta/2}$ and
$J(\weta(h,U))\leq J(U)-\nu h$ hold. Recall that $\Upsilon(M)\subset\wscrC_0$
and for $n\in\N$ we consider an iteration of the following maps:
    \begin{equation*}
        S_n = \Upsilon\circ\weta(h_n,\cdot):\, \wscrC_0\to \wscrC_0, \quad 
        h_n = {\frac{T}{n}}.
    \end{equation*}
More precisely, for $\gamma\in\oGamma_0$ with 
$\sup_{(t,\lambda)\in [0,1]\times\R} I(\gamma(t,\lambda))<\ob+{\frac{\rho}{2}}$, 
we will find a suitable $n\in\N$ and apply 
$S_n\circ S_n\circ\cdots \circ S_n$ ($n$-times) to $\gamma(t,\lambda)$ to find 
a path $\widehat\gamma\in \oGamma$ with 
$\sup_{(t,\lambda)\in[0,1]\times\R} I(\widehat\gamma(t,\lambda))<\ob-{\frac{\rho}{2}}$.

To find such an $n\in\N$, we introduce 
$\whetan:\, [0,T]\times M\to M$ by
    \begin{equation*}
        \whetan(\tau,U)=\begin{cases}
            \weta(\tau,U_0) &\text{for}\ \tau\in [0,h_n),\\
                \ \ \vdots \\
            \weta(\tau-(i-1)h_n,U_{i-1})
                &\text{for}\, \tau\in [(i-1)h_n,ih_n)\ (i=1,2,\cdots,n), \\
            \ \ \vdots \\
        U_n &\text{for}\ \tau=T.
        \end{cases}
    \end{equation*}
Here
    \begin{equation*}
        U_0=\ABS(U), \quad
        U_i= S_n(U_{i-1})=\ABS(\weta(h_n, U_{i-1})) 
        \quad (i=1,2,\cdots, n).
    \end{equation*}
We write $\whetan(\tau,U)=(\theta(\tau,U),\lambda(\tau,U),u(\tau,U))$ and observe

\begin{itemize}
\item[(0)] for $U\in \wscrC_0$, $\whetan(T,U)=(S_n\circ S_n\circ\cdots\circ S_n)(U)$ 
($n$-times);
\item[(i)] $\tau\mapsto \whetan(\tau,U)$ satisfies the first equation of \eqref{8.4} in 
intervals $[0,h_n)$, $[h_n,2h_n)$, $\dots$, $[(n-1)h_n,T]$;
\item[(ii)] $\theta(\tau,U)$, $\lambda(\tau,U)$ are continuous on $[0,T]\times M$. 
Moreover by \eqref{8.2}
    \begin{equation} \label{8.7}
        \pabs{ \frac{d}{ d\tau} \theta(\tau,U) }, \ 
        \pabs{ \frac{d}{d\tau} \lambda(\tau,U) }  \leq 1 
        \quad \text{except for}\ \tau=h_n, 2h_n, \cdots, (n-1)h_n;
    \end{equation}
\item[(iii)] $u(\tau,U):\, [0,T]\times M\to E$ is continuous except sets 
$\set{ h_n, 2h_n, \cdots, (n-1)h_n}\times M$;
\item[(iv)] 
$\abs{u(\tau,U)}:\, [0,T]\times M\to E$, $\norm{u(\tau,U)}_E: [0,T] \times M \to \R$ and 
$J(\whetan(\tau,U)):\, [0,T]\times M\to\R$ are continuous, that is,
for $i\in \{ 1,2,\cdots, n-1\}$ and $U\in M$,
    \begin{equation*}
        \begin{aligned} 
        &\abs{u(ih_n-0,U)(x)} = \abs{u(ih_n,U)(x)} \ \text{for all}\ x\in\R^N,\\
        &\norm{u(ih_n-0,U)}_E=\norm{u(ih_n,U)}_E, \quad
        J(\whetan(ih_n-0,U)) = J(\whetan(ih_n,U));
        \end{aligned}
    \end{equation*}
\item[(v)] $\tau \mapsto J(\whetan(\tau,U));\, [0,T]\to \R$ is non-increasing for 
$U \in M$;
\item[(vi)] for $U=(\theta,\lambda,u)\in M$, $\whetan(\tau,U)=U$ holds provided 
$J(U)\leq \ob-\rho$ and $u\geq 0$. Thus $\oGamma$ and $\oGamma_0$ are stable under 
$(\lambda,u)\mapsto \pi(\whetan(T,0,\lambda,u))$, that is, 
$(t,\lambda)\mapsto \pi(\whetan(T,0,\gamma(\lambda,u)))\in\oGamma_0$ 
for $\gamma\in \oGamma$ (resp. $\oGamma_0$). 
\end{itemize}

We have the following estimate for $\whetan$.

\begin{Lemma} \label{Lemma:8.3}
Suppose $U = (0,\lambda, u) \in \wscrC_0$. 
Then for any $L>0$ there exists $C_L>0$ independent of $n\in\N$ such that 
if $\norm u_E\leq L$, then 
$\whetan(\tau,U)=(\theta(\tau,U), \lambda(\tau,U), u(\tau,U))$ satisfies

\begin{enumerate}[label={\rm (\roman*)}]
\item $\abs{\theta(\tau,U)} \leq T$ for $\tau\in [0,T]$;
\item $\abs{\lambda(\tau,U)} \leq \abs{\lambda}+T$ for $\tau\in [0,T]$;
\item there exist $C_1$, $C_2>0$ such that
    \begin{equation*}
        C_1\norm\varphi_E \leq \norm\varphi_{E_{\theta(\tau,U)}} 
            \leq C_2\norm \varphi_E
        \quad \text{for} \ \varphi\in E;
    \end{equation*}
\item $\norm{u(\tau,U)}_E \leq C_L$, $\norm{{\frac{d}{d\tau}}u(\tau,U)}_E \leq C_L$ 
for $\tau\in [0,T]\setminus\set{ h_n,2h_n,\cdots, (n-1)h_n}$.
\end{enumerate}

\end{Lemma}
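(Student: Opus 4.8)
The plan is to dispatch (i)--(iii) using only the structure of the iterated flow, and to concentrate the real work on the uniform-in-$n$ bound (iv), whose proof is a piecewise Gronwall argument that must be arranged so that the constant does not blow up with the number of pieces.

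First I would recall that $\whetan(\cdot,U)$ is globally defined: on each subinterval $[(i-1)h_n,ih_n]$ it equals the flow $\weta(\cdot,U_{i-1})$ of \eqref{8.4}, whose global existence was established in the proof of \cref{Proposition:7.3}, and the reassignments $U_{i-1}\mapsto U_i=\ABS(\weta(h_n,U_{i-1}))$ only alter the datum at the finitely many times $h_n,\dots,(n-1)h_n$. Since $\ABS$ fixes the first two coordinates, the maps $\tau\mapsto\theta(\tau,U)$ and $\tau\mapsto\lambda(\tau,U)$ are absolutely continuous on all of $[0,T]$, with $\theta(0,U)=0$, $\lambda(0,U)=\lambda$, and, away from the jump times, $\abs{\tfrac{d}{d\tau}\theta}\le\abs{X_1}\le1$ and $\abs{\tfrac{d}{d\tau}\lambda}\le\abs{X_2}\le1$ by \eqref{8.2} (recall $0\le\varphi_1,\varphi_2\le1$), which is exactly \eqref{8.7}. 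Integrating over $[0,\tau]$ with $\tau\le T$ gives (i) $\abs{\theta(\tau,U)}\le T$ and (ii) $\abs{\lambda(\tau,U)}\le\abs{\lambda}+T$. Feeding $\abs{\theta(\tau,U)}\le T$ into \eqref{7.6} then produces (iii) with $C_1=e^{-NT/2}$ and $C_2=e^{NT/2}$, constants that depend only on $T$.

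For (iv), on a subinterval $[(i-1)h_n,ih_n)$ the third coordinate satisfies $\tfrac{d}{d\tau}u=-\varphi_1(\whetan)\varphi_2(J(\whetan))X_3(\whetan)$, so combining \eqref{8.3} with (iii) I obtain $\norm{\tfrac{d}{d\tau}u(\tau,U)}_E\le a+b\,\norm{u(\tau,U)}_E$ with $a=C_1^{-1}$, $b=C_1^{-1}C_2$ (the bound being trivial off the support of $\varphi_1\varphi_2$). An integral Gronwall estimate on a single piece, kept in the form $y(ih_n-0)\le e^{bh_n}\bigl(y((i-1)h_n)+ah_n\bigr)$ --- that is, with the forcing term left additive rather than absorbed into the exponential --- together with the identity $\norm{\,|v|\,}_E=\norm v_E$ (so that $\ABS$ does not change the $E$-norm of the $u$-coordinate), yields the recursion $y_i\le e^{bh_n}(y_{i-1}+ah_n)$ for $y_i:=\norm{(U_i)_3}_E$, with $y_0=\norm{\,|u|\,}_E=\norm u_E\le L$. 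Unrolling and using $e^{bh_n}-1\ge bh_n$ to bound the resulting geometric sum gives $y_i\le e^{bT}L+ab^{-1}e^{bT}(e^{bT}-1)$ for all $i\le n$, and repeating the one-piece estimate on intermediate $\tau$ gives $\norm{u(\tau,U)}_E\le e^{bT}\bigl(e^{bT}L+ab^{-1}e^{bT}(e^{bT}-1)+aT\bigr)=:C_L$ for every $\tau\in[0,T]$; the velocity bound then follows from $\norm{\tfrac{d}{d\tau}u(\tau,U)}_E\le a+bC_L$, after possibly enlarging $C_L$.

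The only genuinely delicate point is the uniformity in $n$ in (iv): if Gronwall is applied over a single piece in the careless form $y(ih_n-0)\le(y((i-1)h_n)+ab^{-1})e^{bh_n}$, then summing over the $n$ pieces produces a factor of order $(e^{bh_n}-1)^{-1}\sim n(bT)^{-1}$ and the constant diverges as $n\to\infty$. Keeping the constant forcing term additive at each step, so that over $[0,T]$ it telescopes to the finite quantity $ab^{-1}e^{bT}(e^{bT}-1)$, is precisely what makes $C_L$ independent of $n$; everything else is the routine continuation and continuity bookkeeping already carried out for $\weta$ in \cref{Section:7}.
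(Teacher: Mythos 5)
Your proof is correct, and for (i)--(iii) it is exactly the paper's argument: integrate \eqref{8.2} (i.e.\ \eqref{8.7}) for $\theta$ and $\lambda$, then feed $\abs{\theta(\tau,U)}\leq T$ into \eqref{7.6}. For (iv) you also start from the same differential inequality as the paper, namely \eqref{8.8}, obtained from \eqref{8.3} together with (iii); the difference is only in how the Gronwall step is organized. You run Gronwall on each subinterval $[(i-1)h_n,ih_n)$, use $\norm{\,\abs{v}\,}_E=\norm{v}_E$ at every jump to set up the discrete recursion $y_i\leq e^{bh_n}(y_{i-1}+ah_n)$, and unroll it, taking care that the forcing term telescopes so the constant stays bounded as $n\to\infty$. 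The paper reaches the same conclusion more directly: since $\Upsilon$ preserves the $E$-norm, the scalar function $\tau\mapsto\norm{u(\tau,U)}_E$ is continuous on all of $[0,T]$ (this is property (iv) of $\whetan$ recorded in \cref{Section:8.4}) and satisfies \eqref{8.8} a.e., so a single application of Gronwall over the whole interval $[0,T]$ gives $\max_{\tau\in[0,T]}\norm{u(\tau,U)}_E\leq C_L$ with the $n$-independence automatic, and the velocity bound then follows from \eqref{8.8} as in your last step. Your cautionary remark about the ``careless'' per-piece constant is accurate for the particular recursion you write down, but it is precisely the bookkeeping that the paper's one-shot continuous Gronwall makes unnecessary; both routes yield a constant depending only on $L$, $T$ and the constants in (iii).
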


\begin{proof}
Properties (i) and (ii) follow from \eqref{8.7}.  Property (iii) follows 
from \eqref{7.6} and (i).

(iv) As in \eqref{7.22}, we have in each interval $((i-1)h_n,ih_n)$ ($i=1,2,\cdots$)
    \begin{equation} \label{8.8}
        \pabs{{\frac{d}{d\tau}} \norm{u(\tau,U)}_E} 
            \leq \pnorm{ {\frac{d}{d\tau}}u(\tau,U) }_E
        \leq C_1^{-1}(1+C_2\norm{u(\tau,U)}_E).         
    \end{equation}
Since $\tau\mapsto \norm{u(\tau,U)}_E$ is continuous and \eqref{8.8} holds a.e.\!
 in $[0,T]$, we apply the Gronwall inequality and observe that for any $L>0$ 
there exists $C_L>0$ such that
    \begin{equation*}
        \max_{\tau\in [0,T]}\norm{u(\tau,U)}_E\leq C_L \quad 
        \text{for}\ \norm{U}_E\leq L.
    \end{equation*}
The boundedness of
$\norm{{\frac{d}{d\tau}}u(\tau,U)}_E$ also follows from \eqref{8.8}. 
\end{proof}

\begin{Lemma} \label{Lemma:8.4}
For any $L>0$ there exists $n_L\in\N$ such that for $n\geq n_L$ and 
$U=(0,\lambda,u)$ with $\norm u_E\leq L$
    \begin{equation*}
        \whetan(\tau,U)\in \wscrC_{\delta/2} \quad \text{for}\ \tau\in [0,T].
    \end{equation*}
\end{Lemma}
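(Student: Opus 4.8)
The plan is to exploit the structure of $\whetan$: at each time $\tau=ih_n$ the trajectory is reset by $\ABS$ to a point $U_i$ whose function component is non-negative, and on each subinterval $[(i-1)h_n,ih_n)$ it merely follows the flow $\weta$ of \eqref{8.4} for the short time $h_n=T/n$, with a velocity whose $E$-norm is bounded uniformly in $n$. Hence, on $[(i-1)h_n,ih_n)$, the function component cannot move more than a distance $O(1/n)$ in $E$ away from the non-negative function $u((i-1)h_n,U)\in\scrP_0$ it started from; for $n$ large this is below $\delta/2$, so the trajectory stays in $\wscrC_{\delta/2}$.

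First I would fix $L>0$ and $U=(0,\lambda,u)$ with $\norm u_E\le L$, and invoke \cref{Lemma:8.3}~(iv) to obtain $C_L>0$, \emph{independent of $n$}, with
\[
\pnorm{\tfrac{d}{d\tau}u(\tau,U)}_E\le C_L\qquad\text{for all }\tau\in[0,T]\setminus\{h_n,2h_n,\dots,(n-1)h_n\}.
\]
Since $U_0=\ABS(U)$ and $U_i=\ABS(\weta(h_n,U_{i-1}))$ all have non-negative function components, $u(ih_n,U)\in\scrP_0$ for $i=0,1,\dots,n$. Given $\tau\in[(i-1)h_n,ih_n)$, the curve $\sigma\mapsto u(\sigma,U)$ is $C^1$ into $E$ on the jump-free interval $[(i-1)h_n,ih_n)$, so integrating the bound above over $[(i-1)h_n,\tau]$ yields $\norm{u(\tau,U)-u((i-1)h_n,U)}_E\le C_Lh_n=C_LT/n$; the case $\tau=T$ is immediate since then the function component is $U_n$'s, which is non-negative.

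Next I would translate this $E$-estimate into the norm $\norm\cdot_{E_\theta}$ that enters the definition of $\wscrC_{\delta/2}$. By \cref{Lemma:8.3}~(i) we have $\abs{\theta(\tau,U)}\le T$, so \eqref{7.6} gives $\norm\varphi_{E_{\theta(\tau,U)}}\le e^{NT/2}\norm\varphi_E$ for all $\varphi\in E$, while the dilation $x\mapsto x/e^{\theta}$ preserves non-negativity and hence membership in $\scrP_0$. Combining these,
\[
\distE\bigl(u(\tau,U)(\cdot/e^{\theta(\tau,U)}),\,\scrP_0\bigr)\ \le\ \norm{u(\tau,U)-u((i-1)h_n,U)}_{E_{\theta(\tau,U)}}\ \le\ e^{NT/2}\,\frac{C_LT}{n}.
\]
Choosing $n_L\in\N$ with $e^{NT/2}C_LT/n_L<\delta/2$, it follows that for every $n\ge n_L$ and every $\tau\in[0,T]$ one has $\pi(\whetan(\tau,U))\in\scrC_{\delta/2}$, i.e. $\whetan(\tau,U)\in\wscrC_{\delta/2}$.

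The main obstacle is ensuring that $C_L$ is uniform in $n$. This is precisely where it matters that $\ABS$ leaves $\norm\cdot_E$ (indeed $\norm{\nabla\,\cdot\,}_2$) invariant: the $E$-norm of the function component does not increase across the reset times $ih_n$ no matter how fine the subdivision is, so the Gronwall estimate behind \cref{Lemma:8.3}~(iv) — built from \eqref{8.3} and \cref{Lemma:8.3}~(iii) on each open subinterval — produces a bound that does not deteriorate as $n\to\infty$. Everything else is the routine interval-by-interval computation above. (Together with property~(v) of $\whetan$ and \eqref{8.5}, this Lemma is what will force the iterated flow to decrease $J$ at rate at least $\nu$ while it stays in $\wscrC_{\delta/2}$, hence below $\ob-\rho/2$ over the time $T=\rho/\nu$.)
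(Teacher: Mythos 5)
Your proposal is correct and follows essentially the same route as the paper's proof: on each jump-free subinterval you integrate the uniform derivative bound from \cref{Lemma:8.3} (iv) to get an $E$-distance of order $T/n$ from the non-negative reset point $U_{i-1}$, then convert to the $\norm{\cdot}_{E_\theta}$ norm via $\abs{\theta}\le T$ and \eqref{7.6} (the paper packages this constant as $C_2$ in \cref{Lemma:8.3} (iii)), and choose $n_L$ accordingly. Your extra remark on why $C_L$ is uniform in $n$ (the $\ABS$ reset does not increase the $E$-norm, so the Gronwall bound of \cref{Lemma:8.3} (iv) is $n$-independent) is consistent with, and indeed the point of, the paper's Lemma 8.3.
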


    \begin{proof}
We write $\whetan(\tau,U)=(\theta(\tau), \lambda(\tau), u(\tau))$. 
For $\tau\in [(i-1)h_n,ih_n)$, we have
    \begin{equation*}
    \begin{aligned}
        \norm{u(\tau)-U_{i-1}}_{E_{\theta(\tau)}} 
        &\leq C_2 \norm{u(\tau)-u((i-1)h_n)}_E \leq C_2\int_{(i-1)
        h_n}^\tau \pnorm{{\frac{d}{ds}}u(s) }_E\, ds \\
        &\leq C_2 C_L(\tau-(i-1)
        h_n) \leq C_2 C_Lh_n= C_2 C_L{\frac{T}{n}}. 
    \end{aligned}
    \end{equation*}
Choosing $n_L\in\N$ with $C_2C_L {\frac{T}{n_L}}<\delta/2$ and 
noting $U_{i-1}\in\wscrC_0$, 
we have $\whetan(\tau,U)\in \wscrC_{\delta/2}$ for $\tau\in [(i-1)h_n,ih_h)$. 
Since $i\in\{ 1,2,\cdots, n\}$ is arbitrary, \cref{Lemma:8.4} holds.
\end{proof}

\begin{Corollary} \label{Corollary:8.5}
For $U=(0,\lambda,u)$ with $\norm u_E\leq L$ and $J(U)\leq \ob+{\frac{\rho}{2}}$,
it holds that for $n\geq n_L$
    \begin{equation*}
        J(\whetan(T,U)) \leq \ob-{\frac{\rho}{2}}.
    \end{equation*}
\end{Corollary}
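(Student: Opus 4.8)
The plan is to follow the scalar quantity $\tau \mapsto J(\whetan(\tau,U))$ along the iterated flow and show it drops by at least $\nu T = \rho$ on $[0,T]$. First I would note that, by construction, $\whetan(\cdot,U)$ is the iterated flow issued from $U_0 := \Upsilon(U) = (0,\lambda,\abs u) \in \wscrC_0$, and $\norm{\abs u}_E = \norm u_E \le L$. Hence \cref{Lemma:8.3} guarantees that $\whetan(\cdot,U)$ is well defined on $[0,T]$ with the bounds stated there, and \cref{Lemma:8.4} gives, for $n \ge n_L$, that $\whetan(\tau,U) \in \wscrC_{\delta/2}$ for all $\tau \in [0,T]$; in particular $\varphi_1(\whetan(\tau,U)) = 1$ throughout $[0,T]$. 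Next I would record that $\tau \mapsto J(\whetan(\tau,U))$ is continuous on $[0,T]$ — the possible jumps at $\tau = ih_n$ being harmless since $J\circ\Upsilon = J$ — and non-increasing (property (v) of $\whetan$), with $J(\whetan(0,U)) = J(\Upsilon(U)) = J(U) \le \ob + \frac{\rho}{2}$.

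Next I would distinguish two cases. If there is $\tau_* \in [0,T]$ with $J(\whetan(\tau_*,U)) \le \ob - \frac{\rho}{2}$, then monotonicity immediately gives $J(\whetan(T,U)) \le \ob - \frac{\rho}{2}$ and the claim holds. In the remaining case $J(\whetan(\tau,U)) > \ob - \frac{\rho}{2}$ for every $\tau \in [0,T]$; combined with $J(\whetan(\tau,U)) \le J(U) \le \ob + \frac{\rho}{2}$ this forces $J(\whetan(\tau,U)) \in \big(\ob - \frac{\rho}{2}, \ob + \frac{\rho}{2}\big]$, hence $\varphi_2(J(\whetan(\tau,U))) = 1$ as well.

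On each open subinterval $\big((i-1)h_n, ih_n\big)$ the curve solves the ODE \eqref{8.4} with $\varphi_1 = \varphi_2 = 1$ along it, so \eqref{8.1} yields $\frac{d}{d\tau}J(\whetan(\tau,U)) = -DJ(\whetan)X(\whetan) \le -\nu$. Since $\tau \mapsto J(\whetan(\tau,U))$ is continuous on $[0,T]$ and differentiable off the finite set $\{h_n,\dots,(n-1)h_n\}$ with derivative $\le -\nu$ there, integrating over $[0,T]$ gives $J(\whetan(T,U)) \le J(U) - \nu T = J(U) - \rho \le \ob - \frac{\rho}{2}$, contradicting the hypothesis of this second case. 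Hence only the first case can occur, and the corollary follows. The only delicate point is the bookkeeping at the jump times $ih_n$: one must use $J\circ\Upsilon = J$ to see that $J(\whetan(\cdot,U))$ does not increase when passing through $\Upsilon$, so that the interval-by-interval differential inequality assembles into the global decay estimate $J(\whetan(T,U)) \le J(U) - \nu T$. Everything else reduces to \cref{Lemma:8.3}, \cref{Lemma:8.4} and the defining relation $T = \rho/\nu$.
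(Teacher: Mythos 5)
Your proof is correct and follows essentially the same route as the paper: Lemma \ref{Lemma:8.4} keeps the iterated flow in $\wscrC_{\delta/2}$, monotonicity of $J\circ\whetan$ handles the easy case, and in the remaining case the estimate \eqref{8.5} applied on each subinterval $[(i-1)h_n,ih_n]$ (with $J\circ\Upsilon=J$ at the jump times) gives a total decay of $\nu T=\rho$, contradicting $J>\ob-\frac{\rho}{2}$. Your two-case formulation is just a repackaging of the paper's indirect argument, and the extra remark that $\whetan$ really starts from $\Upsilon(U)$ with $\norm{\,\abs{u}\,}_E=\norm{u}_E\leq L$ is a harmless (and correct) clarification.
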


\begin{proof}
By \cref{Lemma:8.4}, $\whetan(\tau,U) \in \wscrC_{\delta/2}$ holds 
for $n\geq n_L$ and $\tau \in [0,T]$.
We argue indirectly and suppose that $J(\whetan(T,U))> \ob-{\frac{\rho}{2}}$. 
Then by monotonicity of $J \circ \weta_n$, we have 
$\ob-{\frac{\rho}{2}}< J(\whetan(\tau,U)) \leq \ob+{\frac{\rho}{2}}$ 
for all $\tau\in [0,T]$. \
Applying \eqref{8.5} in intervals $[0,h_n]$, $[h_n, 2h_n]$, $\cdots$, $[(n-1)h_n,T]$, 
we deduce
    \begin{equation*}
        J(\whetan(ih_n,U)) \leq J(\whetan((i-1)h_n, U) -\nu h_n 
            \quad \text{for}\ i=1,2,\cdots, n.
    \end{equation*}
Thus, noting $nh_n= T={\frac{\rho}{\nu}}$, we observe
    \begin{equation*}
        J(\whetan(T,U)) = J(\whetan(nh,U)) \leq J(\whetan(0, U)) -n \nu h_n 
        \leq \ob+{\frac{\rho}{2}} -\rho = \ob -{\frac{\rho}{2}}.
    \end{equation*}
This is a contradiction. 
\end{proof}

\subsection{Proof of \cref{Theorem:1.1}: Case 2 ($\ob>0$)}
 \label{Section:8.5}
We are now ready to find the desired deformation of $\gamma_*\in \oGamma$ and 
get a contradiction with \ref{(N)}. 

We may assume $\ob-\rho>0$ and suppose that $\gamma=(\gamma_1,\gamma_2)\in\oGamma_0$
satisfies
    \begin{equation*}
        \sup_{(t,\lambda)\in [0,1]\times\R} I(\gamma(t,\lambda)) 
        \leq \ob+{\frac{\rho}{2}}.
    \end{equation*}
By the definition of $\oGamma_{0}$ there exists $R>0$ such that if 
$\abs\lambda\geq R$ or $t\sim 0, 1$, then 
    \begin{equation*}
        I(\gamma(t,\lambda)) \leq \ob-\rho \quad 
        \text{and}\quad \gamma_2(t,\lambda)\geq 0.
    \end{equation*}
We set
    \begin{equation*}
        L=\max_{t\in [0,1],\,\abs\lambda\leq R} \norm{\gamma_2(t,\lambda)}_E. 
    \end{equation*}
We choose $n_L\in\N$ by \cref{Lemma:8.4} and define 
$\gamma_*:\, [0,1]\times\R\to M$ by
    \begin{equation}\label{8.9}
        \gamma_*(t,\lambda)
        =\pi(\wheta_{n_L}(T,0,\gamma_1(t,\lambda),\gamma_2(t,\lambda)))
    \end{equation}
Noting 
    \begin{equation*}
        \wheta_{n_L}(T,0,\gamma_1(t,\lambda),\gamma_2(t,\lambda)) 
            = (0,\gamma_1(t,\lambda),\gamma_2(t,\lambda))
        \quad \text{if}\ \abs\lambda\geq R\ \text{or}\ t\sim 0,1,
    \end{equation*}
we have by \cref{Corollary:8.5}
    \begin{align*}
        &\gamma_*\in\oGamma_0, \\
        &I(\gamma_*(t,\lambda))
            =J(\wheta_{n_L}(T,0,\gamma_1(t,\lambda),\gamma_1(t,\lambda)))
            \leq \ob-{\frac{\rho}{2}} \quad 
            \text{for all}\ (t,\lambda)\in [0,1]\times\R.
    \end{align*}
This contradicts
the definition of $\ob$. Thus \ref{(N)} cannot hold and $I$ has a critical point 
$(\lambda,u)$ with $I(\lambda,u) = \ob$ and $u\geq 0$. 
Since $u$ is nontrivial, the strong maximum principle yields $u>0$ in $\R^N$. 
\qed

\begin{Remark} \label{Remark:8.6}
From the above arguments in Sections \ref{Section:7}--\ref{Section:8}, 
if $\underline{b} < 0 < \overline{b}$, then 
both of $\ub$ and $\ob$ are critical values of $I$ and hence 
there are at least two positive solutions of \eqref{1.1} with $m=m_1$.  
\end{Remark}

\begin{Remark}\label{Remark:8.7}
For a given $\gamma\in\oGamma_0$, the path $\gamma_*$ defined in \eqref{8.9} is 
a continuous deformation of $\gamma$.  That is, there is a continuous map 
$\widetilde\gamma(\tau,t,\lambda):\, [0,1]\times M\to M$ such that
    \begin{align*}
        &\widetilde\gamma(0,t,\lambda)=\gamma(t,\lambda), \quad 
         \widetilde\gamma(1,t,\lambda)=\gamma_*(t,\lambda) \quad 
            \text{for}\ (t,\lambda)\in [0,1]\times \R, \\
        &\widetilde\gamma(\tau,t,\lambda)=\gamma_0(t,\lambda) \quad 
            \text{for $t\sim 0$, $t\sim 1$ or $\abs{\lambda}\gg 1$ 
            and for all $\tau\in [0,1]$}.
    \end{align*}
Recall that 
$\gamma_*(t,\lambda)=(\pi\circ S_{n_L}\circ S_{n_L}\circ\cdots 
\circ S_{n_L})(0,\gamma(t,\lambda))$ 
($n_L$-times), $S_{n_L}=\Upsilon\circ\weta(h_{n_L}, \cdot)$.  Thus, 
to show $\gamma_*$ is a continuous deformation of $\gamma$, it suffices to see that 
$\pi(0,\lambda,u)=(\lambda,u)$ and 
$\weta(h_{n_L},\cdot):\, M\to M$, $\Upsilon:\, M\to M$
are homotopic to $id:\, M\to M; (\theta,\lambda,u)\mapsto (\theta,\lambda,u)$.
Continuous homotopies between these maps are given by
    \begin{align*}
        &\gamma_1:\, (\tau,\theta,\lambda,u)\mapsto \weta(\tau h_{n_L},\theta,\lambda,u);
            &&[0,1]\times M\to M, \\
        &\gamma_2:\, (\tau,\theta,\lambda,u)\mapsto (\theta,\lambda, (1-\tau) u + \tau\abs u);
            &&[0,1]\times M\to M.
     \end{align*}
We note that $\gamma_2$ satisfies $J(\gamma_2(1,\theta,\lambda,u))=J(\gamma_2(0,\theta,\lambda,u))
=J(0,\theta,\lambda,u)$ for $(\theta,\lambda,u)\in M$ but we do not have in general
    \[  J(\gamma_2(\tau,\theta,\lambda,u))\leq J(\gamma_2(0,\theta,\lambda,u))=J(0,\theta,\lambda,u)
        \quad \text{for}\ \tau\in (0,1)\ \text{and}\ (\theta,\lambda,u)\in M.
    \]

\end{Remark}

\section{Non-existence results} \label{Section:9}
In previous sections we mainly study the existence of positive solutions of 
\eqref{1.1} under the condition \ref{(g1)}, in particular we assume that $h$ is 
sublinear as $s\sim\pm\infty$.
In this section we show the sublinear condition cannot be removed for the existence 
and give a proof to \cref{Theorem:1.7}.  Here the function
    \begin{equation*}
        \rho(s)={\frac{H(s)}{s^2/2}}\in C(\R,\R)\cap C^1(\R\setminus\{ 0\},\R)
    \end{equation*}
plays a role.  We consider under the situation where
    \begin{equation*}
        \lim_{s\to 0} {\frac{\rho(s)}{\abs s^{p-1}}}\to 0 \quad 
        \text{and} \quad \rho(s)<0 \ \text{for all}\ s\in (0,\infty).
    \end{equation*}
Under condition \ref{(rho1)}, we note that 
    \begin{equation*}
            \rho(s)<0 \quad \text{for all}\ s \in (0,\infty)
    \end{equation*}
and by \cref{Remark:1.8} (i)
    \begin{equation} \label{9.1}
        \intRN H(u)-\half h(u)u\, dx >0 \quad 
        \text{for all $u\in E\setminus \set{0}$ with $u \geq 0$ in $\R^N$}.
    \end{equation}

\begin{proof}[Proof of \cref{Theorem:1.7}]
We argue by contradiction and suppose that
$(\lambda,u)\in\RE$ is a positive solution of \eqref{1.1} with $m=m_1$ 
under \eqref{1.18} and \ref{(rho1)}. 
We use an argument similar to Soave \cite{So20}.

Since $(\lambda,u)$ is a solution of \eqref{1.1}, we have
    \begin{equation} \label{9.2}
        \norm{\nabla u}_2^2 +\mu\norm u_2^2 -\intRN g(u)u\, dx=0. 
    \end{equation}
We also have the Pohozaev identity: 
    \begin{equation} \label{9.3}
        {\frac{N-2}{2}}\norm{\nabla u}_2^2 
        +N\left\{{\frac{\mu}{2}}\norm u_2^2 -\intRN G(u)\, dx\right\}=0.
    \end{equation}
Computing ${\frac{N}{4}}\times\eqref{9.2} - \half\times\eqref{9.3}$, we see 
    \begin{equation*}
        \half\norm{\nabla u}_2^2-{\frac{1}{p+1}}\norm u_{p+1}^{p+1}
        +{\frac{N}{2}}\intRN H(u)-\half h(u)u\, dx=0.
    \end{equation*}
Since $\norm u_2^2=2m_1$, \cref{Corollary:2.2} yields 
    \begin{equation*}
        \half\norm{\nabla u}_2^2-{\frac{1}{p+1}}\norm u_{p+1}^{p+1}\geq 0.
    \end{equation*}
Thus by \eqref{9.1}, we have $u=0$, which is a contradiction. 
\end{proof}

\appendix

\section{Appendix: technical lemmas}\label{Appendix:A}

\subsection{Estimates for $\displaystyle \intRN \pabs{H(\mu^{N/4}u)}\, dx$ and 
$\norm{h( \mu^{N/4} u )}_2$} \label{Appendix:A.1}

The following lemma is used repeatedly in this paper:

\begin{Lemma} \label{Lemma:A.1}
Assume
    \begin{equation} \label{A.1}
        \lim_{s\to 0}{\frac{h(s)}{s}}=0, \quad
        \lim_{\abs{s} \to \infty}{\frac{h(s)}{s}}=0
    \end{equation}
and suppose that $(\mu_j)_{j=1}^\infty\subset(0,\infty)$ satisfies $\mu_j\to 0$ or 
$\mu_j\to\infty$.  Then for any compact set $K\subset L^{2}(\R^N)$,
    \begin{equation*}
        \begin{aligned}
        &\max_{u\in K}\mu_j^{-N/2}\intRN \pabs{H(\mu_j^{N/4}u)} dx\to 0, 
        \quad \mu_j^{-N/4}  \norm{h(\mu_j^{N/4}u)}_2 \to 0
        \quad \text{as}\ j\to\infty.
        \end{aligned}
    \end{equation*}
In particular, for a strongly convergent sequence $(u_j)_{j=1}^\infty
\subset L^2(\R^N)$, 
    \begin{equation}\label{A.2}
        \mu_j^{-N/2} \intRN \pabs{H(\mu_j^{N/4}u_j)}\, dx, \quad
        \mu_j^{-N/4} \norm{h(\mu_j^{N/4}u_j)}_2 \to 0 \quad
        \text{as}\ j\to\infty.          
    \end{equation}
\end{Lemma}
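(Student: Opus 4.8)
The plan is to prove the two estimates simultaneously by reducing everything to a single elementary lemma: for $h$ satisfying \eqref{A.1}, for every $\epsilon>0$ there is a constant $C_\epsilon>0$ such that $|h(s)|\le \epsilon|s| + C_\epsilon\chi_{[\delta_\epsilon,M_\epsilon]}(|s|)\cdot|s|$ — more precisely, since $h(s)/s\to 0$ both at $0$ and at $\infty$, for every $\epsilon>0$ there exist $0<\delta<M<\infty$ with $|h(s)|\le\epsilon|s|$ whenever $|s|\le\delta$ or $|s|\ge M$. Integrating, $|H(s)|\le\tfrac\epsilon2 s^2$ on the same set, while on the ``middle band'' $\delta\le|s|\le M$ the continuity of $h$ gives a uniform bound $|h(s)|\le B_\epsilon$ and $|H(s)|\le B_\epsilon M$.

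First I would handle the $H$-estimate. Fix $\epsilon>0$ and choose $\delta,M,B_\epsilon$ as above. For $u\in K$ write, using the scaling $s=\mu_j^{N/4}u(x)$,
    \begin{equation*}
        \mu_j^{-N/2}\intRN |H(\mu_j^{N/4}u)|\,dx
        = \mu_j^{-N/2}\int_{A_j^-\cup A_j^+} |H(\mu_j^{N/4}u)|\,dx
          + \mu_j^{-N/2}\int_{A_j^0} |H(\mu_j^{N/4}u)|\,dx,
    \end{equation*}
where $A_j^- =\{\mu_j^{N/4}|u|\le\delta\}$, $A_j^+=\{\mu_j^{N/4}|u|\ge M\}$ and $A_j^0$ is the complement. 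On $A_j^-\cup A_j^+$ the pointwise bound $|H(\mu_j^{N/4}u)|\le\tfrac\epsilon2\mu_j^{N/2}u^2$ gives a contribution $\le\tfrac\epsilon2\|u\|_2^2\le\tfrac\epsilon2\sup_{u\in K}\|u\|_2^2$, which is small since $K$ is bounded in $L^2$. On $A_j^0$ we have $|H(\mu_j^{N/4}u)|\le B_\epsilon M$ pointwise and also $\mu_j^{N/4}|u|\ge\delta$, i.e. $1\le \delta^{-2}\mu_j^{N/2}u^2$, hence $\meas(A_j^0)\le\delta^{-2}\mu_j^{N/2}\|u\|_2^2$, so the second term is $\le B_\epsilon M\,\delta^{-2}\|u\|_2^2$ — but this is not yet going to zero. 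The fix is the standard compactness argument: since $K$ is compact in $L^2(\R^N)$ it is uniformly integrable and tight, so $\meas(A_j^0)\to 0$ uniformly in $u\in K$ whenever $\mu_j^{N/4}\to 0$ (the band $A_j^0$ is pushed to where $|u|$ is large, which has small measure uniformly over $K$), while when $\mu_j^{N/4}\to\infty$ the band is pushed to where $|u|$ is small together with a truncation; in either regime one combines tightness at infinity with the fact that $\{|u|\ge c\}$ has measure $\le c^{-2}\|u\|_2^2$. Covering $K$ by finitely many $L^2$-balls of radius $\epsilon$ and using $\max_{u\in K}=\max$ over the finite net plus an $O(\epsilon)$ error makes the ``uniformly in $u$'' rigorous. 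Since $\epsilon>0$ is arbitrary, the first limit follows; taking $K=\overline{\{u_j\}}$ (compact since $(u_j)$ converges) yields \eqref{A.2}.

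The $h$-estimate is entirely analogous: split $\mu_j^{-N/2}\int |h(\mu_j^{N/4}u)|^2\,dx$ (note $\mu_j^{-N/4}\|h(\mu_j^{N/4}u)\|_2 = (\mu_j^{-N/2}\int|h(\mu_j^{N/4}u)|^2\,dx)^{1/2}$ after pulling out the scaling) over the three sets; on $A_j^-\cup A_j^+$ use $|h(\mu_j^{N/4}u)|^2\le\epsilon^2\mu_j^{N/2}u^2$ to get $\le\epsilon^2\|u\|_2^2$, and on $A_j^0$ use the uniform bound $|h|\le B_\epsilon$ together with $\meas(A_j^0)\to 0$ exactly as before. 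I expect the main obstacle to be the bookkeeping of the ``middle band'' term — making precise, via tightness/uniform integrability of the compact set $K$ and a finite-net reduction, that $\meas(\{x:\delta\le\mu_j^{N/4}|u(x)|\le M\})\to 0$ uniformly over $u\in K$ in both scaling regimes $\mu_j\to0$ and $\mu_j\to\infty$; everything else is the elementary splitting of $h$ near $0$ and $\infty$. A clean alternative that sidesteps the measure estimate is to invoke dominated convergence directly: $\mu_j^{-N/2}|H(\mu_j^{N/4}u(x))| = \tfrac12 u(x)^2\,\rho_j(x)$ with $\rho_j(x):=H(\mu_j^{N/4}u(x))/(\tfrac12\mu_j^{N/2}u(x)^2)\to 0$ pointwise a.e. and $|\rho_j|$ bounded by a constant depending only on $\sup|H(s)|/s^2<\infty$; then convergence to $0$ holds for each fixed $u$, and uniformity over $K$ comes again from the finite $\epsilon$-net. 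I would present the proof in this dominated-convergence form, as it is the shortest.
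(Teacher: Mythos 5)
Your final, dominated-convergence version is essentially the paper's proof: the paper likewise uses the global bound $\pabs{H(s)}\leq C s^2$ (equivalently $\abs{h(s)}\leq A\abs s$ --- note both you and the paper implicitly use the continuity of $h$ here, not just \eqref{A.1}) together with the pointwise vanishing of $H(s)/s^2$ as $s\to 0$ and $\abs s\to\infty$, and concludes by dominated convergence. The genuine difference is how uniformity over $K$ is obtained. The paper does not use an $\epsilon$-net: it picks maximizers $u_j\in K$, extracts an $L^2$-convergent subsequence, hence an a.e.\ convergent one dominated by a single $W\in L^2(\R^N)$, and applies dominated convergence to that sequence, splitting the a.e.\ limit points according to $u_0(x)=0$ or $u_0(x)\neq 0$. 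This subsequence trick requires no equicontinuity at all. Your net argument does: for the $H$-functional the needed uniform-in-$j$ estimate is easy and should be stated, namely $\pabs{H(a)-H(b)}\leq A\max(\abs a,\abs b)\abs{a-b}$ gives $\pabs{\Phi_j(u)-\Phi_j(v)}\leq A(\norm u_2+\norm v_2)\norm{u-v}_2$ with $\Phi_j(u)=\mu_j^{-N/2}\intRN\pabs{H(\mu_j^{N/4}u)}\,dx$; but for the $h$-term, where $h$ is only continuous and not Lipschitz, the analogous equicontinuity of $u\mapsto\mu_j^{-N/4}\norm{h(\mu_j^{N/4}u)}_2$ is not automatic, so for that statement it is really your splitting argument (or the paper's subsequence trick) that closes the case.

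One concrete imprecision in the splitting version: the middle band is not saved by $\meas(A_j^0)\to 0$. When $\mu_j\to\infty$ the band $A_j^0=\{\delta\leq\mu_j^{N/4}\abs u\leq M\}$ sits where $\abs u$ is small and its measure need not tend to zero; when $\mu_j\to 0$ the measure does tend to zero but is multiplied by $\mu_j^{-N/2}B_\epsilon M\to\infty$, so (as you noticed) the bare bound only yields boundedness. The quantity that works is $\int_{A_j^0}u^2\,dx$: using $\pabs{H(s)}\leq Cs^2$ and $\abs{h(s)}\leq A\abs s$ on the band (or, equivalently, Chebyshev restricted to $\{\abs u\geq\delta\mu_j^{-N/4}\}$), the band contribution is bounded by $C\int_{A_j^0}u^2\,dx$, and since $A_j^0\subset\{\abs u\geq\delta\mu_j^{-N/4}\}$ when $\mu_j\to 0$ and $A_j^0\subset\{0<\abs u\leq M\mu_j^{-N/4}\}$ when $\mu_j\to\infty$, this integral tends to $0$, uniformly on $K$ by exactly the finite-net comparison you sketch. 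With that correction both of your routes are sound; the paper's maximizer-plus-dominated-subsequence argument is simply the shorter way to get the uniformity.
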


\begin{proof}
We only deal with the case $\mu_j\to\infty$ and we show the first statement\\
$\max_{u\in K}\mu_j^{-N/2}\intRN \pabs{H(\mu_j^{N/4}u)} dx\to 0$.
We can show other statements in a similar way. In particular, to show \eqref{A.2},
consider a compact set $K=\overline{\set{u_j | j\in\N}}$.

Choose $u_j\in K$ such that $\intRN \pabs{H(\mu_j^{N/4}u_j)} dx
=\max_{u\in K}\intRN \pabs{H(\mu_j^{N/4}u)} dx$.
Since $K$ is compact in $L^2(\R^N)$, after extracting a subsequence, 
we may assume that $u_j(x)\to u_0(x)$ a.e. in $\R^N$ and for some 
$W(x)\in L^2(\R^N)$
    \begin{equation}\label{A.3}
        \abs{u_j(x)} \leq W(x) \quad \text{a.e. in}\ \R^N.
    \end{equation}
By \eqref{A.1}, there exists a constant $C>0$, independent of $s$, such that
$\abs{H(s)} \leq C\abs s^2$ for all $s\in\R$. Thus we have
    \begin{equation} \label{A.4}
        \mu_j^{-N/2} \pabs{H ( \mu_j^{N/4} u_j (x) ) } 
        \leq C \abs{u_j(x)}^2  \quad \text{a.e. in $\R^N$}.
    \end{equation}
Thus we have by \eqref{A.3} and \eqref{A.4}
    \begin{equation} \label{A.5}
        \mu_j^{-N/2} \pabs{ H ( \mu_j^{N/4} u_j (x) ) }
        \leq W(x)^2 \quad \text{a.e. in $\R^N$}.
    \end{equation}
In addition, we have
    \begin{equation} \label{A.6}
        \mu_j^{-N/2} \pabs{H(\mu_j^{N/4} u_j(x))} \to 0 \quad \text{a.e. in}\ \R^N
        \ \text{as}\ j\to\infty.    
    \end{equation}
To show \eqref{A.6}, for $x\in\R^N$ we consider two cases $u_0(x)=0$ and $u_0(x)\not=0$
separately.  If $x\in\R^N$ satisfies $u_0(x)=0$, then $u_j(x)\to u_0(x)=0$.  
Thus \eqref{A.4} implies \eqref{A.6}.
On the other hand, if $u_0(x)\not=0$, then $u_j(x)\not=0$
for large $j$ and $\mu_j^{N/4} \abs{u_j(x)}\to \infty$ as $j\to\infty$.
Thus by \eqref{A.1}, 
    \begin{equation*}
        \mu_j^{-N/2} \pabs{ H ( \mu_j^{N/4} u_j (x) ) }
        = \frac{ \pabs{ H ( \mu_j^{N/4} u_j (x) ) } }{(\mu_j^{N/4}\abs{u_j(x)} )^2}
      \abs{u_j(x)}^2 \to 0
    \end{equation*}
and hence \eqref{A.6} holds. 
Together with \eqref{A.5}, the dominated convergence theorem yields the 
desired statement.
\end{proof}

\subsection{Optimal paths for nonlinear scalar equations} \label{Appendix:A.2}
We assume that $f\in C(\R,\R)$ satisfies
\begin{enumerate}[label={\rm (f\arabic*)}]
    \setcounter{enumi}{-1}
\item \label{(f0)}
$f(0)=0$;
\item \label{(f1)}
$f$ has subcritical growth at $s=\pm\infty$, that is,
\begin{itemize}
\item[(1)] for $N\geq 3$,
     $\displaystyle   \lim_{s\to\pm\infty} {\frac{f(s)}{\abs s^{2^*-1}}} = 0$;
\item[(2)] for $N=2$
     $\displaystyle   \lim_{s\to\pm \infty}{\frac{f(s)}{e^{\alpha s^2}}} = 0 \quad
        \text{for any}\ \alpha>0$;
\end{itemize}
\item \label{(f2)}
    $\displaystyle    \lim_{s\to 0}{\frac{f(s)}{s}}<0$;
\item \label{(f3)}
there exists $s_0>0$ such that $F(s_0) > 0$ where 
$\displaystyle F(s) =\int_0^s f(\tau)\,d\tau$. 
\end{enumerate}

A closely related situation is studied in \cite{BeL,BeGaKa83,HIT}. We set 
    \begin{equation*}
        L(u) = \half\norm{\nabla u}_2^2 -\intRN F(u)\,dx:\, E\to \R.
    \end{equation*}
In \cite{JT0}, for a non-zero critical point $u_0\in E$ of $L$, 
a path $\gamma_0 \in C([0,1] , E ) $ is constructed with
    \begin{align}   
        &\gamma_0(0)=0, \quad L(\gamma_0(1))<0,     \label{A.7}\\
        &u_0 \in \gamma_0([0,1]),                   \label{A.8}\\
        &\max_{t\in [0,1]} L(\gamma_0(t)) =L(u_0).  \label{A.9}
    \end{align}
The following proposition gives more detailed information on the path, which is
a key of our argument in \cref{Section:4.2} and \cref{Section:4.4}.

\begin{Proposition} \label{Proposition:A.2}
Assume $N\geq 2$ and {\rm \ref{(f0)}--\ref{(f3)}}. Suppose that 
$u_0\in E\setminus\{ 0\}$ is a critical point of $L$.
Then there exists a path $\gamma_0 \in C([0,1],E)$ with properties
\eqref{A.7}--\eqref{A.9}.
Moreover,
\begin{enumerate}
\item[\rm (i)] when $N\geq 3$, for a large $T>1$ the path $\gamma$ is of the form 
    \[  \gamma(t)(x)=\begin{cases}
            u_0({\frac{x}{Tt}}) &\text{for}\ t\in (0,1],\\
            0        &\text{for}\ t=0.
        \end{cases}
    \]
Setting $t_0= 1/T\in (0,1)$, the path $\gamma$ satisfies 
$\gamma(t_0)=u_0$ and $t\mapsto L(\gamma(t))$ is strictly increasing 
in $[0,t_0)$ and strictly decreasing in $(t_0,1]$; 
\item[\rm (ii)] when $N=2$, setting $u_{0\theta}(x)=u_0(x/\theta)$ and for some small 
$\theta_0\in (0,1)$, large $\theta_1\in (1,\infty)$ and 
some $t_1>1$ close to $1$, joining the following three paths, we obtain the desired
path $\gamma_0:\,[0,1]\to E$ after re-parametrization:
	\[	{\rm (a)}\ [0,1]\to E;\, t\mapsto tu_{0\theta_0}, \quad 
		{\rm (b)}\ [\theta_0,\theta_1]\to E;\, \theta\mapsto u_{0\theta}, \quad
		{\rm (c)}\ [1,t_1]\to E;\, t\mapsto tu_{0\theta_1},
	\]
\noindent
here the path in {\rm (a)--(c)} satisfies for some constant $C>0$
    \begin{align}
    &{\frac{d}{dt}}L(tu_{0\theta_0})\geq Ct \quad  \text{for}\  t\in (0,1], 
                                    \label{A.10}\\
    &L(u_{0\theta})\equiv L(u_0)\quad \text{for} \ \theta\in [\theta_0,\theta_1], 
                                    \label{A.11}\\
    &{\frac{d}{dt}}L(tu_{0\theta_0})\leq -C \quad \text{for}\  t\in [1,t_1],
                                    \label{A.12}\\
    &L(t_1u_{0\theta_1})<0.     \label{A.13}
    \end{align}
\end{enumerate}
\end{Proposition}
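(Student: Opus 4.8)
The statement (Proposition A.2) asserts that for a nonzero critical point $u_0$ of the zero-mass functional $L(u)=\frac12\|\nabla u\|_2^2-\int_{\R^N}F(u)\,dx$, one can build an explicit optimal mountain pass path through $u_0$. The proof splits on dimension, and the key device is the Pohozaev identity for $u_0$, which under \ref{(f0)}--\ref{(f3)} reads $\frac{N-2}{2}\|\nabla u_0\|_2^2=N\int_{\R^N}F(u_0)\,dx$. First I would record this identity and note that it forces $\int_{\R^N}F(u_0)\,dx>0$ when $N\geq3$ (so $\|\nabla u_0\|_2^2>0$ automatically), while for $N=2$ it gives $\int_{\R^2}F(u_0)\,dx=0$. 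This sign information is what makes the two constructions go through.

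\textbf{Case $N\geq3$.} I would use the dilation $u_0(x/s)$ and compute
\[
L(u_0(\cdot/s))=\tfrac{s^{N-2}}{2}\|\nabla u_0\|_2^2-s^N\!\!\int_{\R^N}F(u_0)\,dx=:\phi(s).
\]
Differentiating, $\phi'(s)=\tfrac{N-2}{2}s^{N-3}\|\nabla u_0\|_2^2-Ns^{N-1}\int F(u_0)\,dx$, which by the Pohozaev identity vanishes exactly at $s=1$, is positive for $s\in(0,1)$ and negative for $s>1$; moreover $\phi(s)\to-\infty$ as $s\to\infty$. Hence for $T>1$ large, $\phi(T)=L(u_0(\cdot/T))<0$. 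Then the reparametrized path $\gamma(t)(x)=u_0(x/(Tt))$ for $t\in(0,1]$ and $\gamma(0)=0$ lies in $C([0,1],E)$ (continuity at $t=0$ follows since $\|\nabla\gamma(t)\|_2^2=(Tt)^{N-2}\|\nabla u_0\|_2^2\to0$ and $\|\gamma(t)\|_2^2=(Tt)^N\|u_0\|_2^2\to0$), satisfies \eqref{A.7}, passes through $u_0$ at $t_0=1/T$, and $t\mapsto L(\gamma(t))=\phi(Tt)$ has the claimed strict monotonicity, with maximum $L(u_0)$ at $t_0$. This gives \eqref{A.8}--\eqref{A.9} and the extra assertions of (i).

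\textbf{Case $N=2$.} Here dilations alone preserve the level $L$, since $L(u_{0\theta})=\int_{\R^2}F(u_0)\,dx=0=L(u_0)$ for all $\theta>0$ by the $N=2$ Pohozaev identity; this is precisely \eqref{A.11}. So I would build the path in three segments. For segment (b) take $\theta\mapsto u_{0\theta}$, $\theta\in[\theta_0,\theta_1]$, flat at level $L(u_0)$. For the initial segment (a), $t\mapsto tu_{0\theta_0}$: compute $L(tu_{0\theta_0})=\tfrac{t^2}{2}\|\nabla u_0\|_2^2-\int_{\R^2}F(tu_{0\theta_0})\,dx$; using \ref{(f2)} (so $F(s)\le -cs^2$ near $0$) and choosing $\theta_0$ small so that $u_{0\theta_0}$ is uniformly small in sup-norm, one gets $\frac{d}{dt}L(tu_{0\theta_0})\geq Ct$ for $t\in(0,1]$, which is \eqref{A.10}; this also makes $t\mapsto tu_{0\theta_0}$ a valid continuous path from $0$ (at $t=0$) up to $u_{0\theta_0}$ (at $t=1$). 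For segment (c), $t\mapsto tu_{0\theta_1}$: I would choose $\theta_1$ large so that $\int_{\R^2}F(u_{0\theta_1}s/1)\cdots$ — more precisely, the scaling spreads the mass, and since $\int F(u_0)\,dx=0$ with $F$ somewhere positive (by \ref{(f3)}), one arranges $\frac{d}{dt}L(tu_{0\theta_1})\leq -C$ on $[1,t_1]$ and $L(t_1u_{0\theta_1})<0$ for $t_1>1$ close to $1$, giving \eqref{A.12}--\eqref{A.13} and \eqref{A.7}. Concatenating (a)$\to$(b)$\to$(c) and reparametrizing to $[0,1]$ yields $\gamma_0$ with $\max_{[0,1]}L(\gamma_0)=L(u_0)$, attained exactly on the dilation arc $\{u_{0\theta}:\theta\in[\theta_0,\theta_1]\}$.

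\textbf{Main obstacle.} The delicate point is the $N=2$ gluing: one must choose $\theta_0$ small and $\theta_1$ large \emph{consistently} so that the quantitative estimates \eqref{A.10} and \eqref{A.12}--\eqref{A.13} hold simultaneously, which requires careful use of \ref{(f1)}(2) (the subcritical Trudinger–Moser-type growth) to control $\int_{\R^2}F(tu_{0\theta})\,dx$ uniformly, together with the decay of $u_0$ at infinity. The rest of the argument is a routine consequence of the Pohozaev identity and elementary one-variable calculus on $\phi$. I would cite \cite{JT0,HIT} for the existence of a path with \eqref{A.7}--\eqref{A.9} and for the explicit constructions, and only spell out the monotonicity and level computations needed for the refined statements (i) and (ii).
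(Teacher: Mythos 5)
The paper does not actually prove \cref{Proposition:A.2}; it defers to \cite{JT0}, and your construction is the same one used there: the dilation path $t\mapsto u_0(\cdot/(Tt))$ for $N\geq 3$, and the three-piece path (stretch--dilate--stretch) for $N=2$, both driven by the Pohozaev identity. Your $N\geq 3$ argument is complete and correct: Pohozaev forces $\int_{\R^N}F(u_0)\,dx>0$, and $\phi'(s)=N\bigl(\int_{\R^N}F(u_0)\,dx\bigr)s^{N-3}(1-s^2)$ gives exactly the claimed strict monotonicity.

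In the $N=2$ part, however, two steps are wrong as written. First, the chain $L(u_{0\theta})=\int_{\R^2}F(u_0)\,dx=0=L(u_0)$ is false: the two-dimensional Pohozaev identity gives $\int_{\R^2}F(u_0)\,dx=0$, hence $L(u_{0\theta})=\tfrac12\|\nabla u_0\|_2^2-\theta^2\int_{\R^2}F(u_0)\,dx=\tfrac12\|\nabla u_0\|_2^2=L(u_0)>0$; the conclusion \eqref{A.11} survives, but not your identities. Second, and more seriously, your justification of \eqref{A.10} rests on ``choosing $\theta_0$ small so that $u_{0\theta_0}$ is uniformly small in sup-norm'', which is impossible: a dilation leaves the sup-norm unchanged, $\|u_{0\theta_0}\|_\infty=\|u_0\|_\infty$. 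The correct mechanism is the area factor produced by scaling in two dimensions: $\tfrac{d}{dt}L(tu_{0\theta_0})=t\|\nabla u_0\|_2^2-\theta_0^2\int_{\R^2}f(tu_0)u_0\,dx$, and since $u_0\in L^\infty$ while $f$ is continuous with $f(0)=0$ and $f(s)/s$ bounded near $0$ (by \ref{(f2)}), one has $\abs{f(s)}\leq C\abs{s}$ on the bounded range of $tu_0$, $t\in(0,1]$; thus the second term is at most $C\theta_0^2t\|u_0\|_2^2$ and \eqref{A.10} follows once $\theta_0$ is small. For segment (c) the relevant facts are $\int_{\R^2}f(u_0)u_0\,dx=\|\nabla u_0\|_2^2>0$ and $\int_{\R^2}F(u_0)\,dx=0$, so $\int_{\R^2}F(t_1u_0)\,dx>0$ for $t_1>1$ close to $1$ and $\tfrac{d}{dt}L(tu_{0\theta_1})\big|_{t=1}=(1-\theta_1^2)\|\nabla u_0\|_2^2<0$; choosing $t_1$ first and then $\theta_1$ large yields \eqref{A.12}--\eqref{A.13}. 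In particular no Trudinger--Moser-type uniform control is needed (the nonlinearity is only evaluated on the bounded set $\abs{s}\leq t_1\|u_0\|_\infty$), and $\theta_0$, $t_1$, $\theta_1$ are chosen independently rather than ``consistently'' as your closing paragraph suggests.
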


The proof of \cref{Proposition:A.2} is given in \cite{JT0} and we omit it here.

\subsection{Deformation argument for possibly sign-changing solutions}\label{Appendix:A.3}
In this appendix, we give a variant of deformation results given in Sections \ref{Section:5} 
and \ref{Section:7},
which may be useful to find the existence and multiplicity of possibly sign-changing
solutions.

In this appendix, we do not assume oddness of $g:\,\R\to\R$ and assume 

\smallskip

{\parindent=2\parindent

\begin{description}
 \item[$(g0\#)$] $g\in C(\R,\R)$.
\end{description}

\smallskip

\noindent
As variants of conditions (g1)--(g3), we may consider the following conditions:

\smallskip

\begin{description}
\item[$(g1\#)$]  $g(s)=\abs s^{p-1}s+h(s)$ and
    \begin{equation*}
        \lim_{s\to 0}{\frac{h(s)}{\abs s^{p-1}s}}=0, \quad
        \lim_{s\to \pm\infty}{\frac{h(s)}{\abs s}}=0;
    \end{equation*}
\item[$(g2\#)$] if $u_0\in F$ satisfies \eqref{1.13}, then $u_0\equiv 0$;
\item[$(g3\#)$] $NG(s) -{\frac{N-2}{2}}g(s)s\geq 0$ for all $s\in\R$.
\end{description}

}
\smallskip

\noindent
Condition $(g2\#)$ ensures that \eqref{1.13} has no non-trivial solutions and $(g2\#)$ is
stronger than $(g2)$.  As in Proposition \ref{Proposition:1.2}, we have

\medskip

\begin{Lemma} \label{Lemma:A.3}  
$(g2\#)$ follows from $(g3\#)$.
\end{Lemma}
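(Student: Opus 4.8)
The plan is to adapt the proof of the implication $(g3)\Rightarrow(g2)$ in \cref{Proposition:1.2}, now carried out on all of $\R$ since in this appendix $g$ is not assumed odd. Let $u_0\in F$ be a solution of the zero-mass equation $-\Delta u=g(u)$ in $\R^N$, i.e. $Z'(u_0)=0$ with $Z$ as in \eqref{2.11} (meaningful under $(g0\#)$ and $(g1\#)$ thanks to \cref{Lemma:2.3}). First I would note that the proof of \cref{Lemma:2.5} never uses the oddness of $g$: continuity $(g0\#)$ together with the growth $(g1\#)$ gives $|G(s)|,|sg(s)|\le C|s|^{p+1}$, hence $u_0\in W^{2,q}_{\rm loc}(\R^N)$ for every $q<\infty$ by elliptic regularity and, in particular, $u_0$ is a classical $C^2$ solution; the Berestycki--Lions argument then yields the Pohozaev identity, and combining it with $Z'(u_0)u_0=0$ gives
\[
    \intRN NG(u_0)-\frac{N-2}{2}\,g(u_0)u_0\,dx=0,
\]
the analogue of \eqref{2.15}.

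Next I would record the pointwise sign of this integrand. Writing $g(s)=|s|^{p-1}s+h(s)$ and $G(s)=\frac{1}{p+1}|s|^{p+1}+H(s)$ and using $\frac{N}{p+1}-\frac{N-2}{2}=\frac{2}{N+2}$, one gets
\[
    NG(s)-\frac{N-2}{2}\,g(s)s
    =\frac{2}{N+2}\,|s|^{p+1}+\Big(NH(s)-\frac{N-2}{2}\,h(s)s\Big).
\]
Since $(g1\#)$ forces $NH(s)-\frac{N-2}{2}h(s)s=o(|s|^{p+1})$ as $s\to0$, there exist $\delta_0>0$ and $c_0>0$ with
\[
    NG(s)-\frac{N-2}{2}\,g(s)s\ge c_0|s|^{p+1}\qquad\text{for }|s|\le\delta_0,
\]
while $(g3\#)$, which is imposed on all of $\R$ (not only on $s\ge0$), gives $NG(s)-\frac{N-2}{2}g(s)s\ge0$ for every $s\in\R$.

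Finally I would bring in the decay of $u_0$. By \cref{Lemma:2.4} (the radial lemma for $N\ge3$, and the estimate \eqref{2.12} for $N=2$) one has $u_0(x)\to0$ as $|x|\to\infty$, so there is $R_0>0$ with $|u_0(x)|\le\delta_0$ for $|x|\ge R_0$; hence the integrand above is nonnegative everywhere and at least $c_0|u_0|^{p+1}$ on $\R^N\setminus B_{R_0}$. If $u_0\not\equiv0$, then by continuity of $u_0$ together with $u_0(x)\to0$ the set $\{0<|u_0|<\delta_0\}$ is a nonempty open set, on which the integrand is strictly positive, so $\intRN NG(u_0)-\frac{N-2}{2}g(u_0)u_0\,dx>0$, contradicting the identity derived above. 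Therefore $u_0\equiv0$, i.e. $(g2\#)$ holds. The only point requiring any attention is the observation that \cref{Lemma:2.5} (in fact only its Pohozaev identity \eqref{2.14}--\eqref{2.15}) survives the removal of oddness; since oddness is nowhere used in that proof, this is routine and no genuine obstacle arises.
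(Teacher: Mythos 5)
Your proof is correct and follows essentially the same route as the paper, which proves Lemma~A.3 simply by referring back to the argument for \ref{(g3)}$\Rightarrow$\ref{(g2)} in \cref{Proposition:1.2}: Pohozaev identity from \cref{Lemma:2.5} (which indeed nowhere uses oddness), the pointwise bounds \eqref{2.17}--\eqref{2.18} now valid on all of $\R$ by $(g3\#)$ and $(g1\#)$, and the decay from \cref{Lemma:2.4} to force strict positivity of the integral unless $u_0\equiv 0$.
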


\medskip

We also have

\medskip

\begin{Proposition} \label{Proposition:A.4} 
Assume $(g0\#)$--$(g2\#)$.  Then for $b\not=0$, the $(PSPC)_b$ condition
holds for $I$.
\end{Proposition}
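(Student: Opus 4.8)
The plan is to mimic the proof of Proposition 5.4 verbatim, tracking where oddness of $g$ was actually used and checking that each appearance can be removed under the stronger hypotheses $(g0\#)$--$(g2\#)$. Oddness entered in only two essentially cosmetic ways: in the definition of the functional setting (we worked with the odd extension) and in Step 2 of the $\lambda_j\to\pm\infty$ cases, where we concluded $v_j\to\pm\omega_1$ from the fact that the least energy critical set of $\Psi_{01}$ is $\{\pm\omega_1\}$. Everything else — the $a$ priori estimates $\abs{G(s)}\le C_1\abs{s}^{p+1}$, $\abs{s g(s)}\le C_1\abs{s}^{p+1}$, $\abs{h(s)}\le A\abs{s}$, $\abs{H(s)}\le As^2$, the Pohozaev/Nehari relations (5.11)--(5.14), the Brezis--Lieb splitting, and the key inequalities (5.22)--(5.23) $g(s)s\ge C_1\abs{s}^{p+1}-C_2$ — follows from $(g0\#)$ and $(g1\#)$ alone, since these are conditions on the growth of $g$ and not on its symmetry. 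So I would first state that under $(g0\#)$--$(g1\#)$ all the estimates of Section 2.1 and all of (5.6)--(5.14) remain valid for any $(PSPC)_b$ sequence.

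Next I would split into the same three cases: $\lambda_j$ bounded, $\lambda_j\to+\infty$, $\lambda_j\to-\infty$. Case A is unchanged: the proof of Proposition 5.6 never uses oddness, only boundedness of $(u_j)$ in $E$ (from (5.11)--(5.12)) and weak-to-strong upgrading via $\partial_u I(\lambda_j,u_j)u_j\to 0$ and $\partial_u I(\lambda_j,u_j)u_0\to 0$. Case B (i.e.\ $\lambda_j\to+\infty$): rescale $u_j(x)=\mu_j^{N/4}v_j(\mu_j^{1/2}x)$; Steps 1 and 3 of the proof of Proposition 5.7 are growth estimates and go through; in Step 2 we now only get that $v_j$ converges strongly to \emph{some} least energy solution $v_0$ of $-\Delta v+v=\abs{v}^{p-1}v$ — but by Kwong's uniqueness the least energy solutions are exactly $\{\pm\omega_1\}$, so again $v_j\to\pm\omega_1$ and the conclusion $b=0$ follows from Step 4 exactly as before (the argument there uses only $(p+1)G(u_j)-g(u_j)u_j=(p+1)H(u_j)-h(u_j)u_j$ and Lemma A.1). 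Case C ($\lambda_j\to-\infty$): this is the substantive case. The boundedness of $(u_j)$ in $F$ (Lemma 5.9) uses only (5.22)--(5.23), which hold under $(g1\#)$; note that now we need (5.22)--(5.23) for all $s\in\R$, not just $s\ge 0$, and under $(g1\#)$ (which controls $h$ near $0$ and near $\pm\infty$ on both sides) this is exactly what we get — this is precisely the point made in Remark 5.10 about why the odd extension, or here the intrinsic two-sided hypothesis $(g1\#)$, is what makes the $L^{p+1}$ bound work. Then the proof of Proposition 5.8 gives a subsequence with $u_j\to u_0$ strongly in $F$, $Z'(u_0)=0$, $Z(u_0)=b$. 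By Lemma 2.5 (which holds under $(g0')$--$(g1)$, hence under $(g0\#)$--$(g1\#)$), $b=Z(u_0)\ge 0$. If $b<0$ this already contradicts the existence of such a sequence, so Case C with $b<0$ is vacuous and compactness holds trivially. If $b>0$, we invoke $(g2\#)$: it forces $u_0\equiv 0$, hence $b=Z(0)=0$, contradicting $b\ne 0$; so Case C with $b>0$ is also vacuous. In both sub-cases no nonconvergent $(PSPC)_b$ sequence with $\lambda_j\to-\infty$ exists, which is the required compactness.

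The main obstacle, and the only place real care is needed, is Case C: one must verify that the two-sided growth condition $(g1\#)$ genuinely yields (5.22)--(5.23) for all real $s$ (including the negative part of $u_j$), and that Lemma 2.4's radial decay and the Brezis--Lieb bookkeeping in the proof of Proposition 5.8 do not secretly use oddness — they do not, since they rest only on the membership $u_j\in F$ and on $\abs{H(s)}/\abs{s}^{p+1}\to 0$ at $0$ and $\infty$. Finally, I would remark that $(g3\#)$ implies $(g2\#)$ by Lemma A.3 (whose proof is the same computation as in Proposition 1.2, now applied on all of $\R$ using $u_0(r)\to 0$), so the hypothesis can be replaced by $(g3\#)$ when convenient. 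Assembling these three cases gives: any $(PSPC)_b$ sequence with $b\ne 0$ has a strongly convergent subsequence, i.e.\ the $(PSPC)_b$ condition holds.
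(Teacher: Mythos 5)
Your proposal is correct and follows essentially the same route as the paper, which proves \cref{Proposition:A.4} precisely by repeating Cases A and B of \cref{Proposition:5.4} and slightly modifying Case C, using that under $(g2\#)$ the functional $Z$ has no nontrivial critical points (sign-changing ones included), so any limit profile forces $b=0$. Your tracking of where oddness versus the two-sided growth condition $(g1\#)$ is used (in particular for \eqref{5.22}--\eqref{5.23} and the identification $v_0=\pm\omega_1$ in Case B) fills in exactly the details the paper leaves as a sketch.
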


\medskip

In fact, under $(g2\#)$, $Z$ has no non-trivial critical points including 
sign-changing ones.  Modifying the argument in the proof of \cref{Proposition:5.4} 
in Case C (\cref{Section:5.4})
slightly and repeating the arguments for Cases A and B, we can show 
\cref{Proposition:A.4}.

Thus the argument for \cref{Proposition:7.1} is applicable and we have 
the following deformation result.

\medskip

\begin{Proposition} \label{Proposition:A.5}
Assume $(g0\#)$--$(g2\#)$.  Then for $b\not=0$ the conclusion of 
\cref{Proposition:7.1} holds.
\end{Proposition}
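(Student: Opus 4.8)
The plan is to reduce \cref{Proposition:A.5} to \cref{Proposition:7.1} after first securing \cref{Proposition:A.4}, exactly as the existence proof in \cref{Section:7} reduces to the abstract deformation result. The functional $I$ in \eqref{2.2} makes sense and is $C^1$ on $\RE$ under $(g0\#)$ and $(g1\#)$ (oddness of $g$ was never used in the construction of $I$ or in verifying \ref{(J)}), so the augmented functional $J(\theta,\lambda,u(x))=I(\lambda,u(x/e^\theta))$ still satisfies \ref{(J)}, and $P(\lambda,u)=\partial_\theta J(0,\lambda,u)$ is still the Pohozaev functional \eqref{2.3}. Hence, once $I$ is known to satisfy the $(PSPC)_b$ condition for $b\neq 0$, \cref{Proposition:7.1} applies verbatim and gives the claimed deformation $\eta:[0,1]\times\RE\to\RE$ with properties (1)--(5). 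So the only real content is \cref{Proposition:A.4}.

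For \cref{Proposition:A.4} I would follow the three-case split of \cref{Section:5}: given a $(PSPC)_b$ sequence $(\lambda_j,u_j)$ with $b\neq 0$, pass to a subsequence so that $\lambda_j\to\lambda_0\in\R$ (Case A), $\lambda_j\to+\infty$ (Case B), or $\lambda_j\to-\infty$ (Case C). Cases A and B are literally \cref{Proposition:5.6} and \cref{Proposition:5.7}: their proofs only use \eqref{5.6}--\eqref{5.14}, the bound \eqref{2.1}, \cref{Lemma:A.1}, and the mountain-pass/Palais-Smale facts for $\Psi_{01}$, none of which need $g$ odd; in Case B one again concludes $b=0$, contradicting $b\neq 0$. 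For Case C one reruns \cref{Lemma:5.9} and \cref{Proposition:5.8}: the $L^{p+1}$-boundedness argument via \eqref{5.22}--\eqref{5.23} (which follow from $(g0\#)$ and $(g1\#)$ applied to the full real line, now that $g$ is genuinely defined on all of $\R$ — this is the point of strengthening $(g1)$ to $(g1\#)$), the Brezis--Lieb splitting \eqref{5.27}--\eqref{5.31}, and the Pohozaev relations from \cref{Lemma:2.5} all carry over without sign restrictions. The limit $u_0\in F$ is then a (possibly sign-changing) critical point of $Z$ with $Z(u_0)=b$; by \cref{Lemma:2.5}, $Z(u_0)\geq 0$, and by $(g2\#)$ the \emph{only} critical point of $Z$ is $u_0\equiv 0$, which forces $b=0$ — again contradicting $b\neq 0$. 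So in all three cases either $(\lambda_j,u_j)$ is precompact (Case A) or $b=0$; thus for $b\neq 0$ the $(PSPC)_b$ condition holds. \cref{Lemma:A.3} is proved just as the $(g3)\Rightarrow(g2)$ half of \cref{Proposition:1.2}: $(g3\#)$ gives \eqref{2.17}--\eqref{2.18} on all of $\R$, so \eqref{2.15} forces any $u_0\in F$ with $Z'(u_0)=0$ to vanish.

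The main obstacle, and the reason $(g1)$ must be upgraded to $(g1\#)$, is the Case C boundedness argument: in \cref{Section:5.4} the inequalities $g(s)s\geq C_1|s|^{p+1}-C_2$ for all $s\in\R$ are what control $\|u_j\|_{p+1}$, and for an odd extension of a one-sided $g$ these come for free, whereas here we must impose the two-sided growth $h(s)/|s|\to0$ as $s\to\pm\infty$ directly. With $(g1\#)$ in force there is genuinely nothing new to check — every estimate in \cref{Section:5} that invoked \ref{(g1)} was in fact an estimate on $|h(s)|\le A|s|$, $|H(s)|\le As^2$ valid on all of $\R$, and \cref{Lemma:A.1} already only needs \eqref{A.1}. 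I would therefore write the proof of \cref{Proposition:A.5} as: ``By \cref{Proposition:A.4}, $I$ satisfies the $(PSPC)_b$ condition for $b\neq0$. Since $I$ satisfies \ref{(J)} (the construction of $J$ and the computation \eqref{7.2}--\eqref{7.4} do not use oddness of $g$), \cref{Proposition:7.1} applies and yields a map $\eta$ with properties (1)--(5). $\qed$'', with \cref{Proposition:A.4} and \cref{Lemma:A.3} proved by the case-by-case adaptation of \cref{Section:5} and of \cref{Proposition:1.2} sketched above.
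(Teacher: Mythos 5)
Your proposal is correct and follows the same route as the paper: establish \cref{Proposition:A.4} by rerunning the three-case analysis of \cref{Section:5} (Cases A and B unchanged, Case C using $(g2\#)$ to exclude all nontrivial, possibly sign-changing, critical points of $Z$ and hence force $b=0$), with $(g1\#)$ supplying the two-sided bounds \eqref{2.1} and \eqref{5.22}--\eqref{5.23} that the odd extension provided before, and then invoke the abstract deformation result \cref{Proposition:7.1}, whose hypotheses \ref{(J)} and $(PSPC)_b$ do not involve oddness. This is exactly the argument the paper sketches, so no gap.
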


\section*{Acknowledgments}
S.C. is partially supported by INdAM-GNAMPA and by PRIN PNRR P2022YFAJH \lq\lq Linear and 
Nonlinear PDEs:  New directions and applications''. 
S.C. thanks acknowledge financial support from PNRR MUR project PE0000023 NQSTI 
- National Quantum Science and Technology Institute (CUP H93C22000670006).
M.G. is supported by INdAM-GNAMPA Project \lq\lq Mancanza di regolarit\`a e spazi non lisci: 
studio di autofunzioni e autovalori'', codice CUP E53C23001670001.
N.I. is partially supported by JSPS KAKENHI Grant Numbers JP 19H01797, 19K03590 and 24K06802. 
K.T. is partially supported by JSPS KAKENHI Grant Numbers JP18KK0073, JP19H00644 and 
JP22K03380.

\end{document}